\newcommand{\wh}[1]{\widehat{{#1}}}
\newtheorem{thm}{Theorem}[section]
\newtheorem{cor}[thm]{Corollary}
\newtheorem{lem}[thm]{Lemma}
\newtheorem{prop}[thm]{Proposition}
\theoremstyle{definition}
\newtheorem{defn}[thm]{Definition}
\newtheorem{const}[thm]{Construction}
\theoremstyle{remark}
\newtheorem{rem}[thm]{Remark}
\newtheorem{ex}[thm]{Example}
\newcommand{\gen}[1]{\left\langle#1\right\rangle}
\newcommand{\prs}[2]{\gen{#1\parallel #2}}
\newcommand{\ga }{\Gamma}
\newcommand{\e }{\varepsilon }
\renewcommand{\kappa }{\varkappa}
\renewcommand{\d }{{\rm d} }
\renewcommand{\le }{\leqslant}
\newcommand{\cH }{\mathcal H}
\newcommand{\cL }{\mathcal L}
\newcommand{\cG }{\mathcal G}
\newcommand{\cP }{\mathcal P}
\newcommand{\cS }{\mathcal S}
\newcommand{\N }{\mathbb N}
\newcommand{\Z }{\mathbb Z}
\newcommand{\R }{\mathbb R}
\newcommand{\fftp}{{\rm FFTP}}
\newcommand{\bcd}{{\rm BCD}}
\newcommand{\Lab }{{\bf Lab}}
\newcommand{\CG}{\mathsf{ConjGeo}}
\newcommand\rell{\mathsf{Rel}} 
\newcommand\geol{\mathsf{Geo}} 
\newcommand\cycgeo{\mathsf{CycGeo}}
\newcommand\geocl{\mathsf{ConjGeo}} 
\newcommand{\wt}[1]{\widetilde{#1}}
\newcommand{\wdl}{\widetilde{L}}
\newcommand{\nsc}{{\rm NSC}}
\def\coloneq{\mathrel{\mathop\mathchar"303A}\mkern-1.2mu=}
\begin{document}

\title[Finite generating sets of relatively hyperbolic groups]{Finite generating sets of relatively hyperbolic groups and applications to geodesic languages}

\address{Vanderbilt University,
Department of Mathematics,
1326 Stevenson Center,
Nashville, TN 37240, USA.}
\author{Yago Antol\'{i}n}
\email[Yago Antol\'{i}n]{yago.anpi@gmail.com}

\address{University of Neuch\^{a}tel,
Institut de Math\'{e}matiques, R. Emile-Argand 11, CH-2000 Neuch\^{a}tel, Switzerland.}
\author{Laura Ciobanu}
\email[Laura Ciobanu]{laura.ciobanu@unine.ch}

\thanks{}
%\date{}

\begin{abstract}
Given a finitely generated relatively hyperbolic group $G$, we  construct a finite generating set $X$ of $G$ such that $(G,X)$ has the `falsification
by fellow traveler property' provided that the parabolic subgroups $\{H_\omega\}_{\omega\in \Omega}$ have this property with respect to the generating sets $\{X\cap H_\omega\}_{\omega\in \Omega}$.  This implies that groups hyperbolic relative to virtually abelian subgroups, which include all limit groups and groups acting freely on $\mathbb{R}^n$-trees, or geometrically finite hyperbolic groups, have generating sets for which the language of geodesics is regular, and the complete growth series and complete geodesic series are rational. As an application of our techniques, 
we prove that if each $H_\omega$ admits a geodesic biautomatic structure over $X\cap H_\omega$, then $G$ has a geodesic biautomatic structure. 

Similarly, we  construct a finite generating set $X$ of $G$ such that $(G,X)$  has the `bounded conjugacy diagrams' property or the `neighbouring shorter conjugate' property if the parabolic subgroups $\{H_\omega\}_{\omega\in \Omega}$ have this property with respect to the generating sets $\{X\cap H_\omega\}_{\omega\in \Omega}$. This implies that a group hyperbolic relative to abelian subgroups has a generating set for which its Cayley graph has bounded conjugacy diagrams, a fact we use to give a cubic time algorithm to solve the conjugacy problem.
Another corollary of our results is that groups hyperbolic relative to virtually abelian subgroups have a regular language of conjugacy geodesics.
\end{abstract}

\keywords{relatively hyperbolic groups, Cayley graphs, growth series, conjugacy problem, languages of geodesics, falsification by fellow traveler property, bounded conjugacy diagrams, (bi)automatic groups, rational growth}

\subjclass[2010]{20F65, 20F10, 20F67, 68Q45}

\maketitle

%\tableofcontents

%%%%%%%%%%%%%%%%
%Introduction	
%%%%%%%%%%%%%%%%

\section{Introduction}
In this paper all generating sets generate the group as a monoid, and are not necessarily assumed to be closed under taking inverses.

Let (P) be a property of (ordered) generating sets of groups. We say that a group $G$ is (P)-{\it completable} if every finite (ordered)
generating set can be enlarged to a finite (ordered) generating set $X$ of $G$ that has the property (P). 

The results of the paper can be summarized in the following statement.
\begin{thm}\label{thm:summary}
Let $I\subseteq\{1,2,3,4,5\}$. A generating set satisfies {\rm (P$_I$)} if it simultaneously satisfies {\rm (Pi)}, $i\in I$, where 
\begin{itemize}
\item[{\rm (P1)}] is the falsification by fellow traveler property,
\item[{\rm (P2)}] is $\mathsf{ShortLex}$ is a biautomatic structure,
\item[{\rm (P3)}] is admitting a geodesic  biautomatic 
structure,
\item[{\rm (P4)}] is the bounded conjugacy diagrams property, and
\item[{\rm (P5)}] is the neighboring shorter conjugate property.
\end{itemize}
Let $G$ be a finitely generated group, hyperbolic relative to a family of {\rm (P$_I$)}-completable groups.
Then $G$ is {\rm (P$_I$)}-completable.
\end{thm}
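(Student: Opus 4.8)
The plan is to reduce the five-fold assertion to one construction plus five essentially independent verifications. Since a generating set has (P$_I$) exactly when it has (Pi) for each $i\in I$, it suffices to prove that $G$ is (Pi)-completable for each single $i$, and all these arguments may use the same enlarged generating set. So, starting from a finite generating set $S_0$ of $G$ to be enlarged, I would first fix a finite relative generating set making the relative Cayley graph $\G$ hyperbolic and $(G,\{H_\omega\}_{\omega\in\Omega})$ satisfy the bounded coset penetration property. There are finitely many conjugacy classes of parabolics, so (P$_I$)-completability of the $H_\omega$ lets me pick finite generating sets $Y_\omega$ with property (P$_I$) for a set of representatives and transport them by fixed conjugators; I would then set $X=S_0\cup\bigcup_\omega Y_\omega\cup F$, where $F\subseteq G$ is a finite set containing a large ball of $\G$ together with bounded-length elements realizing the coset-penetration constants. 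One then checks that $X$ is finite, generates $G$, and that $\langle X\cap H_\omega\rangle=H_\omega$; ensuring in addition that $X\cap H_\omega$ is exactly one of the preselected (P$_I$)-sets is a bookkeeping point, since (P$_I$) need not survive enlargement, and it constrains how the finitely many other pieces of $X$ may meet the parabolics.

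The technical heart, and the step I expect to be the main obstacle, is a \emph{decomposition lemma}: when the ball in $F$ has radius large relative to the hyperbolicity and penetration constants, every $X$-geodesic word $w$ can be written $w\equiv u_0h_1u_1\cdots h_nu_n$ with each $h_i$ a geodesic word over some $Y_{\omega_i}$ spelling an element of a parabolic coset, each $u_i$ a word over $F$ reading a bounded-length geodesic of $\G$, and the whole sequence, read in $\G$, a quasigeodesic with constants depending only on $X$; moreover the decomposition should be unique up to bounded modification, with an analogous statement for conjugacy (cyclic) geodesics. Proving this needs careful thin-triangle arguments in $\G$ together with bounded coset penetration, and it must come with \emph{lifting} statements asserting that a synchronous fellow traveller produced in $\G$, or a shortening produced inside a single parabolic coset, translates into a bounded modification of the word in $\Gamma(G,X)$. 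Making the quantitative constants in the decomposition and in the lifting uniform in $w$ --- in particular handling $\mathcal H$-edges of a skeleton, which correspond to crossings of entire cosets --- is the delicate part.

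Granting the decomposition lemma, I would verify each property by gluing. For (P1): up to the decomposition, a word can waste length only if its $\G$-skeleton is non-geodesic, in which case hyperbolicity of $\G$ yields a shorter synchronous fellow traveller that lifts through $F$, or if some syllable $h_i$ is non-geodesic over $Y_{\omega_i}$, in which case $\fftp$ of $H_{\omega_i}$ shortens it inside the coset; either way one obtains a shorter bounded fellow traveller, so $(G,X)$ has $\fftp$. For (P2) and (P3): combine the (geodesic) biautomatic structure of each $H_\omega$ over $Y_\omega$ with the biautomaticity of the hyperbolic graph $\G$, using the decomposition lemma to let a finite-state machine parse the parabolic syllables and splice padded word differences; this is the relatively hyperbolic analogue of how (bi)automaticity passes to $\G$ from the parabolics, following Farb and Rebbechi, and for $\mathsf{ShortLex}$ one additionally arranges the syllable representatives to be $\mathsf{ShortLex}$-minimal. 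For (P4) and (P5): run the decomposition on conjugacy geodesics and use the standard description of conjugacy in relatively hyperbolic groups; a shortest conjugate of a given element is either short in $\G$, where the hyperbolic geometry of $\G$ produces a bounded conjugacy diagram (resp.\ a neighbouring shorter conjugate) directly, or it is conjugated into a parabolic with almost all of its length inside one $H_\omega$, where $\bcd$ (resp.\ $\nsc$) of that $H_\omega$ supplies what is needed. Since each verification consumes only the decomposition lemma and the corresponding hypothesis on the parabolics, assembling the five yields (P$_I$)-completability of $G$.
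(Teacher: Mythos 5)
Your overall architecture does match the paper's: one enlarged generating set $X$, a canonical decomposition of $X$-words into maximal parabolic syllables separated by non-parabolic pieces, a derived word over $X\cup\cH$ that is a quasi-geodesic without (vertex) backtracking in the relative Cayley graph, and then a property-by-property verification in which the Bounded Coset Penetration property plays the role of your ``lifting'' statements. Your decomposition lemma is precisely the paper's factorization construction combined with its Generating Set Lemma, and the case analyses you sketch for (P1)--(P5) are the right dichotomies.

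However, there are genuine gaps. First, the decomposition lemma is asserted, not proven, and it is the entire technical content of the paper: one must produce a \emph{finite} set $\Phi$ of forbidden non-geodesic words such that every $2$-local geodesic avoiding $\Phi$ and having no parabolic shortenings derives to a quasi-geodesic without vertex backtracking; this rests on the local-to-global theorem for hyperbolic spaces together with Osin's polygon lemma bounding the total $X$-length of isolated components, and none of that is supplied or even invoked in your outline. Second, your auxiliary set $F$ ``containing a large ball of $\G$'' is not finite whenever some $H_\omega$ is infinite, since every nontrivial element of every parabolic is a single edge of $\G$; the paper instead adjoins two specific finite families of parabolic elements (the elements of $\cH$ of bounded $Y$-length, and the letters occurring in chosen geodesic replacements of the finitely many forbidden words). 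Third, the simultaneity of the five properties is not mere bookkeeping: it is secured by proving each technical theorem in the sandwich form ``for \emph{every} finite $X$ with $Y\cup\cH'\subseteq X\subseteq Y\cup\cH$ such that $(H_\omega,X\cap H_\omega)$ has the relevant property,'' so that one may take the union of the finitely many sets $\cH'$ and then invoke the \emph{joint} (P$_I$)-completability of the parabolics to choose the traces $X\cap H_\omega$ once and for all; reducing to separate (Pi)-completability of $G$, as your first step proposes, does not obviously recombine, because the five enlargements need not be compatible unless each verification is stable under further enlargement inside $Y\cup\cH$ --- which is exactly what the sandwich formulation provides and what your outline leaves unaddressed.
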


Relatively hyperbolic groups vastly generalize hyperbolic groups and have been intensively studied in the 20 years since their initial suggestion by Gromov \cite{Gromov}, and subsequent development by Farb \cite{Farb}, Bowditch \cite{Bowditch}, Osin \cite{Osin06} and others. One line of research has been to show that they inherit important properties of their parabolic subgroups, and numerous results recording this behaviour have been produced (the inheritance of the Rapid Decay property \cite{DrutuSapir}, finiteness of the asymptotic dimension \cite{Osin05}, decidability of the existential theory with parameters \cite{Dahmani09} or other algorithmic properties, to name just a few). While lifting characteristics of the parabolic subgroups naturally relies on and extends hyperbolic group machinery, the proofs often have to surmount substantial technical obstacles. In this paper we prove that several geometric, language theoretic and algorithmic properties are inherited from the parabolic subgroups, and in doing so, we produce appropriate generating sets, which are both essential for the properties we discuss, and require the development of several technical tools.

Our main theorem, Theorem \ref{thm:summary}, collects the five properties that we show can be lifted from parabolic subgroups. Most importantly, Theorem \ref{thm:summary} applies to groups that are hyperbolic with respect to abelian subgroups, since finitely generated abelian groups satisfy (P1) -- (P5) for all finite generating sets.
Some of the prominent examples in this class are limit groups in the sense of Sela (see \cite[Theorem 0.3]{Dahmani}), which coincide with the finitely presented fully residually free groups, as well as 
groups acting freely on $\R^n$-trees (see \cite[Theorem 7.1]{Guirardel}). More generally, it was shown in \cite[Theorem 63]{KMS} that finitely presented $\Lambda$-free groups, where $\Lambda$ is an ordered abelian group, are also hyperbolic relative to non-cyclic abelian subgroups.

Theorem \ref{thm:summary} also applies to groups that are hyperbolic with respect to virtually abelian groups, since finitely generated virtually abelian groups are (P$_{\{1,3,5\}}$)-completable.
Geometrically finite hyperbolic groups are hyperbolic relative to their
parabolic subgroups that are virtually abelian (see \cite[Theorem 6.1]{Rebbechi} or
\cite[Proposition 7.9]{Bowditch}). Note that Theorem \ref{thm:summary} recovers and generalizes the result of Neumann and Shapiro \cite[Theorem 4.3]{NeumannShapiro}, which states that geometrically finite hyperbolic groups have the falsification by fellow traveler property and admit a geodesic biautomatic structure \cite[Theorem 5.6]{NeumannShapiro}.

 We now discuss each of these properties and its applications. Formal definitions will be given in Section \ref{sec:prelim}.
\subsection{The falsification by fellow traveler property (FFTP)}
The `falsification by fellow traveler' property is a property of graphs 
(see Definition \ref{def:fftp}) and
was introduced in \cite{NeumannShapiro} by W. Neumann and M. Shapiro inspired
by ideas of J. Cannon. Informally, a graph has this property if for each non-geodesic path there is a shorter path with the same end-points such that the two paths fellow travel, i.e. there is a global constant that bounds the distance between the two paths when `traveling' along them from one end-point to the other.

Suppose that the  Cayley graph $\ga(G,X)$ of a finitely generated group $G$ with 
respect to a finite generating set $X$ has \fftp.
 %Then the language of (near) geodesic words of $G$ with respect to $X$ is regular (\cite[Proposition 4.1]
%{NeumannShapiro}, \cite[Proposition 3.6]{CHHR}), which implies, for example, that dead-end elements have bounded depth (\cite{Warshall}); $\ga(G,X)$ has finitely many cone types (\cite{NS2}) and in particular, this implies that the (complete)
%growth  series and the (complete) geodesic growth series of $G$ with respect to $X$ are rational (see \cite{GN});
Then $\ga(G,X)$ is almost convex (\cite[Proposition 1]{Elder02}),  $G$ has a finite presentation with a Dehn function that is at most quadratic (\cite[Proposition 2]{Elder02}), and is of type $F_3$ (\cite[Theorem 1]{Elder02}).
Moreover, the barycentric subdivision of the injective hull of $G$ is a model for the classifying space for proper actions (\cite[Theorem 1.4]{Lang}).

Recall that {\it the complete growth series} $\wh{\cG}(G,X)(z)$ is the formal power series with coefficients in the group ring $\Z G$ given by $$\wh{\cG}(G,X)(x)= \sum_{g\in G} g z^{|g|_X} \, \in \Z G[[z]].$$
There are standard definitions determining when this series is a rational or algebraic function (see \cite{SS}).
The natural map $\Z G \to \Z$ sending each $g$ to $1$ shows that rationality of $\wh{\cG}(G,X)(z)$ implies the rationality of the usual growth series $\cG(G,X)(z)=\sum_{g\in G}  z^{|g|_X} \, \in \Z [[z]]$.
In general, if $\cL$ is a language over $X$, one can define
the associated complete growth series $$\wh{\cG}(\cL)(z)= \sum_{W\in \cL} W z^{\ell_X(W)} \,\in \Z [X^*][[z]].$$
If the language $\cL$ is regular, then the complete growth series is rational, as shown in \cite{SS}.
 
If $(G,X)$ has \fftp, then the language of geodesic words over $X$ is regular (\cite[Proposition 4.1]
{NeumannShapiro}), %see also \cite[Proposition 3.6]{CHHR} the language of {\it near geodesic words}),
 which implies, for example, that dead-end elements in $G$ have bounded depth (\cite{Warshall}); $\ga(G,X)$ has finitely many cone types (\cite{NS2}), so the complete
growth series, as well as
and the complete geodesic growth series of $G$ with respect to $X$, are rational (see \cite{GN}).

Among all the applications listed above, the one that interests us most is the regularity of the language of geodesics. For a word $W\in X^*$, $\ell(W)$ denotes the length of the word $W$. For $g\in G$, $|g|_X$ denotes the the minimal length of a word $W\in X^*$ representing $g$.
Let $\pi \colon X^*\to G$ denote the natural surjection. 
A word $W$ is {\it geodesic} if $\ell(W)=|\pi(W)|_X$.
The language of geodesics of $(G,X)$, denoted by $\geol(G,X)$, is the set
$$\geol(G,X)\coloneq \{W \in X^* \mid \ell(W) =  |\pi(W)|_X\}.$$

\begin{prop}\cite[Proposition 4.1]{NeumannShapiro}\label{prop:NS}
If $(G,X)$ has the falsification by fellow traveler property, then $\geol(G,X)$ is regular.
\end{prop}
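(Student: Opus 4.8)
The plan is to show that a finite-state automaton can recognize $\geol(G,X)$ by using the fellow traveler constant to bound the ``memory'' the automaton needs. Fix the constant $k$ from the falsification by fellow traveler property: every non-geodesic path in $\ga(G,X)$ is $k$-fellow travelled by a strictly shorter path with the same endpoints. The key observation is that whether a word $W$ is geodesic can be detected \emph{locally} if one keeps track of enough information about the ball of radius $k$ around the current vertex, together with the relative position of nearby group elements; the falsification property guarantees that a non-geodesic word fails this local test at some prefix.

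First I would set up the standard ``fellow traveler automaton''. For a word $W = x_1 \cdots x_n$ and each $i$, let $g_i = \pi(x_1\cdots x_i)$ be the vertex reached after reading $i$ letters. Consider, for each prefix length $i$, the partial function that records, for every word $V$ of length at most $k$ over $X$, the value $|g_i^{-1}\pi(g_{i-?}V)|_X$ — more precisely, I would track the ``pattern'' of distances from $g_i$ to all group elements within distance $k$ of the path traversed so far, encoded as an element of a finite set. The point is that the set of such patterns is finite (bounded in terms of $k$, $|X|$, and the size of the $k$-ball, which is finite since $X$ is finite), so this data is carried by a finite-state machine. The transition upon reading a new letter $x_{i+1}$ is determined by the current pattern and $x_{i+1}$, because moving by one generator only changes distances to nearby vertices in a way dictated by the local structure.

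Next I would argue correctness. If $W$ is geodesic, then every prefix is geodesic and no ``shortcut'' is visible, so the automaton never enters a reject state. Conversely, if $W$ is not geodesic, take the shortest prefix $W' = x_1\cdots x_m$ that is not geodesic; then $W'$ is a non-geodesic path, so by the falsification property there is a strictly shorter path $V$ from $1$ to $g_m$ that $k$-fellow travels $W'$. Since $V$ is shorter, at the last step the endpoint $g_m$ is within distance $k-1$ (say) of a point on $V$ that is reached in fewer steps, and this discrepancy is recorded in the pattern at prefix $m$; I would design the automaton to reject precisely when the tracked data exhibits such a shortcut. Thus the automaton accepts $W$ if and only if $W \in \geol(G,X)$, proving regularity.

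The main obstacle is making the ``pattern'' state space genuinely finite and well-defined while still being rich enough to detect shortcuts, and checking that the transition function is well-defined (i.e.\ that the new pattern depends only on the old pattern and the new letter, not on the full history). This requires care: one must include in the state not just distances but enough combinatorial data about which nearby vertices are joined by short paths, and one must verify that the falsification constant $k$ indeed forces any non-geodesic to reveal itself within a bounded window. These are exactly the technical points handled in Neumann–Shapiro's argument, and I would follow their encoding; the rest is bookkeeping.
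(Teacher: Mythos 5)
Your overall strategy is the intended one: build a finite automaton whose state after reading a prefix $W$ stores bounded information witnessing shorter fellow-travelling alternatives, and use \fftp{} to show that a non-geodesic word must trip the automaton at its first non-geodesic prefix. (The paper does not reprove Proposition \ref{prop:NS} itself but carries out exactly this construction in the more general Proposition \ref{prop:restrictedfftp}.) However, the state you propose to track is not the right invariant, and since this is precisely the point you defer to ``Neumann--Shapiro's encoding,'' the write-up has a genuine gap rather than a complete argument. Recording ``distances from $g_i$ to all group elements within distance $k$ of the path traversed so far'' is not finite-state: that set of elements grows with the path. Restricting instead to elements of the ball of radius $k$ about $g_i$ makes the data finite but vacuous, since the distance from $g_i$ to $g_ih$ is just $|h|_X$ and carries no information about the word read so far. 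In neither version is ``the discrepancy recorded in the pattern'' as your correctness argument requires, and the well-definedness of the transition function cannot be checked because the state has not actually been pinned down.

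The datum that works is not a Cayley-graph distance but a \emph{relative length of fellow-travelling representatives}: with $C$ a synchronous fellow-traveller constant, the state after reading a geodesic word $W$ is (essentially) the function $\phi$ assigning to each $g$ with $|g|_X\le C$ the integer
$$\phi(g)=\min\bigl\{\ell(W')-\ell(W)\ :\ W'=_G Wg,\ W'\text{ and }W\text{ asynchronously }C\text{-fellow travel}\bigr\}\in\{-C,\dots,C\},$$
together with a fail state; one rejects upon reading $x$ exactly when $\phi(x)\ne 1$. The actual content of the proof, which your proposal does not supply, is twofold: (i) this $\phi$ updates by a rule depending only on $\phi$ and the new letter, namely $T(\phi,x)(g)=\phi(xg)-1$ when $|xg|_X\le C$ and $T(\phi,x)(g)=\min\{\phi(h):|h|_X\le C,\ \d(h,xg)\le 1\}$ otherwise; and (ii) \fftp{} forces $\phi(x)\le 0$ whenever $Wx$ is non-geodesic, while geodesity of $Wx$ forces $\phi(x)=1$. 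Identifying this invariant and verifying the recursion is the proof; without it, the remaining steps are not ``bookkeeping.'' Compare the proof of Proposition \ref{prop:restrictedfftp}, where this automaton is written out explicitly.
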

 
The only classes of groups that are known to have the falsification by fellow traveler property with respect to 
any generating set are the hyperbolic and abelian ones. In general,
this property is sensitive to changing the generating set, as was shown in \cite{NeumannShapiro}, where
there is an example, due to Cannon, of a virtually abelian group $G$ and two different generating 
sets $X_1$ and $X_2$, such that $\ga(G,X_1)$ has the falsification by fellow traveler property
 and $\ga(G,X_2)$ does not.
 
Families of groups that have the falsification by fellow traveler property with respect to
some generating set  include virtually abelian groups \cite{NeumannShapiro}, geometrically 
finite hyperbolic groups \cite{NeumannShapiro}, Coxeter groups and groups acting simply transitively on the chambers of locally finite buildings \cite{NoskovCoxeter}, 
groups acting cellularly on locally finite CAT(0) cube complexes
where the action is simply transitive on the vertices  \cite{NoskovCC}, 
Garside groups \cite{Holt} and
Artin groups of large type \cite{HoltRees}.

In Section \ref{sec:gensetfftp} we prove that if a group $G$ has \fftp{} with respect to some generating set, then $G$ is \fftp{}-completable. 
With this fact in mind, it is reasonable to say that a group $G$ has the {\it falsification by fellow traveler property} if there exists a finite generating set $X$ of $G$ such that $\ga(G,X)$ has the falsification by fellow traveler property. 

The paper's first main result, which is an immediate consequence of the more technical Theorem \ref{thm:fftpgenset}, is the following (see Section \ref{sec:RH} for a definition of relative hyperbolicity).

\begin{thm} \label{thm:main_intro}
Let $G$ be a finitely generated group, hyperbolic relative to a collection of subgroups with the falsification by fellow traveler property. 
Then $G$ has the falsification by fellow traveler property.
\end{thm}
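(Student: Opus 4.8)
Since $G$ is relatively hyperbolic, it is hyperbolic with respect to a \emph{finite} family $H_{1},\dots ,H_{n}$ of peripheral subgroups, and by hypothesis each $H_{\omega}$ has \fftp{} with respect to some finite generating set $B_{\omega}$. As the statement is announced as a corollary of the more technical Theorem~\ref{thm:fftpgenset}, the substance is to build from these data a single finite generating set $X$ of $G$ for which $\Gamma(G,X)$ has \fftp{}. The plan is to take
$$ X \;=\; A' \,\sqcup\, \bigsqcup_{\omega=1}^{n}\wt B_{\omega}, $$
where the $\wt B_{\omega}\supseteq B_{\omega}$ are pairwise disjoint finite generating sets of the $H_{\omega}$ that still enjoy \fftp{} — these exist because, as shown in Section~\ref{sec:gensetfftp}, each $H_{\omega}$, having \fftp{} over $B_{\omega}$, is \fftp{}-completable — and $A'$ is a finite subset of $G\setminus\bigcup_{\omega}H_{\omega}$ chosen so that $X$ generates $G$; the $\wt B_{\omega}$ are moreover enlarged enough to absorb the finitely many bounded corrections occurring below. (Keeping $A'$ off the peripherals and the $\wt B_{\omega}$ pairwise disjoint uses almost malnormality of the peripheral family.) Then $X\cap H_{\omega}=\wt B_{\omega}$ has \fftp{}, and everything reduces to showing that $\Gamma(G,X)$ has \fftp{}.

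For that I would exploit the relative structure of words over $X$. Each $W\in X^{*}$ factors canonically into \emph{syllables}: maximal subwords over a single $\wt B_{\omega}$ — each a parabolic element traversing one coset $gH_{\omega}$ — separated by letters of $A'$; recording the cosets and $A'$-edges visited produces a path $\wh W$ in the relative Cayley graph $\Gamma(G,A'\sqcup\cH)$, which is $\delta$-hyperbolic and fine and satisfies the Bounded Coset Penetration (BCP) property, by work of Osin and Bowditch. Since $\wt B_{\omega}\subseteq X$, an immediate replacement argument shows that every syllable of an $X$-geodesic is geodesic over its $\wt B_{\omega}$; using Osin's lemmas on (quasi)geodesics in relatively hyperbolic groups I would strengthen this to: there are constants depending only on $G$, $\{H_{\omega}\}$ and $X$ such that the skeleton $\wh W$ of any $X$-geodesic $W$ is a quasigeodesic without backtracking in $\Gamma(G,A'\sqcup\cH)$ with these constants. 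The crux is then a \emph{local correction lemma}: there is $N=N(G,\{H_{\omega}\},X)$ such that every non-geodesic word $W$ over $X$ admits a factorization $W=U\,V\,U'$ with $V$ non-geodesic and ``localized'' — either $V$ lies inside a single syllable, or $V$ spans at most $N$ syllables and its skeleton has relative length at most $N$ — and $V$ can be replaced by a word $V'$ with $\pi(V')=\pi(V)$, $\ell(V')<\ell(V)$, such that $V$ and $V'$ fellow travel in $\Gamma(G,X)$ with a constant $k$ independent of $W$.

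Granting the correction lemma, $\Gamma(G,X)$ has \fftp{}: given a non-geodesic $W=UVU'$ as above, the word $W'=UV'U'$ has the same value, is strictly shorter, and $k$-fellow travels $W$ — they coincide on the $U$- and $U'$-parts and $k$-fellow travel on the $V/V'$-part. The main obstacle is the correction lemma itself, and inside it the production of a \emph{uniform} fellow-traveling constant in the ``localized'' case. When $V$ lies inside a syllable this is painless: $V$ is then a non-geodesic word of $(H_{\omega},\wt B_{\omega})$, so \fftp{} of $(H_{\omega},\wt B_{\omega})$ supplies a shorter fellow-traveling $V'$ over $\wt B_{\omega}$, and since $\wt B_{\omega}\subseteq X$ this fellow-traveling persists in $\Gamma(G,X)$ with constant the \fftp{} constant of $H_{\omega}$. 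The difficulty is the other case: a window of at most $N$ syllables can still have arbitrarily large $X$-diameter (its syllables may be long), and its inefficiency may come from backtracking — the skeleton entering, leaving, and re-entering a coset — so the shortcut $V'$ must still accompany the long excursions while saving length. Controlling how far $V'$ may stray from $V$ therefore requires a genuinely new estimate — essentially a thin-bigon/thin-triangle inequality in $\Gamma(G,X)$ itself — obtained by combining $\delta$-hyperbolicity and fineness of $\Gamma(G,A'\sqcup\cH)$, the BCP property, and \fftp{} of the $H_{\omega}$; and it is exactly to make this estimate hold with $N$ and $k$ independent of the word that the generating set $X$, and in particular the size of the enlargements $\wt B_{\omega}$, must be chosen with care. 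Once that estimate is in hand, assembling the correction lemma and deducing \fftp{} is routine bookkeeping with the syllable decomposition.
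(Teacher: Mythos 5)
Your overall architecture (enlarge the parabolic generating sets keeping \fftp{} via Section~\ref{sec:gensetfftp}, pass to the relative Cayley graph, invoke BCP) matches the paper's, but the proof has a genuine gap at exactly the point you flag as ``the crux.'' Your local correction lemma --- that every non-geodesic $W$ over $X$ factors as $UVU'$ with $V$ non-geodesic and either contained in one syllable or of bounded relative length --- is not proved, and in the stated form it is false. The problematic words are those that are $2$-local geodesic, have no parabolic shortenings, and avoid every word of the finite forbidden list $\Phi$, yet are still non-geodesic: by Theorem~\ref{thm:Phi} and the Generating Set Lemma their skeletons $\wh{W}$ are $(\lambda,c)$-quasi-geodesics without backtracking, but a quasi-geodesic need not be a geodesic, and the length deficit can be spread over the whole word (e.g.\ by penetrating slightly suboptimal cosets throughout), so no subword of bounded relative length is itself non-geodesic. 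The paper does not localize this case at all: it compares $\wh{W}$ \emph{globally} with the skeleton of an actual geodesic representative via BCP (Lemmas~\ref{lem:boundingt1t2}, \ref{lem:mainshortpieces}, \ref{lem:main} and \ref{lem:main2}), matching up long components one by one and using \fftp{} of the parabolics only inside the matched components. Saying the needed estimate is ``obtained by combining hyperbolicity, BCP and \fftp{} of the $H_\omega$'' names the ingredients but is not an argument; this global comparison is the substance of Sections~\ref{sec:const}--\ref{sec:bicom}.

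A secondary issue: you cannot in general take the $\wt B_{\omega}$ pairwise disjoint and $A'$ disjoint from the parabolics, because distinct peripherals may intersect in nontrivial finite subgroups (Lemma~\ref{lem:factsRH}(iii) gives only finiteness, not triviality). The paper is forced to put $\cH_I=\bigcup_{\mu\neq\omega}(H_\omega\cap H_\mu)-\{1\}$ \emph{into} $X$, and then to work with a canonical factorization and the ``special word'' machinery (Lemmas~\ref{lem:nocancellation}, \ref{lem:h1h2}, \ref{lem:specialrep}, \ref{lem:specialchange}) precisely to make the syllable decomposition and the map $W\mapsto\wh{W}$ behave well despite these intersections. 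Your construction would also need to record, as the paper's $\cH'$ does, the parabolic letters appearing in geodesic rewrites of the forbidden words, which is why the enlargement of the parabolic generating sets cannot be chosen before the forbidden list is known.
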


The fundamental group of a finite graph of  groups with finite edge stabilizers is hyperbolic relative to its vertex groups by \cite[Definition 2]{Bowditch} and Bass-Serre theory. In particular, if $G=A*_C B$, where $C$ is finite, then $G$ is hyperbolic relative to
$A$ and $B$. Similarly, if $G$ is an HNN extension of $A$ and the associated
subgroup is finite, then $G$ is hyperbolic relative to $A$. We thus obtain the following, which seems to have been previously unknown.

\begin{cor}
Suppose that $G$ is the fundamental group of a finite graph of groups where the vertex groups have the falsification by fellow traveler property and the edge groups are finite. Then $G$ has the falsification by fellow traveler property.
\end{cor}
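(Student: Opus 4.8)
The plan is to derive the corollary directly from Theorem~\ref{thm:main_intro}, so that the only work is to verify that $G$ satisfies the hypotheses of that theorem: that $G$ is finitely generated, and that it is hyperbolic relative to a family of subgroups each of which has the falsification by fellow traveler property.

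First I would observe that $G$ is finitely generated. A group having \fftp{} is, by the convention adopted in this paper, finitely generated, since the property is asserted relative to a \emph{finite} generating set; hence each vertex group is finitely generated, the edge groups are finite, and the underlying graph is finite, so by Bass--Serre theory the fundamental group $G$ is finitely generated (a finite generating set is obtained by combining finite generating sets of the vertex groups with the finitely many stable letters). Next I would invoke the fact recalled just above the statement: by \cite[Definition~2]{Bowditch} together with Bass--Serre theory, the fundamental group of a finite graph of groups with finite edge stabilizers is hyperbolic relative to its vertex groups. Thus $G$ is hyperbolic relative to the finite collection of its vertex groups, and by hypothesis each of these has \fftp. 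Applying Theorem~\ref{thm:main_intro} then yields that $G$ has the falsification by fellow traveler property.

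Given Theorem~\ref{thm:main_intro}, there is essentially no further obstacle; the only points meriting care are bookkeeping ones. One should confirm that the notion of a group "having \fftp" carries finite generation with it, so that the finite-generation hypothesis of Theorem~\ref{thm:main_intro} is genuinely met. One should also be mildly careful about finite vertex groups: depending on the convention for relative hyperbolicity these may need to be discarded from the peripheral family (in the extreme case that all vertex groups are finite, $G$ is then word-hyperbolic, and hyperbolic groups have \fftp{} with respect to any generating set), but in any event they cause no trouble, since a finite group trivially has \fftp{} --- its Cayley graph has bounded diameter, so any two paths with common endpoints, parametrised at unit speed, automatically fellow travel. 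Finally, the two special cases mentioned before the statement, namely $G = A*_C B$ with $C$ finite (hyperbolic relative to $\{A,B\}$) and $G$ an HNN extension of $A$ along a finite associated subgroup (hyperbolic relative to $A$), are immediate instances of this argument.
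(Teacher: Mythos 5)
Your proposal is correct and follows exactly the route the paper intends: the corollary is presented as an immediate consequence of Theorem~\ref{thm:main_intro} together with the fact, stated in the preceding paragraph, that the fundamental group of a finite graph of groups with finite edge stabilizers is hyperbolic relative to its vertex groups. Your additional bookkeeping remarks (finite generation, finite vertex groups trivially having \fftp) are sound and only make explicit what the paper leaves implicit.
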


\subsection{Geodesically biautomatic groups}
Automatic and biautomatic groups feature prominently in geometric group theory, and \cite{ECHLPT} constitutes an excellent reference for this topic.

We recall here that a geodesic biautomatic structure for $G$ is a regular language $\cL$  of geodesic words such that paths that start or end at distance one apart and are labeled by words in $\cL$ fellow travel. We note that by an example of Neumann and Shapiro \cite{NS3}, admitting a geodesic biautomatic structure is a generating set dependent property.

The techniques we developed in order to prove Theorem \ref{thm:main_intro} lead to:
\begin{thm}
Let $G$ be a finitely generated group, hyperbolic relative to a family of subgroups (admitting a geodesic biautomatic structure)-completable.
Then $G$ is (admitting a geodesic biautomatic structure)-completable.
\end{thm}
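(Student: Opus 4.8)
The plan is to adapt the machinery built for Theorem~\ref{thm:main_intro} (equivalently, the technical Theorem~\ref{thm:fftpgenset}) to carry a \emph{language} of geodesics rather than merely the regularity of all geodesics, and then to upgrade the construction so that it records a fellow-traveler synchronization. First I would fix, for each $\omega\in\Omega$, a geodesic biautomatic structure $\cL_\omega$ over the generating set $X\cap H_\omega$; the completability hypothesis lets us assume these generating sets arise inside a single finite generating set $X$ of $G$ of the special shape produced in Section~\ref{sec:gensetfftp}, where the parabolic cosets are isometrically embedded and relative geodesics lift to actual geodesics. The candidate language $\cL$ for $G$ would be built from \emph{relative geodesic normal forms}: a word in $\cL$ is a concatenation of syllables, where each syllable is either a letter from $X\setminus\bigcup_\omega H_\omega$ or a word from one of the $\cL_\omega$ representing a parabolic component, the sequence of syllables tracing out a geodesic in the relative Cayley graph. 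Regularity of $\cL$ follows because the relative geodesic combinatorics is governed by the finite-state structure coming from relative hyperbolicity (this is exactly what underlies Proposition~\ref{prop:NS} in the relatively hyperbolic setting), and each $\cL_\omega$ is regular by hypothesis; the product/substitution of finitely many regular languages into a regular skeleton is regular.

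Next I would verify that every word in $\cL$ is genuinely geodesic in $(G,X)$ and that $\cL$ surjects onto $G$. Here I would reuse the key geometric estimates already established for \fftp{} in the proof of Theorem~\ref{thm:fftpgenset}: with the carefully chosen $X$, a path that alternates between backtrack-free relative-geodesic letters and \emph{geodesic} parabolic syllables has no shortcut, because any shortcut would either shorten a parabolic syllable (impossible, as $\cL_\omega\subseteq\geol(H_\omega,X\cap H_\omega)$ and the $H_\omega$-cosets are isometrically embedded) or shorten the relative skeleton (contradicting that the skeleton is a relative geodesic, via the bounded-coset-penetration property). Surjectivity is the statement that every element of $G$ has such a normal form, which is the standard ``relative geodesic $+$ fill in parabolic syllables'' decomposition.

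The fellow-traveler property for $\cL$ is where the real work lies, and I expect it to be the main obstacle. One must show that if $U,V\in\cL$ with $\dx(\overline U,\overline V)\le 1$ (including the two-sided versions needed for \emph{bi}automaticity, i.e.\ comparing $U$ with $aVb^{-1}$ for $a,b\in X$), then $U$ and $V$ synchronously $k$-fellow-travel for a uniform $k$. The strategy would be a two-scale argument: first, the relative skeletons of $U$ and $V$ fellow-travel in the relative metric because relatively hyperbolic groups are relatively biautomatic in the appropriate sense (the relative skeleton language is a biautomatic structure over $X\sqcup\cH$), so the two paths stay boundedly close and, by bounded coset penetration, enter and leave the same parabolic cosets within a bounded portion of each other; second, \emph{inside} each shared parabolic coset the two restricted subpaths are words of $\cL_\omega$ with endpoints a bounded $X$-distance apart, hence they $k_\omega$-fellow-travel by the biautomaticity of $\cL_\omega$, and taking $k$ to be the maximum over the finitely many $\omega$ plus the relative-scale constant finishes the synchronization. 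The delicate points will be: controlling the \emph{reparametrization} so that the local fellow-traveling in the parabolic pieces glues into a global \emph{synchronous} fellow-traveling (the relative and absolute length parametrizations differ, so one needs the ``linear divergence of parametrizations'' type lemma that should already be available from the \fftp{} proof), and handling the boundary letters $a,b$ needed for the biautomatic (as opposed to merely automatic) conclusion, which at worst shift endpoints by a bounded amount and are absorbed into $k$. Once synchronous $k$-fellow-traveling of $\cL$ is in hand, $\cL$ is a geodesic biautomatic structure for $(G,X)$, proving that $G$ is (admitting a geodesic biautomatic structure)-completable.
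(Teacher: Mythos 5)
Your overall architecture (special generating set, parabolic syllables drawn from the $\cL_\omega$, BCP to match components, two-scale fellow-traveling) is the right one and matches the paper's, but there are two genuine gaps at the point you yourself flag as "verify that every word in $\cL$ is genuinely geodesic," and they are not minor.

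First, the claim that a word whose derived skeleton is a relative geodesic and whose parabolic syllables are geodesic in $(H_\omega, X\cap H_\omega)$ must be an $X$-geodesic is false. Your dichotomy ("any shortcut would either shorten a parabolic syllable or shorten the relative skeleton") omits the main case: a competing representative can have the same relative length and geodesic syllables but a strictly smaller total $X$-length, because BCP only controls components of large $X$-length up to an additive constant $\e$ per component, and says nothing about which of two relative geodesics between the same endpoints penetrates its cosets more cheaply. The paper does not attempt to prove such a geodesicity statement; instead it defines the candidate structure as $\geol(G,X)\cap\rell(X,\{\cL_\omega^{\rm pc}\}_{\omega})$ and the entire technical burden is to show that this \emph{intersection} is regular. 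That is done by Proposition \ref{prop:restrictedfftp}, a Neumann--Shapiro cone-type automaton restricted to a prefix-closed regular language, which requires first proving (via Lemma \ref{lem:main}) that every non-geodesic word in the ambient regular language asynchronously fellow travels with a geodesic one. Your proposal has no substitute for this step, and without it you have neither geodesicity nor regularity of the geodesic sublanguage.

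Second, your regularity argument for the skeleton is unjustified: the relative Cayley graph $\ga(G,X\cup\cH)$ has an infinite alphabet, and the set of words over $X$ whose derived word is a relative \emph{geodesic} is not known to be regular (Proposition \ref{prop:NS} does not apply in the relative setting). The paper sidesteps this by never requiring relative geodesics: Theorem \ref{thm:Phi} and the Generating Set Lemma produce a \emph{finite} set $\Phi$ of forbidden subwords over $X$ such that avoiding $\Phi$ (a regular condition) and having no parabolic shortenings already forces $\wh{W}$ to be a $(\lambda,c)$-quasi-geodesic without vertex backtracking; quasi-geodesity suffices for all the BCP arguments, and actual $X$-geodesity is then imposed by the automaton of Proposition \ref{prop:restrictedfftp}. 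You also need to pass to prefix closures $\cL_\omega^{\rm pc}$ (Lemma \ref{lem:prefixclosed}) for that automaton argument to run. Your treatment of the fellow-traveler property itself is essentially the paper's Lemma \ref{lem:main}(a) plus Lemma \ref{lem:geosync} and is fine in outline.
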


Rebbechi established in his thesis  \cite{Rebbechi} that
groups hyperbolic relative to biautomatic groups are biautomatic. However, the geometric properties of  his biautomatic structure are not clear.
%complicated and not suitable for algorithmic computations. 
The fact that the biautomatic structure we produce above is geodesic was used in \cite[Section 6]{KMW} to produce Stallings-like graphs for quasi-convex subgroups in toral relatively hyperbolic groups and compute the intersection of such subgroups.

In terms of proofs, both Rebbechi's and our approach consider what is probably the most natural biautomatic structure for $G$, namely the set of paths of minimal length which, when travelling inside the parabolic subgroups, follow fixed normal forms.
We diverge on how we prove that such a language is regular.
One of the technical parts in Rebbechi's thesis is dealing with a variation 
of the falsification by fellow traveler property in order to obtain that this 
language is regular.  Since we are restricting ourselves to geodesic biautomatic structures, we can use the standard falsification by fellow traveler property to prove that the language is regular. In our case, the difficulties arise when finding the appropriate generating sets.

The extra assumption we have imposed, when compared with Rebbechi's work, gives also a stronger conclusion, namely that the biautomatic structure is geodesic. 
\subsection{Conjugacy diagram properties}
In this paper we also investigate conjugacy diagrams in Cayley graphs of 
relatively hyperbolic groups. We show that  in groups hyperbolic relative to
abelian subgroups conjugacy behaves as in free groups, as explained below.

A {\it cyclic geodesic} word is a word for which 
all its cyclic shifts are geodesic.  A conjugacy diagram, as defined in Section 
\ref{sec:conjhyp}, is a $4$-gon in which two opposite sides correspond to 
conjugate words, and the other two sides to the conjugating element. A 
conjugacy diagram is {\it minimal} if the conjugate words $U$ and $V$ are 
cyclic geodesics, and the conjugator is the shortest possible after cyclically 
shifting $U$ and $V$. Minimal conjugacy diagrams have a particularly nice 
behaviour in hyperbolic groups, as shown in \cite[III.$\Gamma$.Lemma 2.9]{BH}: 
the length of two of the opposite sides must be bounded by a universal 
constant. 

More precisely, if $\ga(G,X)$ is hyperbolic, then the following holds (see also Lemma \ref{lem:conjhyp}):  

($\ast$) there is a constant $K>0$ such that for any pair of cyclic geodesic words $U$ and $V$ over $X$ representing conjugate elements either $\max\{\ell(U),\ell(V)\}\leq K$, or there is a word  $C$ over $X$ such that $\ell(C)\leq K$ and $\pi(CU')=\pi(V'C)$, where $U'$ and $V'$ are cyclic shifts of $U$ and $V$.

We will say that pairs $(G,X)$ with the property ($\ast$) have {\it bounded conjugacy diagrams} (\bcd). 

The bounded conjugacy diagrams property is  useful for proving that the language of  conjugacy  geodesics, i.e. the set of minimal length words in a conjugacy class, is regular. 
Let $\sim_c$ be the equivalence relation defined by conjugation in $G$. For $g\in G$, we denote by $[g]_c$ the equivalence class of $G$. A word $W$ is a {\it  conjugacy geodesic} if $\ell(W) = \min_{g \in [\pi(W)]_c} |g|_X$. The language of conjugacy geodesics, denoted by $\CG (G,X)$,
is the set
$$\CG (G,X) \coloneq  \{W \in X^* \mid \ell(W) = \min_{g \in [\pi(W)]_c} |g|_X\}.$$

The language of conjugacy geodesics was introduced by the second author and Hermiller in \cite{CH}, 
where they proved that its regularity is preserved by graph products and free 
products with finite amalgamation. In \cite{CHHR}, the second author, 
Hermiller, Holt and Rees showed that hyperbolic groups, virtually abelian 
groups, extra-large type Artin groups and homogeneous Garside groups have 
generating sets for which the language of conjugacy geodesics is regular.

In fact, in order to prove the regularity of the language of conjugacy 
geodesics, one can use a weaker property than \bcd{}, which we call in this 
paper the {\it neighboring shorter conjugate} property (\nsc). This property 
appears implicitly in \cite[Cor. 3.8]{CHHR}, and it says that any cyclic 
geodesic word that is not a conjugacy geodesic is conjugate (up to cyclic 
shifts) to a shorter word by a conjugator of a uniformly bounded length (see 
Definition \ref{def:nsc}).

The following proposition is one of  our motivations for studying \bcd{} and \nsc{}.
\begin{prop}\cite[Cor. 3.8]{CHHR}\label{prop:CHHR38}
If $(G,X)$ has \fftp{} and \nsc{}, then $\CG (G,X)$ is regular.
\end{prop}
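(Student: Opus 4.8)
The plan is to obtain $\CG(G,X)$ as a Boolean combination of regular languages built out of $\geol(G,X)$, with \fftp{} supplying the automata and \nsc{} making the conjugacy condition finitary. By Proposition~\ref{prop:NS}, \fftp{} already gives that $\geol(G,X)$ is regular. The set of all cyclic shifts of words in a language $\cL$ is $\{vu\mid uv\in\cL\}$, and this operation preserves regularity by a standard construction (from a DFA for $\cL$, nondeterministically guess the state occupied at the cut point); since the language of cyclic geodesic words satisfies $\cycgeo(G,X)=X^*\setminus\{vu\mid uv\in X^*\setminus\geol(G,X)\}$, it too is regular. Note $\CG(G,X)\subseteq\cycgeo(G,X)$: a cyclic shift of a conjugacy geodesic is conjugate to it and of the same length, hence again geodesic.

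Next I would use \nsc{} to describe $\cycgeo(G,X)\setminus\CG(G,X)$. Let $k$ be the constant of Definition~\ref{def:nsc} and let $F\subseteq G$ be the finite set of elements represented by a word over $X$ of length $\le k$, together with all their inverses. For $g\in F$ put
$$U_g\coloneq\{\,W\in\geol(G,X)\ \mid\ |g^{-1}\pi(W)g|_X<\ell(W)\,\},\qquad S\coloneq\cycgeo(G,X)\cap\bigcup_{g\in F}\{\,vu\ \mid\ uv\in U_g\,\}.$$
If $W\in S$, then some cyclic shift $W'$ of $W$ is conjugate to the strictly shorter element $g^{-1}\pi(W')g\in[\pi(W)]_c$, so $W\notin\CG(G,X)$; conversely, if a cyclic geodesic $W$ fails to be a conjugacy geodesic, \nsc{} produces a cyclic shift $W'$ and some $g\in F$ with $|g^{-1}\pi(W')g|_X<\ell(W')=\ell(W)$, so $W\in S$. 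Hence $\CG(G,X)=\cycgeo(G,X)\setminus S$, and since cyclic closure, finite unions, intersections and complements preserve regularity, it suffices to show that each $U_g$ is regular.

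This last point is the heart of the argument, and is where \fftp{} is genuinely used a second time. Fix $g\in F$ and geodesic words $P,Q$ over $X$ with $\pi(P)=g^{-1}$, $\pi(Q)=g$. For $W\in\geol(G,X)$ the word $PWQ$ represents $g^{-1}\pi(W)g$, and the elementary bounds $\ell(W)-\ell(P)-\ell(Q)\le|g^{-1}\pi(W)g|_X\le\ell(PWQ)$ show that the excess $\ell(PWQ)-|\pi(PWQ)|_X$ is at most $2(\ell(P)+\ell(Q))$, independently of $W$. Iterating \fftp{} at most that many times (each step decreases the length and moves within the fellow traveller constant, and the excess drops by at least one), $PWQ$ stays within a uniformly bounded distance $K=K(g)$ of a geodesic word $V$ for $g^{-1}\pi(W)g$. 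I would then recognise $U_g$ by a nondeterministic automaton that reads $W$ --- behaving as though $P$ were read before $W$ and $Q$ after it --- while guessing $V$ one letter at a time, advancing the guess so as to keep it abreast of the current position in $PWQ$. Its finite state records the discrepancy $\pi(\text{current prefix of }PWQ)^{-1}\pi(\text{current prefix of }V)$, which by the fellow traveller bound ranges over the ball of radius $K$, together with the bounded quantity $\ell(V)-\ell(W)$. The automaton accepts when, after the $Q$-tail, the discrepancy is trivial (certifying $\pi(V)=g^{-1}\pi(W)g$) and $\ell(V)\le\ell(W)-1$. On the distinguished guess furnished by \fftp{} the discrepancy stays bounded, so every $W\in U_g$ is accepted; an accepting run exhibits a word shorter than $W$ representing $g^{-1}\pi(W)g$, so $|g^{-1}\pi(W)g|_X<\ell(W)$. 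Intersecting the accepted language with the regular language $\geol(G,X)$ gives that $U_g$ --- and hence $S$ and $\CG(G,X)=\cycgeo(G,X)\setminus S$ --- is regular.

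The main obstacle is this automaton: turning the coarse fellow traveller relation between $PWQ$ and a geodesic $V$ of slightly different length into a genuine finite automaton that reads $W$ alone, and verifying that the tracked discrepancy really does stay inside a fixed ball along the run witnessing membership in $U_g$. The reductions in the first two paragraphs are formal once Proposition~\ref{prop:NS} and the cyclic-closure construction are available, and they use \nsc{} only to pin down the finite set $F$; one should also check that the precise wording of \nsc{} in Definition~\ref{def:nsc} matches the language $S$ above, which only affects how large $F$ must be.
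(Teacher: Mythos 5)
The paper does not prove this proposition; it is quoted directly from \cite[Cor.~3.8]{CHHR}, so there is no internal proof to compare against. Your reconstruction is essentially the standard argument (the one used in \cite{CHHR}, going back to Neumann--Shapiro): express $\CG(G,X)$ as a Boolean/cyclic-closure combination of $\geol(G,X)$ and the sets $U_g$, and recognise each $U_g$ with a word-difference automaton whose states stay in a bounded ball because, by Lemma~\ref{lem:ftqg}, the $(1,2(\ell(P)+\ell(Q)))$-quasi-geodesic $PWQ$ synchronously fellow travels a geodesic representative of $g^{-1}\pi(W)g$. That part is sound. The one point you should fix is not, as you suggest, the size of $F$: Definition~\ref{def:nsc} only applies to cyclic geodesics of length at least $B$, so the equality $\CG(G,X)=\cycgeo(G,X)\setminus S$ can fail on words shorter than $B$, and no enlargement of $F$ repairs this. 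Since there are only finitely many such words, you simply adjust by a finite (hence regular) exceptional set; with that one-line patch the proof is complete.
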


The paper's second main result, which is an immediate consequence of the more technical Theorem \ref{thm:rh_conjhyp_ext}, is the following.

\begin{thm}\label{thm:rh_conjhyp}
Let $G$ be a finitely generated group, hyperbolic relative to a family of \bcd{}-completable  (resp. \nsc{}-completable)  subgroups. Then $G$ is \bcd{}-completable (resp. \nsc{}-completable). % has  the bounded conjugacy diagrams property (resp. neighboring shorter conjugate property).
\end{thm}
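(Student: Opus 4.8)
The plan is to deduce Theorem \ref{thm:rh_conjhyp} from the more technical Theorem \ref{thm:rh_conjhyp_ext} together with a construction of a finite generating set of $G$ that is compatible with the peripheral structure; I will describe the \bcd{} case, the \nsc{} case being entirely parallel (and, step for step, a weakening of it).

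Recall the standing facts about a finitely generated group $G$ that is hyperbolic relative to $\{H_\omega\}_{\omega\in\Omega}$: one may take $\Omega$ finite, each $H_\omega$ is finitely generated, distinct peripheral subgroups intersect in finite subgroups, and for every finite generating set $A$ of $G$ the relative Cayley graph $\Gamma(G,A\sqcup\mathcal H)$ is hyperbolic and enjoys bounded coset penetration. Given an arbitrary finite generating set $A_0$ of $G$, I would first enlarge it to a finite generating set $A_1$ of $G$ such that $A_1\cap H_\omega$ generates $H_\omega$ for each $\omega$ (adjoin a finite generating set of each $H_\omega$). Then, using that each $H_\omega$ is \bcd{}-completable, enlarge $A_1\cap H_\omega$ to a finite generating set $Y_\omega$ of $H_\omega$ with $(H_\omega,Y_\omega)$ having \bcd{}, and set $X\coloneq A_1\cup\bigcup_{\omega\in\Omega}Y_\omega$. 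Then $X\supseteq A_0$ is a finite generating set of $G$. The only subtlety is that $X\cap H_\omega$ may be strictly larger than $Y_\omega$, but it can only acquire finitely many further elements, all lying in subgroups of the form $H_{\omega'}\cap H_\omega$ with $\omega'\neq\omega$, hence in finite subgroups; absorbing these (invoking \bcd{}-completability of $H_\omega$ once more) one may assume $X\cap H_\omega=Y_\omega$ and that $(H_\omega,X\cap H_\omega)$ has \bcd{}. (The generating set produced for Theorem \ref{thm:main_intro} already satisfies all of this.)

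With such an $X$ fixed, the substance of the theorem is Theorem \ref{thm:rh_conjhyp_ext}: if $(H_\omega,X\cap H_\omega)$ has \bcd{} for all $\omega$, then $(G,X)$ has \bcd{}. I would prove this by the usual ``relative part versus peripheral part'' dichotomy for conjugacy in relatively hyperbolic groups. Let $U$ and $V$ be cyclic geodesic words over $X$ with $\pi(U)$ conjugate to $\pi(V)$ in $G$, and pass to their images in $\Gamma(G,X\sqcup\mathcal H)$. Since this graph is hyperbolic, the analogue of property $(\ast)$ holds there (Lemma \ref{lem:conjhyp}) and, combined with bounded coset penetration, forces one of the following. \emph{Either} $\pi(U)$ and $\pi(V)$ are parabolic, i.e.\ conjugate into a common $H_\omega$; then conjugacy in $G$ of such elements reduces to conjugacy inside $H_\omega$, and \bcd{} of $(H_\omega,X\cap H_\omega)$ yields property $(\ast)$ for $U$ and $V$ directly (the remaining subcase, in which the relative lengths of $U$ and $V$ are already bounded, only requires taking the constant $K$ large). \emph{Or} there is a conjugator that is short in $\Gamma(G,X\sqcup\mathcal H)$, whose long peripheral syllables --- like the long peripheral syllables of $U$ and of $V$ --- lie in boundedly many cosets of the $H_\omega$; bounded coset penetration forces each such long syllable to be a geodesic word over $X\cap H_\omega$ in its coset, and the conjugacy quadrilateral in $\Gamma(G,X\sqcup\mathcal H)$ decomposes into a bounded number of ``peripheral rectangles'', one per relevant coset, to which \bcd{} of the corresponding $(H_\omega,X\cap H_\omega)$ applies. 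Reassembling the short pieces from the relative picture with the bounded conjugators produced inside the peripheral cosets, one obtains, after cyclic shifts of $U$ and $V$, a conjugator of bounded $X$-length, which is property $(\ast)$. The \nsc{} version runs identically, replacing conjugacy quadrilaterals by the weaker ``shorter conjugate via a short conjugator'' statement and using \nsc{} of each $(H_\omega,X\cap H_\omega)$ in place of \bcd{}.

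The main obstacle, exactly as for the \fftp{} statement of Theorem \ref{thm:main_intro}, is the passage from the relative metric on $\Gamma(G,X\sqcup\mathcal H)$ to the word metric over $X$: a conjugator that is short in the relative graph need not be short over $X$, since one relative edge can stand for an arbitrarily long element of a peripheral subgroup. Taming this requires both the careful choice of $X$ (so that $X\cap H_\omega$ generates $H_\omega$ with controlled geometry, as in the construction behind Theorem \ref{thm:fftpgenset}) and quantitative bounded coset penetration estimates guaranteeing that the long syllables of a cyclic $X$-geodesic, and of the relative conjugator, are genuine $(X\cap H_\omega)$-geodesics matched up correctly by the conjugacy diagram, so that \bcd{} (resp.\ \nsc{}) of the peripheral subgroup can be fed in syllable by syllable. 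Verifying that the requisite truncation and replacement operations on $X$-geodesics behave well with respect to $X$ is the technical core carried out in the proof of Theorem \ref{thm:rh_conjhyp_ext}.
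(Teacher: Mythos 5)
Your high-level architecture matches the paper's: fix a generating set $X$ containing the finite set $\cH'$ from the Generating Set Lemma together with \bcd{}-generating sets of the peripherals, reduce to the technical statement (Theorem \ref{thm:rh_conjhyp_ext}), and analyse minimal conjugacy $(\lambda,c)$-diagrams $(p,q,r,s)$ in $\ga(G,X\cup\cH)$ using Lemma \ref{lem:conjhyp} and bounded coset penetration. However, the mechanism you propose for the non-parabolic case does not work as stated and misses the actual content of Theorem \ref{thm:bmgcq}. You want to decompose the conjugacy quadrilateral into ``peripheral rectangles, one per relevant coset, to which \bcd{} of $(H_\omega,X\cap H_\omega)$ applies.'' But \bcd{} is a statement about conjugacy diagrams, i.e.\ quadrilaterals whose two opposite sides carry \emph{equal} labels; a BCP-matched pair of peripheral syllables of $U$ and $V$ together with the two short connecting elements is just an arbitrary quadrilateral in a coset of $H_\omega$, to which \bcd{} says nothing (and nothing is needed there beyond the triangle inequality). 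Worse, in a minimal diagram with $\ell(r)>1$ no component of $p$ is connected to a component of $q$ at all (Lemma \ref{lem:longr}), so these rectangles do not occur. The genuine difficulty in the non-parabolic case is elsewhere: a relative conjugator $r$ of length $\le K$ can still have components of enormous $X$-length, and one must bound these. The paper does this with no hypothesis whatsoever on the peripherals, via the polygon estimate for isolated components (Lemma \ref{lem:gon}) combined with the corner lemmas and the ``reducing conjugators'' lemmas (\ref{lem:corners}, \ref{lem:reducing-conjugators}, \ref{lem:reducing-conjugators2}), which cyclically shift $U$ and $V$ so that every non-isolated component of the conjugator has $X$-length at most $m$. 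None of these ideas appears in your proposal; they are deferred to ``quantitative bounded coset penetration estimates,'' which is precisely the part that needs proving.

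There is a second gap in your treatment of the other branch of the dichotomy. Lemma \ref{lem:conjhyp} gives either $\ell(r)\le K$ or $\max\{\ell(p),\ell(q)\}\le K$ in the \emph{relative} metric; the latter is not the same as ``$U$ and $V$ are parabolic,'' and bounding the relative length of $U$ says nothing about $|U|_X$, so ``taking the constant $K$ large'' does not close this case. When $p$ is genuinely parabolic (a single $\cH$-letter), the reduction to conjugacy inside $H_\omega$ is only valid when $p$ is connected to a component of $q$ (Lemma \ref{lem:conjPara}); when it is not, one must still bound $\d_X(p_-,p_+)$, and this is where peripheral \bcd{} (resp.\ \nsc{}) enters for a second time through a comparison with a nearby translate (Lemmas \ref{lem:Apara} and \ref{lem:pnonconnected}). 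When $p$ has bounded relative length but is not parabolic and $\ell(r)>1$, one needs Lemma \ref{lem:plessthanK}, which again rests on the corner and reducing-conjugator machinery. So while your outline correctly locates the parabolic/non-parabolic split and the role of BCP, the two places where the argument actually has to be carried out --- taming the components of the conjugator, and the isolated-parabolic subcase --- are either absent or replaced by a step that fails.
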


In contrast with \fftp{}, we do not know if having $\bcd{}$ (resp. $\nsc{}$) with respect to some generating set implies $\bcd{}$-completable (resp. $\nsc{}$-completable).

A further reason to study the \bcd{} property is the fact that it leads to an efficient solution of the conjugacy problem.
The bounded conjugacy diagrams property was used by Bridson and Howie in 
\cite{BridsonHowie} as part of an algorithm to solve the conjugacy problem 
for lists in hyperbolic groups. In particular, they use bounded conjugacy 
diagrams to show that the conjugacy problem in hyperbolic groups can 
be solved by an algorithm whose time complexity is quadratic in the lengths of 
the input words.
We discuss in Remark \ref{rem:complexity} how to adapt Bridson 
and Howie's algorithm (\cite{BridsonHowie}) to get a cubic time solution of the conjugacy 
problem for groups hyperbolic relative to abelian subgroups. As far as we know, this is the first result that bounds the complexity of the conjugacy problem for this family of groups.

After the completion of this paper, the preprint \cite{Bumagin14} became available. In \cite{Bumagin14} Inna Bumagina determines the complexity of the conjugacy problem in relatively hyperbolic groups as a function of the complexity of the conjugacy problem in the parabolic subgroups.
\subsection{Application to languages}
We now introduce two more languages.

Given a total order order $\prec$ on the set $X$, we can extend it to a 
total order on the monoid  $X^*$ by setting for $U,V\in W^*$, 
$U\prec_{\mathsf{SL}} V$ if either $\ell(U)<\ell(V)$, or $\ell(U)=\ell(V)$ and 
$U$ comes before $V$ in the lexicographic order generated by $\prec$.
We define the set $$\mathsf{ShortLex}(G,X)\coloneq \{ W\in X^* \mid W\prec_{\mathsf{SL}}V \text{ for all }V\in X^* \text{ such that } \pi(V)=\pi(W)\}$$
to be the set of shortlex representatives of elements of $G$, that is, words that are minimal with respect to the length-lexicographic order.

In \cite{CHHR} the following language is also considered
$$\mathsf{ConjGeoMinSL}(G,X)\coloneq \CG(G,X)\cap \mathsf{ShortLex}(G,X).$$ 

Theorem \ref{thm:summary} leads then to a number of results about languages in groups, results which we collect in the following two corollaries.
\begin{cor}\label{cor:toral_languages}
Let $G$ be hyperbolic relative to a family of abelian subgroups.
Then any finite generating set $Y$ can be completed to a finite generating set $X$ satisfying 
\begin{enumerate}
\item[{\rm (1)}] $(G,X)$ has the falsification by fellow traveler property, 
\item[{\rm (2)}] $(G,X)$ has the bounded conjugacy diagram property,
\item[{\rm (3)}] $\mathsf{ShortLex}(G,X)$ is a biautomatic structure for $G$.
\end{enumerate}

In particular, $\mathsf{ShortLex}(G,X)$, $\geol(G,X)$, $\mathsf{ConjGeo}(G,X)$ and $\mathsf{ConjGeoMinSL}(G,X)$ are regular languages and their associated complete growth series, as well as $\wh{\cG}(G,X)(z)$,
are rational.
\end{cor}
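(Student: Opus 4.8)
The plan is to deduce Corollary~\ref{cor:toral_languages} from Theorem~\ref{thm:summary} together with the propositions quoted in the introduction. Since finitely generated abelian groups satisfy (P1)--(P5) with respect to \emph{every} finite generating set, a family of abelian parabolics is in particular (P$_I$)-completable for $I=\{1,2,3,4,5\}$; hence Theorem~\ref{thm:summary} applies and $G$ is (P$_{\{1,2,3,4,5\}}$)-completable. Concretely, starting from an arbitrary finite generating set $Y$, this yields a finite generating set $X\supseteq Y$ such that $(G,X)$ simultaneously has (P1)~\fftp, (P4)~\bcd{}, (P2)~$\mathsf{ShortLex}(G,X)$ is a biautomatic structure, and also (P3) and (P5). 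Items (1), (2), (3) of the corollary are then literally the assertions (P1), (P4), (P2) for this $X$, so the first half is immediate.

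For the ``in particular'' clause I would treat the four languages one at a time. First, $\geol(G,X)$ is regular by Proposition~\ref{prop:NS}, using (P1). Next, $\mathsf{ShortLex}(G,X)$ is regular because it is a biautomatic structure by (P2) --- being a (geodesic) automatic structure, the underlying language is regular by definition; alternatively, since $(G,X)$ has \fftp{} one may intersect $\geol(G,X)$ with the standard regular ``lex-least'' constraint, but invoking (P2) directly is cleanest. For $\mathsf{ConjGeo}(G,X)=\CG(G,X)$: here I note that (P5)~\nsc{} holds together with (P1)~\fftp, so Proposition~\ref{prop:CHHR38} gives that $\CG(G,X)$ is regular. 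Finally $\mathsf{ConjGeoMinSL}(G,X)=\CG(G,X)\cap\mathsf{ShortLex}(G,X)$ is regular as the intersection of two regular languages, since regular languages are closed under intersection.

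The rationality statements then follow formally. For a regular language $\cL$ over $X$, the complete growth series $\wh{\cG}(\cL)(z)$ is rational by \cite{SS}; applying this to $\cL=\mathsf{ShortLex}(G,X)$, which maps bijectively onto $G$ under $\pi$, shows $\wh{\cG}(G,X)(z)=\wh{\cG}(\mathsf{ShortLex}(G,X))(z)$ is rational. The rationality of the complete (and ordinary) geodesic growth series likewise follows from regularity of $\geol(G,X)$; one can also cite the ``finitely many cone types'' route via \cite{NS2}, \cite{GN} as the introduction indicates, but the direct argument through \cite{SS} is shorter. No serious obstacle is expected: the entire corollary is an assembly of already-quoted results. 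The only point requiring a line of care is making sure one applies Theorem~\ref{thm:summary} with a single generating set $X$ witnessing \emph{all} of (P1)--(P5) at once --- which is exactly what (P$_I$)-completability for $I=\{1,\dots,5\}$ provides --- rather than different generating sets for different properties; and observing that $Y\subseteq X$ is automatic from the definition of (P)-completable.
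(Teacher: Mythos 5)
Your proposal is correct and follows essentially the same route as the paper: apply Theorem~\ref{thm:summary} using the fact that finitely generated abelian groups satisfy (P1)--(P5) for every finite generating set, then obtain regularity of the languages from Proposition~\ref{prop:NS}, Proposition~\ref{prop:CHHR38} and closure of regular languages under intersection, with rationality via \cite{SS}. Nothing further is needed.
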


In Section \ref{sec:lang} we prove 
that  any finite generating set of a virtually abelian group can be enlarged to a finite  generating set having the falsification by fellow traveler property, the neighboring shorter conjugate property
and a geodesic biautomatic structure simultaneously. 
Thus by combining these facts with Theorem \ref{thm:summary} we obtain:

\begin{cor}\label{cor:vapar_languages}
Let $G$ be hyperbolic relative to a family of virtually abelian subgroups.
Then any finite generating set $Y$ can be completed to a finite  generating set $X$ satisfying 
\begin{enumerate}
\item[{\rm (1)}] $(G,X)$ has the falsification by fellow traveler property, 
\item[{\rm (2)}] $(G,X)$ has the neighboring shorter conjugate property,
\item[{\rm (3)}] $(G,X)$ admits a geodesic  biautomatic structure.
\end{enumerate}

In particular,  $\geol(G,X)$ and $\mathsf{ConjGeo}(G,X)$  are regular languages  and their associated complete growth series, as well as 
$\wh{\cG}(G,X)(z)$,
are rational.
\end{cor}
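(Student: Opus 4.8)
The plan is to deduce this corollary directly from Theorem~\ref{thm:summary} together with the virtually abelian completion statement promised for Section~\ref{sec:lang}, and then to read off the language and series consequences from Propositions~\ref{prop:NS} and~\ref{prop:CHHR38}.

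First I would record that finitely generated virtually abelian groups are $(\mathrm{P}_{\{1,3,5\}})$-completable. This is exactly the content of the result announced for Section~\ref{sec:lang}: every finite generating set of such a group can be enlarged to a \emph{single} finite generating set with respect to which the group simultaneously enjoys the falsification by fellow traveler property, the neighboring shorter conjugate property, and a geodesic biautomatic structure, i.e.\ simultaneously satisfies (P1), (P3) and (P5). Next, since $G$ is hyperbolic relative to a family of virtually abelian subgroups, it is hyperbolic relative to a family of $(\mathrm{P}_{\{1,3,5\}})$-completable groups, and Theorem~\ref{thm:summary} (applied with $I=\{1,3,5\}$) yields that $G$ itself is $(\mathrm{P}_{\{1,3,5\}})$-completable. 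Thus the given finite generating set $Y$ can be enlarged to a finite generating set $X$ with respect to which $G$ satisfies (P1), (P3) and (P5) \emph{at the same time}, which is precisely assertions~(1), (2) and~(3). The only point requiring attention is that $(\mathrm{P}_I)$-completability is genuinely a statement about one generating set witnessing all of (P$i$), $i\in I$, simultaneously --- but this is built into the definition of $(\mathrm{P}_I)$ in Theorem~\ref{thm:summary}, so no separate argument is needed.

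For the ``in particular'' clause I would argue as follows. By~(1), $(G,X)$ has the falsification by fellow traveler property, so $\geol(G,X)$ is regular by Proposition~\ref{prop:NS}; by~(1) and~(2), $(G,X)$ has both the falsification by fellow traveler property and the neighboring shorter conjugate property, so $\CG(G,X)=\mathsf{ConjGeo}(G,X)$ is regular by Proposition~\ref{prop:CHHR38}. Regularity of a language over $X$ forces its associated complete growth series to be rational by \cite{SS}, giving the claims for $\geol(G,X)$ and $\mathsf{ConjGeo}(G,X)$. Finally, the falsification by fellow traveler property implies that $\Gamma(G,X)$ has finitely many cone types (\cite{NS2}), whence $\wh{\cG}(G,X)(z)$ (and likewise the complete geodesic growth series) is rational by \cite{GN}; alternatively one notes that $\mathsf{ShortLex}(G,X)\subseteq\geol(G,X)$ is then regular and that $\wh{\cG}(G,X)(z)=\wh{\cG}(\mathsf{ShortLex}(G,X))(z)$, so rationality again follows from \cite{SS}.

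I expect the corollary itself to be essentially a bookkeeping exercise; the real work lives in the two inputs it invokes --- the construction, inside the proof of Theorem~\ref{thm:summary}, of a suitable finite generating set of $G$ from generating sets of the parabolics (and the verification that (P1), (P3), (P5) transfer), and the Section~\ref{sec:lang} verification that virtually abelian groups are $(\mathrm{P}_{\{1,3,5\}})$-completable. Granting those, the main obstacle is simply the discipline of tracking which of (P1)--(P5) each cited language or series statement consumes, and of using the one completed generating set $X$ uniformly throughout.
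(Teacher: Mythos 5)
Your proposal is correct and follows essentially the same route as the paper: the paper's proof simply cites Theorem \ref{thm:summary} together with Proposition \ref{prop:VA} (the statement that virtually abelian groups are $(\mathrm{P}_{\{1,3,5\}})$-completable) for (1)--(3), and then Propositions \ref{prop:NS} and \ref{prop:CHHR38} for the regularity and rationality claims. Your extra care about the single generating set witnessing all properties simultaneously, and about which proposition consumes which hypothesis, matches exactly what the paper's (terser) proof implicitly relies on.
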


\subsection{Construction of generating sets}

In Sections  \ref{sec:const} and \ref{sec:goodgenset} we deal with 
one of the main difficulties of the paper, which is finding the appropriate generating set, i.e. one that possesses the desired metric properties. Given a group $G$, hyperbolic relative to
a collection of subgroups $\{H_\omega\}_{\omega\in \Omega}$, the Generating Set Lemma (Lemma \ref{lem:goodgenset}) provides a generating set
$X$ that relates geodesics in the Cayley graph $\ga(G,X)$ to  quasi-geodesics in the
Cayley graph $\ga(G,X\cup \cH)$, where $\cH=\cup_{\omega\in \Omega} (H_\omega-\{1\}).$

We remark that although our main theorems prove the existence of finite generating sets with various properties and no algorithm has been provided to find these sets, they are in fact computable. By starting with a finite group presentation $\prs{X}{R}$ of a  group $G$ and sets of generators, in terms of $X$, for the finitely many subgroups  $\{H_\omega\}_{\omega\in \Omega}$ of $G$, together with a solution of the word problem for those subgroups on these generators, Dahmani showed in \cite{Dahmani2008} that one can compute an explicit relative presentation for $G$ with an explicit linear relative isoperimetric function. This is enough to produce the many other constants used in our paper (hyperbolicity constant, the constants of the bounded coset penetration property, the constant of Lemma \ref{lem:Xi}, etc). With these data, 
the sets $\widehat{\Phi}$ and $\Phi$ in Theorem \ref{thm:Phi} and the Generating Set  Lemma (Lemma \ref{lem:goodgenset}) can be computed following the steps in their corresponding proofs, and with these sets, one can produce the desired generating sets.

%%%%%%%%%%%%%%%%%
%Preliminaries
%%%%%%%%%%%%%%%%%
\section{Preliminaries}\label{sec:prelim}
We collect notation, definitions and basic results that will be used in the rest of paper.
\subsection{Graphs, words and fellow traveling}

Throughout this section $\ga$ will denote a graph, labelled and directed, where loops and multiple edges are allowed. We will use `$\d$' to denote the combinatorial graph distance between vertices. 

By $L[0,n]$ we denote the unlabelled graph with vertex set $\{0,1,2,\dots, n\}$ and edges joining $i$ 
to $i+1$ for $i=0,\dots, n-1$.  A {\it path $p$ of length $n$} in $\ga$ is a
combinatorial graph map $p\colon L[0,n]\to \ga$.
 In particular, $p(i)$ is a vertex of $\ga$, 
$p(0)$ will be denoted by $p_-$ and $p(n)$ by  $p_+$. For $i,j\in \N$, $i<j$, we use $[i,j]$ to denote the set $\{i,\dots, j\}$; 
then $L[i,j]$ represents the subgraph of $L[0,n]$ spanned by the vertices $[i,j]$. 
A subpath of $p$ is the composition of the graph inclusion $L[i,j]$ into $L[0,n]$ with
the map $p$. We use the notation $p|_{[i,j]}$ for such a subpath. 
We also adopt the convention that 
$p(m)=p(n)$ for all $m\geq n$. 
Let $\ell(p)$ be the length of a path $p$. A path $p$ in $\ga$ is {\it geodesic} if 
$\ell(p)$ is minimal among the lengths of all paths $q$ in $\ga$ with $q_-=p_-$ 
and $q_+=p_+$. Let $k>0$. A path $p$ is a {\it $k$-local geodesic} 
if every subpath of $p$ of length less or equal to $k$ is geodesic. 

Let  $\lambda \geq 1$ and $c\geq 0$. 
A path $p$ is a $(\lambda, c)$-{\it quasi-geodesic} 
if for any subpath $q$ of $p$ we have 
$$\ell ( q )\leq \lambda \d(q_{-},q_{+})+c.$$

Given a set $X$, we denote by $X^*$ the free monoid generated by $X$. Elements of $X^{*}$ are called {\it words}. Sometimes we write $W$ {\it is a word over $X$}, which means that we view $W$ as an element of $X^*$.
Consider a group $G$ that is generated by $X$ as monoid. There is a natural monoid surjection $\pi\colon X^*\to G$. In order to ease the reading we will make an abuse of notation and identify $W\in X^*$ with $\pi(W)\in G$ throughout the paper. Let $\ell_X (W)$ denote the length 
of $W$ as a word over $X$, and let $|W|_X$ be the minimal length of a word over $X$ representing the same element, in $G$, as $W$. If the alphabet can be easily understood from the context we will write $\ell(W)$ for $\ell_X (W)$.
For $U,W\in X^ *$ we write $U\equiv W$ to denote word equality and $U=_G W$ 
to denote group element equality. Let $\ga(G,X)$ be the {\it Cayley graph}
of $G$ with respect to $X$. We use `$\d_X$' for the Cayley graph distance if we need to emphasize the generating set.
The labelling of edges in the Cayley graph by elements of $X$ can be
extended to paths, so for each path $p$ in $\ga(G,X)$ we denote
by $\Lab(p)\in X^{*}$ the word that we obtain reading the labels of the edges along $p$. Notice that $\ell(p)=\ell_X(\Lab(p))$.
A word $W$ is {\it geodesic} if the path
$p$ in $\ga(G,X)$ with $\Lab(p)=W$ and $p_-=1$ is geodesic. A word $W$ is a {\it cyclic geodesic} if all its cyclic shifts are geodesic.

Let $p,q$ be paths in $\ga$  and $K\geq 0$. 
We say that $p,q$ {\it asynchronously $K$-fellow travel}
if there exist non-decreasing functions $\phi\colon \N\to \N$
and $\psi \colon \N \to \N$ such that
$\d(p(t),q(\phi(t)))\leq K$ and $\d(p(\psi(t)),q(t))\leq K$ for all $t\in \N$. 
We say that $p,q$ {\it synchronously $K$-fellow travel}
if  $\d(p(t),q(t))\leq K$ for all $t\in \N$. 
Let $U,V$ be two words over $X$, and 
$p$, $q$ be the paths in $\ga(G,X)$ with $\Lab(p)\equiv U$, $\Lab(q)\equiv V$ and $p_-=q_-=1$. We say that 
$U,V$  asynchronously $K$-fellow travel (resp. synchronously $K$-fellow travel)  if $p$ and $q$ do.

\begin{defn}\label{def:fftp}
Let $\ga$ be a graph and $K\geq 0$. We say that $\ga$ satisfies the {\it falsification by $K$-fellow traveler property} ($K$-\fftp, for short) if for every non-geodesic path $p$ in $\ga$
there exists a path $q$ in $\ga$ such that
$\ell(q)<\ell(p)$, $p_-=q_-$, $p_+=q_+$ and $p$ and $q$ asynchronously $K$-fellow travel. 

Let $G$ be a group with a finite generating set $X$.
Then $(G,X)$ satisfies  $K$-\fftp{} if $\ga(G,X)$ does. In this case, for any non-geodesic word $W\in X^*$ there exists $U\in X^*$ such that $\ell(U)<\ell(W)$, $U=_G W$, and $U$ and $W$ asynchronously $K$-fellow travel.
\end{defn}

The following is a variation \cite[Lemma 1]{Elder02}.
\begin{lem}\label{lem:geosync}
For every $k,K\geq 0$ there exists $M=M(K,k)$ such that the following holds.
If $p$ and $q$ are two geodesics in $\ga(G,X)$,
$\d(p_-,q_-)\leq k$ and $\d(p_+,q_+)\leq k $,
and $p$ and $q$ asynchronously
$K$-fellow travel, then they synchronously
$M$-fellow travel.
\end{lem}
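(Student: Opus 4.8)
The plan is to prove Lemma \ref{lem:geosync} by a standard ``convexity of geodesics'' argument, quantified carefully. First I would fix the asynchronous fellow-traveling data: non-decreasing functions $\phi,\psi\colon \N\to\N$ with $\d(p(t),q(\phi(t)))\le K$ and $\d(p(\psi(t)),q(t))\le K$ for all $t$, together with the endpoint hypotheses $\d(p_-,q_-)\le k$ and $\d(p_+,q_+)\le k$. The goal is to bound $\d(p(t),q(t))$ uniformly in $t$ by some $M$ depending only on $K$ and $k$; note first that both $p$ and $q$ have length at most $D\coloneq\diam\{p_-,p_+,q_-,q_+\}$-controlled quantities, but in fact the lengths need not be bounded by a function of $K$ and $k$ alone, so the bound on $\d(p(t),q(t))$ must come from a local argument along the paths, not from bounding the paths' lengths.

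The key step is the triangle-inequality estimate. Fix $t$ and suppose $t\le \ell(q)$ (the case $t>\ell(q)$ is handled by our convention $q(t)=q(\ell(q))$ together with the endpoint bound). Using the asynchronous data we get a parameter $s=\phi(\cdot)$ or $\psi(\cdot)$ with $p$ and $q$ within $K$ at the pair $(t,s)$; the issue is that $s$ may be far from $t$. To control $|s-t|$, I would use that $p$ and $q$ are \emph{geodesics}: for any two parameters $t_1<t_2$ with $\d(p(t_1),q(s_1))\le K$ and $\d(p(t_2),q(s_2))\le K$ where $s_1\le s_2$ (monotonicity of $\phi$), we have $|(t_2-t_1)-(s_2-s_1)|\le 2K$, because $t_2-t_1=\d(p(t_1),p(t_2))$ and $s_2-s_1=\d(q(s_1),q(s_2))$ and these two distances differ by at most $2K$ via the triangle inequality. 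Applying this with $t_1=0$, where $\d(p(0),q(0))=\d(p_-,q_-)\le k$, pins down $|s-t|\le 2K+k$ for the matching parameter $s$ of $t$. Then $\d(p(t),q(t))\le \d(p(t),q(s))+\d(q(s),q(t))\le K+|s-t|\le 3K+k$, since $q$ is geodesic so $\d(q(s),q(t))=|s-t|$.

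Thus $M=M(K,k)\coloneq 3K+k$ works, which also covers the tail case $t>\ell(q)$: there $q(t)=q_+$, and choosing $t'=\ell(q)$ we have $\d(p(t),q(t))\le \d(p(t),p(t'))+\d(p(t'),q(t'))+0$, but one must instead run the same matching-parameter estimate at $t$ directly, observing that the matching parameter $s$ for $t$ satisfies $s\le \ell(q)$ and $|s-t|\le 2K+k$ still by the geodesic-length comparison anchored at $t=0$; combined with $\d(p(t),q(s))\le K$ and $\d(q(s),q_+)=\ell(q)-s$, one gets a bound, but one has to be slightly careful that $t$ can exceed $\ell(p)$ as well. The cleanest route is to first observe that both $\ell(p)$ and $\ell(q)$ differ by at most $2k$ (endpoints within $k$, geodesics), reduce to $t\le\min\{\ell(p),\ell(q)\}$, and absorb the at-most-$2k$ tail discrepancy into $M$, so really $M=3K+3k$ comfortably suffices.

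I expect the main obstacle to be purely bookkeeping: handling the asymmetry between $\phi$ and $\psi$ (the two directions of the asynchronous fellow-traveling) and the convention that paths are extended as constant past their length, so that the ``matching parameter'' $s$ is well defined and stays in range $[0,\ell(q)]$. There is no deep content — it is all the triangle inequality plus the defining property that a geodesic realizes distance between its points — but one should state explicitly the elementary sublemma that along a geodesic $r$, $\d(r(a),r(b))=|b-a|$ whenever $a,b\le\ell(r)$, and use monotonicity of $\phi,\psi$ to keep the comparison of parameters coherent. A mild subtlety worth flagging is that Lemma~1 of \cite{Elder02}, of which this is a variation, is presumably stated for $k=0$ (matching endpoints); the only change here is carrying the additive $k$ through the anchoring step at $t=0$, which is exactly what the computation above does.
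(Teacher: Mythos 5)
Your argument is correct: anchoring the parameter comparison at the initial endpoints and using that geodesic subpaths realize distance gives $|t-\phi(t)|\le K+k$ and hence a synchronous bound of the form $2K+k$ (your $3K+3k$ is more than enough once the tails are absorbed), which is exactly the standard argument behind \cite[Lemma 1]{Elder02}. The paper itself gives no proof of Lemma \ref{lem:geosync} --- it simply cites Elder --- so there is nothing to diverge from; your write-up fills in the citation along the expected lines.
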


\begin{rem}\label{rem:syncfftp}
Using the previous Lemma, Elder showed in \cite{Elder02} that $(G,X)$ has the
falsification by fellow traveler property if and only if $(G,X)$ has
the {\it synchronous falsification by fellow traveler property}, i.e. there is a constant $K>0$ such that  for every non-geodesic path $p$ in $\ga(G,X)$
there exists a path $q$ in $\ga$ such that
$\ell(q)<\ell(p)$, $p_-=q_-$, $p_+=q_+$ and $p$ and $q$ synchronously $K$-fellow travel.
\end{rem}

The following fact follows easily from the definitions.
\begin{lem}\label{lem:comptravel}
Suppose that $p$ is the composition of paths $p_1, p_2, \dots, p_n$
and $q$ is the composition of paths $q_1, q_2, \dots, q_n$.
If $p_i$ and $q_i$ asynchronously $K$-fellow travel for all $i$, 
then $p$ and $q$ asynchronously $K$-fellow travel.
\end{lem}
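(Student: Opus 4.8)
\textbf{Proof plan for Lemma~\ref{lem:comptravel}.}

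The plan is to unpack the definition of asynchronous fellow traveling directly and assemble the required monotone reparametrizing functions from those provided for the pieces. Fix $K\ge 0$ and suppose $p=p_1p_2\cdots p_n$ and $q=q_1q_2\cdots q_n$ with $p_i$ and $q_i$ asynchronously $K$-fellow traveling for each $i$. Write $a_i=\ell(p_i)$ and $b_i=\ell(q_i)$, so $p$ has length $A=\sum a_i$, $q$ has length $B=\sum b_i$, and the $i$-th piece of $p$ occupies the parameter interval $[A_{i-1},A_i]$ where $A_i=a_1+\cdots+a_i$ (with $A_0=0$), and similarly $B_i=b_1+\cdots+b_i$ for $q$. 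For each $i$ let $\phi_i,\psi_i\colon\N\to\N$ be the non-decreasing functions witnessing that $p_i$ and $q_i$ asynchronously $K$-fellow travel, so that $\d\bigl(p_i(t),q_i(\phi_i(t))\bigr)\le K$ and $\d\bigl(p_i(\psi_i(t)),q_i(t)\bigr)\le K$ for all $t$; recall the convention that a path is constant past its length, which lets us treat these as defined on all of $\N$ without boundary fuss.

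The key step is to define a global non-decreasing function $\Phi\colon\N\to\N$ by gluing: for a parameter $t$ lying in the $i$-th block, i.e. $A_{i-1}\le t\le A_i$, set $\Phi(t)=B_{i-1}+\min\{\phi_i(t-A_{i-1}),\,b_i\}$, and $\Phi(t)=B$ for $t\ge A$. One checks that $\Phi$ is non-decreasing: within a block this is immediate from monotonicity of $\phi_i$ and the $\min$ with the constant $b_i$, and across the seam from block $i$ to block $i+1$ one has $\Phi(A_i)\le B_i=B_{i-1}+b_i$ at the left endpoint interpretation and $\Phi(A_i)=B_i+\min\{\phi_{i+1}(0),b_{i+1}\}\ge B_i$ at the right, using that both expressions agree up to the convention (or simply choosing the block assignment consistently and noting values match at the shared endpoint modulo the constant-past-length convention, since $\phi_i(t-A_{i-1})\ge b_i$ eventually). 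Then for $t\in[A_{i-1},A_i]$ we have $p(t)=p_i(t-A_{i-1})$ and $q(\Phi(t))=q_i\bigl(\min\{\phi_i(t-A_{i-1}),b_i\}\bigr)=q_i(\phi_i(t-A_{i-1}))$ by the constant-past-length convention, so $\d(p(t),q(\Phi(t)))=\d(p_i(t-A_{i-1}),q_i(\phi_i(t-A_{i-1})))\le K$. The symmetric construction of $\Psi$ from the $\psi_i$ gives $\d(p(\Psi(t)),q(t))\le K$ for all $t$. Hence $p$ and $q$ asynchronously $K$-fellow travel.

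I do not expect a genuine obstacle here — the statement is elementary and the proof is purely bookkeeping. The only mild subtlety is handling the seams between consecutive blocks and the constant-past-length convention cleanly, so that the glued functions are genuinely non-decreasing and defined on all of $\N$; this is why I build in the $\min\{\cdot,b_i\}$ truncation at each block, which makes the value at the right end of block $i$ equal to $B_i$ and thus compatible with the start of block $i+1$. Everything else follows by restricting the fellow-traveling inequality on each piece and reading off the global inequality.
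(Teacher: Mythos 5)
Your proof is correct and is exactly the direct definition-unpacking the authors intend: the paper states this lemma without proof ("follows easily from the definitions"), and your block-by-block gluing of the reparametrizations $\phi_i$, with the $\min\{\cdot,b_i\}$ truncation and the constant-past-length convention handling the seams, is the standard way to make that precise. One cosmetic point: rather than hard-coding $\Phi(t)=B$ for $t\ge A$ (which would require knowing $\d(p_+,q_+)\le K$ a priori), simply let the last block's formula $\Phi(t)=B_{n-1}+\min\{\phi_n(t-A_{n-1}),b_n\}$ apply to all $t\ge A_{n-1}$; then the tail case is covered by the fellow-traveling of $p_n$ and $q_n$ together with the convention.
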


\subsection{Relatively hyperbolic groups}\label{sec:RH}

We follow \cite{Osin06} for notation and definitions of relatively hyperbolic groups. The definition we use is equivalent to what Farb's calls `strong relative hyperbolicity' in \cite{Farb}, and we will also use his approach when proving Theorem \ref{thm:main_intro}. %we will also use Farb's bounded coset penetration condition \cite{Farb}.

\begin{defn}
Let $G$ be a group, $\Omega$ a set,
$\{H _\omega \}_{\omega \in \Omega}$ a collection of  subgroups of $G$, and
$X$ a subset of G. 

The set $X$ is a {\it generating set relative to} $\{H _\omega \}_{\omega \in \Omega}$ if the natural homomorphism from
$$ F = (*_{\omega \in \Omega}H_\omega )* F (X)$$
to $G$ is surjective, where $ F (X)$ is the free group with  basis $X.$ 

Assume that $X$ is a generating set relative to $\{H_\omega \}_{\omega \in \Omega}$ and let $R$ be a subset of $F$ whose normal closure is the kernel of the natural map $F\to G$.
In this event, we say that $G$ has relative presentation
$$\prs{X \cup_{\omega \in \Omega} H_\omega}{R}.$$

If $|X|<\infty$  and $|R| <\infty$, the relative presentation  is said to be finite and
the group $G$ is said to be {\it finitely presented relative to the collection of subgroups}
$\{H_\omega \}_{\omega \in \Omega}.$
Set
$$\cH :=\cup_{\omega \in \Omega} (H_\omega -\{1\}).$$

Given a word $W$ over the alphabet $X\cup \cH$ that represents $1$ in $G$
there exists an expression for $W$ in $F$ of the form
\begin{equation}
\label{eq:defarea}
 W=_F \prod_{i=1}^n f_ir_i^{\e_1} f_i^{-1},
\end{equation}
where $r_i\in R$, $f_i\in F$ and $\e_i=\pm 1$ for $i=1,\dots, n$.
The smallest possible number $n$ in an expression of type \eqref{eq:defarea} is
called the {\it relative area of $W$} and is denoted by $\mathrm{Area}_{rel}(W)$.

A group $G$ is {\it hyperbolic relative to the collection of subgroups $\{H_\omega\}_{\omega \in \Omega}$}  if it is finitely presented relative to $\{H_\omega\}_{\omega \in \Omega}$ and there is a constant $C\geq 0$, called an {\it isoperimetric constant},
such that 
$$\mathrm{Area}_{rel}(W)\leq C \ell_{X\cup \cH}(W)$$
for all words $W$ over $X\cup \cH$ that are the identity in $G$.
In particular, $\ga(G, X\cup \cH)$ is Gromov hyperbolic (see \cite[Theorem 2.53]{Osin06}).
The family of subgroups  $\{H_\omega\}_{\omega \in \Omega}$ is called the collection of {\it peripheral
(or parabolic) subgroups} of $G.$ Note that being relatively hyperbolic is a group property independent of the relative presentation \cite[Theorem 2.34]{Osin06}. 
\end{defn}

We collect now a series of facts that will be used in the rest of the paper.
\begin{lem}\label{lem:factsRH}
Suppose that $G$ is hyperbolic with respect to $\{H_\omega\}_{\omega \in \Omega}$.
\begin{enumerate}
\item[{\rm (i)}] If $G$ is finitely generated (in the ordinary sense), then $H_\omega$ is finitely generated
for each $\omega\in \Omega$ {\rm (\cite[Proposition 2.29]{Osin06})}.
\item[{\rm (ii)}] If $G$ is finitely generated (in the ordinary sense) then $|\Omega|<\infty$ {\rm (\cite[Corollary 2.48]{Osin06})}.
\item[{\rm (iii)}] For $\omega,\mu\in \Omega$,  $\omega\neq \mu$ and $g,h\in G$, the following hold: $|H_{\omega}^{g}\cap H_{\mu}^{h}|<\infty$ and $|H_{\omega}^{g}\cap H_{\omega}|<\infty$ if $g\notin H_{\omega}$. Here $H^g \coloneq g^{-1}Hg$ {\rm (\cite[Proposition 2.36]{Osin06})}.
\end{enumerate}
\end{lem}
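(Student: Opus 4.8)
The plan is to deduce each of the three statements from the basic machinery of relative presentations and the relative isoperimetric inequality, rather than reproving anything from scratch. For part~(i), I would argue as follows. Fix $\omega \in \Omega$ and suppose $G$ is generated by a finite set $S$ in the ordinary sense. Since $X \cup \cH$ generates $G$ and the relative presentation involves finitely many relators $R$ with a linear relative isoperimetric function, any element $h \in H_\omega$ can be written as a word over $X \cup \cH$; using van Kampen-type diagrams over the relative presentation (equivalently, reading off the expression~\eqref{eq:defarea} and tracking where the $H_\omega$-syllables come from), one shows that $h$ lies in the subgroup generated by $X$ together with a \emph{finite} collection of elements of the various $H_\mu$ coming from the relators $R$ and from expressing the finite set $S$ over $X \cup \cH$. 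Intersecting with $H_\omega$ and using that the $H_\mu$ are malnormal-like (part (iii)-type facts) collapses this to a finite subset of $H_\omega$. This is exactly Osin's argument, so I would simply cite \cite[Proposition 2.29]{Osin06}; the honest work is already packaged there.

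For part~(ii), once each $H_\omega$ is finitely generated, the point is that $\Omega$ cannot be infinite: if it were, one could find infinitely many distinct peripheral subgroups, but each element of a fixed finite ordinary generating set of $G$ decomposes (over $X \cup \cH$, via the relative presentation) using only finitely many $H_\omega$-syllables, and the relators $R$ involve only finitely many of the $H_\omega$; hence all but finitely many peripheral subgroups would be redundant, i.e.\ generated by the rest together with $X$, contradicting that they form the peripheral structure. Again this is Osin's \cite[Corollary 2.48]{Osin06}. For part~(iii), the finiteness of $H_\omega^g \cap H_\mu^h$ for $\omega \neq \mu$, and of $H_\omega^g \cap H_\omega$ when $g \notin H_\omega$, is the standard ``almost malnormality'' of peripheral subgroups: if the intersection were infinite, one produces arbitrarily long words in $\ga(G, X \cup \cH)$ that are geodesic both ``through $H_\omega$'' and ``through $H_\mu$'' (or through two different cosets of $H_\omega$), which contradicts the bounded coset penetration property / hyperbolicity of $\ga(G, X \cup \cH)$. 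This is \cite[Proposition 2.36]{Osin06}.

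In short, since this is a collection lemma whose three parts are verbatim citations of results in Osin's book, the ``proof'' I would write is essentially a pointer: state that (i)–(iii) are \cite[Proposition 2.29, Corollary 2.48, Proposition 2.36]{Osin06} respectively, perhaps recalling in one sentence that the underlying mechanism in all three is that the relative presentation is finite with a linear relative isoperimetric function, so only finitely much ``peripheral data'' can appear in any bounded computation. The only place where one might want to say more than ``see Osin'' is part~(iii), where it could help the reader to note explicitly that $H^g \coloneq g^{-1}Hg$ and that the two clauses together are what is usually phrased as the peripheral subgroups being \emph{almost malnormal}; I would expect the bounded coset penetration property (to be recalled later in Section~\ref{sec:RH}) to be the conceptual heart, but no independent obstacle arises here since everything is quoted.
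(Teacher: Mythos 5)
Your proposal matches the paper exactly: Lemma~\ref{lem:factsRH} is stated in the paper as a collection of quoted facts with the citations \cite[Proposition 2.29]{Osin06}, \cite[Corollary 2.48]{Osin06} and \cite[Proposition 2.36]{Osin06} embedded in the statement, and no independent proof is given. Your decision to treat it as a pointer to Osin (with the heuristic sketches serving only as optional motivation) is therefore correct and takes essentially the same approach.
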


For the rest of the section, $G$ will be a finitely generated  group,
$\{H_\omega\}_{\omega \in \Omega}$ a collection of parabolic subgroups of $G$, and $X$ a finite generating set.
As before, $\cH=\cup_{\omega \in \Omega} (H_\omega -\{1\})$. %Let $\delta\geq 0$ such that 
%the Cayley graph $\ga(G, X\cup \cH)$ is $\delta$-hyperbolic.

\begin{defn}
Let $p$ and $q$ be two paths in $\ga(G, X\cup \cH)$.
\begin{enumerate}
\item An {\it $H_\omega$-component} of ${p}$ is a subpath ${s}$ such that $\Lab(s)\in (H_\omega)^*$ and $s$ is not contained in any other subpath whose label is a word on $H_\omega$. A subpath ${s}$ is a {\it component} if it is an $H_\omega$-component for some $\omega\in \Omega$. For a component $s$ of $p$ and a generating set $Y$ of $G$, the  {\it $Y$-length of $s$} is  $\d_Y(s_-,s_+)$.

\item Two components ${s}$ and ${r}$ (not necessarily in the same path) are {\it connected} if both are $H_\omega$-components for
some $\omega\in \Omega$ and $({s}_-)H_\omega=({r}_-)H_\omega$.  
\item A component ${s}$ of ${p}$ is {\it isolated} if it is not connected to any other component ${r}$ of ${p}$.  
\item The path ${p}$ does not 
 {\it backtrack} if all components are isolated. 
\item The path $p$ does not {\it vertex backtrack} if for any subpath $r$ of 
$p$, $\ell(r)>1$, $\Lab(r)$ does not represent an element of some $H_\omega$. In particular, if a path does not vertex backtrack, it does not backtrack and all components are edges.
\item Let ${p}_1,{p}_2$ be components of ${p}$. We write ${p}_1<{p}_2$ if $p_1$ is traversed before $p_2$ in the path $p$. That is, for ${p}_1\neq {p}_2$ and $t,t'\in \N$, if  ${p}(t')\in {p}_2$ and ${p}(t)\in {p}_1$, then $t<t'$.   

\item We say that ${p}$ and ${q}$
are {\it $k$-similar} if
$$\max\{\d_X ({p}_-, {q}_-), \d_X ({p}_+ , {q}_+)\} \leq  k.$$
\end{enumerate}
\end{defn}

The following two results are key ingredients in many of the proofs in this paper.%, which implies Farb's bounded coset penetration property.

\begin{thm}[Bounded Coset Penetration Property]\cite[Theorem 3.23]{Osin06}
\label{thm:BCP}
For any $\lambda \geq 1, c \geq 0, k \geq 0$, there exists a constant
$\e = \e (\lambda, c, k)$ such that for any two $k$-similar paths ${p}$ and ${q}$ in 
$\ga(G,X\cup \cH)$
that are  $(\lambda, c)$-quasi-geodesics and do not backtrack,
the following conditions hold.
\begin{enumerate}
\item[{\rm(1)}] The sets of vertices of ${p}$ and ${q}$ are contained in the closed $\e$-neighborhoods 
(with respect to the metric $\d_X$) of each other.
\item[{\rm(2)}] Suppose that ${s}$ is an $H_\omega$-component of ${p}$ such that $\d_X({s}_{-}, {s}_+ ) > \e$; then
there exists an $H_\omega$-component ${t}$ of ${q}$ which is connected to ${s}$.
\item[{\rm(3)}] Suppose that ${s}$ and ${t}$ are connected $H_\omega$-components of ${p}$ and ${q}$ respectively.
Then ${s}$ and ${t}$ are $\e$-similar.
\end{enumerate}
\end{thm}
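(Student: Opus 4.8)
This is Osin's theorem, so I would follow the scheme of \cite[Theorem 3.23]{Osin06}, which rests on two inputs available in our setting. The first is that $\ga(G,X\cup\cH)$ is Gromov hyperbolic --- guaranteed by the linear relative isoperimetric inequality, as noted after the definition of relative hyperbolicity --- so that stability of quasi-geodesics in a hyperbolic space yields a constant $\sigma_0=\sigma_0(\lambda,c,k)$ with the property that any two $k$-similar $(\lambda,c)$-quasi-geodesics in $\ga(G,X\cup\cH)$ lie within $\dxh$-Hausdorff distance $\sigma_0$ of each other (compare each to a geodesic with the same endpoints; geodesics with $k$-close, hence $\dxh$-close, endpoints fellow travel). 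The second is a purely combinatorial ``isolated components'' lemma: there is a constant $M_0$ depending only on the relative presentation such that for every cycle $o$ in $\ga(G,X\cup\cH)$ the sum of the $\dx$-lengths of the isolated $\cH$-components of $o$ is at most $M_0\,\mathrm{Area}_{rel}(o)$, hence, writing $M\coloneq M_0C$, at most $M\,\ell_{X\cup\cH}(o)$. I would prove this lemma by induction on $\mathrm{Area}_{rel}(o)$ using a van Kampen diagram over $\prs{X\cup\cH}{R}$: excising one $R$-cell replaces $o$ by a cycle of smaller relative area together with part of that cell's boundary, and since every relator has bounded length the excision adds only a bounded amount of $\dx$-length at each step.

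Since $p$ and $q$ may be arbitrarily long, the lemma has to be applied not to the pair globally but to \emph{short} cycles built locally. For part~(2), given an $H_\omega$-component $s$ of $p$ I would use the Hausdorff bound to pick vertices $u_-,u_+$ of $q$ with $\dxh(s_\pm,u_\pm)\le\sigma_0$, take the subarc $q_0$ of $q$ between them (whose $\dxh$-length is bounded, because $q_0$ is a $(\lambda,c)$-quasi-geodesic joining $\dxh$-close points), and close $s$, $q_0$ and two short $\dxh$-geodesics into a cycle $o_0$ of bounded $\dxh$-length $N$. The lemma bounds the total $\dx$-length of isolated $\cH$-components of $o_0$ by $MN$, so choosing $\e>MN$ (plus slack) the hypothesis $\dx(s_-,s_+)>\e$ forces $s$ to be non-isolated in $o_0$, i.e. connected to another component of $o_0$, which --- after chasing the connection through the short connecting arcs --- can be taken on $q_0$, hence on $q$, giving part~(2). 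Part~(1) follows along the same lines: every vertex $v$ of $p$ has a $\dxh$-nearby vertex of $q$, and the $\dx$-distance between them is bounded by running the same short-cycle argument, invoking both the isolated-components bound and the bound on the $\dx$-length of matched non-isolated components coming from (2), with the roles of $p$ and $q$ then reversed. For part~(3), if $s$ and $t$ are connected then $s_\pm H_\omega=t_\pm H_\omega$, so $\dxh(s_\pm,t_\pm)\le 1$; to upgrade this to $\dx(s_\pm,t_\pm)\le\e$ I would form a short cycle joining $s_-$ to $t_-$ through the Hausdorff-nearby arcs of $p$ and $q$ and the short connecting arcs, and apply the lemma to conclude that the $\cH$-edge between $s_-$ and $t_-$ is neither a long isolated component nor, being non-isolated, connected to anything but a controlled side component; the same for $s_+,t_+$.

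The step I expect to be the genuine obstacle is this last bookkeeping of connectedness in parts~(2) and~(3): the component that $s$ (or the edge between $s_-$ and $t_-$) is forced to be connected to could a priori sit on one of the short auxiliary arcs rather than on $q$, and --- because those arcs are only $\dxh$-short, not $\dx$-short --- one cannot dispose of that case cheaply; the auxiliary cycles have to be chosen carefully (for instance so that their geodesic arcs avoid the offending coset) and the chains of connected components chased, using non-backtracking of $p$, $q$ and the uniqueness of connected components, until they land on $q$ with the right coset. The other point requiring care is the isolated-components lemma itself: the inductive excision of cells must be organised so that every component created by a cut is charged against the perimeter of the removed relator, with a constant independent of $o$, which is exactly where finiteness of the relative presentation enters. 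Everything else is routine hyperbolic-geometry estimation.
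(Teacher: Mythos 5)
This statement is not proved in the paper at all: it is quoted verbatim from Osin \cite[Theorem 3.23]{Osin06}, so there is no internal proof to compare against. Your sketch is a faithful outline of Osin's actual argument --- hyperbolicity of $\ga(G,X\cup\cH)$ plus quasi-geodesic stability for the relative-metric control, and the isolated-components estimate (which is exactly the paper's Lemma \ref{lem:Xi}, proved via the linear relative isoperimetric function as you describe) applied to short auxiliary cycles for the $\dx$-estimates --- and you have correctly identified the genuinely delicate step, namely chasing connections of components through the auxiliary arcs so that they land on $q$ rather than on the connecting geodesics. Since the paper treats this as a black box, the appropriate course here is likewise to cite \cite[Theorem 3.23]{Osin06} rather than reprove it.
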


%We will also make use of the following lemma.
\begin{lem}\cite[Lemma 2.7]{OsinPF} 
\label{lem:Xi}
Let $G$ be a finitely generated group that is hyperbolic relative to a collection of subgroups
$\{H_\omega\}_{\omega \in \Omega}$. Then there exists a finite subset $\Xi\subseteq G$ and  constant $L\geq 0$ such that the
following condition holds. Let $q$ be a path in $\ga(G,X\cup \cH)$ with $q_-=q_+$ and let ${p}_1,\dots,{p}_k$ 
be the set of isolated components of ${q}$.
Then the $\Xi$-lengths of ${p}_1,\dots, p_k$ satisfy 
$$\sum_{i=1}^k \d_\Xi (({p}_i)_-,({p}_i)_+)\leq L\ell(q).$$
\end{lem}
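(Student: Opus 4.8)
This is a citation from \cite{OsinPF}, so strictly I only need to explain how it is established; the statement says: there is a finite set $\Xi$ and a constant $L$ so that for any closed path $q$ in $\ga(G,X\cup\cH)$, the sum of $\Xi$-lengths of the isolated components of $q$ is at most $L\ell(q)$. The plan is to exploit the linear relative isoperimetric inequality. Since $q$ is a closed path, $\Lab(q)$ represents $1$ in $G$, so there is a relative van Kampen diagram $\Delta$ over the relative presentation $\prs{X\cup_\omega H_\omega}{R}$ with boundary label $\Lab(q)$ and with $\mathrm{Area}_{rel}(\Lab(q))\le C\ell(q)$ faces. The faces of $\Delta$ come in two kinds: the $R$-faces (whose boundary labels are relators $r\in R$, and whose perimeters are bounded by $\max_{r\in R}\ell_{X\cup\cH}(r)=:N$, a constant since $|R|<\infty$), and the $H_\omega$-faces (whose boundary labels are words over a single $H_\omega$ representing $1$, of unbounded perimeter).

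First I would pass to a reduced diagram and carefully track the edges of $\Delta$ labelled by letters from $\cH$. The key combinatorial idea is an \emph{edge-counting / surgery} argument: each isolated $H_\omega$-component $p_i$ of $q$ is a single edge $e_i$ of $\partial\Delta$ labelled by an element $h_i\in H_\omega$, and because it is isolated in $q$ it cannot be "cancelled" against another boundary edge; instead its label must be "propagated into" the diagram via the $H_\omega$-faces and the $R$-faces. Concretely, one shows that the element $h_i$ (equivalently, the vertex $(p_i)_+$ regarded as living in the coset $(p_i)_-H_\omega$) can be written as a product of boundedly many "pieces" coming from: (a) boundary labels of $R$-faces adjacent to the strip of $H_\omega$-edges emanating from $e_i$, which contribute only finitely many possible group elements (call this finite set $\Xi_0\subseteq G$, collecting all subwords of all relators in $R$ that lie in some $H_\omega$), and (b) $X$-letters on $\partial\Delta$. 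Summing over all $i$ and counting with multiplicity how many $R$-faces and boundary $X$-edges are used — each $R$-face is used a bounded number of times, and there are $\le C\ell(q)$ of them, and there are $\le\ell(q)$ boundary edges — gives $\sum_i \d_{\Xi}((p_i)_-,(p_i)_+)\le L\ell(q)$ with $\Xi:=\Xi_0\cup X$ and $L$ depending only on $C$, $N$, and the presentation.

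The main obstacle is making the surgery precise: one must argue that distinct isolated components "use up" disjoint (or boundedly overlapping) portions of the diagram, so that the contributions genuinely add up to something linear in $\ell(q)$ rather than in $\ell(q)^2$. The cleanest way to handle this is to route each isolated $H_\omega$-edge $e_i$ along a system of non-crossing paths in the dual-like structure of $\Delta$ (a "component graph" whose vertices are $H_\omega$-faces plus the boundary edge $e_i$ and whose edges record adjacency through $R$-faces), use that the diagram is planar so these routes can be taken to be essentially disjoint, and bound the total length of all routes by the number of $R$-faces plus the boundary length. I would also need the standard fact (from \cite{Osin06}) that in a reduced relative diagram the $H_\omega$-faces associated to a fixed coset form a connected subcomplex, which is what forces an isolated component's label to exit through $R$-faces. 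Once the routing is set up, the estimate is a bookkeeping exercise: $\sum_i(\text{length of route }i)\le (\text{number of }R\text{-faces})\cdot(\text{const})+\ell(q)\le (C\cdot\text{const}+1)\ell(q)$, and along each route the group element changes by an element of the finite set $\Xi$, giving the claimed bound.
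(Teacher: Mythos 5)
This statement is quoted from \cite{OsinPF} and the paper offers no proof of its own, so there is nothing internal to compare against; your task is really to reconstruct Osin's argument, and your outline does so essentially correctly: a relative van Kampen diagram with at most $C\ell(q)$ $R$-cells, a finite set $\Xi$ consisting of the elements of $\cH$ occurring as letters of relators in $R$, and the observation that an isolated component's label must be expressible through edges lying on boundaries of $R$-cells. One remark: the ``main obstacle'' you identify (keeping the routes for different components essentially disjoint via planarity) is not where the work lies. Two distinct isolated components of $q$ necessarily determine distinct parabolic cosets (if they determined the same coset they would be connected, contradicting isolation), so the subcomplexes of the diagram spanned by the edges labelled by elements of a fixed coset are pairwise disjoint for distinct $p_i$; each $\cH$-edge on the boundary of an $R$-cell is therefore charged to at most one component, and the bound $\sum_i \d_\Xi((p_i)_-,(p_i)_+)\le M\cdot C\ell(q)$ (with $M$ the maximal relator length) follows by direct counting, with no planar routing needed. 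Including $X$ in $\Xi$ is harmless but also unnecessary for the same reason.
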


\begin{rem}
In \cite{Osin06} all the generating sets are assumed to be symmetric. Also, at the beginning of \cite[\S 3]{Osin06}, some additional technical hypotheses are set for the relative 
generating set, and Theorem \ref{thm:BCP} above is 
proved under these assumptions. However, it is easy to check that the statements we are using hold if we change a finite relative generating set by another.
\end{rem}

%%%%%%%%%%%%%
%Section
%Creating good generating sets
%%%%%%%%%%%%%%%%%%%

\section{Generating sets with FFTP} \label{sec:gensetfftp}

In this section we show that if a group has one finite generating set, say $X$, with FFTP, then it has infinitely many generating sets with this property, because for any positive integer $m$, the generating set consisting of the ball of radius $m$ over $X$ has FFTP. This is shown in Proposition \ref{lem:extendingfftpgenset}.

For convenience in this section we  use the synchronous version of \fftp{} (See Remark \ref{rem:syncfftp}).
\begin{lem}\label{lem:ftqg}
Suppose that $(G,X)$ has $M$-\fftp.
Then for any integer $b\geq 0$ and any $(1,b)$-quasi-geodesic $p$ in $\ga(G,X)$ one can find a geodesic path $q$ in $\ga(G,X)$ with $p_-=q_-$, $p_+=q_+$ such that $p$ and $q$ $(M\cdot b)$-synchronously  fellow travel.
\end{lem}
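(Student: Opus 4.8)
The idea is to induct on $b$, peeling off one unit of ``non-geodesity'' at a time by applying the synchronous falsification property, and tracking how the fellow-traveling constant accumulates. For the base case $b=0$ a $(1,0)$-quasi-geodesic is already geodesic, so we may take $q=p$ and the constant is $0$. For the inductive step, suppose the statement holds for $b-1$ and let $p$ be a $(1,b)$-quasi-geodesic. If $p$ is geodesic we are done, so assume $\ell(p) > \d_X(p_-,p_+)$; then $p$ is non-geodesic and by the synchronous $M$-\fftp{} (Remark \ref{rem:syncfftp}) there is a path $p'$ with $p'_-=p_-$, $p'_+=p_+$, $\ell(p')<\ell(p)$, and $p,p'$ synchronously $M$-fellow travel.

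The key claim is that $p'$ is a $(1,b-1)$-quasi-geodesic. This is the main point to check: one must verify that for \emph{every} subpath $q'$ of $p'$ one has $\ell(q') \le \d_X(q'_-,q'_+) + (b-1)$. Since $\ell(p') \le \ell(p)-1$ and $p$ is a $(1,b)$-quasi-geodesic, any subpath of $p'$ that is ``as long as possible'' — in particular $p'$ itself — loses one unit of defect; but one has to argue this for arbitrary subpaths, not just $p'$. Here I would use that $p'$ has strictly smaller length than $p$ together with the quasi-geodesic inequality applied to $p$: a subpath $q'$ of $p'$ from $p'(s)$ to $p'(t)$ has length $t-s$, and since the total ``excess'' $\ell(p')-\d_X(p'_-,p'_+)$ is at most $\ell(p)-1-\d_X(p_-,p_+) \le b-1$, and excess of a subpath is bounded by excess of the whole path (as $\d_X$ satisfies the triangle inequality: $\d_X(p'_-,p'_+) \le \d_X(p'_-,q'_-)+\d_X(q'_-,q'_+)+\d_X(q'_+,p'_+) \le (s-0) + \d_X(q'_-,q'_+) + (\ell(p')-t)$, so $t-s \le \d_X(q'_-,q'_+) + \ell(p') - \d_X(p'_-,p'_+)$), we get $\ell(q') \le \d_X(q'_-,q'_+) + (b-1)$. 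So $p'$ is indeed a $(1,b-1)$-quasi-geodesic.

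By the inductive hypothesis, there is a geodesic $q$ with $q_-=p'_-=p_-$, $q_+=p'_+=p_+$ that synchronously $(M(b-1))$-fellow travels $p'$. Concatenating the two fellow-traveling estimates: $\d_X(p(j),q(j)) \le \d_X(p(j),p'(j)) + \d_X(p'(j),q(j)) \le M + M(b-1) = Mb$ for all $j$. Hence $p$ and $q$ synchronously $(M\cdot b)$-fellow travel, completing the induction.

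\textbf{Main obstacle.} The delicate step is confirming that the single application of \fftp{} genuinely reduces the quasi-geodesic constant by exactly one, \emph{uniformly over all subpaths}. The falsification property only promises $\ell(p') < \ell(p)$, i.e.\ $\ell(p') \le \ell(p)-1$; the work is in the bookkeeping of Lemma-style inequality above showing the excess of every subpath of $p'$ drops accordingly, which hinges on the fact that for a path $p'$ from $u$ to $v$, the ``excess'' $\ell(p') - \d_X(u,v)$ dominates the excess of any subpath.
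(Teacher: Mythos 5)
Your proof is correct and follows essentially the same route as the paper: the paper iterates the synchronous falsification property $b$ times (the total excess $\ell(p)-\d_X(p_-,p_+)\le b$ drops by at least one each step) and adds the fellow-traveling constants via the triangle inequality, which is exactly your induction on $b$ recast iteratively. Your extra verification that the excess of every subpath is dominated by the total excess is correct and is only needed because your formulation invokes the inductive hypothesis for $(1,b-1)$-quasi-geodesics, whereas the paper sidesteps this by simply running the iteration $b$ times.
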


\begin{proof}
Suppose that $p$ is a $(1,b)$-quasi-geodesic and that $\Lab(p)$ represents the element $g\in G$.
Set $p_0=p$ and define a sequence of paths $p_0,p_1,\dots, p_b$ in $\ga(G,X)$ as follows.
If $p_i$ is geodesic, then $p_{i+1}=p_i$. If $p_i$ is not geodesic,
then there is some path $q_i$ shorter than $p_i$ with the same end points
that synchronously $M$-fellow travels with $p_i$, in which case we set $p_{i+1}=q_i$.

Since $\ell(p)\leq |g|_X+b$, if $p_i$ is not geodesic, then $\ell(p_{i+1})<\ell(p_i)$ and so the path $p_b$ is geodesic.
Now, using the fact that $p_i$ and $p_{i+1}$ synchronously $M$-fellow travel,
we get that $p_0$ and $p_b$ synchronously $(M\cdot b)$-fellow travel. 
\end{proof}

A word $W$ is {\it minimal non-geodesic} if it is not geodesic but all its proper subwords are geodesic.

\begin{prop}\label{lem:extendingfftpgenset}
Suppose $(G,X)$ has \fftp.
Let $m>0$ and $Z=\{g\in G : |g|_X\leq m\}$. 
Then $(G,Z)$ has \fftp. 
In particular, $G$ is \fftp -completable.
\end{prop}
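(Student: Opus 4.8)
The plan is to work with the synchronous version of \fftp{} throughout (justified by Remark \ref{rem:syncfftp}), fix a constant $M$ so that $(G,X)$ has synchronous $M$-\fftp{}, and then find an explicit constant $K$ (depending on $M$ and $m$) so that $(G,Z)$ has synchronous $K$-\fftp{}. Since $Z$ is the ball of radius $m$ in $(G,X)$, the two metrics $\d_X$ and $\d_Z$ are bi-Lipschitz equivalent: $\d_Z \le \d_X \le m\cdot \d_Z$. The first observation is that it suffices to falsify every \emph{minimal non-geodesic} word $W$ over $Z$, because an arbitrary non-geodesic path contains a minimal non-geodesic subpath, and one can replace that subpath by a strictly shorter fellow-traveling subpath and invoke Lemma \ref{lem:comptravel} (together with the fact that the untouched parts fellow travel with themselves) to conclude the whole path is falsified. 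So fix such a $W$, of length $n$ over $Z$; since every proper subword is $Z$-geodesic, $\ell_Z(W) = n$ but $|\pi(W)|_Z \le n-1$, and in fact $n-1 \le |\pi(W)|_Z$, so $W$ represents an element whose $Z$-length is exactly $n-1$.

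Next I would pass from the word over $Z$ to a word over $X$. Spell out each letter $z_i$ of $W$ as an $X$-geodesic word of length $\le m$; concatenating gives a path $p'$ in $\ga(G,X)$ with the same endpoints as (the $\ga(G,X)$-image of) $W$, with $\ell_X(p') \le mn$ and $\d_X(p'_-,p'_+) = |\pi(W)|_X \ge |\pi(W)|_Z = n-1 \ge (mn)/m - 1$, roughly; more carefully, $\ell_X(p') \le m|\pi(W)|_Z + m = m(n-1)+m \le m\,\d_X(p'_-,p'_+) + (\text{bounded error})$. The key point is that $p'$ is a $(1,b)$-quasi-geodesic in $\ga(G,X)$ for a constant $b$ depending only on $m$ — one needs this for \emph{every} subpath, which follows because every proper subword of $W$ is $Z$-geodesic, hence its $X$-spelling has $X$-length within a bounded additive constant (at most $2m$, say, coming from the two partially-traversed end letters) of $m\cdot|\cdot|_Z \le m\cdot\d_X$. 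Then Lemma \ref{lem:ftqg} gives an $X$-geodesic $q'$ from $p'_-$ to $p'_+$ that synchronously $(M b)$-fellow travels $p'$.

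Now $q'$ is $X$-geodesic, so $\ell_X(q') = |\pi(W)|_X \le m|\pi(W)|_Z = m(n-1)$, which means $q'$ can be rewritten as a word $U$ over $Z$ of $Z$-length $\le \lceil m(n-1)/m \rceil$... — the cleaner route is: group the edges of $q'$ into blocks of $m$ consecutive edges; each block spells an element of $Z$, so we get a $Z$-path $q$ with $\ell_Z(q) \le \lceil \ell_X(q')/m \rceil \le \lceil m(n-1)/m\rceil$. But I actually want $\ell_Z(q) \le n-1 = \ell_Z(W)-1$, which holds since $\ell_X(q') = |\pi(W)|_X \le m|\pi(W)|_Z \le m(n-1)$ gives $\ell_Z(q) \le n-1$. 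Finally, fellow traveling: $W$ and $p'$ synchronously $(2m)$-fellow travel as paths over $X$ (the image of $W$ over $X$ differs from $p'$ only by being "coarsened" to every $m$-th vertex, so within $m$, hence within $m$ in $\d_X$, within $1$ in $\d_Z$... ), $p'$ and $q'$ synchronously $(Mb)$-fellow travel over $X$, and $q'$ and $q$ synchronously $m$-fellow travel over $X$; chaining these, $W$ and $q$ synchronously $(Mb + 2m)$-fellow travel in $\d_X$, hence synchronously $(Mb+2m)$-fellow travel in $\d_Z$ since $\d_Z \le \d_X$. Setting $K = Mb + 2m$, this falsifies $W$ over $Z$, and by the reduction in the first paragraph $(G,Z)$ has synchronous $K$-\fftp. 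The "In particular" clause then follows: any finite generating set $Y$ of $G$ with $(G,Y)$ having \fftp{} is enlarged by taking $Z$ = ball of radius $m$ over $Y$ for suitable $m$, but more to the point, given an arbitrary finite $Y$, the hypothesis "$G$ has \fftp" supplies \emph{some} finite $X$ with $(G,X)$ having \fftp, and one checks directly that for $m$ large enough the ball of radius $m$ over $X$ contains $Y$; combined with Proposition \ref{lem:extendingfftpgenset} applied to $X$, this shows $G$ is \fftp-completable.

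The main obstacle I anticipate is the bookkeeping in the "quasi-geodesic" step: verifying that $p'$ (the $X$-spelling of $W$) is a $(1,b)$-quasi-geodesic requires controlling \emph{every} subpath, including subpaths that start or end in the middle of an $X$-block. One must argue that truncating to block boundaries changes $X$-length and $X$-distance by at most an additive $O(m)$, and that on block boundaries the subpath corresponds to a proper subword of $W$, which is $Z$-geodesic, so its $X$-length is $\le m$ times its $Z$-length $\le m$ times its $Z$-distance $\le m$ times its $X$-distance; reassembling the error terms to get a clean additive constant $b=b(m)$ is the technical heart. Everything else is a routine chaining of fellow-traveling estimates via Lemma \ref{lem:comptravel} and the bi-Lipschitz comparison of $\d_X$ and $\d_Z$.
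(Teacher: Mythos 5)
Your proposal follows essentially the same route as the paper's proof: spell each $Z$-letter as an $X$-geodesic, use minimality of the non-geodesic $Z$-word to show the resulting $X$-path is a $(1,O(m))$-quasi-geodesic (the paper's Claims 1 and 2, with constant $5m$), apply Lemma \ref{lem:ftqg}, regroup the resulting $X$-geodesic into blocks of $m$ edges to obtain a strictly shorter $Z$-path, and chain the synchronous fellow-traveling estimates (the paper's Claim 3). The only slip is the assertion that $|\pi(W)|_Z$ equals exactly $n-1$ for a minimal non-geodesic $W$ of length $n$ — minimality only gives $n-2\le |\pi(W)|_Z\le n-1$ — but your argument uses only the upper bound, so nothing is affected.
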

\begin{proof}
Let $K_X$ be the falsification by fellow traveler constant for $(G,X)$.

For each $z\in Z$ choose a geodesic word $W_z$ over $X$ that represents $z$.

\textbf{Claim 1.} Let $U\equiv z_1\cdots z_n$ be a geodesic word over $Z$ and let
$V\equiv W_{z_1}\cdots W_{z_n}$ be the corresponding word over $X$. We claim that $0\leq \ell_X(V)-|V|_X\leq m$.

Clearly $\ell_X(V)\geq |V|_X$. Take $a\geq 0$ and $0\leq b<m$ such that
$|V|_X= a m +b$. Then $|V|_X>(n-1)m$ because otherwise $V$ could be written as the product of less than $n$ words, each of length $\leq m$, and that would contradict the fact that $U$ is a geodesic over $Z$.  Thus $(n-1)m<|V|_X=am+b$ and therefore $n=\ell_Z(U)\leq a +1$. Thus, $|V|_X\leq \ell_X(V)\leq m\ell_Z(U)\leq am+m\leq |V|_X+m$, and the claim is proved.

\textbf{Claim 2.} Suppose that $U\equiv z_1\cdots z_n$ is a minimal non-geodesic word over $Z$. Then $V\equiv W_{z_1}\cdots W_{z_n}$ is
a $(1,5m)$-quasi-geodesic word over $X$.

Take a subword $V'$ of $V$. Then $V'= A W_{z_i}\cdots W_{z_j}B$ for some $1<i\leq j<n$, where $A$ 
is a nonempty suffix of $W_{z_{i-1}}$ and $B$ a nonempty prefix of $W_{z_{j+1}}$. Since $U$ is a minimal non-geodesic, the subword $z_i\cdots z_j$ is geodesic over $Z$.  Let $C\equiv W_{z_i}\cdots W_{z_j}$. Then by Claim 1,
$\ell_X(C)\leq |C|_X +m$. Notice that $|C|_X-2m \leq |ACB|_X\leq |C|_X+2m$, and hence
$$\ell_X(A CB)\leq \ell_X(C) +2m \leq |C|_X +3m\leq |ACB|_X+5m,$$
which proves the second claim.

Finally we show that $(G,Z)$ has the synchronous falsification by fellow traveler property with constant $K_Z=5m \cdot K_X +6m$ by proving that any minimal non-geodesic in $\ga(G,Z)$ synchronously $K_Z$-fellow travels with a geodesic in $\ga(G,Z)$. 

Suppose that $U$ is a minimal non-geodesic word over $Z$, and let $V$ be the corresponding word over $X$. 
Let $p_X$ be the path in $\ga(G,X)$ with $(p_X)_-=1$ and such that $\Lab(p_X)\equiv V$, 
and let $p_Z$ be the path in $\ga(G,Z)$ with $(p_Z)_-=1$, $\Lab(p_Z)\equiv U$. 

By Claim 2, $p_X$ is a $(1,5m)$-quasi-geodesic, and by Lemma \ref{lem:ftqg}, 
there exists a geodesic path $q_X$ in $\ga(G,X)$ with 
$(q_X)_-=(p_X)_-$, $(q_X)_+=(p_X)_+$ such that $p_X$ and $q_X$ synchronously $(5m \cdot K_X)$-fellow travel.

Since every subpath of length less than $m$ in $q_X$ represents an element of $Z$, we can subdivide $q_X$ to obtain a path $q_Z$ in $\ga(G,Z)$ such that $q_Z(t)=q_X(tm)$ 
for $t=0,1,\dots, \lfloor\frac {\ell(q_X)}{m}\rfloor$ 
and $q_Z(t)=(p_Z)_+$ for $t>\lfloor\frac {\ell(q)}{m}\rfloor$. Notice that $q_Z$ is a geodesic by construction. 

\textbf{Claim 3.} For every $r,s,t\in \N$ such that  $p_Z(t)=p_X(s)$, $q_Z(t)=q_X(r)$ we have that $|s-r|\leq 6m$.

Indeed, since $p_Z$ is a minimal non-geodesic, $t\geq \d_Z(1,p_Z(t))\geq t-1$ and thus $$(t-1)m\leq \d_X(1,p_Z(t))=\d_X(1,p_X(s))\leq tm.$$
Since $s=\ell_X(p_X([1,s]))\geq \d_X(1,p_X(s))$, we obtain that $s\geq (t-1)m$. 
The fact that $p_X$ is a $(1,5m)$-quasi-geodesic implies that $s=\ell_X(p_X([0,s]))\leq \d_{X}(1,p_X(s))+5m\leq tm +5m$. In summary,
$$(t-1)m\leq  s \leq  tm +5m .$$
By the construction of $q_Z$ and $r$, it similarly follows that $(t-1)m\leq r\leq tm$. Claim 3 now follows.

To conclude the proof, observe that, since $q_X$ is geodesic, we have that for every $t\in \N$ and $s,r$ such that  $p_Z(t)=p_X(s)$, $q_Z(t)=q_X(r)$
\begin{align}
\label{eq:Kz}
\d_Z(p_Z(t),q_Z(t))&\leq \d_X(p_Z(t),q_Z(t))=   \d_X(p_X(s),q_X(r)) \\
&\leq \d_X(p_X(s),q_X(s))+\d_X(q_X(s),q_X(r))\leq K_X\cdot 5m+ |s-r|.\nonumber
\end{align}
Using Claim 3 and \eqref{eq:Kz} $p_Z$ and $q_Z$ synchronously $(5m \cdot K_X+6m)$-fellow travel.
\end{proof}

%%%%%%%%%%%%%%%%%%%%
% Section
% Construction
%%%%%%%%%%%%%%%%%%%
\section{Constructing paths  in \texorpdfstring{$\ga(G,X\cup \cH)$}{XUH} from paths in \texorpdfstring{$\ga(G,X)$}{X}}
\label{sec:const}

Throughout this section let $G$ be hyperbolic relative to $\{H_\omega\}_{\omega\in \Omega}$, $\cH =\cup_{\omega \in \Omega} (H_\omega -\{1\})$, and $X$ a finite generating set.

The main idea of the paper is to transform paths in $\ga(G,X)$ into paths in 
$\ga(G,X\cup \cH)$ while taking advantage of the hyperbolicity of the latter. 
In this section we establish a canonical way of performing such transformations.

Most of the technicalities in this section are due to the non-trivial intersections of parabolic subgroups. It is worth mentioning that in the case
of torsion-free groups the arguments and the proofs in this section can be greatly simplified.

\begin{const}\label{const:paths}

The {\it $\{H_\omega\}_{\omega\in \Omega}$-factorization} 
(or simply factorization), of a word $W$ over $X$, 
is an expression for $W$ of the form 
$$W\equiv A_0U_1A_1\cdots U_nA_n,$$ 
where $A_0,\dots ,A_n$ are words over $X-\cH$ and 
$U_1,\dots,U_n$ non-empty words over some $X\cap H_\omega$
 such that if $A_i$ is empty, then $U_i x$ cannot be a word 
 over any $X\cap H_\omega$, where $x$ is the first letter of 
 $U_{i+1}$. The number of words $U_i$ is the length of the 
 factorization. We note that the factorization is uniquely determined by $W$.

We say that a word $W$ over $X$ has {\it no parabolic shortenings} if
each $U_i$ in the factorization of $W$,  $U_i$ a word over $X\cap H_\omega$, is geodesic in $(H_\omega, X\cap H_\omega)$.

For a word $W$ with no parabolic shortenings, we define
$$\wh{W}\equiv A_0h_1A_1\cdots h_nA_n$$
to be a word over $X\cup \cH$, where $U_i=_Gh_i\in \cH$. Notice that since there are no parabolic shortenings, each $U_i$ is geodesic and non-empty, and hence $h_i\neq 1$. We say that the resulting word $\wh{W}$ is {\it derived} from $W$.

Similarly, if $p$ is a path in $\ga(G,X)$ and $\Lab(p)$ has no parabolic shortenings, we denote by $\wh{p}$
the path in $\ga(G,X\cup \cH)$ with $p_-=\wh{p}_-$ and $\Lab(\wh{p})\equiv\wh{\Lab(p)}$. This gives a well-defined map $$\wh{-}\colon \{\text{paths in }\ga(G,X)\}\to \{\text{paths in }\ga(G,X\cup \cH)\},$$
$$p\mapsto \wh{p}.$$

\end{const}

In the next section we will show that for a $2$-local geodesic word $W$ that has no parabolic shortenings, we only need to check a finite list of forbidden words to conclude that $\wh{W}$ is a quasi-geodesic with some fixed parameters. In order to prove this, a first step is Lemma \ref{lem:2geod}, where we get sufficient conditions for $\wh{W}$ to be a 2-local geodesic.

For $t>0$ and any finite generating set $X$ of $G$ we set 
$\Theta_X(t)=\{h\in \cH : |h|_X\leq t \}$. 
We will use the notation $\Theta(t)$ instead of $\Theta_X(t)$ 
when the generating set is clear from the context.

\begin{lem}\label{lem:nocancellation}
Let $G$ be hyperbolic relative to $\{H_\omega\}_{\omega\in \Omega}$ and finitely generated by a set $X$. Let $\cH =\cup_{\omega \in \Omega} (H_\omega -\{1\})$. There exists $m=m(X)>1$ such that for every $\omega,\mu\in \Omega$, $\mu\neq \omega$ 
$$(H_\omega -\Theta(m))\cap H_\mu=\emptyset,$$
and for all $\omega \in \Omega$, $f\in H_\omega-\Theta(m)$ and $y\in (X\cup \cH)-H_\omega$
$$|fy|_{X\cup \cH}=|yf|_{X\cup \cH}=2.$$
\end{lem}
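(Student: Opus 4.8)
The plan is to build the finite set $\Theta(m)$ by combining the finiteness statements in Lemma~\ref{lem:factsRH}(ii)--(iii) with the fact that nontrivial parabolic elements cannot be represented by short words over $X\cup\cH$ unless they are parabolic "on the nose". First I would treat the statement $(H_\omega-\Theta(m))\cap H_\mu=\emptyset$: by Lemma~\ref{lem:factsRH}(iii), for $\omega\neq\mu$ the intersection $H_\omega\cap H_\mu$ is finite, so $H_\omega\cap H_\mu\subseteq\Theta(t_{\omega,\mu})$ where $t_{\omega,\mu}=\max_{h\in H_\omega\cap H_\mu}|h|_X$ (a finite max over a finite set); since $|\Omega|<\infty$ by Lemma~\ref{lem:factsRH}(ii), taking $m$ at least the maximum of all the $t_{\omega,\mu}$ handles all pairs simultaneously. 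So the first assertion is essentially immediate from the finiteness results and costs only a finite enlargement of $m$.

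The substantive part is the length statement: for $f\in H_\omega-\Theta(m)$ and $y\in(X\cup\cH)-H_\omega$, that $|fy|_{X\cup\cH}=|yf|_{X\cup\cH}=2$. The upper bound $\le 2$ is clear ($f$ and $y$ are each single letters of $X\cup\cH$, and $f\neq 1$ since $f\notin\Theta(m)$, $y\neq 1$), so the content is the lower bound: $fy$ cannot be written as a single letter of $X\cup\cH$ and is not the identity. It is not the identity because $y\neq f^{-1}$: indeed $f^{-1}\in H_\omega$ but $y\notin H_\omega$. The remaining case to rule out is $fy\in X\cup\cH$ as a single generator, i.e.\ either $fy\in X$ (so $|fy|_X\le 1$) or $fy\in H_\mu-\{1\}$ for some $\mu$. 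If $fy\in X$ then $f=(fy)y^{-1}$ has $X\cup\cH$-length $\le 2$; I would instead argue at the level of $\Gamma(G,X\cup\cH)$ or directly bound $|f|_X$ --- the point is that $f=(fy)y^{-1}$ where $fy$ and $y^{-1}$ are short in a controlled way, forcing $f$ into a bounded set, hence into $\Theta(m)$ for $m$ large, contradiction. If $fy=h\in H_\mu-\{1\}$ then $f=hy^{-1}$; if $\mu=\omega$ then $y=h^{-1}f\in H_\omega$, contradicting $y\notin H_\omega$; if $\mu\neq\omega$ we use again that $f\in H_\omega$ with $f=hy^{-1}$, so $y=h^{-1}f$, and I need to argue that this traps $y$ (or $f$) in a finite set.

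The cleanest way to organize the lower bound, which is the approach I would actually take, is via the Bounded Coset Penetration Property (Theorem~\ref{thm:BCP}) or the no-backtracking/isolated-component machinery: suppose $fy=z$ with $z$ a single letter of $X\cup\cH$; then the path in $\Gamma(G,X\cup\cH)$ labelled $fyz^{-1}$ is a closed triangle, and by analyzing its components one shows the $H_\omega$-component $f$ must be connected to, or swallowed by, a bounded-size piece, forcing $|f|_X$ below a constant depending only on $X$ and the structure constants --- call this constant $m$. More concretely: enumerate the finitely many ways $f$ can fail the conclusion (namely $f\in H_\mu$ for some $\mu\ne\omega$, or $fy\in X$, or $fy\in H_\mu$), each of which, via Lemma~\ref{lem:factsRH}(iii) applied to the relevant intersections $H_\omega\cap H_\mu^g$, confines $f$ to a finite union of finite sets; then let $m$ be larger than the $X$-length of every element in that finite union, together with the bound from the first assertion. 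The main obstacle I expect is the bookkeeping in the case $fy\in H_\mu$ with $\mu\neq\omega$ and $y\in\cH$ (so $y\in H_\nu$ for some $\nu$): here one genuinely uses that distinct parabolics and their conjugates meet in finite sets, and one must be careful that the conjugating elements stay bounded --- this is exactly the kind of "non-trivial intersections of parabolic subgroups" technicality the authors flag just before the construction, so I would lean on Theorem~\ref{thm:BCP} with $\lambda=1$, $c=0$, $k=0$ to extract a uniform $\varepsilon$ and then set $m$ above $\varepsilon$ and above all the finite-set bounds.
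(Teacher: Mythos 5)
Your treatment of the first assertion coincides with the paper's: both take $m$ large enough that the finite set $\cup_{\mu\neq\omega}(H_\omega\cap H_\mu)$ lies in $\Theta(m)$, using Lemma~\ref{lem:factsRH}(ii)--(iii). For the length assertion you take a genuinely different route. The paper does not use Theorem~\ref{thm:BCP} here; it considers the closed triangle $q_1$ labelled $fyh_1^{-1}$ (where $fy=_Gh_1\in X\cup\cH$), observes that the edge labelled $f$ is an \emph{isolated} component of $q_1$ (if it were connected to another component, that component would have to be an $H_\omega$-component since $f\notin\Theta(m)$ excludes $f$ from every other parabolic, forcing $y\in H_\omega$), and then applies Lemma~\ref{lem:Xi} to get $|f|_{\Xi}\le 3L$; the constant $m$ is chosen in advance so that $|g|_X>m$ implies $|g|_\Xi>3L$. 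Your BCP argument (compare the two-edge path $fy$ with the one-edge path $h_1$ and use clause (2) to force the single component of the short side to be an $H_\omega$-component connected to $f$, whence $y\in H_\omega$) reaches the same contradiction and is a legitimate alternative; the paper's choice of Lemma~\ref{lem:Xi} just packages the same "isolated components of short cycles are short" principle more directly.

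Two points you should tighten if you write this up. First, the path labelled $fy$ has length $2$ and endpoints at $\dxh$-distance $1$, so it is \emph{not} a $(1,0)$-quasi-geodesic; you need $\e(1,1,0)$ (or $\e(2,0,0)$), and you must also verify non-backtracking, which follows from $y\notin H_\omega$ together with the first assertion. Second, your intermediate remark that $f=(fy)y^{-1}$ "forces $f$ into a bounded set" is unjustified when $y\in\cH$: a product of two letters of $X\cup\cH$ can have arbitrarily large $X$-length, so that case genuinely requires the BCP (or Lemma~\ref{lem:Xi}) argument rather than a direct length count. Finally, remember the statement is symmetric: the case $yf=_Gh_2\in X\cup\cH$ must be handled as well, which your setup does with the mirror-image diagram.
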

\begin{proof}
By Lemma \ref{lem:factsRH}~(iii), $H_\omega\cap H_\mu$ is finite for all $\omega,\mu\in \Omega$. By Lemma \ref{lem:factsRH}~(ii), $\Omega$ is finite and hence
$I=\cup_{\mu \neq \omega} (H_\mu \cap H_\omega)$ is finite
and any $m>0$ such that $I\subseteq \Theta(m)$ satisfies the first claim of the lemma.

In order to prove the second claim, suppose further that $|g|_X>m$ implies $|g|_{\Xi}>3L$, for all $g\in \gen{\Xi}$, where  $\Xi$ and $L$ are the set and the constant of Lemma \ref{lem:Xi}.

Let $f\in H_\omega-\Theta(m)$ and $y\in (X\cup \cH)-H_\omega$. Since $y\notin H_\omega$ we get $fy\neq_G 1$ and $yf \neq_G 1$. So we only need to consider the case $fy=_G h_1\in X\cup \cH$ and the case $yf=_G h_2\in X\cup \cH$.

Let $q_1$ (resp. $q_2$) be the cycle whose label is $\Lab(q_1)\equiv fyh_1^{-1}$ (resp. $\Lab(q_2)\equiv yfh_2^{-1}$).
Here $f$ labels an isolated component of $q_1$ (resp. $q_2$), since if $f$ were connected to some other component, this should be an $H_\omega$-component  because $f\in H_\omega-\Theta(m)$. In this case, we would get that $y\in H_\omega$, which contradicts the hypothesis.

Then by  Lemma \ref{lem:Xi} if $fy=_G h_1$ (resp. $yf=_G h_2$) $|f|_{\Xi}\leq L \ell(q_1)= 3L$ (resp. $|f|_{\Xi}\leq L \ell(q_2)= 3L$), which contradicts $|f|_X>m$.
\end{proof}

We denote by $\cH_I$ the union of all intersections of pairs of parabolic subgroups, that is,  
\begin{align}
\label{eq:Hi}
\cH_I=(\cup_{\mu\neq \omega} (H_\omega\cap H_\mu))-\{1\}.
\end{align}
Notice that by Lemma \ref{lem:factsRH}, if $G$ is finitely generated, then
the set $\cH_I$ is finite.

\begin{lem}\label{lem:h1h2} Suppose that $\cH_I\subseteq X$. 
Let $W$ be a word over $X$ with no parabolic shortenings. 
%Let $\wh{W}$ be derived from $W$. 
Then
$\wh{W}$ does not contain a subword of the form $f_1f_2$, where $f_1,f_2$ in $H_\omega$.
\end{lem}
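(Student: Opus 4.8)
The plan is to reduce the claim to a contradiction with the defining property of the $\{H_\omega\}_{\omega\in\Omega}$-factorization $W\equiv A_0U_1A_1\cdots U_nA_n$ of $W$. First I would determine which consecutive pairs of letters can occur in $\wh W\equiv A_0h_1A_1\cdots h_nA_n$: every letter of each $A_j$ lies in $X-\cH$ and hence in no $H_\omega$ (since $H_\omega-\{1\}\subseteq\cH$, any element of $X-\cH$ that lies in $H_\omega$ would have to be $1$), so the only candidate subword $f_1f_2$ with $f_1,f_2\in H_\omega$ for a common $\omega$ is $f_1=h_i$, $f_2=h_{i+1}$ with $1\le i\le n-1$ and $A_i$ empty. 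So I would assume, for contradiction, that $A_i$ is empty and that $h_i,h_{i+1}$ both lie in a single $H_\omega$, with the goal of contradicting the factorization rule, which in this situation asserts that $U_ix$ is not a word over any $X\cap H_\mu$, where $x$ denotes the first letter of $U_{i+1}$.

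The heart of the argument is to show that, despite the mismatch that may occur between the parabolics recorded by the factorization and the parabolic $H_\omega$ in the statement, $U_i$ is a word over $X\cap H_\omega$ and $x\in X\cap H_\omega$; this exhibits $U_ix$ as a word over $X\cap H_\omega$, the desired contradiction. Write $H_{\omega_i}$ for the parabolic with $U_i$ a word over $X\cap H_{\omega_i}$. If $H_{\omega_i}=H_\omega$ there is nothing to prove. If $H_{\omega_i}\ne H_\omega$, then $h_i$ is a nontrivial element of $H_\omega\cap H_{\omega_i}$, so by definition of $\cH_I$ we have $h_i\in\cH_I\subseteq X$, whence $h_i\in X\cap H_{\omega_i}$ and $|h_i|_{X\cap H_{\omega_i}}=1$; since $U_i$ is a non-empty geodesic word over $X\cap H_{\omega_i}$ representing $h_i$, it must be a single letter $x'\in X\cap H_{\omega_i}$ with $x'=_G h_i\in H_\omega$, hence $x'\in X\cap H_\omega$. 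The identical argument applied to $h_{i+1}$ and $U_{i+1}$ shows that $U_{i+1}$, and in particular its first letter $x$, lies in $X\cap H_\omega$. Combining these two facts finishes the proof.

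I expect the only real subtlety to be exactly the point just highlighted: the factorization records some parabolic $H_{\omega_i}$ for the syllable $U_i$, and a priori this need not be the $H_\omega$ containing $h_i$, since distinct parabolics can intersect nontrivially (cf. Lemma \ref{lem:factsRH}(iii)). The hypothesis $\cH_I\subseteq X$ is present precisely to deal with this — it forces every such ``shared'' element to be a single generator, collapsing $U_i$ (and likewise $U_{i+1}$) to length one and making it visibly a word over $X\cap H_\omega$. Everything else is a direct unwinding of Construction \ref{const:paths}. (One should keep the convention that no letter of $X$ represents $1\in G$; otherwise the statement is literally false for such a letter, and the intended reading is that $f_1,f_2$ are genuine parabolic letters of $\cH$, which is the substantive case treated above.)
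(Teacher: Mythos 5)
Your proposal is correct and follows essentially the same route as the paper's proof: reduce to the case $f_1=h_i$, $f_2=h_{i+1}$ with $A_i$ empty, use $\cH_I\subseteq X$ together with geodesicity of the syllables to collapse $U_i$ (and $U_{i+1}$) to a single letter of $X\cap H_\omega$ whenever the recorded parabolic differs from $H_\omega$, and then contradict the maximality condition in the definition of the factorization. The only cosmetic difference is that you contradict the ``$U_ix$ is not a word over any $X\cap H_\mu$'' clause while the paper phrases the same contradiction as $U_iU_{i+1}$ violating the maximality of $U_i$.
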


\begin{proof}
Suppose that the factorization of $W$ is $W\equiv A_0U_1A_1\cdots U_nA_n$.
%where  the $A_i$'s are words over $X-\cH$, each $U_i$ is a non-empty geodesic word
%over some $X\cap H_\omega$, and if $A_i$ is empty, then $U_ix$ is not a word over any $X\cap H_\omega$, where $x$ is the first letter of $U_{i+1}$.
Let $U_i=_G h_i\in \cH$ and $\wh{W}\equiv A_0h_1A_1\cdots h_nA_n$.

If $\wh{W}$ contains $f_1f_2$ as a subword, then $f_1,f_2$ must belong to some $H_\omega$, $f_1=h_i$, $f_2=h_{i+1}$ and $A_{i+1}$ must be empty. We will show that this leads to a contradiction.

Suppose that $h_i$ and $h_{i+1}$ are elements in $H_\omega$, 
$U_i$ is a word in $X\cap H_\mu$ and $U_{i+1}$ is a word in 
$X\cap H_\nu$ with $\mu \neq \nu$.
Then $h_i\in H_\omega\cap H_\mu$ and $h_{i+1}\in H_\omega\cap H_\nu$.
%Since $\nu\neq \mu$, either $\mu\neq \omega$ or $\nu \neq \omega$.
We claim that $U_i$ is a word in $X\cap H_\omega$. The only case we need to
check is $\mu \neq \omega$. Since $U_i$ is a geodesic and represents an
element of $H_\mu\cap H_\omega\subseteq \cH_I\subseteq X$, we have that $U_i\equiv h_i$, and $U_i$ is a word in $X\cap H_\omega$, as claimed.
A similar argument shows that $U_{i+1}$ is a word in  $X\cap H_\omega$.
Then $U_i U_{i+1}$ is  a word in $X\cap H_\omega$ contradicting the maximality of $U_i$.
\end{proof}

\begin{lem}\label{lem:2geod}
Let $Y$ and $X$ be finite generating sets of $G$. Suppose  that  $$Y\cup \{h\in \cH : 0<|h|_Y\leq 2m\}\subseteq X\subseteq Y\cup \cH,$$ where $m(Y)>1$ is the constant of Lemma \ref{lem:nocancellation}. 
Let $W$ be a 2-local geodesic word over $X$ with no parabolic shortenings.
Then $\wh{W,}$ the word over $X\cup \cH$ derived from $W$, is a 2-local geodesic.
\end{lem}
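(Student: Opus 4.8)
The plan is to argue by contradiction: suppose $\wh W$ is not a $2$-local geodesic, so it contains a subword $y_1 y_2$ (two consecutive letters of $\wh W$, each in $X\cup\cH$) with $|y_1y_2|_{X\cup\cH}\le 1$, i.e. $y_1y_2=_G y$ for some $y\in (X\cup\cH)\cup\{1\}$. First I would dispose of the trivial structural cases. By Lemma~\ref{lem:h1h2} — whose hypothesis $\cH_I\subseteq X$ is guaranteed since $\{h\in\cH: 0<|h|_Y\le 2m\}\subseteq X$ and $\cH_I$ consists of elements of bounded $Y$-length (being a finite set) — the word $\wh W$ contains no subword $f_1f_2$ with $f_1,f_2$ in the same $H_\omega$. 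So the pair $y_1y_2$ is not of that form. The remaining possibilities for $y_1,y_2$ are: (a) both in $X-\cH$; (b) one in $X-\cH$ and one a component letter $h_i\in\cH$; (c) both component letters $h_i, h_{i+1}$ lying in \emph{distinct} parabolics $H_\mu, H_\nu$ with an empty $A_{i+1}$ between them.

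In case (a), $y_1y_2$ is literally a length-$2$ subword of $W$ over $X$, and since $W$ is a $2$-local geodesic over $X$ we get $|y_1y_2|_X=2$, hence $|y_1y_2|_{X\cup\cH}\ge\dots$ — here one must be careful, as $X\cup\cH$ is a larger generating set and could shorten things; but $y_1,y_2\notin\cH$, and if $y_1y_2=_G y\in\cH$ then $y$ would have $Y$-length at most that of $y_1y_2$ over $Y$, which is at most $2$ (translating through $X\subseteq Y\cup\cH$, where $X$-letters that are parabolic have bounded $Y$-length — this is the place the hypothesis $\{h:|h|_Y\le 2m\}\subseteq X$ is used to control lengths in both directions). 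In case (b), write $y_1y_2$ with $y_2=h_i\in H_\omega-\{1\}$ (say) and $y_1\in X-\cH$; then $y_1\notin H_\omega$ (since $y_1\notin\cH$), so we may invoke Lemma~\ref{lem:nocancellation}: either $h_i\in\Theta_Y(m)$, in which case $h_i$ has small $Y$-length and by the inclusion hypothesis $y_1y_2$ can be re-read as a subword of a word over $X$ of length $\le 2$ contradicting $2$-local geodesicity of $W$, or $h_i\notin\Theta_Y(m)$, in which case the lemma gives $|y_1h_i|_{X\cup\cH}=2$ outright, contradicting $|y_1y_2|_{X\cup\cH}\le 1$. Case (c) is the same: $h_i\in H_\mu$, $h_{i+1}\in H_\nu$, $\mu\ne\nu$; since $\mu\ne\nu$ we have $h_{i+1}\notin H_\mu$ (using Lemma~\ref{lem:nocancellation}'s first conclusion: if $h_{i+1}$ were in $H_\mu\cap H_\nu$ it would lie in $\Theta_Y(m)$, hence be an element of $X$, hence $U_{i+1}\equiv h_{i+1}$ is a word over $X\cap H_\mu$ too, and then it should have been absorbed into $U_i$ — contradicting maximality, as in the proof of Lemma~\ref{lem:h1h2}), so again Lemma~\ref{lem:nocancellation} applies with $f=h_i$, $y=h_{i+1}\notin H_\mu$ and gives $|h_ih_{i+1}|_{X\cup\cH}=2$, a contradiction — unless $h_i\in\Theta_Y(m)$, again handled by the length bookkeeping via the inclusion of small parabolic elements in $X$.

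The routine bookkeeping I am glossing over is the translation of word lengths between the three generating sets $Y$, $X$, and $X\cup\cH$; the hypothesis $Y\cup\{h\in\cH:0<|h|_Y\le 2m\}\subseteq X\subseteq Y\cup\cH$ is exactly engineered so that a subword of $\wh W$ which collapses in $\ga(G,X\cup\cH)$ forces a collapse detectable either inside $W$ (as a short subword over $X$, contradicting the $2$-local-geodesic hypothesis) or by Lemma~\ref{lem:nocancellation} (when the offending parabolic element is genuinely "large"). The main obstacle is getting case (a) and the $\Theta_Y(m)$-subcases airtight: one has to make sure that when a parabolic letter $h_i$ has small $Y$-length it genuinely sits in $X$ and that the corresponding stretch of $W$ is a \emph{proper} length-$\le 2$ subword so that $2$-local geodesicity of $W$ really is violated. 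I expect that isolating the correct $2m$ (as opposed to $m$) in the hypothesis is precisely what makes these subcases work: a length-$2$ subword of $W$ can contain \emph{two} parabolic generators each of $Y$-length up to $m$, whose product sits in a parabolic of $Y$-length up to $2m$, and that product must itself be a generator in $X$ for the argument to close.
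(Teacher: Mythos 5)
Your proposal is correct and takes essentially the same route as the paper, which merely packages the identical case analysis (Lemma \ref{lem:h1h2} to exclude same-parabolic pairs, Lemma \ref{lem:nocancellation} for large parabolic letters, and the $2m$-inclusion together with $2$-local geodesicity of $W$ for small ones) as an induction on the length of the factorization. One small repair: the inclusion $\cH_I\subseteq X$ follows not from mere finiteness of $\cH_I$ but from the first conclusion of Lemma \ref{lem:nocancellation}, which gives $\cH_I\subseteq\Theta_Y(m)\subseteq\{h\in\cH : 0<|h|_Y\le 2m\}\subseteq X$.
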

\begin{proof}
Fix a generating set $X$ satisfying the hypothesis. It is immediate to see that $X-\cH=Y-\cH$.
Also, by Lemma \ref{lem:nocancellation}, $\cH_I\subseteq X$.

Let $W$ be a $2$-local geodesic word over $X$ with no parabolic shortenings.

We argue by induction on the length of the factorization of $W$.
If the length of the factorization is zero, then $W$ is a word over $X-\cH$ and $\wh{W}\equiv W$. 
If $\wh{W}$ is not a 2-local geodesic,  it contains a subword
$xy$, $x,y\in X-\cH=Y-\cH$ for which $|xy|_Y= 2$ and $|xy|_{X\cup \cH}\leq 1$. Then $xy\in  X\cup \cH$.
If $xy\in \cH$, since $|xy|_Y= 2$, $xy\in X$.
This contradicts that $W$ is a 2-local geodesic over $X$.

Consider now the case that the factorization is $W\equiv A_0U_1A_1\cdots U_n A_n$ with $n>0$, and $\wh{W}\equiv A_0h_1A_1\cdots h_n A_n$.
The words $A_i$ are words over $X-\cH$ and by the previous discussion, all of them are $2$-local geodesics
over $X\cup \cH$.

So if $\wh{W}$ is not a $2$-local geodesic, either some $A_i$ is empty, and $|h_ih_{i+1}|_{X\cup \cH}\leq 1$,
or  $A_i$ is non-empty and either $|yh_{i+1}|_{X\cup \cH}\leq 1$, where $y$ is the last letter of $A_i$, or
$|h_i x|_{X\cup \cH}\leq 1$, where $x$ is the first letter of $A_i$.
In each case we are going to derive a contradiction.

In the first case, if $A_i$ is empty, Lemma \ref{lem:h1h2} implies that $h_i,h_{i+1}$ do not
belong to the same parabolic subgroup. If $|h_ih_{i+1}|_{X\cup \cH}\leq 1$, Lemma \ref{lem:nocancellation}
implies that $|h_i|_Y\leq m$ and $|h_{i+1}|_Y\leq m$ and in particular, $h_i, h_{i+1}\in X$. Since $h_i,h_{i+1}\in X$, and $U_i, U_{i+1}$ are geodesic, $U_i\equiv h_i, U_{i+1}\equiv h_{i+1}$
and $h_ih_{i+1}$ is a subword of $W$. Since $W$ is a $2$-local geodesic,
$|h_ih_{i+1}|_X=2$. If $|h_ih_{i+1}|_{X\cup \cH}\leq 1$ then $h_ih_{i+1}\in  \cH$.
Notice that since $|h_ih_{i+1}|_Y\leq 2m$, if $h_ih_{i+1}\in \cH$, then $h_ih_{i+1}\in X$ contradicting $|h_ih_{i+1}|_X=2$. %Thus $|h_ih_{i+1}|_{X\cup \cH}=2$.

Similarly, if $A_i$ is non-empty and $|yh_{i+1}|_{X\cup \cH}\leq 1$, where $y$ is the last letter of $A_i$, then by Lemma \ref{lem:nocancellation} $|h_{i+1}|_Y\leq m$, so $h_{i+1}\in X$ and since $U_{i+1}$ is geodesic, $U_{i+1}\equiv h_{i+1}$ and $yh_{i+1}$ is a subword of $W$. Since $W$ is a $2$-local geodesic, $|yh_{i+1}|_X=2$. Thus if $|yh_{i+1}|_{X\cup \cH}\leq 1$, then  $yh_{i+1} \in \cH$, but since $|yh_{i+1}|_Y\leq m+1$, we obtain $yh_{i+1}\in X$, which is a contradiction.

The last case is analogous.
\end{proof}

In section \ref{sec:bicom} we will show that if for each $\omega\in \Omega$ there is a language $\cL_\omega\subseteq \geol(H_\omega, X\cap H_\omega)$ satisfying a certain fellow traveller property, then we can extend this property to a language of words over $X$. A first step in this direction is to consider the following language.

\begin{defn}\label{defn:rel}
For each $\omega\in \Omega$ let $\cL_\omega\subseteq  (X\cap H_\omega)^*$. We define the set  $\rell(X, \{\cL_\omega\}_{\omega\in \Omega})$ to be the subset of words $W$ in $X^*$ such that in the factorization $A_0U_1A_1U_2\cdots U_nA_n$ of $W$ all
$U_i\in \cup \cL_\omega$, $i=1,\dots, n$.
\end{defn}

 Let $K$ be a group, $Z$ a generating set, and $\cL\subseteq Z^*$. We define the following property for $(K,Z,\cL)$:
\begin{enumerate}
\item[{\rm (L1)}] $ \cL \subseteq \geol(K, Z)$ and $\cL$ contains at least one representative for each element of $K$. 
\end{enumerate}
We usually require  that each $(H_\omega, X\cap H_\omega, \cL_\omega)$  satisfies (L1).  Notice that in this case, if $W\in \rell(X, \{\cL_\omega\}_{\omega\in \Omega})$, then $W$ has no parabolic shortening and hence $\wh{W}$ is defined.

\begin{lem}\label{lem:rel}
Suppose $\cH_I\subseteq X$, and suppose that each $(H_\omega, X\cap H_\omega, \cL_\omega)$ satisfies (L1).
\begin{enumerate}
\item[{\rm(i)}] If each $\cL_\omega$ is prefix-closed, then so is  $\rell(X, \{\cL_\omega\}_{\omega\in \Omega})$.
\item[{\rm (ii)}] If each $\cL_\omega$  is regular, then  so is  $\rell(X, \{\cL_\omega\}_{\omega\in \Omega})$.
%\item[{\rm (iii)}] Suppose that $W\in \rell(X, \{\cL_\omega\}_{\omega\in \Omega})$ and $\wh{W}\equiv \wh{W}_1h \wh{W}_2$, $h\in \cH$, then there are words $W_1$ and $W_2$ over $X$ and $V\in \cup \cL_\omega$ such that $W\equiv W_1VW_2$, $V=_G h$, and $\wh{W}_1$ and $\wh{W}_2$ are derived from $W_1$ and $W_2$, respectively.
\end{enumerate}
\end{lem}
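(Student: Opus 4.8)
The plan is to show both statements essentially by reducing a property of $\rell(X,\{\cL_\omega\}_{\omega\in\Omega})$ to the corresponding property of the $\cL_\omega$ together with the syntactic structure of the factorization. The key observation is that, because the factorization $A_0U_1A_1\cdots U_nA_n$ of a word $W$ over $X$ is \emph{uniquely determined by $W$} (as stated in Construction \ref{const:paths}), the membership of $W$ in $\rell(X,\{\cL_\omega\}_{\omega\in\Omega})$ depends only on checking, block by block, that each maximal parabolic block $U_i$ lies in the appropriate $\cL_\omega$, subject to the maximality/boundary condition built into the definition of factorization (namely: when $A_i$ is empty, $U_ix$ is not again a word over any $X\cap H_\omega$, where $x$ is the first letter of $U_{i+1}$). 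The hypothesis $\cH_I\subseteq X$ is what guarantees this boundary condition is well-behaved: by the argument already used in Lemma \ref{lem:h1h2}, if two consecutive $H_\omega$-blocks $U_i,U_{i+1}$ were in the same parabolic subgroup $H_\omega$ with $A_i$ empty, then (using that the intersections of distinct parabolics lie in $X$) $U_iU_{i+1}$ would again be a word over $X\cap H_\omega$, contradicting maximality. So distinct consecutive blocks genuinely come from distinct parabolics or are separated by a nonempty $A_i$.

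For part (i): suppose each $\cL_\omega$ is prefix-closed, and let $W\in\rell(X,\{\cL_\omega\}_{\omega\in\Omega})$ with factorization $A_0U_1A_1\cdots U_nA_n$. A prefix $W'$ of $W$ is obtained by truncating inside some $A_j$ or inside some $U_j$. If we truncate inside $A_j$ (or at its end), the factorization of $W'$ is $A_0U_1\cdots U_jA_j'$ where $A_j'$ is a prefix of $A_j$; all the parabolic blocks are unchanged, so $W'\in\rell$. If we truncate inside $U_j$, writing $U_j\equiv U_j'U_j''$ with $U_j'$ a nonempty proper prefix, then I claim the factorization of $W'$ is $A_0U_1\cdots A_{j-1}U_j'$ — here the boundary condition "$A_j'$ empty forces $U_j'x$ not a parabolic word" is vacuous since there is no $U_{j+1}$, and $U_j'$ is a maximal parabolic block because it is a prefix of the maximal block $U_j$ whose neighboring letter on the right (inside $U_j$) keeps it parabolic but that letter is no longer present. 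Since $U_j\in\cL_\omega$ for the relevant $\omega$ and $\cL_\omega$ is prefix-closed, $U_j'\in\cL_\omega$, hence $W'\in\rell$. (One should double-check the degenerate case where the prefix ends exactly at the boundary between $A_{j-1}$ and $U_j$, i.e. $W'\equiv A_0U_1\cdots A_{j-1}$; this is again of the first type.)

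For part (ii): the strategy is to build a finite automaton for $\rell(X,\{\cL_\omega\}_{\omega\in\Omega})$ out of finite automata $M_\omega$ recognizing the $\cL_\omega$ (which exist since each $\cL_\omega$ is regular and, being over the finite alphabet $X\cap H_\omega$, lives over a subalphabet of $X$). Concretely, I would describe a machine that reads a word over $X$ and maintains the factorization on the fly: it has a "reading $A_i$" mode (in which it accepts any letter of $X-\cH$, with no constraint — note $X-\cH$ is finite and the language of such blocks is all of $(X-\cH)^*$, which is regular), and, for each $\omega$, a "reading a $U_i$ over $H_\omega$" mode in which it simulates $M_\omega$ on letters of $X\cap H_\omega$. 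The transitions between modes are governed exactly by the factorization rules: a letter of $X\cap H_\omega$ read while in "$A_i$" mode starts a new parabolic block (so one must be careful that such a letter is unambiguously attributed — this uses that a single generator $x\in X$ can lie in $X\cap H_\omega$ for at most... well, possibly several $\omega$ if $x\in\cH_I$, so strictly the automaton should nondeterministically guess which $\omega$, which is fine for regularity); a letter of $X-\cH$ read while in a $U_i$ mode closes the block (requiring $M_\omega$ to be in an accepting state) and returns to "$A_i$" mode; and the maximality/boundary condition when $A_i$ is empty — "you may not start a new $U_{i+1}$ in the same $H_\omega$ immediately after closing $U_i$ unless $U_ix$ leaves $H_\omega$" — is handled by recording in the state the $\omega$ of the just-closed block until a letter of $X-\cH$ is seen. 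The resulting automaton has finitely many states (product of modes with the states of the $M_\omega$ and a finite buffer), so $\rell$ is regular.

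The main obstacle I anticipate is bookkeeping the boundary/maximality condition correctly in part (ii): the factorization is defined so that $U_iA_iU_{i+1}$ with $A_i$ empty is only split if $U_i$ followed by the first letter of $U_{i+1}$ fails to stay in a single $X\cap H_\omega$, and the possibility that a single generator lies in several parabolics (i.e. in $\cH_I$) means the automaton must track, or nondeterministically commit to, the relevant $\omega$. Invoking $\cH_I\subseteq X$ and the Lemma \ref{lem:h1h2}-style argument shows that in any word of $\rell$ two adjacent parabolic blocks never collapse, which is what makes the mode transitions deterministic enough to encode in a finite-state device; this is the step I would write out most carefully. The rest — closure of regular languages under the finite-state construction, and the prefix truncation analysis in part (i) — is routine.
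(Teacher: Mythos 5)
Your part (i) is exactly the paper's argument: a prefix of $W$ truncates the factorization either inside some $A_i$ or inside some $U_i$, and prefix-closedness of $\cup_\omega\cL_\omega$ finishes it; nothing to add there. For part (ii) you take a genuinely different route. The paper builds no automaton: it writes the complement $X^*-\rell(X,\{\cL_\omega\}_{\omega\in\Omega})$ as a finite union of concatenations of the form $X^*\cP_{\mu,\omega}\cL_\omega^{c}\cS_\omega X^*$ (plus degenerate variants), where $\cL_\omega^c=(X\cap H_\omega)^*-\cL_\omega$ and the regular languages $\cP_{\mu,\omega}$, $\cS_\omega$ certify that a bad block begins and ends at the indicated positions; regularity then follows from closure under complement, concatenation and union. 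To make the ``$U_i\in\cL_\omega$ for \emph{some} $\omega$'' quantifier compatible with that description, the paper first observes --- using (L1) and $\cH_I\subseteq X$ --- that a word lying in $(X\cap H_\omega)^*\cap(X\cap H_\mu)^*$ is either in both of $\cL_\omega,\cL_\mu$ or in neither, so the choice of witnessing parabolic is immaterial. Your on-the-fly simulation is a standard and perfectly acceptable alternative; it yields a more explicit machine at the cost of heavier bookkeeping.

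That bookkeeping is the one place where your sketch, as literally written, would fail. Nondeterministically guessing a single $\omega$ when a block opens, or ``recording in the state the $\omega$ of the just-closed block,'' is not enough: the block boundaries of the (unique) factorization are governed by the condition that $U_ix$ is not a word over \emph{any} $X\cap H_\omega$, a universal quantifier over $\Omega$. A run that commits to one $\omega$ can be forced to close a block early when a later letter of the true block lies outside $H_\omega$ but inside some other $H_\mu$, so it parses a decomposition that is not the factorization of $W$ and may accept a word outside $\rell$. The repair is to carry in the state the full subset $S\subseteq\Omega$ of parabolics for which the block read so far lies in $(X\cap H_\omega)^*$ (updated by $S\mapsto S\cap\{\omega:x\in X\cap H_\omega\}$ at each letter), to close the block exactly when this set would become empty or a letter of $X-\cH$ arrives, and to run the automata $M_\omega$ for all $\omega\in S$ in parallel, accepting the block iff some surviving $M_\omega$ accepts. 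Since $\Omega$ is finite (Lemma \ref{lem:factsRH}(ii)), the subset-plus-product state space is finite, so this is a fixable bookkeeping issue rather than an obstruction --- but it is precisely the step you flagged, and it must be written out; it replaces the paper's ``both or neither'' observation as the mechanism handling letters of $\cH_I$.
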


\begin{proof}

(i). Suppose that $W\in \rell(X, \{\cL_\omega\}_{\omega\in \Omega})$ has factorization $A_0U_1A_1U_2\cdots U_nA_n$. Then
a prefix $W'$ of $W$ has factorization $A_0U_1A_1U_2\cdots U_i'A_i'$, where $i\leq n$, $A_i'$ is a prefix of $A_i$ and $U_i'\equiv U_i$ if $A_i$ is non-empty or $U_i'$ is a non-empty prefix of $U_i$.  Since $\cup \cL_\omega$ is prefix-closed, it follows that  $W'\in \rell(X, \{\cL_\omega\}_{\omega\in \Omega})$.

(ii). Recall that regular languages are closed under
concatenation, union, Kleene star and complement.
%Suppose that $W\in \rell(X,\{\cL_\omega\}_{\omega\in \Omega})$.
By definition $W \notin \rell(X, \{\cL_\omega\}_{\omega\in \Omega})$ if and only if in the factorization $W\equiv  A_0U_1A_1U_2\cdots U_nA_n$ there is some $i$ such that
$U_i\notin \cup \cL_\omega$. Since the  languages $\cL_\omega$
are geodesic and $X_I\subseteq X$, if a geodesic word $V$ represents an element of $H_\omega\cap H_\mu$, $\omega \neq \mu$, then
$V\equiv x$, where $x\in X$.
Since  $\cL_\omega$ contains at least one representative
for  $x\in H_\omega$, and $x$ is the unique geodesic word
representing $x$, we have that $x\in \cL_\omega$. Thus if a word $W$ belongs to $(X \cap H_{\omega})^* \cap (X \cap H_{\mu})^*$, then either $W \in \cL_\omega \cap \cL_\mu$ or $W \notin \cL_\omega \cup \cL_\mu$.

Let $\cL_\omega^c =(X\cap H_\omega)^*-\cL_\omega$, $\cP_{\mu,\omega}=[(X\cap H_\mu)^*-(X\cap H_\mu \cap H_\omega)^*]\cup (X-\cH)$ and $\cS_\omega =X-H_\omega$. If $\cL_\omega$ is regular, so is $\cL_\omega^c$. Regularity of $\cP_{\mu,\omega}$ and $\cS_\omega$ follows from closure properties of regular languages. The set of words that are not in $\rell(X, \{\cL_\omega\}_{\omega\in \Omega})$ is exactly the set of words that, in their factorization, they contain a $U_i$ in some $\cL_\omega^c$.  Suppose that the factorization of $W$ is  $A_0U_1A_1U_2\cdots U_nA_n$. If $U_i\in (X\cap H_\omega)^*$, then
$A_0U_1A_1\cdots A_{i}$ is either empty or belongs to $\cup_{\mu\neq \omega} X^*\cP_{\mu,\omega}$ and
$A_{i+i}U_{i+1}\cdots A_n$ is either empty or belongs to $\cS_\omega X^*$.

Therefore, $X^*-\rell(X, \{\cL_\omega\}_{\omega\in \Omega})$ is the set
 
$$\bigcup_{\omega\in \Omega} \bigcup_{\mu\neq \omega}\left( X^*\cP_{\mu,\omega}\cL_\omega^{c} \cS_\omega X^*\bigcup X^*\cP_{\mu,\omega}\cL_\omega^c \bigcup  \cL_\omega^{c} \cS_\omega X^* \bigcup\cL_\omega^c \right).$$
It follows that if each $\cL_\omega$ is regular, $\rell(X, \{\cL_\omega\}_{\omega\in \Omega})$ is regular.
\end{proof}

At a later point we will need to be able to substitute a subword $U_i$ of $W$ in the factorization of $W$ by another word $U_i'$ in $\cup \cL_\omega$ representing the same element. Suppose that $W'$ is the word obtained by this substitution. We will need that $\wh{W}\equiv \wh{W'}$, but this is not always the case.
\begin{ex}
Suppose that the factorization of $W$ is $ U_1U_2$, where $U_1\in (X\cap H_\mu)^*, U_2 \in (X\cap H_\nu)^*$.
It might happen that $U_2=_GU_2'\in \cL_\nu$, but the first letter of $U_2'$ is in $X\cap H_\mu \cap H_\nu$, so
write $U_2'\equiv x V_2$. Then the factorization of $W'\equiv U_1 U'_2$ is $(U_1x)V_2$ and $\wh{W}\not\equiv \wh{W'}$.
\end{ex}
To avoid this problem, we will use geodesic words for which the subwords in $H_\omega \cap X$ are maximal, prioritizing maximality for the left-most subwords, as made precise below.

%However, if one simply substitutes, the resulting word may not be in $\rell(X, \{\cL_\omega\}_{\omega\in \Omega})$. This may happen if $A_{i+1}$ is empty, $x$ is the first letter of $U_{i+1}$ and $U_i'x$ is a word over some $X\cap H_\omega$ that does not belong to $\cL_\omega$.

%In order to overcome this problem we introduce the following family of words.
\begin{defn}\label{def:special}
A word $W$ is {\it $\{\cL_\omega\}_{\omega\in \Omega}$-special}, or simply {\it special}, if the family of languages is clear from the context, if the following hold:
\begin{enumerate}
\item[(1)] $W$ is in $\geol(G,X)\cap \rell(G,\{\cL_\omega\}_{\omega\in \Omega})$, and 
\item[(2i)] either the factorization of $W$ has  length zero (i.e. $W$ is a word in $X-\cH$),
 \item[(2ii)] or the factorization of $W$ has  $A_0U_1A_1U_2\cdots U_nA_n$ of length $n>0$ such that $A_1U_2\cdots U_nA_n$ is special and
$\ell(U_1)\geq \ell(U_1')$ for all $U_1'$ in a factorization $A_0'U_1'A_1'\cdots U_m'A_m'$ of a  word $W'$ in $\geol(G,X)\cap \rell(G,\{\cL_\omega\}_{\omega\in \Omega})$ satisfying $W=_GW'$.

\end{enumerate}
\end{defn}

\begin{lem}\label{lem:specialrep}
Suppose that the collection $\{\cL_\omega\}_{\omega\in \Omega}$ satisfy {\rm (L1)}.
Then for every $g\in G$ there is a special word  representing $g$.
\end{lem}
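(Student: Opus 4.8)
The plan is to prove Lemma~\ref{lem:specialrep} by induction on $|g|_X$, producing the special representative greedily from the left. Fix $g\in G$ and consider the (non-empty, since (L1) guarantees each $H_\omega$-element has an $\cL_\omega$-representative, and $X$ generates $G$) collection $\mathcal{W}$ of all words $W'\in\geol(G,X)\cap\rell(G,\{\cL_\omega\}_{\omega\in\Omega})$ with $W'=_Gg$. The base case is $g=1$ (or more generally when some $W'\in\mathcal{W}$ has factorization of length zero): then the empty word, or that $W'$, is special by clause (2i). So assume every $W'\in\mathcal{W}$ has factorization of positive length.

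First I would handle the leftmost block. Among all $W'\in\mathcal{W}$, let the factorization of $W'$ be $A_0'U_1'A_1'\cdots U_{m}'A_{m}'$ and set $\ell_1 = \max_{W'\in\mathcal{W}}\ell(U_1')$, which is finite since every $W'\in\mathcal{W}$ is geodesic, so $\ell(U_1')\le |g|_X$. Choose $W\in\mathcal{W}$ realizing this maximum, with factorization $A_0U_1A_1\cdots U_nA_n$ and $\ell(U_1)=\ell_1$. This $W$ already satisfies clauses (1) and, for clause (2ii), the maximality condition $\ell(U_1)\ge\ell(U_1')$ over all of $\mathcal{W}$. What remains is to arrange that the tail $A_1U_2A_2\cdots U_nA_n$ is itself special, and here I would invoke the induction hypothesis: let $h = \pi(A_0U_1)$, so $|h^{-1}g|_X = \ell(A_1U_2\cdots A_n) = |g|_X - \ell(A_0U_1) < |g|_X$ (the last inequality because the factorization has positive length, so $U_1$ is non-empty). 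By induction there is a special word $T$ representing $h^{-1}g$. Then replace the tail of $W$ by $T$, i.e.\ set $\widetilde{W}\equiv A_0U_1T$.

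The subtle point — and what I expect to be the main obstacle — is to check that $\widetilde{W}\equiv A_0U_1T$ is still special, in particular still in $\geol(G,X)\cap\rell(G,\{\cL_\omega\}_{\omega\in\Omega})$ and that its factorization has first block still $U_1$ (of maximal length) with tail exactly $T$. Geodesicity is fine since $\ell(\widetilde{W}) = \ell(A_0U_1) + \ell(T) = \ell(A_0U_1) + |h^{-1}g|_X = |g|_X$. The membership in $\rell$ and the shape of the factorization are where the maximality of $\ell(U_1)$ must be used: the worry, exactly as in the Example preceding the lemma, is that $T$ might begin with a letter $x\in X\cap H_\mu\cap H_\nu$ that merges with the last block of $A_0U_1$, changing the factorization. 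If $A_1$ (the block of $W$ following $U_1$) is non-empty, then $T$ begins where $A_1$ does, i.e.\ with a letter of $X-\cH$ or at least not extending $U_1$, and the factorization of $\widetilde{W}$ is $A_0U_1\cdot(\text{factorization of }T)$ with first block $U_1$; if $A_1$ is empty, then by the defining property of the factorization of $W$ the letter following $U_1$ in $W$ cannot extend $U_1$ to a longer $H_\omega$-word, and one checks $T$ cannot begin with such an extending letter either — if it did, one could move that letter into the first block and obtain a word in $\mathcal{W}$ with a longer $U_1$, contradicting maximality of $\ell_1$. (One also uses here, as in Lemma~\ref{lem:h1h2} and the argument of Lemma~\ref{lem:rel}(ii), that $\cH_I\subseteq X$ so that elements of $H_\omega\cap H_\mu$ are single letters, hence already lie in the relevant $\cL$'s.) Once the factorization of $\widetilde{W}$ is identified as $A_0U_1A_1''U_2''\cdots$ with first block $U_1$ and tail $T$, clause (2ii) holds: $T$ is special by construction, and $\ell(U_1)=\ell_1$ is maximal over $\mathcal{W}\ni\widetilde{W}$ (note $\widetilde{W}=_Gg$, so $\widetilde{W}\in\mathcal{W}$, and any competitor's first block has length $\le\ell_1$ by choice of $\ell_1$). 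Hence $\widetilde{W}$ is a special representative of $g$, completing the induction.
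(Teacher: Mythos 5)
Your overall strategy --- build the special word greedily from the left, choose a first parabolic block of maximal length, obtain the tail from an inductive hypothesis, splice, and check that the factorization does not merge at the splice point --- is the same as the paper's, and your induction on $|g|_X$ in place of the paper's induction on the minimal factorization length of geodesic representatives is a legitimate (arguably cleaner) variant. The step that does not close is exactly the one you flag as the main obstacle, and it fails because you take the maximum of $\ell(U_1')$ only over $\mathcal{W}=\geol(G,X)\cap\rell(X,\{\cL_\omega\}_{\omega\in\Omega})\cap\pi^{-1}(g)$ rather than over all geodesic words representing $g$. Two problems result. First, you need $\mathcal{W}\neq\emptyset$ even to define $\ell_1$, and your parenthetical does not prove it: replacing each block of a geodesic word by an $\cL_\omega$-representative of the same length can change the factorization (that is precisely the point of the Example preceding Definition \ref{def:special}), so it is not clear that the resulting word lies in $\rell(X,\{\cL_\omega\}_{\omega\in\Omega})$. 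Second, and more seriously, when the first letter $x$ of $T$ could extend $U_1$ to a longer parabolic word, your contradiction requires exhibiting a word of $\mathcal{W}$ whose first block is longer than $\ell_1$; but the word obtained by ``moving $x$ into the first block'' need not lie in $\mathcal{W}$: the enlarged block need not belong to any $\cL_\omega$ (replacing it by an $\cL_\omega$-representative can again disturb the factorization), and the tail of $T$ with its first letter removed need not remain in $\rell(X,\{\cL_\omega\}_{\omega\in\Omega})$, since the $\cL_\omega$ are not assumed suffix-closed. So maximality of $\ell_1$ over $\mathcal{W}$, as you have set it up, does not deliver the contradiction.

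The paper avoids both issues by maximizing $\ell(U_1')$ over the factorizations of \emph{all} geodesic words representing $g$ --- a set that is obviously non-empty and contains $\mathcal{W}$. Then the spliced word $A_0U_1T$ is itself a geodesic word for $g$, so if its factorization had a first block properly containing $U_1$ this would immediately contradict maximality, with no need to land back inside $\mathcal{W}$; and since the maximum over all geodesic representatives dominates the maximum over $\mathcal{W}$, clause (2ii) of Definition \ref{def:special} is still verified for the word you construct. The only extra step this costs is that the chosen $U_1$ need no longer lie in $\cup\cL_\omega$, so one must first replace it by a word $C\in\cL_{\omega}$ with $C=_GU_1$ and $\ell(C)=\ell(U_1)$, which (L1) provides because a block of a geodesic word over $X$ is automatically geodesic over $X\cap H_\omega$. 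With that single change of the set over which you maximize, your argument closes.
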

\begin{proof}
Given $g\in G$, we define $$f(g)=\min\{ n \mid W\in\geol(G,X),\, W=_Gg,\,\text{ the factorization of $W$ has length } n\}.$$
Suppose that $f(g)=0$. Then there is a geodesic word $W$ over $X-\cH$ representing $g$, and hence $W$ is special.

Let $g\in G$, assume that $f(g)=n>0$ and that the Lemma holds for all $h\in G$, $f(h)<n$.

Suppose $W\equiv A_0U_1A_1\cdots U_nA_n$ is the factorization of a geodesic word representing $g$ such that $\ell(U_1)$ is maximal, i.e. if $A_0'U_1'A_1'\cdots U_n'A_n'$ is another factorization of a geodesic word representing $g$, then $\ell(U_1)\geq \ell(U_1')$.

Let $h=_G A_1\cdots U_nA_n$. By construction $f(h)<n$. Then there exists a special word $B$ such that $B=_Gh$.
By (L1) there is $C\in \cup \cL_\omega$ such that $C=_G U_1$.
We claim that $A_0 C B$ is special.
First we need to show that $A_0 C B$ is in 
$\geol(G,X)\cap \rell(G,\{\cL_\omega\}_{\omega\in \Omega})$.
By construction, $A_0CB=_G W=_Gg$, $\ell(C)\leq \ell(U_1)$ and $\ell(B)\leq \ell(A_1\cdots U_nA_n)$. Therefore, $\ell(A_0CB)\leq \ell(W)$, and hence $A_0CB$ is geodesic.
It is easy to show that the factorization of $A_0CB$ is $A_0C$ followed by 
the factorization of $B'$ since by maximality of $\ell(U_1)=\ell(C)$, $Cx$ can not be a word 
over some $X\cap H_\omega$, where $x$ is the first letter of $B$.
Thus since $C\in \cup \cL_\omega$ and $B$ is special, 
$A_0CB\in \rell(X,\{\cL_\omega\}_{\omega})$. 
The maximality condition of $C$ follows by construction.\end{proof}

Now when we substitute a substring $S \in \cL_\omega$ inside a special word by another substring $S' \in \cL_\omega$, where $S=_G S'$, we obtain again a special word.
More precisely:

\begin{lem}\label{lem:specialchange}
Let $W\equiv ACB$ be a special word. Suppose that $\wh{W}\equiv \wh{A}h\wh{B}$, where $\wh{A}$ and $\wh{B}$ are derived 
from $A$ and $B$, and $h=_G C$.
Then $ AC'B$ is special for any $C'\in \cup \cL_\omega$ such that $C'=_G h$.
\end{lem}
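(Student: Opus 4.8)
The plan is to unwind the definition of ``special'' (Definition \ref{def:special}) along the factorization of $W\equiv ACB$ and verify that replacing $C$ by $C'$ preserves each defining clause. First I would record the shape of the factorization: since $\wh W\equiv\wh A\,h\,\wh B$ with $h=_G C\in\cH$, the subword $C$ must be exactly one of the blocks $U_i$ in the $\{H_\omega\}$-factorization of $W$, say $C=U_k$ with $C\in(X\cap H_\omega)^*$, and $A$ (resp.\ $B$) accounts for the portion of $W$ before (resp.\ after) this block; by definition of $\wh{-}$ the words $\wh A,\wh B$ are indeed the derived words of $A,B$. The key structural point, which is already recorded in the proof of Lemma \ref{lem:specialrep}, is that because $W$ is special the block $C=U_k$ has \emph{maximal} $X\cap H_\omega$-length among all factorizations of geodesic words in $\rell$ representing the relevant group element, so $C'x$ cannot be a word over any $X\cap H_\mu$ when $x$ is the first letter of $B$ (otherwise one could lengthen $U_k$), and symmetrically on the left with the last letter of $A$. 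Hence the factorization of $AC'B$ is simply the factorization of $A$, followed by $C'$ as a single block, followed by the factorization of $B$ — exactly the same combinatorial shape as that of $W$, with $U_k$ replaced by $C'$.

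With this in hand I would check clause (1) of Definition \ref{def:special}: $AC'B\in\rell(G,\{\cL_\omega\}_{\omega\in\Omega})$ because all the blocks of $A$ and $B$ lie in $\cup\cL_\omega$ (inherited from $W$) and the new block $C'\in\cup\cL_\omega$ by hypothesis; and $AC'B$ is geodesic because $\ell(C')=|C'|_X=|C|_X\le\ell(C)$ (as $C'\in\cL_\omega\subseteq\geol(H_\omega,X\cap H_\omega)$ and $C'=_G h=_G C$, using (L1)), so $\ell(AC'B)\le\ell(ACB)=|ACB|_X$, forcing equality. Then I would verify clause (2): if the factorization of $W$ has length zero there is nothing to change ($C$ is empty, $C'=_G1$ must also be empty, $AC'B\equiv W$); otherwise I induct on the factorization length exactly as in Lemma \ref{lem:specialrep}. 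If the replaced block $C$ is the \emph{first} block $U_1$, then write $W\equiv A_0U_1A_1\cdots A_n=ACB$ with $A\equiv A_0$, $C\equiv U_1$, $B\equiv A_1U_2\cdots U_nA_n$; the tail $B$ is special by the recursive clause (2ii) for $W$ and is untouched, the new first block $C'$ still has the required maximal length since $\ell(C')=\ell(C)$, so $A_0C'B$ is special. If $C$ is a \emph{later} block, apply the inductive hypothesis to the special tail $A_1U_2\cdots U_nA_n$ of $W$ to conclude that the tail with $C$ replaced by $C'$ is special, and then note that the first block $U_1$ of $AC'B$ is the same as that of $W$, so its maximality condition still holds; hence $AC'B$ is special.

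The only genuinely delicate point — and the one I would state carefully rather than wave at — is the claim that the factorization of $AC'B$ really is ``factorization of $A$, then $C'$, then factorization of $B$'' with $C'$ a single block, i.e.\ that no merging occurs at the two seams. This is where the maximality clause (2ii) of ``special'' is essential and is used in precisely the same way as in the final paragraph of the proof of Lemma \ref{lem:specialrep}: maximality of $\ell(U_k)$ (equivalently of $\ell(C)=\ell(C')$) among all $\rell$-factorizations of geodesic words representing the corresponding group element rules out the possibility that $C'$ absorbs a letter of $A$ on the left or a letter of $B$ on the right. Once the factorization shape is pinned down, the verification of clauses (1) and (2) is routine bookkeeping, and the maximality clause for the new block is immediate because lengths are preserved; so I expect the seam argument to be the main obstacle and everything else to be short.
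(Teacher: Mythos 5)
Your proposal is correct and follows essentially the same route as the paper: induction on the factorization length, splitting according to whether the replaced block is the first block $U_1$ or lies in the special tail, using $\ell(C')=\ell(C)$ together with the maximality clause of Definition \ref{def:special} to see that no merging occurs at the seams and that the factorization of $AC'B$ has the same shape as that of $W$. The paper's proof is just a slightly more compressed version of the same argument (it phrases the case split as ``$h$ is the first $\cH$-letter of $\wh W$'' versus ``$h$ is a letter of $\wh E$''), so no further comment is needed.
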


\begin{proof}

We proceed by induction on the length of the factorization of $W$. If the length is zero, there is nothing to prove.

So assume that $W$ has factorization $A_0U_1\cdots U_nA_n$ with $n>0$ and the result holds for special words with factorization of smaller length.

Since $W$ is  special,  $W\equiv A_0 U_1 E$, $U_1=_Gf\in \cH$,  and $E$ is special.
Then $\wh{W}\equiv A_0f\wh{E}$.
% Moreover, for any other  word $W'=_GW$ such that
%$W'\in\geol(G,X)\cap \rell(G,\{\cL_\omega\}_{\omega\in \Omega}) $, if $W'\equiv A_0'U'_1E'$ where 
%$A_0'\in (X-\cH)^*$, $U'_1$ a word over $X\cap H_\omega$ such that $U'_1x$ is not a word over any $X\cap H_\mu$, and $x$ the first letter of $E'$, then  we get that $\ell(U'_1)\leq \ell(U_1)$.

We have two cases: either $\wh{A}h\equiv A_0f$ or $h$ is a letter of $\wh{E}$.

(i). In the first case $A\equiv A_0$, $B\equiv E$, $h=f$ and $C=_G U_1$.
Since both $U_1$ and $C$ are geodesic, $\ell(U_1)=\ell(C)$.
Take any $C'\in \cup \cL_\omega,$ $C'=_G h$. 
We claim that $A_0C'E$ is special. Since $W$ is special and  $\ell(U_1)=\ell(C)=\ell(C')$,
we conclude that  $C'$ cannot  be contained in a longer subword of $A_0C'E$ over some $X\cap H_\omega$. Hence the factorization of $A_0C'E$ is $A_0C'$ followed by the factorization of $E$. As $E$ is special, and $\ell(U_1)=\ell(C')$, we conclude that $A_0C'E$ is special.

(ii). In the second case $B$ is a proper suffix of $E$.
Then $\wh{E}\equiv \wh{D}h\wh{B}$, where $\wh{D}$ might be the empty word.
Let $D$ be a subword of $E$ such that $\wh{D}$ is derived from $D$. 
Since $E$ is special, by the induction hypothesis, $DC'B$ is special for any  $C'\in \cL_\omega$ such that $C'=_G C=_G h$. Therefore $A_0U_1DC'B$ is special, by definition.
\end{proof}

%%%%%%%%%%%%%%%%%%%%%%
%Section
%Testability of geodesics.
%%%%%%%%%%%%%%%%%%%%%%%
\section{Constructing finite generating sets for relatively hyperbolic groups}\label{sec:goodgenset}

Throughout this section let $G$ be hyperbolic relative to $\{H_\omega\}_{\omega\in \Omega}$, $\cH =\cup_{\omega \in \Omega} (H_\omega -\{1\})$, and $Y$ a finite generating set of $G$.

The objective here is to prove the Generating Set Lemma (Lemma \ref{lem:goodgenset}), a key result of the paper since it provides a finite generating set $X$ (depending on $Y$) for $G$ which makes it possible to relate geodesics in $\ga(G,X)$ to quasi-geodesics in $\ga(G,X\cup \cH)$.
The main ingredient of the Generating Set Lemma is Theorem \ref{thm:Phi} below. We recall in Theorem \ref{lem:local2global} the `local to global' quasi-geodesic feature of hyperbolic spaces.

\begin{thm}\cite[Section 3, Theorem 1.4]{CDP}\label{lem:local2global}
Suppose that $\ga$ is a $\delta$-hyperbolic space. For all $\lambda'\geq 1$ and $c'\geq 0$ there exist $k>0,$ $\lambda\geq 1$ and $c\geq 0$ (depending on $\delta, \lambda'$ and $c'$) such that every $k$-local $(\lambda',c')$-quasi-geodesic path is a $(\lambda,c)$-quasi-geodesic.
\end{thm}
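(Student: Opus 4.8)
The plan is to derive this ``local-to-global'' principle from the stability (Morse) lemma for genuine quasi-geodesics in $\delta$-hyperbolic spaces, applied over short sub-windows, together with the $\delta$-thinness of geodesic polygons. Since any subpath of a $k$-local $(\lambda',c')$-quasi-geodesic is again a $k$-local $(\lambda',c')$-quasi-geodesic, it suffices to produce $\lambda\ge 1$, $c\ge 0$ and a threshold $k$, all depending only on $\delta,\lambda',c'$, so that every $k$-local $(\lambda',c')$-quasi-geodesic $p$, of combinatorial length $n$, satisfies $n\le\lambda\,\d(p_-,p_+)+c$. Fix such a $p$, fix a geodesic $\gamma$ of $\ga$ from $p_-=p(0)$ to $p_+=p(n)$, and put $D=\d(p_-,p_+)$. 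The only consequence of the hypothesis I will use is that whenever $0\le i\le j\le n$ with $j-i\le k$, the subpath $p|_{[i,j]}$ is a \emph{genuine} $(\lambda',c')$-quasi-geodesic, hence by the Morse lemma lies in the $H$-neighborhood of a geodesic joining $p(i)$ to $p(j)$, for a constant $H=H(\delta,\lambda',c')$ that does not depend on $k$.

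The first and crucial step is the geometric estimate that, once $k$ is large enough, $p$ and $\gamma$ are at Hausdorff distance at most some $R=R(\delta,\lambda',c')$ independent of $k$. To get $\mathrm{Im}(p)\subseteq N_R(\gamma)$ one argues with an extremal vertex: pick $p(i_0)$ realizing $D_0:=\max_i\d(p(i),\gamma)$, and note that $D_0\le\min(i_0,n-i_0)$ since $p_\pm\in\gamma$, so an extremal vertex witnessing a large $D_0$ automatically sits far from both ends of $p$. One then examines a sub-window $p|_{[i_0-s,\,i_0+s]}$ of combinatorial length $2s\le k$ (clipped to $[0,n]$): by the Morse lemma it stays within $H$ of a geodesic $\sigma=[p(i_0-s),p(i_0+s)]$, and a point $w\in\sigma$ with $\d(w,p(i_0))\le H$ is, by the quasi-geodesic estimate along $p$, at distance $\ge(s-c')/\lambda'-H$ from each endpoint of $\sigma$. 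Comparing $\sigma$ with $\gamma$ through the $2\delta$-thin geodesic quadrilateral whose two ``short'' sides join $p(i_0\pm s)$ to their nearest points on $\gamma$ (each of length $\le D_0$) then forces such a deep point $w$ to be $2\delta$-close to $\gamma$, and hence $D_0$ to be bounded, provided the window length $s\approx k/2$ is large compared to $\lambda'(D_0+H+\delta)+c'$. Making this quantitative and treating the degenerate configurations yields $R$; the reverse inclusion $\gamma\subseteq N_R(p)$ follows from a connectedness argument along $\gamma$ (the coarse-Lipschitz nearest-point projection $\pi_\gamma\circ p$ sweeps $\gamma$ from $p_-$ to $p_+$, so every point of $\gamma$ is near $\mathrm{Im}(p)$).

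Granting the Hausdorff estimate, the proof concludes with a telescoping count. Subdivide $p$ at $0=s_0<s_1<\dots<s_N=n$ with $s_{j+1}-s_j=L:=\lfloor k/3\rfloor$ for $j<N-1$, so that $N\le n/L+1$ and each double window $p|_{[s_{j-1},\,s_{j+1}]}$ has length $\le k$; it is therefore a genuine $(\lambda',c')$-quasi-geodesic lying within $H$ of a geodesic which, by the first step, stays within $R+H$ of $\gamma$. Project each $p(s_j)$ to a nearest point $q_j\in\gamma$; then $\d(q_0,q_N)=D$, each $\d(q_j,q_{j+1})$ lies between roughly $L/\lambda'$ and $L$, and, because a geodesic lying uniformly close to $\gamma$ projects onto $\gamma$ coarsely monotonically, the point $q_j$ lies coarsely between $q_{j-1}$ and $q_{j+1}$ along $\gamma$ with an error independent of $k$. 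Playing the upper bounds for $\d(q_j,q_{j+1})$ against this coarse monotonicity gives $N\cdot(L/\lambda')\le D+N\cdot O(1)$, so that for $k$ large $N$ is at most a linear function of $D$, and hence $n\le NL+L\le\lambda D+c$ with $\lambda,c$ depending only on $\delta,\lambda',c'$. Taking $k$ to be the maximum of the thresholds used above finishes the proof.

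I expect the main obstacle to be the Hausdorff estimate of the middle paragraph: bounding the distance between $p$ and $\gamma$ by a constant \emph{independent of $k$}. The tension is that the sub-window must be long (of order $k$) for the $k$-local quasi-geodesic hypothesis to have any force, whereas the thin-quadrilateral comparison only produces a bound phrased in terms of $D_0$ and $H$, so one must track $k$, $H$ and the unknown width $D_0$ simultaneously and verify that the boundary cases (extremal vertex close to an end of $p$, degenerate quadrilaterals) are genuinely absorbed. This is exactly the bookkeeping carried out in the referenced section of \cite{CDP}; once it is settled, the reverse Hausdorff inclusion and the final telescoping count are routine.
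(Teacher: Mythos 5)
The paper offers no proof of this statement; it is quoted directly from \cite{CDP}, so there is no in-paper argument to compare yours against and your proposal must stand on its own. Its architecture (Hausdorff closeness to a geodesic, then a telescoping count of projections) is the standard one, but the step you yourself flag as the main obstacle --- a Hausdorff bound $R$ \emph{independent of $k$} --- is not closed by the argument you describe, and it is the heart of the theorem rather than bookkeeping. As you set it up, the thin-quadrilateral comparison at the extremal vertex gives only a dichotomy: either $D_0\le H+2\delta$, or the admissible window half-length $s$ satisfies $s\le\lambda'(D_0+H+2\delta)+c'$. Since $s$ is capped by $\min(i_0,n-i_0,\lfloor k/2\rfloor)$ and the only a priori control is $D_0\le\min(i_0,n-i_0)$, the second alternative is never contradicted: when $s=\min(i_0,n-i_0)$ the inequality $D_0\le s\le\lambda'(D_0+\cdots)+c'$ is vacuous for $\lambda'>1$, and when $s=\lfloor k/2\rfloor$ you merely learn $D_0\gtrsim k/(2\lambda')$, which is not absurd. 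So the circle "window must be long compared to $\lambda'D_0$, but $D_0$ is the unknown" does not break, and deferring it to \cite{CDP} leaves the proof incomplete.

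The observation that breaks the circularity is a sharper location of the foot of $w$ on the vertical side. If $w\in\sigma$ satisfies $\d(w,p(i_0))\le H$ and is $2\delta$-close to a point $v$ on the side $[p(i_0\pm s),u_\pm]$ (where $u_\pm$ is the projection of $p(i_0\pm s)$ to $\gamma$), then $\d(v,u_\pm)\ge\d(w,\gamma)-2\delta\ge D_0-H-2\delta$, whence $\d(v,p(i_0\pm s))\le D_0-(D_0-H-2\delta)=H+2\delta$; consequently $\d(p(i_0),p(i_0\pm s))\le 2H+4\delta$ and the quasi-geodesic lower bound forces $s\le\lambda'(2H+4\delta)+c'$ --- a threshold \emph{independent of $D_0$}. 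Taking $k>2\lambda'(2H+4\delta)+2c'+2$ then yields $D_0\le\max\{H+2\delta,\;\lambda'(2H+4\delta)+c'\}$, which is the estimate you need. A second, smaller repair is needed in your final paragraph: the justification you give for coarse monotonicity only yields that each $q_j$ lies coarsely between $q_{j-1}$ and $q_{j+1}$ (a consecutive-triple statement), and $\sum_j\d(q_j,q_{j+1})$ does not telescope to $D$ from that alone; you must upgrade to global monotonicity of the arc-length parameters $t_j$, which works once the step size $L/\lambda'-O(1)$ exceeds twice the betweenness error. With these two repairs your outline becomes a correct proof.
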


\begin{thm} \label{thm:Phi}
Let $G$ be hyperbolic relative to $\{H_\omega\}_{\omega\in \Omega}$, $\cH =\cup_{\omega \in \Omega} (H_\omega -\{1\})$, and $Y$ a finite generating set of $G$.

There exists a finite set $\wh{\Phi}$ of non-geodesic words over $Y\cup \cH$ and constants $\lambda\geq 1$ and $c\geq 0$ such that if $W$ is a 2-local geodesic word over $Y\cup \cH$ not containing any element of $\wh{\Phi}$ as a subword, then $W$ is a $(\lambda,c)$-quasi-geodesic without vertex backtracking. 
\end{thm}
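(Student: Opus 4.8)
The plan is to use the `local to global' principle (Theorem \ref{lem:local2global}) in the hyperbolic Cayley graph $\ga(G, Y\cup\cH)$, together with the Bounded Coset Penetration property (Theorem \ref{thm:BCP}) and the linear-length bound on isolated components (Lemma \ref{lem:Xi}). The strategy is: first control \emph{local} behaviour of $W$ by forbidding a finite set of short subwords, then bootstrap to a global quasi-geodesic and vertex-backtracking-free conclusion. Concretely, fix the hyperbolicity constant $\delta$ of $\ga(G,Y\cup\cH)$ and pick a target $k$ (the local-geodesic scale) as large as the eventual argument requires — roughly $k$ should exceed the BCP constant $\e(\lambda', c', 0)$ for the quasi-geodesic parameters produced by Theorem \ref{lem:local2global}, plus the constant $m(Y)$ of Lemma \ref{lem:nocancellation}, and a few multiples of the constant $L$ of Lemma \ref{lem:Xi}. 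I would then let $\wh{\Phi}$ be the (finite!) set of all non-geodesic words over $Y\cup\cH$ of $(Y\cup\cH)$-length at most $k$ that do \emph{not} contain a strictly shorter non-geodesic subword — i.e. the minimal non-geodesic words of bounded length — \emph{and that are themselves 2-local geodesics}. Finiteness of $\wh{\Phi}$ needs a word: there are infinitely many words over $Y\cup\cH$, but any 2-local geodesic word of bounded $\d_{Y\cup\cH}$-length whose label cannot be shortened within that window has bounded \emph{combinatorial} length, because between two consecutive $\cH$-letters lying in the same coset BCP would force a shortening; I would make this precise using Lemma \ref{lem:Xi} applied to the cycle formed by the word and a geodesic between its endpoints, exactly as in the proof of Lemma \ref{lem:nocancellation}.

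The main body of the argument is then: suppose $W$ is a 2-local geodesic over $Y\cup\cH$ avoiding every element of $\wh{\Phi}$; I claim $W$ is a $k$-local $(1,0)$-quasi-geodesic, i.e.\ every subword of $\d_{Y\cup\cH}$-length at most $k$ is geodesic. If not, some minimal non-geodesic subword $V$ of $W$ of small $(Y\cup\cH)$-length exists; by minimality $V$ has no shorter non-geodesic subword, $V$ is still a 2-local geodesic, and (by the finiteness argument above) $V$ has bounded combinatorial length, hence $V\in\wh\Phi$, contradiction. Once $W$ is a $k$-local $(1,0)$-quasi-geodesic, Theorem \ref{lem:local2global} upgrades it to a global $(\lambda,c)$-quasi-geodesic for the promised $\lambda,c$ (chosen in advance so the constant $k$ we used is at least the $k$ that theorem demands for parameters $(1,0)$). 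This handles the quasi-geodesic assertion.

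For \emph{no vertex backtracking}: a vertex backtrack is a subpath $r$ of $\wh W$ with $\ell(r)>1$ whose label lies in some $H_\omega$. I would argue that such an $r$, together with a single edge labelled by that element of $H_\omega$ (or a geodesic inside $\ga(G,Y\cup\cH)$ realizing it), forms a cycle; applying the BCP/Lemma \ref{lem:Xi} bound to the isolated components of that cycle, or more directly observing that $r$ itself would then be non-geodesic of bounded length, forces $W$ to contain a forbidden subword — so enlarging $\wh\Phi$ to also include all such `local backtracks' of bounded length (still finitely many, for the same combinatorial-length reason) rules them out. One subtlety is that $r$ need not be short a priori; but if $r$ has large $\d_{Y\cup\cH}$-length then $W$ already fails to be a $k$-local quasi-geodesic (a long path inside a single parabolic contracts to one $\cH$-letter), which we have just excluded, so in fact any vertex backtrack is short and hence forbidden.

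\textbf{Main obstacle.} The delicate point is the finiteness of $\wh\Phi$, equivalently: bounding the \emph{combinatorial} length of a 2-local-geodesic word over $Y\cup\cH$ in terms of its $(Y\cup\cH)$-distance between endpoints, once we know no proper subword shortens it. This is where relative hyperbolicity (via Lemma \ref{lem:Xi} and BCP) does real work — in a general group a 2-local geodesic can be arbitrarily long while its endpoints stay close. I expect the proof to form the cycle consisting of the path labelled $W$ and a geodesic $[W_-,W_+]$ in $\ga(G,Y\cup\cH)$, note that consecutive $\cH$-letters of $W$ cannot lie in a common coset (else BCP yields a shortening, contradicting minimality), deduce that $W$ does not backtrack, and then apply Lemma \ref{lem:Xi} to bound the total $\Xi$-length, hence (after choosing $m$ as in Lemma \ref{lem:nocancellation} so that large $X$-length forces large $\Xi$-length) the number and size of the $\cH$-letters, and finally the number of $X-\cH$ letters in between via the 2-local geodesic hypothesis. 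Carrying the bookkeeping of all these interdependent constants ($\delta$, then $\lambda,c$ from Theorem \ref{lem:local2global}, then $\e$ from BCP, then $m$, then $L$) in a non-circular order is the real care-point of the proof.
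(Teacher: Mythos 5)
Your overall strategy is the one the paper uses (local-to-global in the hyperbolic graph $\ga(G,Y\cup\cH)$, plus BCP and Lemma \ref{lem:Xi} to force finiteness), and you correctly identify finiteness of $\wh\Phi$ as the crux. But your proposed definition of $\wh\Phi$ --- all minimal non-geodesic $2$-local-geodesic words of bounded length --- has a genuine gap: that set need not be finite, and your finiteness argument does not close it. Lemma \ref{lem:Xi} bounds the $\Xi$-length only of the \emph{isolated} components of the cycle $V\gamma^{-1}$ (where $\gamma$ is a comparison geodesic). Your BCP observation rules out two components of $V$ lying in a common coset, i.e.\ backtracking \emph{within} $V$; it does nothing about a component of $V$ being connected to a component of $\gamma$. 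Such a component is not isolated in the cycle, Lemma \ref{lem:Xi} gives no bound on it, and a minimal non-geodesic word can therefore contain an arbitrarily large $\cH$-letter whose presence is ``explained'' by a matching large component on the geodesic side. The paper's proof is built around exactly this: it never forbids the whole word $V$, but only a subword $r_i$ lying strictly between consecutive components of $p$ that are connected to components of the geodesic $q$, and it pairs $r_i$ with the small cycle $o=r_it_{i+1}s_i^{-1}t_i^{-1}$, checking by hand that every component of $o$ is either isolated or of $Y$-length at most $m$ so that Lemma \ref{lem:Xi} applies to $o$.

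A second, related problem is your choice of local quasi-geodesic parameters $(1,0)$, i.e.\ demanding that every window of length $\le k$ be exactly geodesic. The paper works with $k$-local $(4,0)$-quasi-geodesics, and the multiplicative slack is essential: the counting argument $\ell(p)>4\ell(q)$ versus $\ell(r_i)\le \ell(s_i)+2$ is what produces a segment $r_i$, disjoint from the large connected components, that is genuinely too long and hence eligible to be a forbidden word. With $(1,0)$ you would have to forbid a subword every time the length drops by one, and when the drop is caused by a large $\cH$-letter interacting with the geodesic there is no bounded subword witnessing it --- so no finite $\wh\Phi$ can enforce exact local geodesicity. You would need to rework the argument with a multiplicative constant and with forbidden words that are only \emph{pieces} of the offending path.
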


\begin{proof}
Suppose that $\ga(G,Y\cup \cH)$ is $\delta$-hyperbolic.
Take $\lambda'=4$ and $c'=0$ and let $k,\lambda, c$ be the constants provided by Theorem \ref{lem:local2global}. 
Then every $k$-local $(4,0)$-quasi-geodesic is a $(\lambda,c)$-quasi-geodesic. 
Without loss of generality, we can enlarge $k$ to further assume that $\lambda+c\leq k$.

Let $\Delta$ be the set of closed paths in $\ga(G,Y\cup \cH)$ 
of length at most $2k$ in which all the components are isolated or have $Y$-length less than or equal to $m$, where $m$ is the constant of Lemma \ref{lem:nocancellation}.
Let $L>0$ and $\Xi$ be the constant and the finite subset of $G$ 
provided by Lemma \ref{lem:Xi}. Then, by Lemma \ref{lem:Xi}, 
if $p\in \Delta$, and $p_1,\dots, p_n$ are the isolated components of $p$,
$$\sum \d_\Xi (({p}_i)_-,({p}_i)_+)\leq L\ell(p)\leq L2k.$$ 
In particular, since $\Xi$ is finite, the set of labels of paths in  $\Delta$ is a finite set.

We take $\wh{\Phi}$ to be the set of labels $\Lab(q)$, where $q$ is a subpath of some $p\in \Delta$, $\ell(q)>\ell(p)/2$. Thus $\wh{\Phi}$ is a finite set of non-geodesic words over $Y\cup \cH$.

{\bf Claim 1:} If $p$ is a 2-local geodesic path in $\ga(G,Y\cup \cH)$ of length at most $k$ that vertex backtracks, then $\Lab(p)$ contains an element of $\wh{\Phi}$ as a subword.

Notice first that a $2$-local geodesic of length $2$ cannot vertex backtrack.
Take a 2-local geodesic path $p$, $2<\ell(p)\leq k$. We can assume without loss of generality that $p$ vertex backtracks, but no proper subpath of $p$ vertex backtracks. 
Let $r$ be the edge from $p_-$ to $p_+$ and suppose that $\Lab(r)\in H_\omega$.
Since $p$ is $2$-local geodesic, all the components of $p$ are single edges. If two components of $p$ were connected, a proper subpath of $p$ would backtrack and hence vertex backtrack. Therefore all components of $p$ are isolated.

If a component $p_1$ of $p$ were connected to $r$, the points $p_-,p_+,(p_1)_-$ and $(p_1)_+$ would lie in the same $H_\omega$-coset.
Since $\ell(p)>2$, either the subpath from $p_-$ to $(p_1)_-$ or the subpath from $(p_1)_+$ to $p_+$ would have length greater than 1 and have end points in the same $H_\omega$-coset, which contradicts our minimality assumption. 
Thus all the components of the path $pr^{-1}$ are isolated. Since $\ell(r)<\ell(p)\leq k$, $pr^{-1}$ is in $\Delta$, and $\Lab(p)\in \wh{\Phi}$.

This completes the proof of Claim 1.

{\bf Claim 2:} If $p$ is a 2-local geodesic path in $\ga(G,Y\cup \cH)$ that does not label a $k$-local $(4,0)$-quasi-geodesic,  then $\Lab(p)$ contains an element of $\wh{\Phi}$ as a subword.

We can assume without loss of generality that $\ell(p)\leq k$, and by Claim 1, also assume that $p$ does not backtrack.
Let $q$ be a geodesic path 
with $q_-=p_-$ and $q_+=p_+$. Since $p$ is not a 
$(4,0)$-quasi-geodesic, we can further assume that $\ell(p)>4\ell(q)$.
Also, $p$ does not backtrack, so all the components of $p$ are isolated. 
As $q$ is geodesic, all the components of $q$ are isolated, and $p$ and $q$ being $2$-local geodesics implies that all the components in $p$ and $q$ are edges. 

If all the components on the cycle $pq^{-1}$ are isolated, then   $pq^{-1}\in \Delta$ and $\Lab(p)\in \wh{\Phi}$. So without loss of generality we  assume that at least a component of $p$ is connected to a component of $q$.

We now choose a set of components, i.e.~edges,  
$p_1<\dots< p_n$, $n\geq 1$,  of $p$ such that each $p_i$ is connected to a component $q_i$ of $q$, $q_1<q_2<\dots <q_n$ and no component of the subpath of $p$ from $(p_i)_+$ to $(p_{i+1})_-$ is connected to a component of the subpath of $q$ from $(q_i)_+$ to $(q_{i+1})_-$ for $i=0,\dots, n$, where we understand that $(p_0)_+=(q_0)_+=q_-=p_-$ and $(p_{n+1})_-=(q_{n+1})_-=q_+=p_+$. See Figure \ref{fig:t2leqt1}.

\begin{figure}[ht]
\begin{center}
\includegraphics[scale=0.4]{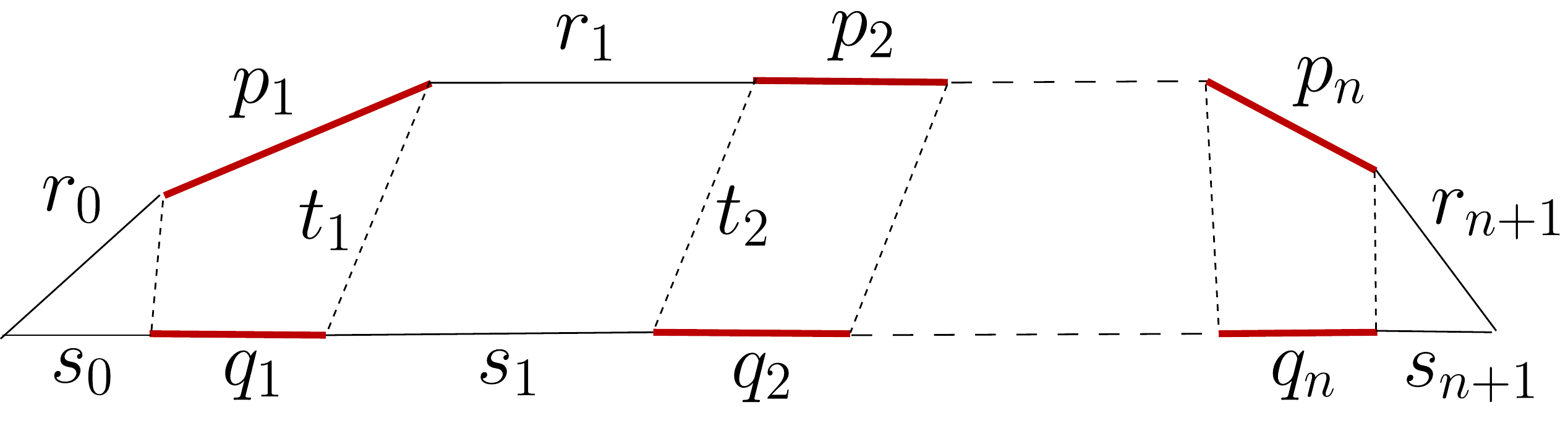} 
\end{center}
\caption{Claim 2.}\label{fig:t2leqt1}
\end{figure}

For $i=0,\dots, n$, we let $r_i$ denote the subpath of $p$ from $(p_i)_+$ to $(p_{i+1})_-$ and $s_i$ denote the subpath of $q$ from $(q_i)_+$ to $(q_{i+1})_-$. 
We remark that by hypothesis $n\neq 0$.
In general,
$$\ell(p)=n+\sum \ell(r_i)>4 \ell(q)= 4n+4\sum \ell(s_i).$$
If $\ell(r_i)\leq \ell(s_i)+2$ for $i=0,\dots, n$, the inequality 
$$4n +4\sum \ell(s_i) < n+\sum \ell(r_i) \leq 3n+\sum \ell(s_i)$$
gives a contradiction.
%which implies $\ell(q)=0$ and then $p\in \Delta$ and $\Lab(p)\in \wh{\Phi}$. 

Therefore, there is an $i$ such that $\ell(r_i)>\ell(s_i)+2$. 
Let $t_i, t_{i+1}$ be geodesic paths from $(r_i)_-$ to $(s_i)_-$ and $(r_i)_+$ to $(s_i)_+$, respectively. We can assume  that $\ell(t_j)=1$ and  $\Lab(t_j)\in H_{\omega_j}$ for $j=i,i+1$. We can view $t_i$ and $t_{i+1}$ as components of the paths $t_i$ and $t_{i+1}$, respectively. If $\d_Y((t_i)_-,(t_i)_+)>m$ and $t_i$ is connected to any component of
$r_i$ then the non vertex backtracking condition implies that this component must be the first edge of $r_i$ and, by Lemma \ref{lem:nocancellation}, this component will have label in $H_{\omega_i}$.  Lemma \ref{lem:nocancellation} also implies that the label of $p_i$ is also in $H_{\omega_i}$,  which contradicts the fact that $p$ doesn't vertex backtrack.
So if $\d_Y((t_i)_-,(t_i)_+)>m$, $t_i$ is not connected to a component of $r_i$. Similarly, if $\d_Y((t_i)_-,(t_i)_+)>m$, $t_i$ is not connected to a component of $s_i$.
If  $t_i$ is connected to $t_{i+1}$, again the non vertex backtracking condition implies that $\ell(r_i)\leq 1$, contradicting $\ell(r_i)>2+\ell(s_i)$.
Therefore $t_i$ is either isolated in the closed path $o=r_it_{i+1}s_i^{-1}t_i^{-1}$ or $\d_Y((t_i)_-,(t_i)_+)\leq m$. The same is true for $t_{i+1}$.
By construction, no component of $s_i$ is connected to a component of $r_i$, so it follows that the closed path $o=r_it_{i+1}s_i^{-1}t_i^{-1}$ has all components isolated or of $Y$-length at most $m$ and the length of $o$ is at most $2k$. Therefore $o\in \Delta$, and since $\ell(r_i)>\ell(s_i)+2$, it follows that $\Lab(r_i)\in \wh{\Phi}$. 

This completes the proof of Claim 2.

Hence, by Claim 2, any 2-local geodesic path $p$ in $\ga(G,Y\cup \cH)$ such that $\Lab(p)$ does not contain any subword in $\wh{\Phi}$ is a $(\lambda,c)$-quasi-geodesic.

Moreover, we remark that such $p$ does not vertex backtrack. Suppose $p$ vertex backtracked. Claim 1 implies that there is a subpath $p_1$ of $p$ with $\ell(p_1)>k$ and
$\d ((p_1)_-,(p_1)_+))\leq 1$.  Since $k$ was chosen such that $\lambda+c \leq k$, Claim 2 implies $\d ((p_1)_-,(p_1)_+))> 1$, which leads to a contradiction.
\end{proof}

\begin{lem}[Generating Set Lemma]\label{lem:goodgenset}
Let $G$ be a finitely generated group, hyperbolic with respect to a family of subgroups $\{H_\omega\}_{\omega\in \Omega}$, and let $Y$ be a finite  generating set.

There exist $\lambda  \geq 1$, $c\geq 0$ and a finite subset 
$\cH'$ of $\cH=\cup_{\omega\in \Omega} (H_\omega - \{1\})$ such that
for every  finite generating set $X$ of $G$ 
with $$Y\cup \cH'\subseteq X\subseteq Y\cup \cH,$$ there is a finite subset $\Phi$ of non-geodesic words over $X$ satisfying: if a word $W \in X^*$ has no parabolic shortenings and does not contain subwords in $\Phi$, then the word $\wh{W} \in (X\cup \cH)^*$ 
is a 2-local geodesic $(\lambda, c)$-quasi-geodesic without vertex backtracking.

In particular, for every $\omega\in \Omega$ and $h\in H_\omega$, $|h|_{X}=|h|_{X\cap H_\omega}$.
\end{lem}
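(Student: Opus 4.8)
The plan is to combine Theorem~\ref{thm:Phi} (applied to the generating set $Y\cup\cH$) with Lemma~\ref{lem:2geod} (relating $2$-local geodesics over $X$ to $2$-local geodesics over $X\cup\cH$), and then to pull back the forbidden set $\wh\Phi$ of words over $Y\cup\cH$ to a finite forbidden set $\Phi$ of words over $X$. First I would fix the constant $m=m(Y)>1$ from Lemma~\ref{lem:nocancellation} and set $\cH'=\{h\in\cH:0<|h|_Y\le 2m\}$, which is finite because $\cH_I$ is finite and each $H_\omega$ is finitely generated; by Lemma~\ref{lem:nocancellation} this choice also guarantees $\cH_I\subseteq X$ for any $X$ with $Y\cup\cH'\subseteq X\subseteq Y\cup\cH$. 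Let $\lambda,c$ and $\wh\Phi$ be the constants and finite set provided by Theorem~\ref{thm:Phi} for the generating set $Y$. Given such an $X$, the hypotheses of Lemma~\ref{lem:2geod} are satisfied (with the roles of $Y$ and $X$ there), so for any $2$-local geodesic word $W$ over $X$ with no parabolic shortenings, $\wh W$ is a $2$-local geodesic over $X\cup\cH$, and Theorem~\ref{thm:Phi} then gives that $\wh W$ is a $(\lambda,c)$-quasi-geodesic without vertex backtracking as soon as $\wh W$ avoids $\wh\Phi$.

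The work is then to (a) replace ``$2$-local geodesic over $X$'' by ``avoids a finite set of short words'', and (b) replace ``$\wh W$ avoids $\wh\Phi$'' by ``$W$ avoids a finite set $\Phi$ of words over $X$''. For (a), a word fails to be a $2$-local geodesic iff it contains a subword of length $2$ that is not geodesic over $X$; there are only finitely many such subwords, so I add them all to $\Phi$. For (b), I want to understand which length-$2$ ``windows'' of $W$ can create a window of $\wh W$ lying inside a forbidden subword from $\wh\Phi$. The key point is that the map $W\mapsto\wh W$ only replaces maximal parabolic syllables $U_i$ (words over some $X\cap H_\omega$) by single letters $h_i\in\cH$; since a $2$-local geodesic $W$ has no two adjacent letters from the same $X\cap H_\omega$ (else that pair would be a non-geodesic $2$-subword once we throw the relevant short relators into $\Phi$), each $U_i$ is in fact a single letter of $X\cap H_\omega$, so $\wh W$ is obtained from $W$ simply by relabelling certain single letters $x\in X\cap(H_\omega\setminus\{1\})$ by the corresponding element of $\cH$. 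Hence occurrences of words of $\wh\Phi$ in $\wh W$ correspond to occurrences in $W$ of a finite set of words over $X$ (obtained by back-substituting, into each word of $\wh\Phi$, for each letter in $\cH$ a geodesic word over $X$ representing it, of which there are finitely many of bounded length); this finite set is the remaining part of $\Phi$.

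The main obstacle is bookkeeping the interaction between the factorization of $W$ and the substitution: I must check that a length-$\le|\wh\Phi|$ window of $\wh W$ really does lift to a bounded-length window of $W$, which uses that each syllable $U_i$ is a single letter for $2$-local geodesics (this in turn is where $\cH_I\subseteq X$ and the choice of $\cH'$ containing $\Theta_Y(2m)$ are used, exactly as in the proof of Lemma~\ref{lem:2geod}), and that $W$ has no parabolic shortenings so $\wh W$ is defined. Once these are in place, the displayed consequence is routine: apply the Lemma to a geodesic word $W$ over $X\cap H_\omega$ representing $h$ (such $W$ has trivial factorization from the point of view of $\Omega$ after noting $X\cap H_\omega$ generates $H_\omega$, is automatically $2$-local geodesic, has no parabolic shortenings, and avoids $\Phi$ when $h$ is long enough; short $h$ are handled directly), so $\wh W$ is an $h$-length-$1$ path that is a $(\lambda,c)$-quasi-geodesic, forcing $|h|_{X\cup\cH}$ and hence $|h|_X$ to be comparable to $\ell(W)=|h|_{X\cap H_\omega}$; combined with the trivial inequality $|h|_X\le|h|_{X\cap H_\omega}$ and a short argument using that a geodesic over $X$ representing $h\in H_\omega$ cannot leave the coset, one gets equality.
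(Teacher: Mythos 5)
Your overall route (Theorem~\ref{thm:Phi} applied over $Y\cup\cH$, Lemma~\ref{lem:2geod} to pass from $2$-local geodesics over $X$ to $2$-local geodesics over $X\cup\cH$, then pulling $\wh{\Phi}$ back to a finite set $\Phi$ over $X$) is the paper's, but two steps do not work as written. First, the claim that in a $2$-local geodesic each parabolic syllable $U_i$ is a single letter of $X\cap H_\omega$ is false: two adjacent letters of $X\cap H_\omega$ need not form a non-geodesic pair (take $ab$ in $\Z^2$ with standard generators), and since $X$ is finite while $H_\omega$ is typically infinite, $X$ cannot absorb all such products. Syllables of $2$-local geodesics can be arbitrarily long, so $\wh{W}$ is not obtained from $W$ by relabelling single letters. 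The subword correspondence you want survives for a different reason: the $\cH$-letters occurring in words of $\wh{\Phi}$ form a finite set, each has only finitely many geodesic representatives over the finite set $X$ (all of length $|h|_X$), and the factorization of $W$ restricts to the factorization of the relevant subword; this is exactly how the paper defines $\Phi$ (words with no parabolic shortenings whose derived word lies in $\wh{\Phi}$, together with the non-geodesic words of length $2$).

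Second, and more seriously, your $\cH'=\{h\in\cH: 0<|h|_Y\le 2m\}$ is too small. The lemma asserts that $\Phi$ consists of \emph{non-geodesic} words over $X$, and this is what drives the final clause: a geodesic word over $X$ representing $h\in H_\omega$ automatically avoids $\Phi$, so its derived word has no vertex backtracking, hence has length $1$, hence the word is a single syllable over $X\cap H_\omega$. To show that a word $U$ with $\wh{U}\in\wh{\Phi}$ is non-geodesic over $X$ you need a strictly shorter word \emph{over $X$}; knowing only that $\wh{U}$ is non-geodesic over $X\cup\cH$ produces a shorter word $\wh{V}$ over $X\cup\cH$ whose $\cH$-letters may be enormously long over $X$. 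The paper therefore adds to $\cH'$ all $\cH$-letters of a chosen geodesic replacement $\wh{V}(\wh{U})$ for each $\wh{U}\in\wh{\Phi}$, so that $\wh{V}(\wh{U})$ is itself a word over $X$ witnessing non-geodesicity of $U$. With your smaller $\cH'$ this fails, and with it your treatment of the ``in particular'' clause collapses; note also that this clause must be proved by applying the main statement to a geodesic word over $X$ (not over $X\cap H_\omega$) representing $h$, and that the assertion that such a geodesic ``cannot leave the coset'' is precisely the content to be derived from the no-vertex-backtracking conclusion, not a separate short argument.
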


\begin{proof}
Let $\lambda, c,$ and $\wh{\Phi}$ be the constants and the set given by Theorem \ref{thm:Phi} applied to $(G,Y,\{H_\omega\}_{\omega\in \Omega})$, and let $m=m(G,Y,\{H_\omega\}_{\omega\in \Omega})$ be the constant provided by Lemma \ref{lem:nocancellation}.

For each $\wh{U}\in \wh{\Phi}$ let $\wh{V}(\wh{U})$ be a geodesic word over $Y\cup \cH$ such that 
$\wh{U}=_G \wh{V}(\wh{U})$.

Let $$\cH'=\{h\in \cH : h\text{ letter in some } \wh{V}(\wh{U}),\wh{U}\in \wh{\Phi}\}\cup \{h\in \cH : 0< |h|_Y\leq 2m\}.$$ 
Let $X$ be a finite  subset of $G$  satisfying $$Y\cup \cH' \subseteq X\subseteq Y\cup \cH.$$
Clearly $\gen{X}=G$ and $Y\cup \cH=X\cup \cH$.

Let $\Phi$ be the set of words $U$ over $X$ that are either non-geodesics of length 2, or words with no parabolic shortenings such that
the derivation $\wh{U}$ of $U$ lies in $\wh{\Phi}$.
 Since $\wh{\Phi}$ is finite and $X$ is finite, it follows that $\Phi$
is finite. 

We first show that the words in $\Phi$ are not geodesic over $X$.
Pick $U\in \Phi$; we only need to consider the case when the  derivation $\wh{U}$  from $U$  is a word in $\wh{\Phi}$.
By our choice of $X$, $\wh{V}(\wh{U})$ is a word over $X$ and is geodesic over 
$X\cup \cH=Y\cup \cH$. Hence $\wh{V}(\wh{U})$ must be geodesic when viewed as a word over $X$. We denote by $V(U)$ the word $\wh{V}(\wh{U})$ viewed as a word over $X$.  We have that $\ell_X(U)\geq \ell_{X\cup \cH}(\wh{U})>\ell_{X\cup \cH}(\wh{V})=\ell_{X}(V)$ and then $U$ is non-geodesic.

Let $W$ be a word with no parabolic shortenings that does not contain any subword of $\Phi$, and $\wh{W}$ the word derived from $W$.
Since $W$ is  $2$-local geodesic, by Lemma \ref{lem:2geod}, $\wh{W}$ is a 2-local geodesic. By construction of $\Phi$, $\wh{W}$ does not contain subwords of $\wh{\Phi}$. Hence,
by Theorem \ref{thm:Phi}, $\wh{W}$ is a $(\lambda,c)$-quasi-geodesic without vertex backtracking.

Finally, assume moreover that $W$ is a geodesic word over $X$, $W=_G h\in H_\omega$. Since $\wh{W}$  does not vertex backtrack, $\wh{W}$ must have length 1. By the Construction \ref{const:paths} $W$ is a word over $X\cap H_\omega$, which means that for $h\in H_\omega$, $|h|_X=|h|_{X\cap H_\omega}$ for all $\omega\in \Omega$. This completes the proof of the lemma.
\end{proof}

\begin{cor}
Let $G$ be a finitely generated group, hyperbolic relative to a family $\{H_\omega\}_{\omega\in \Omega}$. There exists a finite generating set $X$ and constants $k\geq 0$, $\lambda_1 \geq 1$ and $c_1\geq 0$ such that the following hold:
\begin{enumerate}
\item[{\rm (a)}] The inclusion $\ga(H_\omega, X\cap H_\omega)\to \ga(G,X)$ is  an isometric embedding.
\item[{\rm (b)}] If $W\in X^*$ is a $k$-local geodesic word with no parabolic shortenings, then $W$ is a $(\lambda_1,c_1)$-quasi-geodesic.
\end{enumerate}
\end{cor}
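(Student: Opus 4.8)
The plan is to combine the Generating Set Lemma with the local-to-global property of hyperbolic spaces. First I would fix the finite generating set $X$ produced by Lemma \ref{lem:goodgenset} applied to $(G,Y,\{H_\omega\}_{\omega\in\Omega})$ (say with $Y=X$, or with $Y$ any fixed finite generating set and $X=Y\cup\cH'$), together with the associated constants $\lambda\geq 1$, $c\geq 0$ and finite set $\Phi$ of non-geodesic words over $X$. Part \textrm{(a)} is then immediate: the last sentence of Lemma \ref{lem:goodgenset} says $|h|_X=|h|_{X\cap H_\omega}$ for all $\omega\in\Omega$ and $h\in H_\omega$, and since the inclusion $X\cap H_\omega\hookrightarrow X$ induces a graph embedding $\ga(H_\omega,X\cap H_\omega)\to\ga(G,X)$, the equality of word lengths for every pair of vertices (by translating) upgrades this to an isometric embedding.

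For part \textrm{(b)} I would first record that $\ga(G,X\cup\cH)$ is $\delta$-hyperbolic (it equals $\ga(G,Y\cup\cH)$) and apply Theorem \ref{lem:local2global}: the $(\lambda,c)$ coming from Lemma \ref{lem:goodgenset} yields, for the quasi-geodesic constants $(\lambda,c)$, a local constant $k_0$ such that every $k_0$-local $(\lambda,c)$-quasi-geodesic in $\ga(G,X\cup\cH)$ is a global $(\lambda',c')$-quasi-geodesic for some $\lambda'\geq 1$, $c'\geq 0$. The key bridge is the map $W\mapsto\wh{W}$ from Construction \ref{const:paths}: if $W$ is a word over $X$ with no parabolic shortenings not containing any subword of $\Phi$, then $\wh{W}$ is a $(\lambda,c)$-quasi-geodesic without vertex backtracking in $\ga(G,X\cup\cH)$, and moreover $\ell_{X\cup\cH}(\wh{W})\geq \ell_X(W)/N$ for a suitable $N$ bounded by the maximal $X$-length of letters of $\cH$ that can appear in such $\wh{W}$ — but more importantly, since $\wh{W}$ does not vertex backtrack, the $X$-distance between $\wh{p}_-$ and $\wh{p}_+$ is comparable (up to multiplicative and additive constants coming from the $X$-lengths of the parabolic letters, which are controlled because they appear in a finite list) to $\ell_{X\cup\cH}(\wh{W})$.

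The remaining issue is that a $k$-local geodesic $W$ over $X$ need not avoid subwords of $\Phi$ globally, and need not have no parabolic shortenings — so I would choose $k$ large enough to force these. Concretely: take $k$ to exceed the length of every word in $\Phi$ and every word in the (finite) list of geodesic representatives of the parabolic letters appearing in derived words, so that a $k$-local geodesic automatically has no parabolic shortenings (any $U_i$ over $X\cap H_\omega$ in the factorization, being a subword of length $\leq k$, is geodesic in $(H_\omega,X\cap H_\omega)$ by part \textrm{(a)}) and automatically contains no subword of $\Phi$ (each word of $\Phi$ is non-geodesic of length $\leq k$). Then $\wh{W}$ is a $(\lambda,c)$-quasi-geodesic without vertex backtracking; enlarging $k$ further, using Theorem \ref{lem:local2global} as above, shows $\wh{W}$ is a global $(\lambda',c')$-quasi-geodesic in $\ga(G,X\cup\cH)$. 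Finally, the absence of vertex backtracking means each maximal parabolic subword $U_i$ of $W$ has $\d_X((U_i)_-,(U_i)_+)=\ell_{X\cup\cH}$ of the corresponding single edge, and one deduces $\ell_X(W)\leq N\cdot\ell_{X\cup\cH}(\wh{W})\leq N(\lambda'\d_{X\cup\cH}(\wh{W}_-,\wh{W}_+)+c')\leq N(\lambda'\d_X(W_-,W_+)+c')$ where $N$ bounds the $X$-length of a parabolic letter over the relevant finite alphabet; setting $\lambda_1=N\lambda'$ and $c_1=Nc'$ (and checking subwords of $W$ are again $k$-local geodesics with no parabolic shortenings, so the same estimate applies to them) gives the claimed $(\lambda_1,c_1)$-quasi-geodesic property. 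I expect the main obstacle to be the bookkeeping in this last step: relating $X$-lengths of subwords of $W$ to $X\cup\cH$-lengths of subwords of $\wh{W}$ uniformly, i.e.\ verifying that the parabolic letters occurring in $\wh{W}$ lie in a fixed finite subset of $\cH$ so that their $X$-lengths are uniformly bounded, and handling the interaction between taking subwords of $W$ and the factorization/derivation operation.
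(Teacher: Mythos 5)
Your setup (the Generating Set Lemma for part (a), and the observation that a $k$-local geodesic with $k\geq\max\{\ell(U):U\in\Phi\}$ automatically avoids all subwords of $\Phi$, so that $\wh{W}$ is a $(\lambda,c)$-quasi-geodesic without vertex backtracking) matches the paper. But the final step contains a genuine gap: you want to deduce $\ell_X(W)\leq N\cdot\ell_{X\cup\cH}(\wh{W})$ where $N$ "bounds the $X$-length of a parabolic letter over the relevant finite alphabet," and you flag the verification that "the parabolic letters occurring in $\wh{W}$ lie in a fixed finite subset of $\cH$" as bookkeeping. That verification fails. The letters of $\wh{W}$ lying in $\cH$ are the elements $h_i=_G U_i$ where $U_i$ is a maximal parabolic subword of $W$; these subwords can be arbitrarily long (already for $H_\omega\cong\Z$), so $|h_i|_X=\ell_X(U_i)$ is unbounded and no such $N$ exists. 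A multiplicative comparison between $\ell_X(W)$ and $\ell_{X\cup\cH}(\wh{W})$ is simply not available.

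The paper closes this gap with the Bounded Coset Penetration property rather than a letter-by-letter bound. Take a geodesic word $W_2$ over $X$ with $W_2=_G W$; by the Generating Set Lemma $\wh{W}_2$ is also a $(\lambda,c)$-quasi-geodesic without backtracking. By Theorem \ref{thm:BCP}, every component of $\wh{W}$ of $X$-length greater than $\e=\e(\lambda,c,0)$ is connected to a component of $\wh{W}_2$, and the $X$-lengths of connected components differ by at most $2\e$; since the components of $\wh{W}_2$ are labelled by disjoint subwords of the geodesic $W_2$, their total $X$-length is at most $\ell(W_2)=|W|_X$. The remaining edges of $\wh{W}$ have $X$-length at most $\e$ and their number is controlled by the quasi-geodesic inequality $\ell(\wh{W})\leq\lambda\ell(W_2)+c$. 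Summing gives $\ell_X(W)\leq(\e\lambda+2\e+1)|W|_X+\e c$, and applying this to every subword yields (b). Also note that your claim that $k$-local geodesicity forces "no parabolic shortenings" is not sound (a maximal parabolic subword $U_i$ need not have length $\leq k$), but this is harmless since the corollary assumes no parabolic shortenings as a hypothesis.
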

\begin{proof}
Take $\lambda, c$ and $X$ as in Lemma \ref{lem:goodgenset}, and $\Phi \subset X^*$ the associated finite set of non-geodesic words. Claim
(a) follows from the last claim of the Lemma.

To see (b), take $k=\max\{\ell(U)\mid U\in \Phi\}$ and consider any word $W_1$ over $X$ that is a $k$-local geodesic with no parabolic shortenings. Let $W_2$ be a geodesic word over $X$ representing the same element as $W_1$.
By Lemma \ref{lem:goodgenset}, $\wh{W}_1$ labels a $(\lambda,c)$-quasi-geodesic over $X\cup \cH$ without vertex backtracking, so
\begin{equation}\label{eq:corquasigeodesic}
\ell(\wh{W}_1)\leq \lambda |\wh{W}_1|_{X\cup \cH} +c \leq \lambda \ell(\wh{W}_2)  +c \leq \lambda \ell(W_2)+c.\end{equation}

By Lemma \ref{lem:goodgenset},  $\wh{W}_2$ also labels a $(\lambda,c)$-quasi-geodesic over $X\cup \cH$. Let $\wh{p}_1$ and $\wh{p}_2$ be $0$-similar paths  in $\ga(G, X\cup \cH)$ labelled by $\wh{W}_1$ and $\wh{W}_2$, respectively. Let $\e=\e(\lambda,c,0)$ be the constant of the Bounded Coset Penetration property (Theorem \ref{thm:BCP}), and $\wh{r}_1,\dots \wh{r}_n$ be the components of $\wh{p}_1$ of $X$-length greater than $\e$. By the BCP property, there are components $\wh{s}_1,\dots, \wh{s}_n$  of $\wh{p}_2$, such that $\wh{s}_i$ is connected to $\wh{r}_i$ and
$\d_X((\wh{r}_i)_-,\wh{r}_i)_+)\leq \d_X((\wh{s}_i)_-,(\wh{s}_i)_+)+2\e$.

Then we have that
$$\sum_{i=1}^n\d_X((\wh{r}_i)_-,\wh{r}_i)_+)\leq \sum_{i=1}^n \big(\d_X((\wh{s}_i)_-,(\wh{s}_i)_+)+2\e \big)\leq \ell(W_2)+2n\e\leq (2\e+1)\ell(W_2).$$

We can assume that the edges in $\wh{p_1}$ that do not belong to $\wh{r}_1,\dots, \wh{r}_n$ have $X$-length less or equal to $\e$.
By \eqref{eq:corquasigeodesic}, there are at most $\lambda \ell(W_2)+c$ such edges.
Finally 
$$\ell(W_1)=\sum_{e \text{ edge in } \wh{p}_1} \d_X(e_-,e_+)\leq \e(\lambda \ell(W_2)+c)+(2\e+1)\ell(W_2) = (\e\lambda + 2\e+1)\ell(W_2)+\e c .$$
Since $W_1$ is arbitrary and $\ell(W_2)=|W_1|_X$, we get the desired result with $\lambda_1=\e\lambda + 2\e+1$ and $c_1=c\e$.

\end{proof}

%%%%%%%%%%%%
% Section
% Main Lemma
%%%%%%%%%%%%%

\section{Relatively hyperbolic groups with geodesic bicombings}
\label{sec:bicom}

In this section we suppose that 
 $G$ is hyperbolic relative to $\{H_\omega\}_{\omega \in \Omega}$,
$X$ is a finite generating set of $G$ and for each parabolic subgroup $H_\omega$ we have a preferred set of geodesic words $\cL_\omega$ over $X\cap H_\omega$
that fellow travel. The aim of this section is to extend the
fellow traveler property to words over $X$. 

More precisely, let $K$ be a group, $Z$ a finite generating set of $K$ and
$\cL$ a set of words over $Z$. We define the following two properties for $(K,Z,\cL)$, and we recall the property (L1).

\begin{enumerate}
\item[{\rm (L1)}] $ \cL \subseteq \geol(K,Z)$ (i.e.  $\cL$ is a set of geodesics over $Z$) and $\cL$ contains at least one representative for each element of $K$. 

\item[{\rm (L$\forall$)}] There is $M>0$ so that for every $W\in \cL$, $g,h\in K$ and all $U\in \cL$ with $U=_G gWh$, $p,q$ asynchronously $M(|g|_{Z}+|h|_{Z})$-fellow travel, where $\Lab(p)\equiv U$, $\Lab(q)\equiv W$, $p_-=1,$ $q_-=g$.

\item[{\rm (L$\exists$)}] There is $M>0$ so that for every $W\in \cL$, $g,h\in K$ there exists $U\in \cL$ with $U=_G gWh$ such that  $p,q$ asynchronously $M(|g|_{Z}+|h|_{Z})$-fellow travel, where $\Lab(p)\equiv U$, $\Lab(q)\equiv W$, $p_-=1,$ $q_-=g$.
\end{enumerate}

It would be equivalent to state {\rm (L$\exists$)} and {\rm (L$\forall$)} for $g,h\in Z^*$ rather than in $K$, as the following Lemma shows. We have omitted its proof since it is exactly the same as that of \cite[Lemma 4.5]{NeumannShapiro}. There the case $\cL=\geol(K,Z)$, where $(K,Z)$ has $M$-\fftp, was considered.
\begin{lem}
\label{lem:esimilarfftp}
Let $K$ be a group  finitely generated by $Z$. 
Suppose $\cL$ is a set of geodesic words over $Z$
for which there is $M>0$ such that for all 
$x,y\in Z$, $W\in \cL$, for all (resp. there exist) $U,V\in \cL$
with $U=_Gx^{-1}W$, $V=_G Wy^{-1}$,
the path $p$ asynchronously $M$-fellow travels with 
$q_U$ and $q_V$, where
$\Lab(p)\equiv W$, $\Lab(q_U)\equiv U$,
$\Lab(q_V)\equiv V$, $p_-=1=(q_V)_-$, and $(q_U)_-=x$.
Then $\cL$ satisfies {\rm (L$\forall$)} (resp. {\rm (L$\exists$)}).
\end{lem}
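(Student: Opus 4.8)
The plan is to bootstrap from the single-generator statement to the general statement by induction on word length, exactly as in \cite[Lemma 4.5]{NeumannShapiro}. I will treat the (L$\forall$) case; the (L$\exists$) case is identical with "for all $U,V$" replaced by "there exist $U,V$" throughout, and with the obvious choices of intermediate words.

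First I would reduce to the case $h=1$ (the case $g=1$ is symmetric, and the general case follows by concatenating the two one-sided fellow-traveling estimates using Lemma \ref{lem:comptravel} and the triangle inequality, absorbing the constants into $M$). So fix $W\in\cL$ with $\Lab(q)\equiv W$, $q_-=1$, and let $g\in K$ with $|g|_Z=n$; write $g=_G x_1\cdots x_n$ with $x_i\in Z$. I would build a chain $W_0\equiv W,\ W_1,\dots, W_n\equiv U$ of words in $\cL$, where $W_i=_G (x_{n-i+1}\cdots x_n)^{-1}W$, inductively: given $W_{i-1}\in\cL$ representing $(x_{n-i+2}\cdots x_n)^{-1}W$, apply the hypothesis of the Lemma with the letter $x=x_{n-i+1}$ to obtain (for all, resp. some) $W_i\in\cL$ with $W_i=_G x_{n-i+1}^{-1}W_{i-1}$, so that the path $p_{i-1}$ labelled $W_{i-1}$ starting at $x_{n-i+1}$ asynchronously $M$-fellow travels with the path $p_i$ labelled $W_i$ starting at $1$. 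Shifting all basepoints by the same group element does not change fellow traveling, so after translating, $p_i$ (based at $x_{n-i+1}\cdots x_n$, reading $W_i$ — wait, I must be careful with left versus right translation) — more precisely I would set up each $W_i$ as a path based at the appropriate vertex so that consecutive paths in the chain asynchronously $M$-fellow travel, and then compose the $n$ estimates. The composition of $n$ asynchronous $M$-fellow-traveling relations (each obtained by passing through one intermediate path) yields an asynchronous $nM$-fellow-traveling relation between $p_0$ (labelled $W$, based at $g$) and $p_n$ (labelled $U$, based at $1$); since all the $W_i$ have the same length (they are geodesics representing $g^{-1}W$ up to the suffixes stripped off, and by (L1) all words in $\cL$ are geodesic, so lengths are controlled), the non-decreasing reparametrization functions compose to non-decreasing functions and the distance bounds add. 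This gives the statement with $M(|g|_Z+|h|_Z)$ after reintroducing $h$.

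The one point requiring genuine care — and the step I expect to be the main (minor) obstacle — is the bookkeeping of basepoints and the direction of multiplication when iterating. In the Lemma's hypothesis the fellow-traveling is between $W$ based at $1$ and $U=_G x^{-1}W$ based at $x$; when I want to iterate I must make sure that "stripping the next letter off $g$" lines up with this convention, i.e. that I peel letters of $g$ from the appropriate end so that at each stage I really am in the hypothesis' situation after a global left-translation by a group element. Once the indexing is fixed, the distance estimates are a routine application of the triangle inequality, the observation that left-translation is an isometry of the Cayley graph, Lemma \ref{lem:comptravel} is not even needed for the iteration itself but only for gluing the $g$-side and $h$-side together, and the fact that composing non-decreasing functions gives a non-decreasing function. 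Since this is verbatim the argument of \cite[Lemma 4.5]{NeumannShapiro} with $\geol(K,Z)$ replaced by an arbitrary $\cL$ satisfying (L1), and the proof there never used anything about $\geol$ beyond the two properties we have abstracted, the proof goes through unchanged; this is why the authors omit it.
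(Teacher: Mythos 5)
Your proposal is correct and is essentially the argument the paper intends: the paper omits the proof entirely, citing \cite[Lemma 4.5]{NeumannShapiro}, and your induction on $|g|_Z+|h|_Z$ — peeling off one generator at a time, applying the one-letter hypothesis at each step, and adding the fellow-traveling constants via transitivity of asynchronous fellow traveling — is exactly that argument transplanted from $\geol(K,Z)$ to a general $\cL$. The basepoint bookkeeping you flag is indeed the only delicate point, and it resolves as you describe.
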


%We have two examples of $\cL_\omega$ satisfying (L1)--(L$\forall/\exists$) in mind.
%The first one,
\begin{ex}
In view of Lemma \ref{lem:esimilarfftp},  if 
$(K, Z)$ has \fftp{} then $(K,Z,\geol(K,Z))$ satisfies (L1) and (L$\exists$). 

Similarly, if  $\cL$ is a geodesic biautomatic structure for $(K,Z)$, then $(K,Z,\cL)$ satisfies (L1) and  (L$\forall$).
\end{ex}

\begin{rem}\label{rem:Lsync} In view of Lemma \ref{lem:geosync}, if $(K,Z,\cL)$ satisfies (L1), $(K,Z,\cL)$ satisfies (L$\forall$) if and only if it satisfies a synchronous version of (L$\forall$). The same holds for (L$\exists$).
\end{rem}

 The main result of the section is the following. We refer the reader to Definition \ref{defn:rel} for the definition of $\rell(X,\{\cL_\omega\}_{\omega\in \Omega})$.
\begin{prop} \label{prop:geobicom}
Let $G$ be a finitely generated group, hyperbolic with respect to a family of subgroups $\{H_\omega\}_{\omega\in \Omega}$.
Let $Y$ be a finite generating set of $G$ and $\cH=\cup_{\omega\in \Omega} (H_\omega-\{1\})$. 

There exists a finite subset $\cH'$ of $\cH$ such that, for every finite  generating set $X$ of $G$ satisfying
\begin{enumerate}
\item[{\rm (i)}] $Y\cup \cH'\subseteq X\subseteq Y\cup \cH$ and
\item[{\rm (ii)}] for all $\omega\in \Omega$, there is a language $\cL_\omega$  such that $(H_\omega, X\cap H_
\omega, \cL_\omega)$ satisfies {\rm (L1)} and {\rm (L$\forall$)} (resp. {\rm (L1)} and {\rm (L$\exists$)}),
\end{enumerate}
the triple $(G,X, \cL)$ satisfies {\rm (L1)} and {\rm (L$\forall$)} (resp. {\rm (L1)} and {\rm (L$\exists$)}), where $\cL=\geol(G,X)\cap \rell(X,\{\cL_\omega\}_{\omega\in \Omega})$.
\end{prop}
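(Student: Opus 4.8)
The plan is to use the Generating Set Lemma (Lemma \ref{lem:goodgenset}) to pass from geodesics over $X$ to quasi-geodesics over $X\cup\cH$ without vertex backtracking, then to play off the hyperbolicity of $\ga(G,X\cup\cH)$ via the Bounded Coset Penetration property (Theorem \ref{thm:BCP}). First I would fix $\cH'$: take it to be the set $\cH'$ provided by Lemma \ref{lem:goodgenset} applied to $(G,Y,\{H_\omega\})$, enlarged if necessary so that $\cH_I\subseteq\cH'$ (using Lemma \ref{lem:nocancellation}) and so that the hypotheses of Lemma \ref{lem:2geod} hold. For any $X$ with $Y\cup\cH'\subseteq X\subseteq Y\cup\cH$ and any choice of the $\cL_\omega$ as in (ii), the property (L1) for $(G,X,\cL)$ is essentially immediate: $\cL\subseteq\geol(G,X)$ by definition, and Lemma \ref{lem:specialrep} (which only needs (L1) for the $\cL_\omega$) shows that every $g\in G$ has a \emph{special} word representative, which in particular lies in $\geol(G,X)\cap\rell(X,\{\cL_\omega\})=\cL$.

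The substantive content is (L$\forall$) (resp.\ (L$\exists$)). By Lemma \ref{lem:esimilarfftp} it suffices to treat the case $g=x^{-1}$, $h=1$ (and symmetrically $g=1$, $h=y^{-1}$) with $x,y\in X$; I will describe the ``$g=x^{-1}$'' step. So let $W\in\cL$ and let $U\in\cL$ (resp.\ choose $U\in\cL$) with $U=_G x^{-1}W$. Let $W$ have factorization $A_0U_1A_1\cdots U_nA_n$, so that $\wh W\equiv A_0h_1A_1\cdots h_nA_n$ is defined (no parabolic shortenings, since $W\in\rell$); similarly for $\wh U$. Because $W,U$ are geodesic over $X$ and avoid the forbidden set $\Phi$ of Lemma \ref{lem:goodgenset} (a geodesic contains no forbidden subword), $\wh W$ and $\wh U$ are $(\lambda,c)$-quasi-geodesics in $\ga(G,X\cup\cH)$ without vertex backtracking; they are $1$-similar (endpoints differ by $x$). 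Apply Theorem \ref{thm:BCP} with $k=1$ to get a constant $\e=\e(\lambda,c,1)$: the vertex sets of $\wh p:=\wh U$ and $\wh q:=\wh W$ lie in $\e$-neighbourhoods of each other, and each $H_\omega$-component of $\wh q$ of $X$-length $>\e$ is connected (hence $\e$-similar) to an $H_\omega$-component of $\wh p$, and vice versa. This already gives an asynchronous $\e$-fellow-travelling of $\wh p$ and $\wh q$ \emph{between the components}; on the ``long'' components $h_i\leftrightarrow h_i'$ one has elements of $H_\omega$ whose $X\cap H_\omega$-geodesic representatives $U_i$ and $U_i'$ (using $|h|_X=|h|_{X\cap H_\omega}$ from Lemma \ref{lem:goodgenset}) satisfy $U_i'=_G a U_i b$ with $|a|_{X\cap H_\omega},|b|_{X\cap H_\omega}\le\e$; hence by (L$\forall$) (resp.\ (L$\exists$), choosing $U_i'$ appropriately, then invoking Lemma \ref{lem:specialchange} to stay inside $\cL$) the words $U_i,U_i'$ asynchronously $2M\e$-fellow travel. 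The ``short'' components and the $X-\cH$ stretches $A_i$ are already handled by the $\e$-neighbourhood conclusion of part (1) of BCP: any geodesic/short subpath in a bounded neighbourhood of the other path can be re-parametrized to fellow travel it at a bounded distance. Gluing all these pieces together with Lemma \ref{lem:comptravel} yields that $\wh U$ and $\wh W$ asynchronously $K_1$-fellow travel for $K_1=K_1(\e,M)$, and finally translating from $\ga(G,X\cup\cH)$ back to $\ga(G,X)$ costs only a bounded multiplicative factor (each letter $h_i$ expands to the geodesic word $U_i$ of length $|h_i|_X$, and the $\e$-similarity of connected components controls the error), so $U$ and $W$ asynchronously $K$-fellow travel over $X$ for a suitable $K$ depending only on $M$ and the constants of Lemma \ref{lem:goodgenset} and Theorem \ref{thm:BCP}.

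The main obstacle I anticipate is bookkeeping the correspondence between the factorizations of $W$ and $U$: BCP matches \emph{long} components but says nothing directly about short components or about components of $\wh q$ that are genuinely isolated (unmatched), and one must check that the unmatched pieces are short enough—both on the $\wh q$ side and the $\wh p$ side—that their contribution to the fellow-travelling constant is bounded. Concretely, between two consecutive matched components one has subpaths $r_i$ of $\wh p$ and $s_i$ of $\wh q$ with the same endpoints up to $\e$, each of which is a $(\lambda,c)$-quasi-geodesic with no vertex backtracking and all components of $X$-length $\le\e$; a second application of Lemma \ref{lem:Xi} (as in the proof of Theorem \ref{thm:Phi}) or of BCP part (1) bounds $\ell(r_i)$ and $\ell(s_i)$ in terms of each other, which is exactly what is needed to control the local asynchronous fellow-travelling there. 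Assembling these local bounds into one global constant, and verifying that in the (L$\exists$) case the choices of $U_i'$ can be made compatibly so that the assembled word $U$ genuinely lies in $\cL$ (which is where Lemma \ref{lem:specialchange} and the notion of special word earn their keep), completes the argument.
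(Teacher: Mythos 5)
Your proposal is correct and follows essentially the same route as the paper: the Generating Set Lemma to make $\wh W,\wh U$ non-backtracking quasi-geodesics, BCP to match the long components in order, (L$\forall$)/(L$\exists$) on the matched parabolic pieces with Lemma \ref{lem:specialchange} keeping the substitution inside $\cL$ in the existential case, and a separate bounded fellow-travelling argument for the short stretches (which the paper packages as Lemmas \ref{lem:boundingt1t2} and \ref{lem:mainshortpieces}, proved by exactly the induction you sketch). The two obstacles you flag — the order-preservation of the component matching and the control of unmatched short pieces — are precisely the two points the paper's Lemma \ref{lem:main} spends its effort on, and your proposed fixes are the ones used there.
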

 
\begin{rem}
A set $\cL$ such that $(K,Z,\cL)$ satisfies (L1) and (L$\forall$), and which contains exactly one representative for each element of the group, is called a {\it geodesic bicombing} in \cite{Short}. It is easy to adapt the proof of  Proposition \ref{prop:geobicom} in a way that if we further assume that each $\cL_\omega$ is a geodesic bicombing
then $\geol(G,X)\cap \rell(X,\{\cL_\omega\}_{\omega\in \Omega})$
contains a geodesic bicombing.
\end{rem}

\begin{lem}\label{lem:boundingt1t2}
Let $\lambda\geq 1$, $c\geq 0$ and $k\geq 0$.
Let $\e=\e(\lambda,c,k)$ be the constant of the Bounded Coset Penetration property (Theorem \ref{thm:BCP}).
There exists $K_1=K_1(\e,\lambda,c)$ such that the following hold.

Let $p$ and $q$ be two $k$-similar $(\lambda,c)$-quasi-geodesics. Then
for any subpath $p_1$ of $p$, $\ell(p_1)>K_1$, and any subpath $q_1$ of $q$ 
such that $p_1$ and $q_1$ are $\e$-similar, there is a vertex $u$ in $p_1$, 
$u\notin\{ (p_1)_-, (p_1)_+\}$,
and a vertex $v$ in $q_1$, such that $\d_X(u,v)\leq \e$.
\end{lem}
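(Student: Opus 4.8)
The plan is to use the Bounded Coset Penetration property to control the relationship between the vertices of $p$ and $q$, and then a simple counting/pigeonhole argument to find a pair of close interior vertices. First I would apply Theorem \ref{thm:BCP}(1) to the $k$-similar $(\lambda,c)$-quasi-geodesics $p$ and $q$: their vertex sets lie in the closed $\e$-neighborhoods of each other with respect to $\d_X$. Now suppose $p_1$ is a subpath of $p$ and $q_1$ a subpath of $q$ that are $\e$-similar, and assume toward a contradiction that no interior vertex $u$ of $p_1$ lies within $\d_X$-distance $\e$ of any vertex of $q_1$. The point is that every vertex of $p_1$ is $\e$-close (in $\d_X$) to \emph{some} vertex of $q$, but by assumption the interior vertices of $p_1$ can only be $\e$-close to vertices of $q$ that lie \emph{outside} $q_1$, i.e.\ in the two subpaths of $q$ before $(q_1)_-$ or after $(q_1)_+$.

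Next I would quantify how far those ``outside'' vertices of $q$ can be from $(q_1)_-$ and $(q_1)_+$. Since $q$ is a $(\lambda,c)$-quasi-geodesic, a vertex $v$ on $q$ lying outside $q_1$ has $\dxh$-distance (along $q$) at most roughly $\lambda(\d(v,(q_1)_\pm)+ \e)+c$ from the corresponding endpoint of $q_1$; here one uses that $\ell_X$ of an edge in $\ga(G,X\cup\cH)$ may be more than one, but since we are measuring $\d_X$ between the relevant vertices and $v$ is $\e$-close to an interior vertex of $p_1$ which is $\e$-close to $(q_1)_-$ or $(q_1)_+$… Actually the cleaner route: every interior vertex $u$ of $p_1$ satisfies $\d_X(u,(p_1)_-)\le \e$-ish \emph{no} — rather, $u$ being $\e$-close to some $v\notin q_1$, and $v$ being reachable from an endpoint of $q_1$ along $q$, gives $\d_X(u,(q_1)_-)\le \e+ D$ or $\d_X(u,(q_1)_+)\le \e+D$, where $D$ is a uniform bound (depending on $\lambda,c,\e$) on $\d_X$ between a vertex of $q$ outside $q_1$ and the nearest endpoint of $q_1$, obtained because such a vertex is $\e$-close to a point of $q_1$ hence close to an endpoint of $q_1$ along the quasi-geodesic $q$. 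Combining with the $\e$-similarity of $p_1$ and $q_1$, one gets that every interior vertex of $p_1$ is within a uniform distance $K_1'=K_1'(\e,\lambda,c)$ of one of the two endpoints $(p_1)_-$, $(p_1)_+$.

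Finally, this forces $\ell(p_1)$ to be bounded: since $p$ is a $(\lambda,c)$-quasi-geodesic, if every vertex of $p_1$ is within $\d_X$-distance $K_1'$ of $(p_1)_-$ or $(p_1)_+$, then in particular there are two adjacent interior vertices one close to $(p_1)_-$ and one close to $(p_1)_+$ (or the set near $(p_1)_-$ reaches $(p_1)_+$ directly), so $\d_X((p_1)_-,(p_1)_+)\le 2K_1'+1$, and hence $\ell(p_1)\le \lambda(2K_1'+1)+c=:K_1$. Taking $K_1$ this value completes the proof: any $p_1$ with $\ell(p_1)>K_1$ must have an interior vertex $\e$-close to a vertex of $q_1$. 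I expect the main obstacle to be the bookkeeping in the middle step — carefully bounding, using only the quasi-geodesic inequality and BCP(1), the $\d_X$-distance from a vertex of $q$ lying just outside $q_1$ to the endpoints of $q_1$, and making sure all constants depend only on $\e,\lambda,c$ and not on the particular paths; the role of the hypothesis $\ell(p_1)>K_1$ and $u\notin\{(p_1)_-,(p_1)_+\}$ must be threaded through correctly so that the contradiction is genuinely about interior vertices.
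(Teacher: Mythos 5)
Your overall strategy (argue by contradiction, use Theorem \ref{thm:BCP}(1) to match every interior vertex $u$ of $p_1$ with a vertex $v$ of $q$ lying outside $q_1$, and then locate a pair of adjacent vertices of $p_1$ witnessing a ``transition'') is in the right spirit, but the pivotal middle step is unjustified and, as written, essentially circular. You claim there is a uniform $D=D(\e,\lambda,c)$ such that any vertex $v$ of $q$ outside $q_1$ that is $\e$-close to an interior vertex $u$ of $p_1$ lies within distance $D$ of an endpoint of $q_1$, ``because such a vertex is $\e$-close to a point of $q_1$''. Under the contradiction hypothesis $u$ is \emph{not} $\e$-close to any vertex of $q_1$, and there is no reason $v$ should be; in fact the triangle inequality gives $\d_{X\cup \cH}(v,(q_1)_-)\ge \d_{X\cup \cH}(u,(p_1)_-)-2\e$ (and similarly at the other end), so $v$ is uniformly close to an endpoint of $q_1$ \emph{only if} $u$ is already uniformly close to the corresponding endpoint of $p_1$ --- which is exactly what you are trying to prove. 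A vertex $u$ deep inside $p_1$ could, as far as the quasi-geodesic inequalities you invoke are concerned, be matched to a vertex $v$ of $q$ far before $(q_1)_-$, so the concluding pigeonhole never gets off the ground.

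The paper's proof avoids bounding distances to endpoints altogether. It takes $u_-$ to be the vertex of $p_1$ closest to $(p_1)_+$ (with $u_-\neq(p_1)_+$, allowing $u_-=(p_1)_-$) that is $\e$-close to some $v_-$ on the part of $q$ from $q_-$ to $(q_1)_-$, and lets $u_+$ be the next vertex of $p_1$; by the choice of $u_-$ and the contradiction hypothesis, $u_+$ must be $\e$-close to some $v_+$ on the part of $q$ from $(q_1)_+$ onward. Then $\d_{X\cup \cH}(v_-,v_+)\le 2\e+1$, while the subpath of $q$ from $v_-$ to $v_+$ contains $q_1$, whose length is at least $\d_{X\cup \cH}((p_1)_-,(p_1)_+)-2\e\ge \ell(p_1)/\lambda-c-2\e$; since $q$ is a $(\lambda,c)$-quasi-geodesic this forces $\d_{X\cup \cH}(v_-,v_+)$ to exceed $2\e+1$ once $\ell(p_1)>K_1$ for a suitable explicit $K_1(\e,\lambda,c)$, a contradiction. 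If you replace your ``uniform bound $D$'' step by this jump argument, the rest of your set-up goes through. Separately, keep the two metrics straight: the BCP estimates are in $\d_X$ while the quasi-geodesic inequalities are in $\d_{X\cup \cH}$; you only ever need $\d_{X\cup \cH}\le \d_X$, but your draft slides between them (e.g.\ the ``$+1$'' for an edge of $\ga(G,X\cup\cH)$ is not a bound on its $X$-length).
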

\begin{proof}
Let $K_1$ be a constant satisfying
$$
K_1/\lambda -c >  \lambda(2\e+2+c)+2\e.
$$

Let $p$, $q$, $p_1$ and $q_1$ be as in the hypothesis of the lemma.

Suppose that the lemma does not hold, i.e. for all
$u$ in $p_1$, 
$u\notin\{(p_1)_-, (p_1)_+\}$,
and all vertices $v$ in $q_1$, we have $\d_X(u,v)>\e$.

\begin{figure}[ht]
\begin{center}
\includegraphics[scale=0.6]{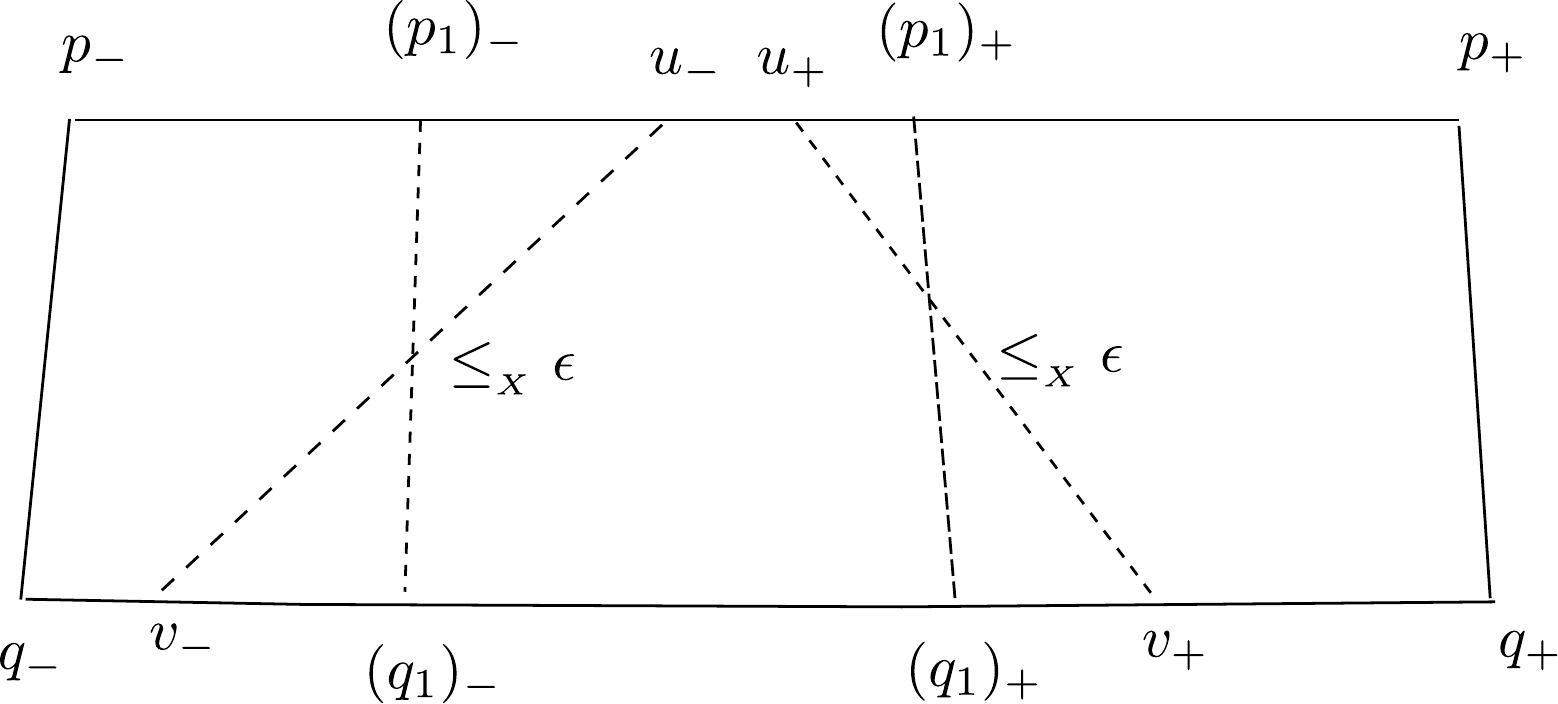} 
\end{center}
\caption{}\label{fig:59}
\end{figure}

By Theorem \ref{thm:BCP}, for each vertex $u$ in $p_1$ there is a vertex $v$ in $q$ such that $\d_X(u,v)\leq \e$.
As shown in Figure \ref{fig:59}, let $u_-\neq (p_1)_+$ be the closest vertex in $p_1$ to $(p_1)_+$ for which there exists $v_-$ in the subpath from $q_-$ to $(q_1)_-$ such that $\d_X(u_-,v_-)\leq \e$. Notice that we are allowing $u_-=(p_1)_-$.
Let $u_+$ be the closest vertex in $p_1$ to $u_-$, between $u_-$ and $(p_1)_+$. In this case there is a vertex $v_+$ in the subpath from $(q_1)_+$ to $q_+$ such that $\d_X(v_+,u_+)\leq \e$.

Thus $\d_{X\cup \cH}(v_-,v_+)\leq 1+2\e$.

Since $p$ is a $(\lambda,c)$-quasi-geodesic 
$$d_{X\cup \cH}((p_1)_-,(p_1)_+)\geq K_1/\lambda -c > \lambda(2\e+2+c)+2\e.$$
Thus $$\d_{X \cup \cH}((q_1)_-,(q_1)_+)\geq d_{X\cup \cH}((p_1)_-,(p_1)_+)-2\e\geq \lambda(2\e+2+c).$$
Let $q_2$ be the subpath of $q$ from $v_-$ to $v_+$. Then $q_1$ is a subpath of $q_2$ and hence $$\ell(q_2)\geq \ell(q_1) \geq \d_{X \cup \cH}((q_1)_-,(q_1)_+)\geq \lambda(2\e+2+c).$$
Since $q$ is a $(\lambda,c)$-quasi-geodesic, we obtain from the previous equation that 
$$2\e+2 \leq \ell(q_2)/\lambda -c  \leq\d_{X\cup \cH}((q_2)_-,(q_2)_+)= \d_{X\cup \cH}(v_-,v_+),$$
which contradicts $\d_{X\cup \cH}(v_-,v_+)\leq 1+2\e$.
\end{proof}

\begin{lem}\label{lem:mainshortpieces}
Let $\lambda\geq 1$, $c\geq 0$, $k\geq 0$ and $B>0$. There exists $K_2=K_2(\lambda,c,k,B)$ such that the following hold.
Let $p$ and $q$ be two paths in $\ga(G,X)$ with no parabolic shortenings. Suppose that $\wh{p}$ and $\wh{q}$  are $\e$-similar and are subpaths of two $k$-similar $(\lambda,c)$-quasi-geodesics, and
assume that the $X$-length of the components of $\wh{p}$ and $\wh{q}$ is 
at most $B$. 

Then $p$ and $q$ asyncronously $K_2$-fellow travel.
\end{lem}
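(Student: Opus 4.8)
The plan is to push the problem into $\ga(G,X\cup\cH)$, where relative hyperbolicity is available, solve it there for $\wh p$ and $\wh q$, and then ``fill in'' the component blocks. Every vertex of $\wh p$ is a vertex of $p$; conversely, a vertex of $p$ either is a vertex of $\wh p$ or lies on a maximal block $U_j$ of $p$ whose label is a geodesic word over some $X\cap H_\omega$, whose two endpoints are consecutive vertices of $\wh p$. Since $p$ has no parabolic shortenings, $\ell_X(U_j)=\ell_{X\cap H_\omega}(U_j)=|h_j|_{X\cap H_\omega}$, and for the generating sets we work with this equals $|h_j|_X$ (Lemma~\ref{lem:goodgenset}), which is the $X$-length of the corresponding component of $\wh p$ and hence at most $B$. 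So every vertex of $p$ is within $\d_X$-distance $B$ of a vertex of $\wh p$ via a monotone projection, and likewise for $q$ and $\wh q$. Consequently, if $\wh p$ and $\wh q$ asynchronously $D$-fellow travel in $\d_X$ via non-decreasing reparametrisations, then composing with these projections shows $p$ and $q$ asynchronously $(D+B)$-fellow travel. It remains to bound $D$ in terms of $\lambda,c,k,B$.

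The paths $\wh p$ and $\wh q$ are $\e$-similar $(\lambda,c)$-quasi-geodesics (being subpaths of the given $k$-similar ones), which we may assume do not backtrack. Since $\ga(G,X\cup\cH)$ is $\delta$-hyperbolic, stability of quasi-geodesics (the Morse lemma) provides non-decreasing reparametrisations witnessing that $\wh p$ and $\wh q$ asynchronously $R$-fellow travel with respect to $\d_{X\cup\cH}$, with $R=R(\delta,\lambda,c,\e)$; alternatively these reparametrisations can be built by iterating Lemma~\ref{lem:boundingt1t2}. On the other hand, applying the Bounded Coset Penetration property (Theorem~\ref{thm:BCP}) to $\wh p$ and $\wh q$ shows that each vertex of $\wh p$ is within $\d_X$-distance $\e_1:=\e(\lambda,c,\e)$ of a vertex of $\wh q$, and conversely. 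To combine these, fix a vertex $\wh p(t)$, let $\phi$ be the $\d_{X\cup\cH}$-reparametrisation above (so $\d_{X\cup\cH}(\wh p(t),\wh q(\phi(t)))\le R$), and choose a vertex $\wh q(s)$ with $\d_X(\wh p(t),\wh q(s))\le\e_1$; then $\d_{X\cup\cH}(\wh q(s),\wh q(\phi(t)))\le\e_1+R$, so $s$ and $\phi(t)$ differ by at most $T:=\lambda(\e_1+R)+c$ along $\wh q$, because $\wh q$ is a $(\lambda,c)$-quasi-geodesic. As every edge of $\wh q$ has $\d_X$-length at most $B$, we get $\d_X(\wh q(s),\wh q(\phi(t)))\le TB$, hence $\d_X(\wh p(t),\wh q(\phi(t)))\le\e_1+TB$. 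The symmetric estimate along the other reparametrisation gives $D:=\e_1+TB$, which depends only on $\lambda,c,k,B$, so $K_2:=D+B$ works.

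The main obstacle is the constant shuttling between the two word metrics: the hyperbolic geometry of $\ga(G,X\cup\cH)$ controls fellow traveling only for $\d_{X\cup\cH}$, whereas both the intermediate statement for $\wh p,\wh q$ and the final statement for $p,q$ concern $\d_X$. The Bounded Coset Penetration property bridges the gap, but only as a Hausdorff-type closeness of vertex sets, so it must be combined with the quasi-geodesic property of $\wh q$ to control the parameter shift along $\wh q$ --- and it is exactly here that the uniform bound $B$ on the $X$-length of all components is indispensable, both to turn a bounded parameter shift into a bounded $\d_X$-distance and, afterwards, to fill in the blocks of $p$ and $q$. The remaining work, namely checking that all the reparametrisations in sight can be chosen non-decreasing, is routine but should be done with some care.
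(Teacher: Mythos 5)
Your argument is correct, but it follows a genuinely different route from the paper's. The paper proceeds by induction on $\ell(\wh p)$: Lemma \ref{lem:boundingt1t2} produces an \emph{interior} vertex of $\wh p$ that is $\d_X$-close to a vertex of $\wh q$, the two paths are cut there into shorter $\e$-similar pieces, and Lemma \ref{lem:comptravel} glues the fellow traveling of the pieces; the base case $\ell(\wh p)\le K_1$ is disposed of by the crude bound $\ell(p)+\ell(q)+2\e$, which is where the hypothesis on $B$ enters. This divide-and-conquer scheme never has to exhibit monotone reparametrisations for the whole path. You instead build the reparametrisations explicitly: the Morse lemma in the $\delta$-hyperbolic graph $\ga(G,X\cup\cH)$ gives $\d_{X\cup\cH}$-fellow traveling, and BCP together with the quasi-geodesic parameter control on $\wh q$ (the estimate $|s-\phi(t)|\le\lambda(\e_1+R)+c$, then multiplying by the edge bound $B$) converts this into $\d_X$-fellow traveling; the ``fill in the blocks'' step at the end is the paper's base case in disguise. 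The trade-off is that your construction is non-inductive and makes the role of $B$ transparent, but it leans on the standard (and not proved in the paper) fact that quasi-geodesics in a hyperbolic graph asynchronously fellow travel via non-decreasing reparametrisations, whereas the paper only ever needs the single-close-vertex statement of Lemma \ref{lem:boundingt1t2}. Two caveats you should note are shared with the paper's own proof: BCP requires $\wh p,\wh q$ not to backtrack (true in all applications of the lemma, and you flag it), and identifying the length of a block of $p$ with the $X$-length of the corresponding component of $\wh p$ uses $|h|_{X\cap H_\omega}=|h|_X$, which you rightly make explicit via the Generating Set Lemma.
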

\begin{proof}
Let $K_1$ be the constant of Lemma \ref{lem:boundingt1t2}. 
We will prove the lemma by induction on $\ell(\wh{p})$.
Suppose that $\ell(\wh{p})\leq K_1$.
Then, since all the components of $\wh{p}$ have $X$-length at most $B$,
$p$ is a path of length at most $B\cdot K_1$.
We can bound $\ell(q)$ in the following way. First observe that
 $\d_{X\cup \cH}(\wh{q}_-,\wh{q}_+)\leq 2\e+\ell(\wh{p})\leq 2\e + K_1$. Since $\wh{q}$ is  a $(\lambda,c)$-quasi-geodesic,
$\ell(\wh{q})\leq \lambda (2\e+K_1)+c$,
so $\ell(q)\leq B[\lambda (2\e+K_1)+c]$.
As $\d_X(p_-,q_-)\leq \e$ and $\d_X(p_+,q_+)\leq \e$,
$p$ and $q$ asynchronously $K_2$-fellow travel,
where $$K_2\coloneq B K_1+ B[\lambda (2\e+K_1)+c]+2\e \geq \ell(p)+\ell(q)+2\e.$$

Suppose that $\ell(\wh{p})> K_1$, and we have proven the result for shorter paths.
By Lemma \ref{lem:boundingt1t2}, there is a vertex $u$ in $\wh{p}$, $u\notin\{ \wh{p}_-,\wh{p}_+\}$, and a vertex $v$ in $\wh{q}$ such that $\d_X(u,v) \leq \e$.
Then $u$ and $v$ divide $\wh{p}$ and $\wh{q}$ into paths $\wh{p_1}$, $\wh{p_2}$ 
and $\wh{q_1}$, $\wh{q_2}$, respectively, $\wh{p_i}$ and $\wh{q_i}$ are $\e$-
similar, and $\ell(\wh{p}_i)<\ell(\wh{p})$. For $i=1,2$, let $p_i$ and $q_i$ be 
the subpaths of $p$ and $q$ projecting to $\wh{p}_i$ and $\wh{q}_i$, 
respectively, via $\wh{-}\,$. By induction, $p_i$ and $q_i$, $i=1,2$,  asynchronously $K_2$-fellow travel and hence, by Lemma \ref{lem:comptravel}, $p$ and $q$ asynchronously $K_2$-fellow travel.
\end{proof}

We note that the proof of Lemma \ref{lem:main} below follows the same lines as  \cite[Lemma 4.7]{NeumannShapiro}, except here we use (L$\forall$)/(L$\exists$) instead of the falsification by fellow traveler property. 
Recall that `special words' were introduced in Definition \ref{def:special}.

\begin{lem}\label{lem:main}
Let $\lambda \geq 1$ and $M,c,k\geq 0$.
Suppose that for all $\omega \in \Omega$, $(H_\omega, X\cap H_\omega, \cL_\omega)$ satisfies {\rm (L1)}.
There exists a constant $K_3=K_3(\lambda,c,k,M)$ such that the following holds.

Let $p$ and $q$ be paths in $\ga(G,X)$ 
such that
$\max\{\d_X(p_-, q_-),\d_X(p_-, q_-)\}\leq k$. 
Suppose that $\Lab(p), \Lab(q)\in \rell(X, \{\cL_\omega\}_{\omega\in \Omega})$, and $\wh{p}$ and $\wh{q}$ are
$(\lambda,c)$-quasi-geodesics without backtracking.
 
\begin{enumerate}
\item[{\rm (a)}] If all $(H_\omega, X\cap H_\omega, \cL_\omega)$ satisfy {\rm (L$\forall$)} with fellow traveler constant $M$, then $p$ and $q$ asynchronously $K_3$-fellow travel.

\item[{\rm (b)}] If all $(H_\omega, X\cap H_\omega, \cL_\omega)$ satisfy {\rm (L$\exists$)}   with fellow traveler constant $M$
and $\Lab(q)$ is special, then there exists a geodesic path $\varrho$ in $\ga(G,X)$ with
 $\Lab(\varrho)$ special and 
$\Lab(\wh{\varrho})\equiv\Lab(\wh{q})$, such that $\varrho$ and $p$ asynchronously $K_3$-fellow travel.
 \end{enumerate}

\end{lem}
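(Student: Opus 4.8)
The plan is to transfer the statement to the hyperbolic graph $\ga(G,X\cup\cH)$, where $\wh p$ and $\wh q$ are $k$-similar, non-backtracking $(\lambda,c)$-quasi-geodesics, and to use the Bounded Coset Penetration property (Theorem \ref{thm:BCP}) to cut both paths into aligned pieces of two kinds: \emph{long components}, namely the single edges of $\ga(G,X\cup\cH)$ arising from the parabolic subwords of $\Lab(p)$ and $\Lab(q)$ of large $X$-length, which will be matched to one another through connectedness; and \emph{short pieces} lying between consecutive long components, all of whose components have bounded $X$-length. Fix $\e=\e(\lambda,c,k)$ as in Theorem \ref{thm:BCP}, enlarged so that $\e\ge k$, and call a component of $\wh p$ or $\wh q$ \emph{big} if its $X$-length exceeds $\e$.

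First I would set up the matching. Since $\wh p$ and $\wh q$ do not backtrack, all their components are isolated, so by Theorem \ref{thm:BCP}(2) every big component of $\wh p$ (resp.\ $\wh q$) is connected to a unique component of $\wh q$ (resp.\ $\wh p$), and connected components are $\e$-similar by Theorem \ref{thm:BCP}(3). Listing, in order of occurrence, all pairs $(\sigma,\tau)$ of connected components of $\wh p$ and $\wh q$ for which $\sigma$ or $\tau$ is big — and checking, by the bookkeeping of \cite[Lemma 4.7]{NeumannShapiro} which uses non-backtracking to force this listing to be order-preserving — produces $\sigma_1<\dots<\sigma_N$ along $\wh p$ and $\tau_1<\dots<\tau_N$ along $\wh q$, hence decompositions $\wh p\equiv\alpha_0\sigma_1\alpha_1\cdots\sigma_N\alpha_N$ and $\wh q\equiv\beta_0\tau_1\beta_1\cdots\tau_N\beta_N$ such that every component of every $\alpha_i$ and $\beta_i$ has $X$-length at most $\e$ (a bigger one would itself appear in the list, contradicting consecutiveness), and such that $\alpha_i$ and $\beta_i$ are $\e$-similar (their endpoints are endpoints of the $\e$-similar pairs $\sigma_i,\tau_i$ and $\sigma_{i+1},\tau_{i+1}$, with the $k$-similarity of $\wh p_-,\wh q_-$ and $\wh p_+,\wh q_+$ covering the end pieces). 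Letting $a_i,u_i$ and $b_i,v_i$ be the subpaths of $p$ and $q$ mapping under $\wh{-}$ onto $\alpha_i,\sigma_i$ and $\beta_i,\tau_i$, Lemma \ref{lem:mainshortpieces} with $B=\e$ supplies $K_2=K_2(\lambda,c,k,\e)$ such that $a_i$ and $b_i$ asynchronously $K_2$-fellow travel for each $i$.

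It remains to deal with the long components. The path $u_i$ is labelled by a parabolic subword $U_i$ and $v_i$ by a parabolic subword $V_i$, both of which lie in $\cL_\omega$, where $\omega$ is the peripheral factor of the common type of $\sigma_i$ and $\tau_i$; indeed if $U_i$ (or $V_i$) represented an element of two distinct parabolics then that element would lie in $\cH_I\subseteq X$ and equal a single letter, which by {\rm (L1)} belongs to $\cL_\omega$. Put $x=(u_i)_-$ and $y=(v_i)_-$; connectedness gives $x^{-1}y\in H_\omega$ and the $\e$-similarity of $\sigma_i,\tau_i$ gives $|x^{-1}y|_X\le\e$ and $|(xU_i)^{-1}(yV_i)|_X\le\e$. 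As $|\Omega|<\infty$ and each $X\cap H_\omega$ generates $H_\omega$ by {\rm (L1)}, there is a constant $\e'$ bounding $|h|_{X\cap H_\omega}$ over all $\omega$ and all $h\in H_\omega$ with $|h|_X\le\e$, whence $V_i=_G g\,U_i\,h$ with $g=y^{-1}x$ and $h=(xU_i)^{-1}(yV_i)$ in $H_\omega$ of $(X\cap H_\omega)$-length at most $\e'$. In case (a), {\rm (L$\forall$)} applied to $U_i$ and $V_i$ and left-translated by $y$ (so the two paths it provides start at $y$ and $yg=x$) shows that $u_i$ and $v_i$ asynchronously $2M\e'$-fellow travel; applying Lemma \ref{lem:comptravel} to the $2N+1$ pieces then gives that $p$ and $q$ asynchronously $K_3$-fellow travel with $K_3=\max\{K_2,2M\e'\}$. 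In case (b), {\rm (L$\exists$)} instead produces $V_i'\in\cL_\omega$ with $V_i'=_G g\,U_i\,h=_G V_i$ whose path asynchronously $2M\e'$-fellow travels that of $U_i$; letting $\varrho$ be obtained from $q$ by replacing each $V_i$ in $\Lab(q)$ with $V_i'$, one gets $\Lab(\wh\varrho)\equiv\Lab(\wh q)$, that $\Lab(\varrho)$ is geodesic (the substituted blocks have equal length), and that $\Lab(\varrho)$ is special by applying Lemma \ref{lem:specialchange} once for each $i$, while the same composition argument yields that $\varrho$ asynchronously $K_3$-fellow travels $p$.

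The main obstacle is the first step: packaging the Bounded Coset Penetration data into an order-preserving matching of the long components with control on the intervening short pieces, so that Lemmas \ref{lem:mainshortpieces} and \ref{lem:comptravel} can be invoked cleanly. This bookkeeping is modelled on \cite[Lemma 4.7]{NeumannShapiro}; the genuinely new inputs are the use of {\rm (L$\forall$)} and {\rm (L$\exists$)} in place of the falsification by fellow traveler property to fellow-travel the long parabolic components, and, in case (b), the verification via Lemma \ref{lem:specialchange} that the substitution keeps the word special.
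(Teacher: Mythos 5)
Your overall strategy is the same as the paper's: decompose $\wh p$ and $\wh q$ into matched long parabolic components and short intervening pieces, handle the former with {\rm (L$\forall$)}/{\rm (L$\exists$)} and the latter with Lemma \ref{lem:mainshortpieces}, then compose via Lemma \ref{lem:comptravel}; your treatment of the long components (including the constant $\e'$ converting $|\cdot|_X$-bounds into $|\cdot|_{X\cap H_\omega}$-bounds) and of case (b) via iterated use of Lemma \ref{lem:specialchange} is fine. The gap is exactly at the step you yourself flag as the main obstacle: the order-preserving matching. You declare a component \emph{big} when its $X$-length exceeds $\e=\e(\lambda,c,k)$ and assert that non-backtracking ``bookkeeping'' forces the listing of connected big pairs to be order-preserving. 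That threshold is too low for the only available argument. To rule out a crossing (say $\sigma_i<\sigma_j$ along $\wh p$ but $\tau_j<\tau_i$ along $\wh q$) one must apply Theorem \ref{thm:BCP}(2) not to the full $k$-similar paths but to initial subpaths of $\wh p$ and $\wh q$ whose far endpoints are endpoints of matched components, and these subpaths are only $\e$-similar (Theorem \ref{thm:BCP}(3) controls those endpoints only up to $\e$). For that application the relevant component must have $X$-length greater than the \emph{second-level} constant $\e_2=\e(\lambda,c,\e)$, not merely $\e$; worse, since only one member of each of your pairs is guaranteed big, its partner's $X$-length is bounded below only by the big one's minus $2\e$ and may be less than $\e$. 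With your threshold the crossing case is simply not excluded by anything you have written, and the citation of \cite[Lemma 4.7]{NeumannShapiro} does not supply the missing quantitative input.

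The repair is the one the paper uses: select only the components of $\wh p$ of $X$-length greater than $\e_2+2\e$; their BCP-partners in $\wh q$ then have $X$-length greater than $\e_2$, and the subpath argument shows the induced permutation is the identity. The cost is that the intervening pieces now have components of $X$-length bounded by $\e_2+2\e$ (up to a further $2\e$ on the $\wh q$-side) rather than by $\e$, so Lemma \ref{lem:mainshortpieces} must be invoked with $B=\e_2+2\e$ (or slightly larger) instead of $B=\e$. Nothing else in your argument needs to change.
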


\begin{proof}

Let $p$ and $q$ be two paths in $\ga(G,X)$ satisfying the hypotheses of the Lemma.

Let $\e=\e(\lambda, c, k)$ be the constant of Theorem \ref{thm:BCP}.
Without loss of generality we can assume that $\e\geq k$.
 By the Bounded Coset Penetration property (Theorem \ref{thm:BCP}), 
for every vertex $v$ of $\wh{p}$ there is a vertex $u$ in $\wh{q}$ with $\d_X(u,v)\leq \e$, 
and if $s$ is a component of $\wh{p}$ with $\d_X(s_-,s_+)>\e$, then there is a component $r$ 
of $\wh{q}$ connected to $s$ such that $\d_X(s_-,r_-)\leq \e$ and $\d_X(s_+,r_+)\leq \e$.

Let $\e_2=\e(\lambda, c,\e)$ be the constant of Theorem \ref{thm:BCP}.
Suppose that $\wh{p_1}< \wh{p_2}< \dots < \wh{p_n}$ are the (isolated)  components of $\wh{p}$ satisfying $\d_X((\wh{p_i})_-,(\wh{p_i})_+)>\e_2+2\e$.

{\bf Claim:}
There exist components $\wh{q_1}<\dots< \wh{q_n}$ of $\wh{q}$ such that $\wh{q_i}$ is 
connected to $\wh{p_i}$,  $\d_X((\wh{p_i})_-,(\wh{q_i})_-)<\e$ and 
 $\d_X((\wh{p_i})_+,(\wh{q_i})_+)<\e$ for $i=1,\dots, n$.

By Theorem \ref{thm:BCP} (2) and (3), there exist components
 $\wh{q_1}<\dots < \wh{q_n}$ of $\wh{q}$ and a permutation $\sigma$ of $\{1,\dots,n\}$ such that  for each $i=1,\dots, n$, $\wh{p_i}$ and $\wh{q_{\sigma(i)}}$ are connected,
 $\d_X((\wh{p_i})_-,(\wh{q_{\sigma(i)}})_-)\leq\e$ and $\d_X((\wh{p_i})_+,(\wh{q_{\sigma(i)}})_+)\leq \e$. The lack of backtracking in both $\wh{p}$ and $\wh{q}$ implies that an isolated component $\wh{p_i}$ cannot be connected to two different isolated components of $\wh{q}$.
Also notice that $\d_X((\wh{q_{\sigma(i)}})_-,(\wh{q_{\sigma(i)}})_+)> \e_2$. 
 
 We will show that $\sigma$ is the identity. Suppose that $i>\sigma(i)$ for some $i$.

Let $\wh{s}$ be the subpath of $\wh{p}$ from $\wh{p}_{-}$ to $(\wh{p_i})_+$ and
$\wh{r}$ the subpath of $\wh{q}$ from $\wh{q}_-$ to $\wh{q_{\sigma(i)}}_+$.
Then $\wh{s}$ and $\wh{r}$ are $\e$-similar (recall that $k\leq \e$) $(\lambda,c)$-quasi-geodesics without backtracking. Hence, by Theorem \ref{thm:BCP}(2),  
the components $\wh{p_1},\dots, \wh{p_{i-1}}$ are connected to some components of $\wh{s}$.
Since $\wh{q}$ does not backtrack,   for $l=1,\dots, i-1$, $\wh{p_l}$ should be connected to 
$\wh{q_{\sigma(l)}}$. Thus $\wh{q_{\sigma(1)}},\dots, \wh{q_{\sigma(i-1)}}$ lie in $\wh{s}$, 
contradicting that $i>\sigma(i)$. See Figure \ref{fig:sigma}.
\begin{figure}[ht]
\begin{center}
\includegraphics[scale=0.50]{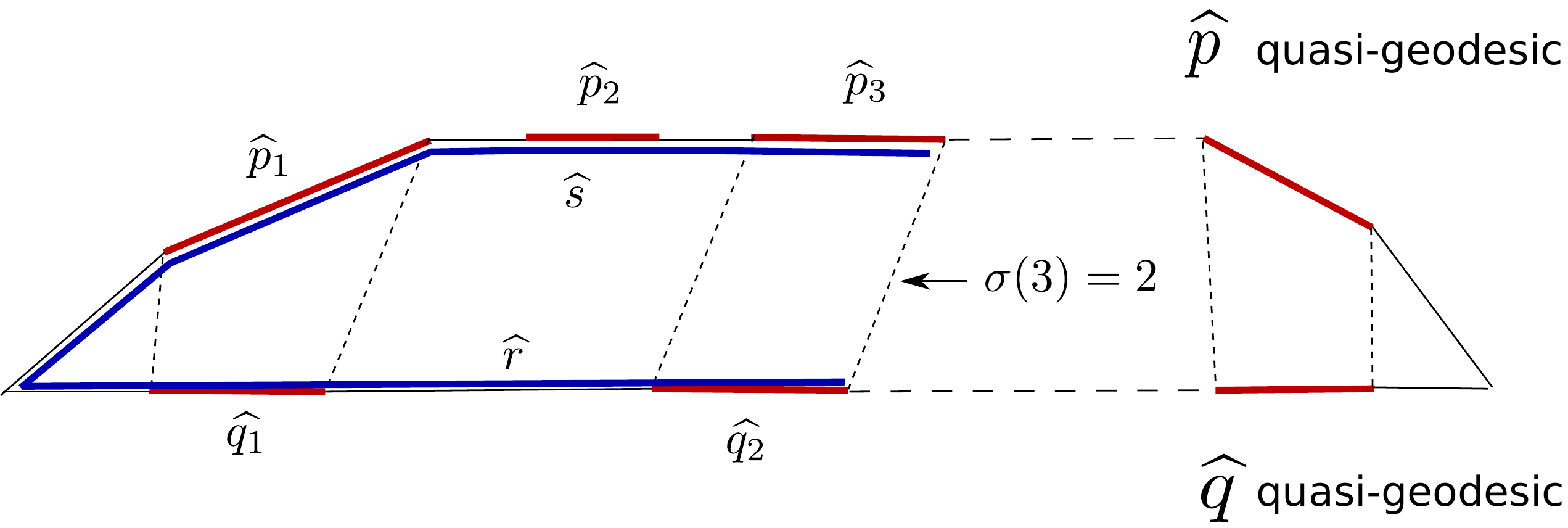} 
\end{center}
 \caption{Proof of Claim: the paths $\wh{r}$ and $\wh{s}$ (here $k=0$).}\label{fig:sigma}
\end{figure}

A similar argument holds if $i<\sigma(i)$. This completes the proof of the claim.

Now, for $i=1,\dots, n$, let $p_i$ be the  subpath of $p$ that is sent to the component $\wh{p_i}$ via Construction \ref{const:paths}, and similarly for $q_i$. Note that since we are assuming there are no parabolic shortenings, $p_i$ is a geodesic path in the copy $\ga(gH_\omega, X\cap H_\omega)$ of $\ga(H_\omega, X\cap H_\omega)$ contained in $\ga(G,X)$, where $g=(p_i)_-$. 

%$(p_i)_-$ $g\ga(H_\omega, H_\omega\cap X)\subseteq \ga$.

Let $a_i=(p_i)_-$, $b_i=(q_i)_-$, and view them as elements of $G$. Set $g_i=a_i^{-1}b$. Similarly, let $c_i=(p_i)_+$, $d_i=(q_i)_+$ and $h_i=d_i^{-1}c_i$. 
See Figure \ref{fig:perturbedgeodesic}. 
\begin{figure}[ht]
\begin{center}
\includegraphics[scale=0.30]{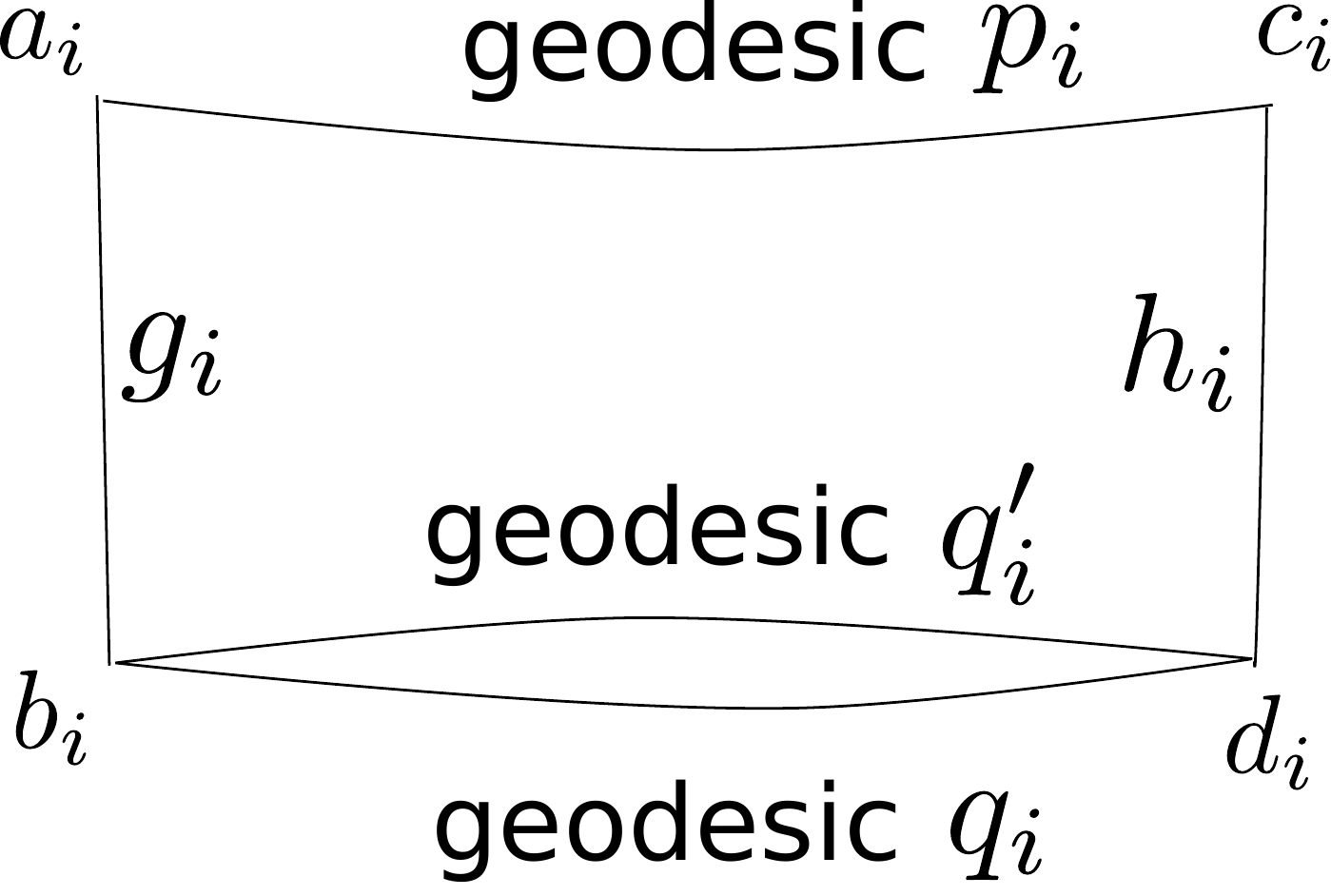} 
\end{center}
\caption{Geodesic $q_i$ replaced by geodesic $q'_i$.}\label{fig:perturbedgeodesic}
\end{figure}

Let $U_i\equiv \Lab(p_i)$. Since $\Lab(p)\in \rell(X, \{\cL_\omega\}_{\omega\in \Omega})$, by Construction \ref{const:paths},  $U_i$ is a word in $\cL_{\omega_i}$. 
Let $K_3=\max \{K_2,M(\e_2+2\e)\}$. We are going to prove the conditions (a) and (b) separately.

(a) 
Since $\Lab(q)\in \rell(X, \{\cL_\omega\}_{\omega\in \Omega})$, by Construction \ref{const:paths},   $V_i\equiv \Lab(q_i)$ is a word in $\cL_{\omega_i}$. 
By (L$\forall$), $p_i$ and $q_i$ asynchronously $(2\e M)$-fellow travel.

Let $r_0,\dots, r_n$ (resp. $s_0,\dots, s_n$) be the subpaths of $p$  (resp. $q$) such that $p$ is the composition of paths
$r_0, p_1, r_1, p_2, \dots, p_n, r_n$ ($q$ is the composition of paths $s_0, q_1,s_1,\dots, q_n, s_n$).
Since the components of each $\wh{r}_i$ have $X$-length bounded by $B=(\e_2+2\e)$, and $\wh{r}_i$ and $\wh{s}_i$ are $\e$-similar, Lemma \ref{lem:mainshortpieces} implies that $r_i$ and $s_i$ asynchronously $K_2$-fellow travel.

Then by Lemma \ref{lem:comptravel}, $p$ and $q$ asynchronously $K_3$-fellow travel.

(b) By (L$\exists$) there is a word $V_i$ over $X\cap H_{\omega_i}$ in $\cL_{\omega_i}$
such that $V_i=_G g_i U_i h_i$ and the paths $p_i$ and $q_i'$, asynchronously $(2\e M)$-fellow travel, where $(q_i')_-=(q_i)_-$ and $\Lab(q_i')\equiv V_i$. Notice that $(q_i')_+=(q_i)_+$.

We replace the subpaths $q_i$ of $q$ by the paths $q_i'$ to obtain $\varrho$. Notice that, by (L1), $q_i'$ and $q_i$ are geodesic over $X\cap H_{\omega_i}$, and since $\Lab(q_i')=_G\Lab(q_i)$, $\ell(q_i')=\ell(q_i)$.  Therefore  $\ell(q)= \ell(\varrho)$ and $\Lab(\varrho)$ is geodesic.
Since $\Lab(q)$ is special, by Lemma \ref{lem:specialchange},  $\Lab(\varrho)$ is special and $\Lab(\wh{\varrho})\equiv \Lab(\wh{q})$. 

Using the same argument as in case (a) with $\varrho$ instead of $q$ we get that $p$ and $\varrho$ asynchronously $K_3$-fellow travel.

\end{proof}

We are now ready to prove Proposition \ref{prop:geobicom}.
\begin{proof}[Proof of Proposition \ref{prop:geobicom}]
Let $\cH'$,  $\lambda$ and $c$ be the sets and constants provided by the 
Generating Set Lemma (Lemma \ref{lem:goodgenset}).  Fix $X$, a finite generating 
set for $G$ satisfying (i) and (ii), and let $\Phi=\Phi(X)$ be the set provided 
by the Generating Set Lemma.

By Lemma \ref{lem:specialrep},  $\cL=\geol(G,X)\cap \rell(X,\{\cL_\omega\}_{\omega\in \Omega})$ contains at least one representative for each element of $G$ and hence (L1) holds for $(G,X,\cL)$.
By (ii), all $(H_\omega, X\cap H_\omega, \cL_\omega)$ satisfy (L$\forall$) (resp. (L$\exists$)). 
We need to show that $(G,X,\cL)$ satisfies (L$\forall$) (resp. (L$\exists$)) with fellow traveler constant $M$.

Let $K_3(\lambda, c, 1, M)$ be the constant of Lemma \ref{lem:main}.
Let $U$ be any word in $\geol(G,X)\cap \rell(X,\{\cL_\omega\}_{\omega\in \Omega})$. Then $U$ does not contain subwords of 
$\Phi$ and has no parabolic shortenings. By Lemma \ref{lem:goodgenset}, 
$\wh{W}$ labels a $(\lambda,c)$-quasi-geodesic path without backtracking.
Let $x,y\in X\cup \{1\}$. 

Suppose that  all $(H_\omega, X\cap H_\omega, \cL_\omega)$ satisfy  (L$\exists$).

Let $V$ be any special word in $\geol(G,X)\cap \rell(X,\{\cL_\omega\}_{\omega\in \Omega})$ such that $V=_GxUy$.  By Lemma \ref{lem:specialrep} such $V$ exists.

It follows from Lemma \ref{lem:main}(b) that there is a special $V'$
such that $p$ and $q$ asynchronously $K_3$-fellow travel, where $p_-=1$, $q_-=x$, 
$\Lab(p)=U$ and $\Lab(q)=V'$. 
%By Lemma \ref{lem:geosync}, $p$ and $q$ synchronously 
%$M$-fellow travel for some $M$ depending on $K_3$ and $1$.
Now, by Lemma \ref{lem:esimilarfftp}, $(G,X,\cL)$ satisfies (L$\exists$).

Suppose that all $(H_\omega, X\cap H_\omega, \cL_\omega)$ satisfy (L$\forall$).
Let $V$ be any  word in $\geol(G,X)\cap \rell(X,\{\cL_\omega\}_{\omega\in \Omega})$ such that $V=_GxUy$.
Let $p,q$ be paths in $\ga(G,X)$,
$\Lab(p)\equiv U$, $\Lab(q)\equiv V$ 
with $p_-=1$ and $q_-=x$.
It follows from Lemma \ref{lem:main}(a) that $p$ and
$q$ $K_3$-asynchronously fellow travel.
Now, by Lemma \ref{lem:esimilarfftp}, $\geol(G,X)\cap \rell(X,\{\cL_\omega\}_{\omega\in \Omega})$ satisfies (L$\forall$).
\end{proof}

%%%%%%%%%%%%%
% Section
% FFTP REL
%%%%%%%%%%%%%%
%
%
\section{\fftp{} and biautomaticity for relatively hyperbolic groups}\label{sec:relhyp_fftp}

The primary goal of this section is to prove our first main result, Theorem \ref{thm:main_intro} of the Introduction. We also prove the existence of geodesic biautomatic structures in Theorem \ref{thm:biaut}.

\begin{thm}\label{thm:fftpgenset}
Let $G$ be a finitely generated group, hyperbolic relative to a family of subgroups $\{H_\omega\}_{\omega\in \Omega}$.
Let $Y$ be a finite generating set of $G$ and let $\cH=\cup_{\omega\in \Omega} (H_\omega-\{1\})$. 

There exists a finite subset $\cH'$ of $\cH$ such that, for every finite generating set $X$ of $G$ satisfying 
\begin{enumerate}
\item[{\rm (i)}] $Y\cup \cH'\subseteq X\subseteq Y\cup \cH$, and
\item[{\rm (ii)}] for all $\omega\in \Omega$, $X\cap H_\omega$ generates $H_\omega$ and $(H_\omega, X\cap H_\omega)$ has \fftp{},
\end{enumerate}
the pair $(G,X)$ has \fftp.
\end{thm}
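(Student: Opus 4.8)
The plan is to use the Generating Set Lemma (Lemma~\ref{lem:goodgenset}) to choose $\cH'$ and the constants $\lambda,c$, and then, for any $X$ satisfying (i) and (ii), to invoke Proposition~\ref{prop:geobicom} with $\cL_\omega=\geol(H_\omega,X\cap H_\omega)$. Since $(H_\omega,X\cap H_\omega)$ has \fftp{}, the example after Lemma~\ref{lem:esimilarfftp} tells us that $(H_\omega,X\cap H_\omega,\geol(H_\omega,X\cap H_\omega))$ satisfies (L1) and (L$\exists$). Hence Proposition~\ref{prop:geobicom} yields that $(G,X,\cL)$ satisfies (L1) and (L$\exists$), where $\cL=\geol(G,X)\cap\rell(X,\{\cL_\omega\}_{\omega\in\Omega})$; in particular there is a fellow-traveler constant $M$ controlling how elements of $\cL$ fellow travel with words representing nearby group elements.

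First I would record the setup: apply Lemma~\ref{lem:goodgenset} to $(G,Y,\{H_\omega\})$ to get $\cH'$, $\lambda\geq 1$, $c\geq 0$; shrink/enlarge $\cH'$ if needed so that it also contains all $h\in\cH$ with $0<|h|_Y\leq 2m$ (needed to apply Lemma~\ref{lem:2geod}). Fix $X$ with $Y\cup\cH'\subseteq X\subseteq Y\cup\cH$ satisfying (ii) and let $\Phi=\Phi(X)$ be the associated finite set of non-geodesic words. Note that by the last assertion of Lemma~\ref{lem:goodgenset}, $|h|_X=|h|_{X\cap H_\omega}$ for $h\in H_\omega$, so a geodesic over $X$ lying in $H_\omega$ is already a geodesic over $X\cap H_\omega$, and therefore any word with no parabolic shortenings that avoids $\Phi$ has $\wh{W}$ a $(\lambda,c)$-quasi-geodesic without vertex backtracking.

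Next, the core of the argument: take a non-geodesic word $W$ over $X$; I want to produce a strictly shorter word that asynchronously $K$-fellow travels with it, for a uniform $K$. There are two sources of non-geodesity to handle. If $W$ has a parabolic shortening, then some factor $U_i$ (a word over $X\cap H_\omega$) is non-geodesic in $(H_\omega,X\cap H_\omega)$; by \fftp{} for $H_\omega$ there is a shorter $U_i'$ with $U_i'=_G U_i$ that asynchronously fellow travels $U_i$ over $X\cap H_\omega$, hence over $X$; replacing $U_i$ by $U_i'$ and using Lemma~\ref{lem:comptravel} gives a shorter word fellow traveling $W$. If instead $W$ contains a subword from $\Phi$, replace that subword by the corresponding geodesic (these are uniformly short, so the two subpaths trivially fellow travel within a constant depending only on $\max\{\ell(U):U\in\Phi\}$), and again conclude by Lemma~\ref{lem:comptravel}. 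So we may assume $W$ has no parabolic shortenings and avoids $\Phi$; then $\wh{W}$ is a $(\lambda,c)$-quasi-geodesic without backtracking. Now let $W_0$ be a special geodesic word over $X$ with $W_0=_G W$ (exists by Lemma~\ref{lem:specialrep}), so $\Lab(W_0)\in\cL$ and $\ell(W_0)=|W|_X<\ell(W)$. Apply Lemma~\ref{lem:main}(b) with $p$ the path labeled $W$ and $q$ labeled $W_0$, $k=0$: this produces a geodesic path $\varrho$ with $\Lab(\varrho)$ special, $\Lab(\wh\varrho)\equiv\Lab(\wh{W_0})$, and $\varrho$ asynchronously $K_3$-fellow traveling $p$. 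Since $\ell(\varrho)=\ell(W_0)<\ell(W)$, setting $K=\max\{K_3,\ \text{the two constants from the parabolic-shortening and }\Phi\text{ cases}\}$ gives the desired shorter fellow traveler, so $(G,X)$ has $K$-\fftp{}.

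The step I expect to be the main obstacle is making the ``two sources of non-geodesity'' bookkeeping clean and uniform: a general non-geodesic $W$ might fail to be a $2$-local geodesic, so one must iterate the local reductions (replacing $\Phi$-subwords and parabolic shortenings) finitely often until reaching a word with no parabolic shortenings and no $\Phi$-subwords, and then check that the accumulated fellow-traveler constant stays bounded --- this is where an argument in the spirit of Lemma~\ref{lem:ftqg}, tracking how many reduction steps are needed and composing the fellow-traveler bounds via Lemma~\ref{lem:comptravel}, is required. One also has to be careful that at each reduction step the word stays in $\rell(X,\{\cL_\omega\})$ (equivalently, keeps the parabolic factors from $\cup\cL_\omega$) so that Lemma~\ref{lem:main} remains applicable; using special representatives and Lemma~\ref{lem:specialchange} is what keeps this consistent. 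Everything else is a direct assembly of the already-proved lemmas.
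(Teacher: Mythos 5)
Your proposal is correct and follows essentially the same route as the paper: a case split into parabolic shortenings, $\Phi$-subwords, and the remaining case (no parabolic shortenings, no $\Phi$-subwords) handled by applying Lemma \ref{lem:main}(b) with $\cL_\omega=\geol(H_\omega,X\cap H_\omega)$ against a special geodesic representative, which is exactly what the paper packages as Lemma \ref{lem:main2}. The obstacle you anticipate at the end does not arise: \fftp{} only requires producing \emph{one} strictly shorter fellow traveler for each non-geodesic word, not a geodesic one, so a single local replacement already suffices in each of the first cases and no iteration or accumulation of fellow-traveler constants is needed --- the paper simply takes the maximum of the constants arising from the mutually exhaustive cases.
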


\begin{proof}
Let $\cH'$,  $\lambda$ and $c$ be the sets and constants provided by the Generating Set Lemma (Lemma \ref{lem:goodgenset}).  Fix $X$, a finite generating set for $G$ satisfying (i) and (ii), and let $\Phi=\Phi(X)$ be the set provided by the Generating Set Lemma.

By Lemma \ref{lem:factsRH}(ii) $\Omega$ is finite. So there is an $M>0$ such that $(H_\omega, X\cap H_\omega)$ has $M$-\fftp{} for every $\omega\in \Omega$.
Let $M_1= \max \{\ell_{X}(U) : U\in \Phi \}$.

Let $W$ be a non-geodesic word over $X$. There are several possibilities.

(i) If $W$ is not $2$-local geodesic, that is, $W\equiv AxyB$, where $xy=_G z$, $z\in X$, then $W$ asynchronously $1$-fellow travels with $AzB$.

(ii) If $W$ is $2$-local geodesic and has parabolic shortenings, then $W\equiv ACB$, where
$C$ is a word over $X\cap H_\omega$, for some $\omega\in \Omega$, that is not geodesic.
Then there exists a shorter word $C'$ over $X\cap H_\omega$  such that $C$ and $C'$ 
asynchronously $M$-fellow travel. Hence $W$ asynchronously $M$-fellow travels
with $AC'B$.

(iii) If $W$ contains some word $U\in \Phi$, then $W\equiv AUB$, and since $U$ is non-geodesic
there exists a shorter word $V$ such that $U=_G V$. Then $W$ and $AVB$ $M_1$-asynchronously 
fellow travel.

(iv) If $W$ is a $2$-local geodesic with no parabolic shortenings that does not contain any subword of $\Phi$, then by Lemma \ref{lem:goodgenset}, $\wh{W}$ is a $(\lambda, c)$-quasi-geodesic without  backtracking.  In this case we use the following lemma.

\begin{lem}\label{lem:main2}
There exists a constant $M_2=M_2(\lambda,c,\delta)$ such that the following holds.
For every non-geodesic path $p$ in $\ga(G,X)$ with no parabolic shortenings and with $\wh{p}$ a $(\lambda,c)$-quasi-geodesic without backtracking, there exists a shorter path $q$ in $\ga(G,X)$ with $q_-=p_-$ and $q_+=p_+$ such that 
$p$ and $q$ asynchronously $M_2$-fellow travel.
\end{lem}
\begin{proof}
We will apply Lemma \ref{lem:main} with $\cL_\omega=\geol(H_\omega, X\cap H_\omega)$. Notice that $(H_\omega, X\cap H_\omega, \cL_\omega)$ trivially satisfies (L1), and (L$\exists$) follows from Lemma \ref{lem:esimilarfftp}.

Let $p$ be a non-geodesic path  with no parabolic shortenings such that $\wh{p}$ 
is a $(\lambda,c)$-quasi-geodesic without backtracking. Notice that $\Lab(p)$ 
is in $\rell(X, \{\cL_\omega\}_{\omega\in \Omega})$. 
By Lemma \ref{lem:specialrep}, there exists a geodesic path $q$  in $\ga(G,X)$ with same end points as $p$ and such that $\Lab(q)$ is special.
Notice that $q$ does not contain subwords of $\Phi$ and has no parabolic shortenings. Thus $\Lab(q)\in \rell(X, \{\cL_\omega\}_{\omega\in \Omega})$ and by the Generating Set Lemma (Lemma \ref{lem:goodgenset}), $\wh{q}$ is a $(\lambda,c)$-quasi-geodesic without backtracking.

Let $M_2=K_3(\lambda,c,0,M)$ be the constant of Lemma \ref{lem:main}(b).
Then there is a geodesic path  $\varrho$ in $\ga(G,X)$ with same end points as $q$, and such that $p$ and $\varrho$ asynchronously $M_2$-fellow travel.
\end{proof}
 
Thus Lemma \ref{lem:main2} implies that there exists a
shorter word $W'$ such that $W$ and $W'$ asynchronously $M_2$-fellow travel.

So in all cases, for $K=\max\{1,M,M_1,M_2\}$, a non-geodesic word over $X$ asynchronously  $K$-fellow travels
with a shorter word and we thus obtain the falsification by fellow traveler property for the group $G$ with generating set $X$. 
\end{proof}

We are now ready to prove Theorem \ref{thm:main_intro} of the Introduction.

\begin{proof}[Proof of Theorem \ref{thm:main_intro}]
Suppose that $G$ is  hyperbolic relative to $\{H_\omega\}_{\omega\in\Omega}$. 
For each $\omega\in \Omega$ there is a finite generating set $Y_\omega$ of $H_\omega$
such that $(H_\omega,Y_\omega)$ has \fftp.
Let $Y$ be a finite generating set of $G$ such that
$Y\cap \cH=\cup_{\omega\in \Omega} Y_\omega$.
 
According to Theorem \ref{thm:fftpgenset} it is enough to show that 
for any finite subset $\cH'$ of $\cH$ there is a finite generating set $X$ of $G$ such that 
$Y\cup \cH'\subseteq X\subseteq Y\cup \cH$ and $(H_\omega, H_\omega\cap X)$ has \fftp{} for all $\omega\in \Omega$.

By Lemma \ref{lem:nocancellation}, there is $m>0$ such that
if $g\in H_\omega\cap H_\mu$, $\mu\neq \omega$, then $|g|_Y\leq m$.

For each $\omega\in \Omega$, let $$k_\omega = \max \{|h|_{Y_\omega}: h\in  (\cH'\cup \Theta_Y(m))^{\pm 1}\cap H_\omega\}+1$$ and let $k=\max\{ k_\omega : \omega \in \Omega\}$. 
Let $$X=Y\cup %\cH'\cup (\cH')^{-1} \cup 
(\bigcup_{\omega \in \Omega} \{h\in H_\omega : |h|_{Y_\omega}\leq k \}).$$

Then $X$ is a finite  generating set of $G$ and  $Y\cup  \cH'\subseteq X \subseteq Y\cup \cH$.
Observe that  $H_\omega \cap X= \{h\in H_\omega : |h|_{Y_\omega}\leq k\}$ and then by Proposition \ref{lem:extendingfftpgenset}, $(H_\omega, H_\omega\cap X)$ has the falsification by fellow traveler property.
\end{proof}
%
%%%%%%%%%%%%%%%
%%New section
%%Biautumatic
%%%%%%%%%%%
%
\subsection{Geodesically biautomatic relatively hyperbolic groups}

After presenting a preliminary version of this paper at the Group Theory 
International Webinar,  Olga Kharlampovich asked if it was possible to use 
our techniques to give another proof of  Rebbechi's result about 
biautomaticity of groups hyperbolic relative to biautomatic groups with a prefix 
closed normal form \cite{Rebbechi}. We can indeed recover 
this result when some additional assumptions are made, assumptions which hold for virtually abelian groups, for example.
One of the main technicalities in Rebbechi's thesis is dealing with a variation of the falsification by fellow traveler property to obtain that certain languages are regular. In our approach, however, we use generating sets with the standard falsification by fellow traveler property to obtain the regularity of the language. Moreover, the biautomatic structure  we obtain is geodesic.

We use \cite[Lemma 2.5.5]{ECHLPT} as definition for biautomaticity.
\begin{defn}
Let $G$ be a group, $X$ a finite  generating set and 
$\cL$ a regular language 
over $X$ that maps onto $G$.
 Then $\cL$ is  a {\it biautomatic structure} if  
 there exists a constant $M$ such that for each 
 $W\in \cL$, each pair $x,y\in X\cup \{1\}$,
 and all $U\in \cL$ with $U=_G xWy$, 
 the paths  $p$ and $q$ synchronously $M$-fellow travel,
 where $\Lab(p)\equiv W$, $\Lab(q)\equiv U$,
 $p_-=1$ and $q_-=x$.

We say that $\cL$ is a {\it geodesic biautomatic} structure if all words in $\cL$ are geodesic over $G$. It is clear from the definition that if $\cL$ is a geodesic biautomatic structure for $G$, then $\cL$ satisfies (L1) and (L$\forall$).

\end{defn}

The proof of the following lemma is basically the same as that of \cite[Theorem 2.5.9]{ECHLPT}.
\begin{lem}\label{lem:prefixclosed}
Let $\cL$ be a geodesic biautomatic structure for $(G,X)$
and let $\cL^{{\rm pc}}$ be the prefix closure of $\cL$.
Then $\cL^{{\rm pc}}$ is a geodesic biautomatic structure.
\end{lem}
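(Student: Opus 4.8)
The plan is to mimic the classical argument of \cite[Theorem 2.5.9]{ECHLPT} that the prefix closure of a (bi)automatic structure is again (bi)automatic, checking at each step that geodesicity is preserved. First I would record that $\cL^{\rm pc}$ is regular: the prefix closure of a regular language is regular (one simply makes every state of an accepting automaton for $\cL$ into an accepting state, after trimming states from which no accept state is reachable). Since every prefix of a geodesic word is geodesic, every word in $\cL^{\rm pc}$ is geodesic, and $\cL^{\rm pc}$ maps onto $G$ because $\cL$ already does; so it only remains to verify the synchronous fellow traveler property.

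For the fellow traveler property, let $M$ be the biautomaticity constant for $\cL$, and let $\delta\ge 0$ be a hyperbolicity-type constant controlling the thinness of geodesic bigons/triangles in $\ga(G,X)$; since $\cL\subseteq\geol(G,X)$ maps onto $G$ and $\cL$ is biautomatic, $\ga(G,X)$ has a geodesic biautomatic structure and hence the usual consequences apply (in particular geodesics with endpoints at bounded distance synchronously fellow travel, by Lemma \ref{lem:geosync} together with an asynchronous estimate). Now take $W'\in\cL^{\rm pc}$ and $U'\in\cL^{\rm pc}$ with $U'=_G xW'y$ for some $x,y\in X\cup\{1\}$. By definition there are $W,U\in\cL$ having $W'$, $U'$ as prefixes; write $W\equiv W'W_1$, $U\equiv U'U_1$. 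Let $p,q$ be the paths from $1$, respectively $x$, labelled $W$, $U$; by biautomaticity of $\cL$ (applied with the relevant $x,y$, or with $x=y=1$ after an initial adjustment as in \cite{ECHLPT}) $p$ and $q$ synchronously $M$-fellow travel. The prefixes $p'$ (label $W'$) and $q'$ (label $U'$) are the initial segments of $p$ and $q$ up to the appropriate times; they already synchronously $M$-fellow travel as subpaths of $p,q$, \emph{provided} the two prefixes end at times that differ by a bounded amount. That is guaranteed because $W'$ and $U'$ are geodesic and their endpoints satisfy $\d_X((p')_+,(q')_+)\le |x|_X+|y|_X\le 2$, so $\bigl|\ell(p')-\ell(q')\bigr|\le 2$; hence at every time $t$ we have $\d_X(p'(t),q'(t))\le M+2$, say. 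Thus $\cL^{\rm pc}$ is a biautomatic structure with constant $M+2$, and since all its words are geodesic it is a geodesic biautomatic structure.

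The one point that needs genuine care — and which I expect to be the main obstacle — is handling the case where one of $W'$, $U'$ is much shorter than the word $W$ (resp.\ $U$) it sits inside, i.e.\ making sure that the fellow traveler estimate for the \emph{prefixes} is not destroyed when we truncate $p$ and $q$. The subtlety is that synchronous $M$-fellow traveling of $p$ and $q$ controls $\d_X(p(t),q(t))$ for all $t$, but we need the terminal vertices of the truncated paths $p',q'$ to be close; this is exactly where we use that $W'$ and $U'$ represent group elements within distance $|x|_X+|y|_X$ of each other and that both are geodesic, so their lengths differ by at most $|x|_X+|y|_X$. Once that length comparison is in hand the rest is bookkeeping identical to \cite[Theorem 2.5.9]{ECHLPT}, and geodesicity is automatic since prefixes of geodesics are geodesic. (If one prefers the asynchronous formulation, one can instead invoke Lemma \ref{lem:comptravel} and Remark \ref{rem:syncfftp}/Lemma \ref{lem:geosync} to pass back and forth, but the synchronous argument above is the cleanest.)
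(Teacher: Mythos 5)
Your proposal has a genuine gap at its central step, and it is precisely the step that carries all the content of the lemma. (For reference, the paper itself gives no detailed argument here: it says only that the proof is ``basically the same as that of \cite[Theorem 2.5.9]{ECHLPT}'', so the comparison is really against the correct version of that argument.) You take $W',U'\in\cL^{\rm pc}$ with $U'=_G xW'y$, extend them to $W,U\in\cL$, and then assert ``by biautomaticity of $\cL$ \dots\ $p$ and $q$ synchronously $M$-fellow travel'' for the paths labelled by the \emph{full} words $W$ and $U$. But the fellow traveler property of $\cL$ applies only to pairs of words of $\cL$ whose images satisfy $\overline{U}=x'\overline{W}y'$ with $x',y'\in X\cup\{1\}$; the relation $U'=_G xW'y$ constrains the prefixes, not the extensions, and $\overline{W}$ and $\overline{U}$ can be arbitrarily far apart (nor can you in general choose the extensions to be compatible, since $W'$ and $U'$ may only extend to words of $\cL$ heading in unrelated directions). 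So biautomaticity of $\cL$ says nothing about $p$ and $q$, and the subsequent truncation bookkeeping has nothing to truncate. The difficulty you flag at the end (controlling the times at which the prefixes end) is not the real obstacle; the real obstacle is that the hypothesis of the fellow traveler property is simply unavailable for $W$ and $U$.

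A second, related error: you claim that because $G$ admits a geodesic biautomatic structure, ``geodesics with endpoints at bounded distance synchronously fellow travel.'' That is false for non-hyperbolic groups --- in $\mathbb{Z}^2$ with the standard generators, $a^nb^n$ and $b^na^n$ are geodesics with the same endpoints that do not fellow travel, even though $\mathsf{ShortLex}$ is a geodesic biautomatic structure. Only words \emph{in the language} are controlled, and Lemma \ref{lem:geosync} merely upgrades asynchronous to synchronous fellow traveling for geodesics that are already known to asynchronously fellow travel. What a correct proof must establish is the following sub-statement, which is where the argument of \cite[Theorem 2.5.9]{ECHLPT} does its actual work: there is a uniform constant $C$ such that every prefix $W'$ of a word $W\in\cL$ fellow travels with some (equivalently, by biautomaticity, any) word of $\cL$ representing $\overline{W'}$. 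Once that is in hand, the general case $U'=_GxW'y$ follows by comparing $W'$ and $U'$ each to words of $\cL$ representing $\overline{W'}$ and $\overline{U'}$, applying the genuine biautomaticity estimate between those two words of $\cL$, composing the asynchronous estimates, and invoking Lemma \ref{lem:geosync}; geodesicity and regularity of $\cL^{\rm pc}$ are as routine as you say. Your proposal never proves (or even isolates) this sub-statement, so as written it does not establish the lemma.
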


The following is based on \cite[Proposition 4.1]{NeumannShapiro}, and can be viewed as the restriction of the \fftp{} to prefix-closed regular languages over $X$.
\begin{prop}\label{prop:restrictedfftp}
Let $X$ be a finite  generating set for $G$ and  $\cL\subseteq X^*$ be a prefix-closed regular language.
Suppose that there is $C>0$ such that for any non-geodesic word $U\in \cL$
there is a word $V$, $\ell(V)<\ell(U)$, $V=_G U$, such that $U$ and $V$
asynchronously $C$-fellow travel.
Then $\geol(G,X)\cap \cL$ is a regular language.
\end{prop}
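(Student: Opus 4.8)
\textbf{Proof plan for Proposition \ref{prop:restrictedfftp}.}

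The plan is to adapt the classical argument that \fftp{} implies regularity of the language of geodesics (\cite[Proposition 4.1]{NeumannShapiro}, Proposition \ref{prop:NS} here), being careful that now we only have a \emph{restricted} falsification property, valid for words already in $\cL$, and a regular prefix-closed $\cL$ to intersect with. First I would reduce to the synchronous version of the hypothesis: by an argument parallel to Lemma \ref{lem:geosync} and Remark \ref{rem:syncfftp}, if a non-geodesic $U\in\cL$ asynchronously $C$-fellow travels a shorter word $V=_GU$, then (after possibly replacing $V$ by a geodesic and enlarging the constant) one gets a synchronous $C'$-fellow traveler; here one uses that the endpoints coincide so the two-sided reparametrization can be straightened out. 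Fix such a synchronous constant $C'$.

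Next I would run the standard automaton construction. Consider the language $\cN=\geol(G,X)\cap\cL$. Build a (nondeterministic, or deterministic via the ball of radius $C'$) finite-state machine whose states, after reading a prefix $W$ of an input word, record: (a) the state of the fixed finite automaton accepting $\cL$; and (b) the ``fellow traveler data'', namely the element $\pi(W)^{-1}g$ for every $g$ in the ball $B(\pi(W),C')$ that is the value of some equally-long prefix of a geodesic representative — more precisely the finite pattern consisting of, for each element $v$ in the $C'$-ball around the current vertex, whether $v$ is reachable by a word of length $\ell(W)$ that is a prefix of an accepted geodesic word. Since $X$ is finite, this ball has bounded size and there are finitely many such patterns, so the state set is finite. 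The machine reads a word letter by letter, updates both components in the obvious way, and accepts when the $\cL$-automaton accepts and the current vertex is flagged as reachable-by-a-geodesic.

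The correctness argument is where the restricted hypothesis is used, and it is the main obstacle. One direction is immediate: any word in $\cN$ is accepted. For the converse one must show that if the machine accepts $W$ then $W$ is geodesic; equivalently, that whenever $W\in\cL$ is non-geodesic, the machine detects a shorter competitor. The point is that prefix-closedness of $\cL$ lets us stay inside $\cL$: writing $W=W'W''$ and applying the synchronous restricted \fftp{} to $W\in\cL$ produces a shorter $V=_GW$ with $V$ synchronously $C'$-close to $W$; each prefix of $V$ of length $t$ sits within $C'$ of the vertex $\pi(W[0,t])$, so the fellow-traveler component of the state after reading $W[0,t]$ ``sees'' the corresponding prefix of $V$ — but one must check these prefixes of $V$ are themselves prefixes of \emph{accepted geodesic} words, which requires iterating the falsification until a geodesic is reached and invoking prefix-closure at each stage so that the intermediate words remain in $\cL$. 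Carrying this bookkeeping through, and making precise that ``the state remembers enough'' to certify non-geodesic words, is the technical heart; once it is in place, the accepted language of the machine is exactly $\geol(G,X)\cap\cL$, which is therefore regular.
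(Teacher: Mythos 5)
There is a genuine gap, and it sits exactly where you flag ``the technical heart.'' Your correctness argument needs the shorter competitor $V$ produced by the hypothesis to be (a prefix of) a geodesic, and you propose to get this by ``iterating the falsification until a geodesic is reached and invoking prefix-closure at each stage so that the intermediate words remain in $\cL$.'' But the falsification hypothesis here is \emph{restricted to words of} $\cL$, and the word $V$ it outputs is in no way guaranteed to lie in $\cL$: prefix-closedness only says that prefixes of words of $\cL$ are in $\cL$, and $V$ is not a prefix of $U$. So after one application you leave $\cL$ and the hypothesis can no longer be applied; the iteration is not available. The same problem undermines your preliminary reduction to the synchronous version: Lemma \ref{lem:geosync} requires \emph{both} paths to be geodesic, and to replace $V$ by a geodesic fellow-traveler you would again need to iterate falsification on $V$, which you cannot do. Your automaton's state (``reachable by a length-$\ell(W)$ prefix of an accepted geodesic word'') is built around synchronous tracking of genuine geodesics, so the whole correctness proof collapses at this point.

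The paper's proof is designed precisely to avoid this. It uses the Neumann--Shapiro automaton whose state after reading $W$ is a function $\phi$ recording, for each $g$ in the $C$-ball, the (truncated to $[-C,C]$) minimal value of $\ell(W')-\ell(W)$ over words $W'$ representing $Wg$ that \emph{asynchronously} $C$-fellow travel $W$ --- with no requirement that these competitors be geodesic or lie in $\cL$. The claim that a word of $\cL$ is accepted iff it is geodesic is then proved by induction on length: writing $W\equiv Ux$, prefix-closure is used only to ensure $U\in\cL$, and the falsification hypothesis is applied \emph{once}, to $W$ itself, producing a word of length $\le\ell(U)$ representing $Ux$; this forces $\phi(x)\neq 1$ and the automaton rejects. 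One then intersects $\cL(A)$ with the regular language $\cL$. If you want to salvage your approach, you should replace your state data by this ``signed distance pattern'' over arbitrary asynchronous fellow-travelers and drop the synchronization step.
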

\begin{proof}
Let $A$ be the automaton  of \cite[Proposition 4.1]{NeumannShapiro},
that is, $X$ is the input alphabet,
the states are given by the set $$S= \{\rho\}\cup \{ \phi \in \mathrm{Maps}(\Theta(C) \rightarrow \{-C,\dots, -1,0,1,\dots, C\}) \mid \phi(1)=0\}$$ where  $\rho$ is the fail state and $\Theta(C)=\{g\in G : |g|_X\leq C\}$. 
The transition function $T\colon S\times X\to S$ is given by
$T(\phi,x)=\rho$ if $\phi(x)\neq 1$, and if this is not the case,
then 
for $g\in \Theta(C)$,
$T(\phi,x)(g)=\phi(xg)-1$ if $xg\in \Theta(C)$ and $T(\phi,x)(g)=\min \{ \phi(h): h\in \Theta(C), \d(h,xg)\leq 1\}$ if $xg\notin \Theta(C)$. We note that all but $
\rho$ are accepting states and therefore $\cL(A)$, the language accepted by $A$, is prefix-closed.

Given a word $W$ accepted by $A$ at state $\phi$, we get that
for all $g\in \Theta(C)$ there is a word $W'$ of length $\ell(W)+\phi(g)$
such that $Wg=_G W'$ and $W$ and $W'$ asynchronously $C$-fellow travel.

{\bf Claim:} $W\in \cL$ is accepted by $A$ if and only if $W$ is geodesic.

We prove the claim by induction on $\ell(W)$. If $\ell(W)=1$ it is easy to see
that $A$ accepts $W$.

So suppose that our claim holds for words $U$, $\ell(U)\leq n$.
Let $W\in \cL$, $\ell(W)=n+1$. Since $\cL$ is prefix-closed,
there is $x\in X$ and $U\in \cL$ such that $W\equiv Ux$, $\ell(U)=n$.

If $U$ is not geodesic, $W$ is not geodesic. Since, by induction, $U$ is not accepted by $A$, neither is $W$.
So assume that $U$ is geodesic and has been accepted at state $\phi$.
If $W$ is not geodesic, by assumption there is a geodesic word $V=_G W$
 such that $W$ and $V$ asyncronously $C$-fellow travel.
Since $U$ is geodesic and $W$ is not, $\ell(V)\leq \ell(U)$.
Thus $\phi(x)=\ell(V)-\ell(U)\neq 1$, and $W$ is not accepted.

 Conversely, if $W$ is geodesic then $\phi(x)=1$, so $T(\phi,x)\neq \rho$.
This completes the proof of the claim.

By the claim, $\geol(G,X)\cap \cL= \cL(A)\cap \cL$.  Since the class of regular languages is closed under intersection, $\geol(G,X)\cap \cL= \cL(A)\cap \cL$ is regular.
\end{proof}

We can now state a criterion for biautomaticity.

\begin{thm}\label{thm:biaut}
Let $G$ be a finitely generated group, hyperbolic relative to a family of subgroups $\{H_\omega\}_{\omega\in \Omega}$.
Let $Y$ be a finite  generating set of $G$ and $\cH=\cup_{\omega\in \Omega}(H_\omega-\{1\})$.

There exists  a finite subset $\cH'\subseteq \cH$ such that, for every finite  generating set $X$ of $G$ satisfying
\begin{enumerate}
\item[{\rm (i)}] $Y\cup \cH'\subseteq X\subseteq Y\cup \cH$, and
\item[{\rm (ii)}] for all $\omega\in \Omega$, there is a geodesic  biautomatic structure $\cL_\omega$  for $(H_\omega, H_\omega\cap X)$,
\end{enumerate}
the set $\geol(G,X)\cap\rell(X,\{\cL_\omega^{{\rm pc}}\}_{\omega\in \Omega})$ is a geodesic  biautomatic structure for $(G,X)$.
\end{thm}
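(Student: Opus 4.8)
The plan is to verify that the language $\cL\coloneq\geol(G,X)\cap\rell(X,\{\cL_\omega^{\rm pc}\}_{\omega\in\Omega})$ is (a) regular, (b) surjective onto $G$, and (c) satisfies the synchronous two-sided fellow-traveler bound required by the definition of a geodesic biautomatic structure. First I would invoke Lemma \ref{lem:prefixclosed} to know that each $\cL_\omega^{\rm pc}$ is again a geodesic biautomatic structure for $(H_\omega,H_\omega\cap X)$; in particular each $(H_\omega,X\cap H_\omega,\cL_\omega^{\rm pc})$ satisfies (L1) and (L$\forall$). Hence Proposition \ref{prop:geobicom} applies (after enlarging $\cH'$ if necessary so that it contains the set from that proposition as well as the one from Lemma \ref{lem:goodgenset}), giving that $(G,X,\cL)$ satisfies (L1) and (L$\forall$). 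Property (L1) already yields surjectivity and geodesicity, so clauses (b) and the ``geodesic'' part are done, and (L$\forall$) is exactly the asynchronous form of the fellow-traveler condition; by Lemma \ref{lem:geosync} (see Remark \ref{rem:Lsync}) the asynchronous bound upgrades to a synchronous one once we know both words are geodesic, which they are. So the only genuinely missing ingredient is regularity of $\cL$.

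For regularity I would argue as follows. By Lemma \ref{lem:rel}(ii), $\rell(X,\{\cL_\omega^{\rm pc}\}_{\omega\in\Omega})$ is regular since each $\cL_\omega^{\rm pc}$ is regular (regularity of $\cL_\omega^{\rm pc}$ being part of Lemma \ref{lem:prefixclosed}), and by Lemma \ref{lem:rel}(i) it is prefix-closed since each $\cL_\omega^{\rm pc}$ is prefix-closed. Now I want to apply Proposition \ref{prop:restrictedfftp} with the prefix-closed regular language $\rell(X,\{\cL_\omega^{\rm pc}\}_{\omega\in\Omega})$: its conclusion is precisely that $\geol(G,X)\cap\rell(X,\{\cL_\omega^{\rm pc}\}_{\omega\in\Omega})=\cL$ is regular. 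To invoke it I must supply a constant $C$ such that every non-geodesic word $U$ in $\rell(X,\{\cL_\omega^{\rm pc}\}_{\omega\in\Omega})$ asynchronously $C$-fellow travels with a strictly shorter word representing the same element. This is obtained by a case analysis identical to the proof of Theorem \ref{thm:fftpgenset}: a word $U\in\rell(X,\{\cL_\omega^{\rm pc}\}_{\omega\in\Omega})$ has no parabolic shortenings (since each $\cL_\omega^{\rm pc}\subseteq\geol(H_\omega,X\cap H_\omega)$), so case (ii) of that proof disappears; what remains is that $U$ is either not $2$-local geodesic (shorten with constant $1$), or contains a forbidden subword from $\Phi$ (shorten with constant $M_1=\max\{\ell_X(V):V\in\Phi\}$), or is a $2$-local geodesic with no parabolic shortenings avoiding $\Phi$, in which case Lemma \ref{lem:goodgenset} makes $\wh U$ a $(\lambda,c)$-quasi-geodesic without vertex backtracking and Lemma \ref{lem:main2} produces a shorter $C$-fellow traveler with $C=M_2$. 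Taking $C=\max\{1,M_1,M_2\}$ does the job.

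Putting the pieces together: $\cL$ is regular by the previous paragraph, maps onto $G$ and consists of geodesics by (L1), and synchronously $M$-fellow travels under left- and right-multiplication by generators by (L$\forall$) plus Lemma \ref{lem:geosync}; therefore $\cL$ is a geodesic biautomatic structure for $(G,X)$. The one point requiring a little care is the bookkeeping of the finite set $\cH'$: I would take $\cH'$ to be the union of the set produced by the Generating Set Lemma (Lemma \ref{lem:goodgenset}) and the set produced by Proposition \ref{prop:geobicom}, so that any $X$ with $Y\cup\cH'\subseteq X\subseteq Y\cup\cH$ simultaneously satisfies the hypotheses of both, and also so that the constant $M_1$ and the set $\Phi$ used in the regularity argument are the same ones coming from Lemma \ref{lem:goodgenset}. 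I expect the main obstacle — such as it is — to be purely expository: checking that the prefix-closure operation interacts correctly with $\rell$ (handled by Lemma \ref{lem:rel}) and that the constant in Proposition \ref{prop:restrictedfftp} can legitimately be harvested from the Theorem \ref{thm:fftpgenset} case analysis, rather than any new mathematical difficulty.
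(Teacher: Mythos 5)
Your overall architecture is exactly the paper's: Lemma \ref{lem:prefixclosed} plus Proposition \ref{prop:geobicom} for (L1) and (L$\forall$) (hence surjectivity, geodesicity, and the synchronous fellow-traveling via Remark \ref{rem:Lsync}), and Proposition \ref{prop:restrictedfftp} applied to the prefix-closed regular language coming from Lemma \ref{lem:rel} for regularity. The only cosmetic difference is that the paper first intersects with the prefix-closed regular language $\cL_\Phi$ of $\Phi$-avoiding words so that a single case remains, whereas you keep the three-case analysis; both yield the same intersection with $\geol(G,X)$.

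There is, however, one step that does not go through as written: in your third case you invoke Lemma \ref{lem:main2} to produce the shorter fellow traveler. That lemma is proved inside Theorem \ref{thm:fftpgenset}, and its proof takes $\cL_\omega=\geol(H_\omega,X\cap H_\omega)$ and derives (L$\exists$) for these languages from Lemma \ref{lem:esimilarfftp}, which in turn uses the standing hypothesis there that each $(H_\omega,X\cap H_\omega)$ has \fftp. In Theorem \ref{thm:biaut} the parabolics are only assumed to admit a geodesic biautomatic structure, which controls fellow traveling for words \emph{in} $\cL_\omega$ but says nothing about arbitrary non-geodesic words over $X\cap H_\omega$; indeed the paper treats (P1) and (P3) as independent properties, so you cannot harvest $M_2$ from that case analysis. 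The repair is short and is what the paper does: for a non-geodesic $U\in\rell(X,\{\cL_\omega^{\rm pc}\}_{\omega\in\Omega})$ that is $2$-local geodesic and avoids $\Phi$, take a special geodesic representative $V$ of the same element (Lemma \ref{lem:specialrep}, available since the $\cL_\omega^{\rm pc}$ satisfy (L1)); then $V\in\rell(X,\{\cL_\omega^{\rm pc}\}_{\omega\in\Omega})$, both $\wh{U}$ and $\wh{V}$ are $(\lambda,c)$-quasi-geodesics without backtracking by the Generating Set Lemma, and Lemma \ref{lem:main}(a) — whose hypothesis (L$\forall$) \emph{is} supplied by Lemma \ref{lem:prefixclosed} — gives the asynchronous $K_3$-fellow traveling with the strictly shorter word $V$. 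With that substitution your argument is complete and coincides with the paper's proof.
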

\begin{proof}
Let $\cH'_1$ be the set of Proposition \ref{prop:geobicom}.
Let $\cH'_2$,  $\lambda$ and $c$ be the sets and constants provided by the Generating Set Lemma (Lemma \ref{lem:goodgenset}).
Let $\cH'=\cH_I\cup \cH'_1\cup \cH_2'$.
Fix $X$, a finite  generating set for $G$ satisfying (i)-(ii), and let $\Phi=\Phi(X)$ be the set provided by the Generating Set Lemma.

By Lemma \ref{lem:prefixclosed}, the language $\cL_\omega^{{\rm pc}}$ is a prefix-closed
biautomatic structure, and
by Lemma \ref{lem:rel}  $\rell(X,\{\cL_\omega^{{\rm pc}}\}_{\omega \in \Omega})$ is regular and prefix-closed.
%Since each $\cL_\omega$ is regular and $\cH_I\subseteq X$, Lemma \ref{lem:regular} implies that, $\rell(X,\{\cL_\omega\}_{\omega \in \Omega})$ is regular.

Let $\cL_\Phi=X^*- \cup_{W\in \Phi} X^*WX^*$ be the set of words that do not contain
subwords in $\Phi$. Since $\Phi$ is finite, this language is regular.
By definition $\cL_\Phi$ is prefix-closed.
Thus $\cL=\rell(X, \{\cL_\omega\}_{\omega\in \Omega})\cap \cL_\Phi$ is regular, prefix-closed and for all $W\in \cL$, $\wh{W}$ is a $(\lambda,c)$-quasi-geodesic with no parabolic shortenings. Since for any geodesic
$U$ the word $\wh{U}$ is a $(\lambda,c)$-quasi-geodesic we obtain, by Lemma \ref{lem:main}(a), that if $W\in \cL$ is not geodesic, then it asynchronously  $M$-fellow travels with a geodesic word.

By Proposition \ref{prop:restrictedfftp}, $\geol(G,X)\cap \cL=\geol(G,X)\cap \rell(X,\{\cL_\omega^{\rm pc}\}_{\omega\in \Omega})$ is regular. The synchronous fellow traveler property follows from Proposition \ref{prop:geobicom} and Remark \ref{rem:Lsync}.
\end{proof}
If $\mathsf{ShortLex}(H_\omega,H_\omega\cap X)$ is a biautomatic structure for $H_\omega$, then it clearly satisfies (ii) of  Theorem  \ref{thm:biaut}. We have the following.
\begin{thm}\label{thm:shortlex}
Suppose the assumptions of Theorem \ref{thm:biaut} hold. Then
if $X$ is an ordered generating set satisfying (i) of Theorem \ref{thm:biaut}  and
\begin{enumerate}
\item[{\rm ($\mathrm{ii}^\prime$)}] for all $\omega\in \Omega$, $\mathsf{ShortLex}(H_\omega,H_\omega\cap X)$ is a biautomatic structure for $H_\omega$ (here the order on $H_\omega\cap X$ is the restriction of the order on $X$)
\end{enumerate}
then $\mathsf{ShortLex}(G, X)$ is a biautomatic structure for $G$. 
\end{thm}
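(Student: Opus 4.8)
The plan is to deduce this from Theorem \ref{thm:biaut} by showing that the generating set $X$, once we verify ($\mathrm{ii}^\prime$), satisfies hypothesis (ii) of Theorem \ref{thm:biaut} with $\cL_\omega = \mathsf{ShortLex}(H_\omega, H_\omega\cap X)$, and then that the language produced by Theorem \ref{thm:biaut} actually coincides with $\mathsf{ShortLex}(G,X)$. The first part is immediate: a $\mathsf{ShortLex}$ language consists of geodesic words (the shortlex minimum among all representatives is in particular length-minimal) and contains exactly one representative for each group element, so it satisfies (L1), and being a biautomatic structure it satisfies (L$\forall$); hence ($\mathrm{ii}^\prime$) implies (ii), and Theorem \ref{thm:biaut} applies. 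Note also that $\mathsf{ShortLex}$ languages are automatically prefix-closed, so $\cL_\omega^{\mathrm{pc}}=\cL_\omega$ and the language produced by Theorem \ref{thm:biaut} is $\cL:=\geol(G,X)\cap\rell(X,\{\mathsf{ShortLex}(H_\omega,H_\omega\cap X)\}_{\omega\in\Omega})$, which we already know is a geodesic biautomatic structure for $(G,X)$.

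The substantive step is to identify $\mathsf{ShortLex}(G,X)$ inside this picture. First I would observe that $\mathsf{ShortLex}(G,X)\subseteq\cL$: a shortlex representative is geodesic, so by the last assertion of the Generating Set Lemma (Lemma \ref{lem:goodgenset}) it has no parabolic shortenings, and for each factor $U_i$ over $X\cap H_\omega$ in its $\{H_\omega\}$-factorization, $U_i$ must itself be the shortlex representative of the element it defines in $H_\omega$ — otherwise replacing $U_i$ by a shortlex-smaller word over $X\cap H_\omega$ would, since the order on $X$ restricts to the order on $X\cap H_\omega$ and shortlex comparison of words agrees on a common prefix/suffix decomposition, produce a shortlex-smaller representative of the same element of $G$, a contradiction. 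Hence every $U_i\in\mathsf{ShortLex}(H_\omega,H_\omega\cap X)$ and the word lies in $\rell(X,\{\cL_\omega\}_\omega)$, so in $\cL$. Next, $\cL$ surjects onto $G$ and is a language of geodesics, so it contains for each $g$ at least one geodesic representative; intersecting $\cL$ with $\mathsf{ShortLex}(G,X)$ picks out exactly one representative per element, and the inclusion just proved shows this intersection is all of $\mathsf{ShortLex}(G,X)$. Thus $\mathsf{ShortLex}(G,X)=\cL\cap\mathsf{ShortLex}(G,X)$.

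It then remains to see that $\mathsf{ShortLex}(G,X)$ is regular and biautomatic. For regularity, one standard fact is that for any regular language $\cL$ of geodesics that surjects onto $G$ with the property that $(G,X)$ admits an automatic (or even just a biautomatic) structure, the sublanguage of shortlex representatives contained in $\cL$ is regular; more concretely, since $\cL$ is a biautomatic structure, the "word-difference" machinery of \cite{ECHLPT} lets one build an automaton that, reading a word $W\in\cL$, checks whether some $V\in\cL$ with $V=_G W$ satisfies $V\prec_{\mathsf{SL}}W$, and rejects exactly those $W$; the accepted sublanguage is $\cL\cap\mathsf{ShortLex}(G,X)=\mathsf{ShortLex}(G,X)$. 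Finally, $\mathsf{ShortLex}(G,X)$ is a biautomatic structure for $G$: it is a regular language of geodesics surjecting onto $G$ with a unique representative per element, and the synchronous fellow-traveler property for its words follows because each such word lies in $\cL$, which is a (synchronous) biautomatic structure, combined with Lemma \ref{lem:geosync} — if $W,W'\in\mathsf{ShortLex}(G,X)$ with $W'=_G xWy$, then viewing them as elements of $\cL$ gives asynchronous fellow traveling, and since both are geodesic with $1$-similar (indeed controlled) endpoints, Lemma \ref{lem:geosync} upgrades this to synchronous fellow traveling with a uniform constant.

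The main obstacle I anticipate is the regularity argument in the last paragraph: passing from "a biautomatic structure $\cL$ of geodesics" to "the shortlex sublanguage of $\cL$ is regular" requires the word-difference construction, and one must be a little careful that the relevant word differences stay in a bounded set (which they do, since all words involved are geodesic and fellow travel by the biautomaticity of $\cL$). Everything else is bookkeeping: that shortlex languages are prefix-closed, that the order restricts compatibly to parabolic generating sets, and that shortlex factors of a shortlex word are themselves shortlex in the parabolic subgroup.
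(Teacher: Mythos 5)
Your proposal is correct and follows essentially the same route as the paper: verify that ($\mathrm{ii}'$) gives (ii) so Theorem \ref{thm:biaut} yields the geodesic biautomatic structure $\cL$, observe $\mathsf{ShortLex}(G,X)\subseteq\cL$, extract regularity of the shortlex sublanguage via the word-difference machinery of \cite{ECHLPT} (the paper cites \cite[Theorem 2.5.1]{ECHLPT} for exactly this), and deduce biautomaticity from the fellow-traveler property of words in $\cL$. The paper states the inclusion $\mathsf{ShortLex}(G,X)\subseteq\cL$ without proof; your factorization argument supplies the missing detail correctly.
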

\begin{proof}
If ($\mathrm{ii}^\prime$) holds, then (ii) also holds, so we have that, 
 by Theorem \ref{thm:biaut}, $$\cL=\geol(G,X)\cap\rell(X,\{\cL_\omega\}_{\omega\in \Omega}) $$ is a geodesic biautomatic structure for $G$, where $\cL_\omega=\mathsf{ShortLex}(H_\omega,H_\omega\cap X)$.
It is easy to see that $\mathsf{ShortLex}(G,X)\subseteq\cL$.
Then, by \cite[Theorem 2.5.1]{ECHLPT},
 $\mathsf{ShortLex}(G,X)$ is a regular language.  The biautomaticity follows from the fellow traveler property for words in $\cL$.
\end{proof}

%%%%%%%%
% New Section
% Conjhyp
%%%%%%%%

\section{Cayley graphs with bounded conjugacy diagrams}\label{sec:conjhyp}

Let $G$ be a group and $X$ a  generating set.
An $n$-gon $p_1 \cdots p_n$ in $\ga(G,X)$ is a sequence of paths
$p_1, p_2, \dots, p_n$ in $\ga(G,X)$ such that $(p_i)_+=(p_{i+1})_-$ for $i=1,\dots, n-1$, and $(p_n)_+=(p_1)_-$.
A {\it conjugacy diagram} over $(G,X)$ is a quadruple $(p,q,r,s)$ where
$p$ and $q$ are paths, $r,s$ are geodesic paths with the same label, and $p s q^{-1} r^{-1}$ is a 4-gon in $\ga(G,X)$. See Figure \ref{fig:conjquad}.

\begin{figure}[!ht]
\includegraphics[scale=0.4]{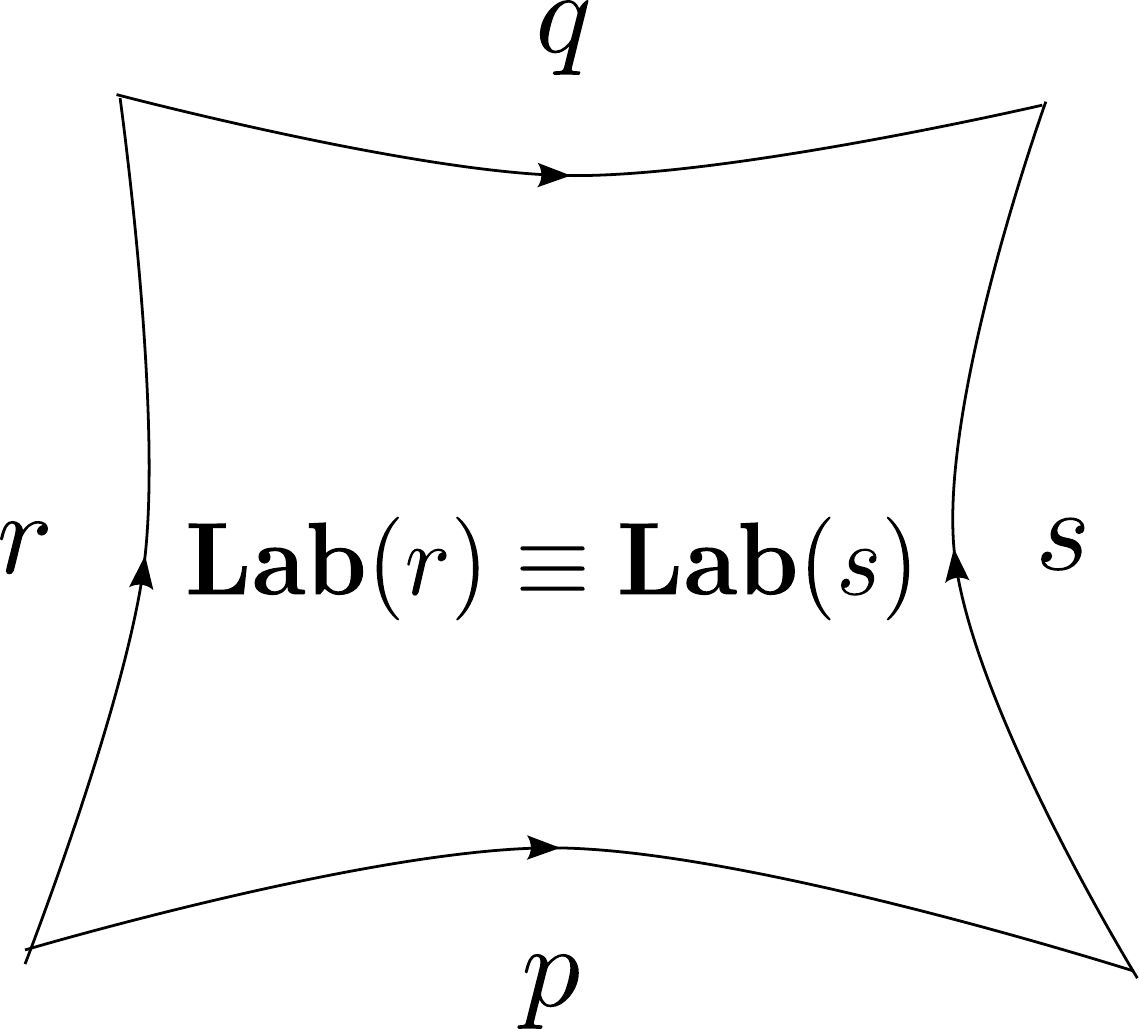}
\caption{A conjugacy diagram over $(G,X)$}
\label{fig:conjquad}
\end{figure}

Suppose that $(p,q,r,s)$ and $(p',q',r',s')$ are conjugacy diagrams. We write $(p,q,r,s)\sim(p',q',r',s')$ if $\Lab(p')$ and $\Lab(q')$ are  cyclic permutations of $\Lab(p)$ and $\Lab(q)$, respectively. 
Let $\lambda \geq 1$ and $c\geq 0$. A conjugacy diagram $(p,q,r,s)$ is a {\it minimal conjugacy $(\lambda, c)$-diagram} if all cyclic permutations of $\Lab(p),\Lab(q)$ are $(\lambda,c)$-quasi-geodesic  and
$\ell(r)\leq \ell(r')$ for all  $(p',q',r',s')\sim(p,q,r,s)$.

 Let $k \geq 0$. We will say that $(G,X)$ has {\it $k$-bounded minimal conjugacy $(\lambda,c)$-diagrams} if  for every minimal conjugacy $(\lambda,c)$-diagram $(p,q,r,s)$ 
 \begin{equation}\label{kbounded}
 \min\{ \max\{\ell(p),\ell(q)\}, \ell(r) \}\leq k.
 \end{equation}

\begin{ex} Let $G$ be a finitely generated abelian group and $X$ any finite  generating set. Then $(G,X)$ has $0$-bounded minimal conjugacy $(1,0)$-diagrams.

More generally, if $G$ is a finitely generated finite-by-abelian group, then every conjugacy class is finite, and two elements in the same conjugacy class are conjugated by an element of the finite subgroup. 
This is exactly the class of finitely generated  BFC-groups (groups with bounded finite conjugacy classes). BFC-groups were studied by B. H. Neumann in \cite{BFC}.
\end{ex}

The idea behind bounded minimal conjugacy $(1,0)$-diagrams comes from solving the conjugacy problem in free groups when working over free generating sets. There, two words are conjugate if after cyclic reduction one word is a cyclic permutation of the other. So in free groups minimal conjugacy $(1,0)$-diagrams have the bound $k=0$ for free generating sets. This can be generalized to hyperbolic groups. %, which it was the main class of groups that was known to have minimal bounded conjugacy $(\lambda, c)$-diagrams. 
The following lemma is a well known result, which can be found in a slightly different version in \cite[III.$\Gamma$.Lemma 2.9]{BH} and \cite[Lemma 7.3]{BridsonHowie}. We notice
that its proof only depends on the $\delta$-hyperbolicity of the Cayley graph, and 
it remains valid if one relaxes geodesics to quasi-geodesics. We leave the details of
the proof to the reader.

\begin{lem}\label{lem:conjhyp}
Let $G$ be a group and $Z$ a (possibly infinite) generating set such that $\ga(G,Z)$ is hyperbolic. Given $\lambda \geq 1,$ $c\geq 0$, there exists $K=K(G,Z,\lambda,c)$ such that    
 $(G,Z)$ has $K$-bounded minimal conjugacy $(\lambda,c)$-diagrams.
\end{lem}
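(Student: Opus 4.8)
The plan is to mimic the classical argument for hyperbolic groups (as in \cite[III.$\Gamma$.Lemma 2.9]{BH} or \cite[Lemma 7.3]{BridsonHowie}), being careful that everything used is purely $\delta$-thinness of geodesic (or quasi-geodesic) quadrilaterals, so that the statement survives the replacement of geodesics by $(\lambda,c)$-quasi-geodesics. Fix $\lambda\geq 1$, $c\geq 0$, let $\delta$ be a hyperbolicity constant for $\ga(G,Z)$, and let $(p,q,r,s)$ be a minimal conjugacy $(\lambda,c)$-diagram, so that $psq^{-1}r^{-1}$ is a $4$-gon with $\Lab(r)\equiv\Lab(s)$, all cyclic permutations of $\Lab(p)$ and $\Lab(q)$ are $(\lambda,c)$-quasi-geodesic, and $\ell(r)$ is minimal over the class $\sim$. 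First I would recall that in a $\delta$-hyperbolic space a quadrilateral with $(\lambda,c)$-quasi-geodesic sides is $D$-thin for some $D=D(\delta,\lambda,c)$: each side lies in the $D$-neighbourhood of the union of the other three. I would apply this to the long side $p$: each vertex of $p$ is within $D$ of $q\cup r\cup s$.

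The dichotomy is: either many vertices of $p$ are close to $r$ or to $s$, or many are close to $q$. In the first case, suppose $\ell(p)$ is large; then (by a counting/pigeonhole argument, since $r$ and $s$ are $(\lambda,c)$-quasi-geodesics of the \emph{same} length $\ell(r)$) one finds a long subsegment of $p$ whose initial vertex is $D$-close to a vertex $r(i)$ of $r$ and whose terminal vertex is $D$-close to $r(j)$ with $|i-j|$ small, i.e. a ``backtrack'' of $p$ along $r$; using minimality of $\ell(r)$ over cyclic permutations of $p$ one derives that $\ell(r)$ itself must be bounded. More precisely: if $\ell(r)>k_0$ for a suitable $k_0=k_0(\delta,\lambda,c)$, then there is a vertex $u$ on $p$ with $\d(u,r_-)\le D$ and $\d(u,r_+)\le D$ (tracking a path from $r_-$ up along $s$, across, and back); cyclically permuting $\Lab(p)$ at $u$ and $\Lab(q)$ at the matching vertex on $q$ produces a diagram $(p',q',r',s')\sim(p,q,r,s)$ with $\ell(r')\le 2D$, contradicting minimality of $\ell(r)$ unless $\ell(r)\le 2D$. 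In the second case, every vertex of $p$ is within $D$ of $q$ (after the first case is excluded we may assume $\ell(r)\le k_0$, hence $r$ and $s$ contribute only a bounded neighbourhood and the endpoints of $p$ are within $k_0+D$ of the endpoints of $q$); then applying Lemma \ref{lem:geosync}-type synchronisation — or directly quasi-geodesic stability — $p$ and $q$ synchronously $D'$-fellow travel for $D'=D'(\delta,\lambda,c)$, so $\ell(p)$ and $\ell(q)$ differ by at most a constant and, more to the point, $\max\{\ell(p),\ell(q)\}\le k$ is what we need to \emph{negate}, so actually I should phrase it as: if $\max\{\ell(p),\ell(q)\}$ is large, then $p$ has a long subsegment backtracking along $q$, and cyclic-reducing both words simultaneously contradicts that all cyclic permutations of $\Lab(p),\Lab(q)$ are $(\lambda,c)$-quasi-geodesic. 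Collecting the two cases, set $K=K(G,Z,\lambda,c)=\max\{k_0,2D,D',\dots\}$; then either $\ell(r)\le K$ or $\max\{\ell(p),\ell(q)\}\le K$, which is exactly \eqref{kbounded}.

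The main obstacle I expect is the bookkeeping in the ``backtrack forces a shorter conjugator'' step: one must pass from ``a vertex of $p$ is simultaneously $D$-close to both endpoints of $r$'' to an \emph{honest} conjugacy diagram $(p',q',r',s')$ with short $r'$, which requires choosing the cyclic permutation of $\Lab(q)$ at precisely the vertex of $q$ that the quadrilateral's thinness pairs with $u$, and checking that the new $r',s'$ can be taken geodesic with $\Lab(r')\equiv\Lab(s')$ — this is where one uses that $\d(u,r_-)$ and $\d(u,r_+)$ are both bounded so the conjugator has bounded length, and that the pairing is consistent along the quadrilateral so that $p'$ and $q'$ are genuine cyclic shifts. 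A secondary nuisance is that the sides $p,q$ are only assumed quasi-geodesic up to cyclic permutation (not that $psq^{-1}r^{-1}$ is itself a quasi-geodesic polygon), but this is harmless: thinness of $D$-slim polygons needs only that each \emph{side} is an individual quasi-geodesic. Since the statement explicitly defers the proof to the reader, in the paper I would simply record that the argument is the standard one and that the only ingredient is $\delta$-hyperbolicity of $\ga(G,Z)$, which is insensitive to replacing geodesics by quasi-geodesics with fixed parameters.
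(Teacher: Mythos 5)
The paper does not actually write out a proof of Lemma \ref{lem:conjhyp}: it cites \cite[III.$\Gamma$.Lemma 2.9]{BH} and \cite[Lemma 7.3]{BridsonHowie}, remarks that the argument uses only $\delta$-hyperbolicity and so survives the passage from geodesics to $(\lambda,c)$-quasi-geodesics, and leaves the details to the reader. So you are reconstructing the standard argument, and your skeleton (thinness of quasi-geodesic quadrilaterals plus minimality of $\ell(r)$ under $\sim$) is the right one. However, both of your case analyses are assembled incorrectly, and as written neither closes.

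In your ``close to $r$ or $s$'' case, the assertion that one finds a vertex $u$ of $p$ with $\d(u,r_-)\le D$ and $\d(u,r_+)\le D$ is unsupported (the ``tracking a path from $r_-$ up along $s$'' step is not an argument), and if such a $u$ existed it would give $\ell(r)\le 2D$ by the triangle inequality alone, with no appeal to minimality. The correct use of minimality here is different: if $p(t)$ is $D$-close to $r(i)$, write $U\equiv U_1U_2$ (split at $t$) and $C\equiv C_1C_2$ (split at $i$); then $U_2U_1$ is conjugate to $V$ by $U_1^{-1}C=(U_1^{-1}C_1)C_2$, an element of length at most $D+\ell(r)-i$, so minimality of $\ell(r)$ forces $i\le D$ and hence confines all such vertices to the $2D$-ball around the corner $p_-$ (symmetrically for $s$ and $p_+$). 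In your ``close to $q$'' case the conclusion you aim for is wrong: a middle vertex $p(t)$ being $D$-close to $q(k)$ does not contradict that cyclic permutations of $\Lab(p),\Lab(q)$ are quasi-geodesic (fellow travelling is exactly what happens when the conjugator is short); rather, it produces the short conjugator directly, since $U_1^{-1}CV_1$ conjugates $U_2U_1$ to $V_2V_1$ and has length at most $D$, whence $\ell(r)\le D$ by minimality. Putting these together: if $\ell(r)>D$ then no middle vertex of $p$ can be $D$-close to $q$, nor (by the corner argument) to $r$ or $s$ outside the $2D$-corners, so quasi-geodesity of $p$ bounds $\ell(p)$ by roughly $2\lambda(2D+c)+2$, and likewise $\ell(q)$; this is exactly the dichotomy \eqref{kbounded}. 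Your pigeonhole/backtracking formulation cannot replace this, because it only bounds the number of vertices of $p$ near $r$ in terms of $\ell(r)$, which is not bounded a priori.
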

The previous lemma motivates the following definition.
\begin{defn}
Let $G$ be a group and $X$ a generating set. We say that $(G,X)$ has {\it bounded conjugacy diagrams} (\bcd{}) if there is some $k \geq 0$ such that $(G,X)$ has $k$-bounded minimal conjugacy $(1,0)$-diagrams.
\end{defn}

%That is, a group has \bcd{} if the minimal conjugacy diagrams $(p,q,r,s)$, where $p$ and $q$ are cyclic geodesics, are universally bounded by a constant.

\begin{ex}
As already mentioned, abelian and hyperbolic groups have \bcd{} with respect to any generating set.
It is not hard to see that right-angled Artin groups have \bcd{} with respect to the standard generating set.
\end{ex}

The \bcd{} property turns out to be useful for efficiently solving the conjugacy problem (see \cite[Section 7]{BridsonHowie} or Remark \ref{rem:complexity}). However, in order to prove the regularity of the language of conjugacy geodesics, a weaker condition suffices (see Proposition \ref{prop:CHHR38}). This condition requires that any long enough cyclic geodesic word in $X^*$ that is not a conjugacy geodesic has a cyclic permutation that can be shortened via conjugation by an element of bounded length, as the following definition explains.

\begin{defn}\label{def:nsc}
Let $B\geq 0$. We say that a group has the property $B$-\nsc{}, which stands for the {\em neighboring shorter conjugate} property, if for any cyclic geodesic $U$ such that $\ell(U) \geq B$ and  $U$ is conjugate to some element of length  less than $\ell(U)$, there is a cyclic permutation $U'$ of $U$ and $g\in G$ with  $|g|_X\leq B$, such that $|gU'g^{-1}|_X<\ell(U)$.

We say then that $(G,X)$ satisfies the \nsc{} property if there is some $B \geq 0$ such that $(G,X)$ is $B$-\nsc{}. 

\end{defn}
\begin{rem}
If $(G,X)$ has $A$-\bcd{} then it also has $A$-\nsc{}, where $A \geq 0$.
\end{rem}

\section{Conjugacy diagrams in relatively hyperbolic groups}\label{sec:conjdiag}

In this section we assume that $G$ is hyperbolic with respect to $\{H_\omega\}_{\omega\in \Omega}$, $X$ is a finite  generating set of $G$ and $\cH=\cup (H_\omega -\{1\})$. Also, $\lambda\geq 1$ and $c>0$ are fixed constants.

We first need the following result, which is a version of \cite[Proposition 3.2]{OsinPF}.
\begin{lem}\label{lem:gon}
There exists $D=D(G,X,\lambda,c)>0$ such that the following hold. Let 
 $\mathcal{P}=p_1p_2\cdots p_n$ be an $n$-gon in $\ga(G,X\cup \cH)$ and  $I$ a distinguished subset  of sides of $\mathcal{P}$ such that if $p_i\in I$, $p_i$ is an isolated component in $\mathcal{P}$, and
if $p_i\notin I$, $p_i$ is a $(\lambda,c)$-quasi-geodesic. Then
$$\sum_{i\in I}\d_X((p_i)_-,(p_i)_+)\leq D n.$$
\end{lem}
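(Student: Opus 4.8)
The plan is to follow the strategy of \cite[Proposition 3.2]{OsinPF}: first reduce to the case in which every side of the polygon is a geodesic of $\ga(G,X\cup\cH)$, and then argue by induction on $n$, using the Bounded Coset Penetration property (Theorem \ref{thm:BCP}), Lemma \ref{lem:Xi}, and the $\delta$-hyperbolicity of $\ga(G,X\cup\cH)$.

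\emph{Normalisation.} For each $i\notin I$ I would replace $p_i$ by a geodesic $\bar p_i$ of $\ga(G,X\cup\cH)$ with the same endpoints, and set $\bar p_i=p_i$ for $i\in I$ (an $I$-side is a single edge, hence a geodesic). Let $\mathcal Q=\bar p_1\cdots\bar p_n$. Since geodesics of $\ga(G,X\cup\cH)$ do not backtrack and have all their components equal to single edges, $\mathcal Q$ is an $n$-gon all of whose sides are geodesics. The key point of this step is the claim that there is a constant $\e_0=\e_0(G,X,\lambda,c)$ such that if $i\in I$ and the edge $p_i$ is connected inside $\mathcal Q$ to a component $t$ of some $\bar p_j$, then $\d_X((p_i)_-,(p_i)_+)\le 3\e_0$. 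To see this, note that by $\delta$-hyperbolicity $p_j$ lies at bounded Hausdorff distance from $\bar p_j$ in the metric $\d_{X\cup\cH}$; after replacing $p_j$ by a non-backtracking $(\lambda',c')$-quasi-geodesic with the same endpoints and applying Theorem \ref{thm:BCP} to a suitable subpath of it together with $t$, a component $t$ of large $\d_X$-length would be connected to a component of $p_j$, hence of $\mathcal P$, hence to $p_i$, contradicting that $p_i$ is isolated in $\mathcal P$; thus $\d_X(t_-,t_+)\le\e_0$, and Theorem \ref{thm:BCP}(3) then forces $t$ and $p_i$ to be $\e_0$-similar. Consequently the $I$-sides that fail to be isolated in $\mathcal Q$ contribute at most $3\e_0 n$ to the sum, and we may henceforth assume that every $I$-side of $\mathcal Q$ is isolated in $\mathcal Q$.

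\emph{Induction on $n$.} For $n=2$ the statement follows from Theorem \ref{thm:BCP} together with the finiteness of the intersections $H_\omega\cap H_\mu$ and $H_\omega\cap H_\omega^{g}$ for $g\notin H_\omega$ (Lemma \ref{lem:factsRH}(iii)): a bigon with geodesic sides can carry isolated components only of uniformly bounded $\d_X$-length. For $n\ge 3$ I would use $\delta$-hyperbolicity to cut the polygon: pick two consecutive sides $\bar p_s,\bar p_{s+1}$ and a geodesic $c$ of $\ga(G,X\cup\cH)$ from $(\bar p_s)_-$ to $(\bar p_{s+1})_+$, so that the triangle $\bar p_s\bar p_{s+1}c^{-1}$ is $\delta$-thin and $\mathcal Q'=c\,\bar p_{s+2}\cdots\bar p_n$ is an $(n-1)$-gon with geodesic sides. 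Applying Theorem \ref{thm:BCP} to the components of $c$ (exactly as in the normalisation step) shows that the $I$-sides among $\bar p_{s+2},\dots,\bar p_n$ that cease to be isolated in $\mathcal Q'$ contribute a bounded amount; applying Lemma \ref{lem:Xi} to the short cycles produced by the $\delta$-thin triangle bounds the contribution of the (at most two) $I$-sides contained in $\bar p_s$ and $\bar p_{s+1}$; and the inductive hypothesis bounds the contribution of the $I$-sides of $\mathcal Q'$ that remain isolated by $D(n-1)$. Adding the bounded terms and enlarging $D$ accordingly yields $\sum_{i\in I}\d_X((p_i)_-,(p_i)_+)\le Dn$.

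\emph{Main obstacle.} The delicate point throughout is that a geodesic of $\ga(G,X\cup\cH)$ — and, in the cutting step, the auxiliary geodesic $c$ — may have edges of arbitrarily large $\d_X$-length, so there is no direct bound on the $\d_X$-lengths of the new edges introduced by normalisation and cutting. Instead one must at each stage invoke the Bounded Coset Penetration property to show that such a large-$\d_X$-length edge cannot be connected to an isolated side without contradicting isolation back in the original polygon $\mathcal P$. Organising this bookkeeping, so that isolation of the remaining $I$-sides is preserved up to a controllable, additive-in-$n$ error at every step, is the technical heart of the argument; once it is in place, Lemma \ref{lem:Xi} supplies the base estimates and the induction closes.
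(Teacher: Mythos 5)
The paper does not actually prove this lemma: it is stated as ``a version of \cite[Proposition 3.2]{OsinPF}'' and used as a black box, so your proposal is an attempt to reprove Osin's result rather than something that can be matched against an argument in the text. I will therefore only assess whether your argument works.

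There is a genuine gap at the crux of your normalisation step. You claim that if an $I$-side $p_i$ (isolated in $\mathcal P$) becomes connected in $\mathcal Q$ to a component $t$ of a geodesic side $\bar p_j$, then $\d_X((p_i)_-,(p_i)_+)\le 3\e_0$, and the key inequalities $\d_X((p_i)_-,t_-)\le\e_0$, $\d_X((p_i)_+,t_+)\le\e_0$ are justified by Theorem \ref{thm:BCP}(3). But BCP(3) applies only to connected components of two $k$-similar non-backtracking quasi-geodesics $p$ and $q$; here $p_i$ and $t$ are components of two \emph{different sides of a polygon}, with no similarity hypothesis relating them. Being connected only means that $(p_i)_\pm$ and $t_\pm$ lie in a common coset $gH_\omega$, which gives no control on $\d_X((p_i)_-,(p_i)_+)$: the bound $\d_X(t_-,t_+)\le\e_0$ that your BCP(2) argument does deliver says nothing about the $X$-length of $p_i$, since the element of $H_\omega$ joining $t$ to $p_i$ inside the coset can be arbitrarily long. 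This is precisely the hard case of the lemma --- an isolated side absorbed into a coset penetrated by another side --- and it has to be handled by cutting the polygon along the offending coset and feeding the resulting auxiliary cycles into Lemma \ref{lem:Xi}, not by the similarity clause of BCP.

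A second, related soft spot is in the inductive step: the cycles produced by the $\delta$-thin triangle $\bar p_s\bar p_{s+1}c^{-1}$ are not short (its sides may be arbitrarily long), and Lemma \ref{lem:Xi} bounds isolated components only by $L$ times the total length of the cycle, so invoking it there does not by itself give a contribution that is bounded independently of the side lengths; one has to argue via BCP on the two geodesics forming a bigon-with-corner configuration, and again face the possibility that the $I$-side is connected to, rather than isolated from, the new geodesic $c$. The overall architecture (normalise, then induct on $n$) is reasonable, but as written the two steps that carry the real content do not go through.
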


For the rest of the section $D$ will denote the constant of Lemma \ref{lem:gon}, and we assume $D>1$.

The following corollary is an immediate application of Lemma \ref{lem:gon}.
\begin{cor}\label{cor:exem}
Let $p_1p_2p_3p_4$ be a 4-gon in $\ga(G,X\cup \cH)$, where all components of $p_1$ are edges and are isolated in the closed path $p_1p_2p_3p_4$. Suppose that $p_1,p_2,p_3,p_4$ are $(\lambda,c)$-quasi-geodesics.
Then $\d_X((p_1)_-,(p_1)_+)\leq (2\ell(p_1)+3)D$.
\end{cor}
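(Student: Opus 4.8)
The idea is to apply Lemma \ref{lem:gon} with $n=4$ to the closed path $p_1p_2p_3p_4$, but first one has to massage the path so that the distinguished side $I$ consists of isolated \emph{components}, not of $p_1$ itself (which is a path, typically of length $>1$). The natural way to do this is to re-parenthesize the $4$-gon as a $(\ell(p_1)+3)$-gon: write $p_1 = e_1 e_2 \cdots e_{\ell(p_1)}$ as a concatenation of its edges, and consider the closed path $e_1 e_2 \cdots e_{\ell(p_1)} p_2 p_3 p_4$. By hypothesis every $e_j$ is an isolated component of $p_1 p_2 p_3 p_4$, hence also an isolated component of this refined closed path (subdividing one side does not create new connections among components). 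The sides $p_2, p_3, p_4$ are $(\lambda,c)$-quasi-geodesics by hypothesis, and each edge $e_j$, being a single edge, is trivially a $(\lambda,c)$-quasi-geodesic as well; so all sides of the refined $N$-gon, with $N = \ell(p_1)+3$, are $(\lambda,c)$-quasi-geodesics, and in addition the sides $e_1,\dots,e_{\ell(p_1)}$ are isolated components.

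First I would set $I = \{e_1,\dots,e_{\ell(p_1)}\}$ and apply Lemma \ref{lem:gon} to the $N$-gon $e_1\cdots e_{\ell(p_1)}p_2p_3p_4$. This yields
$$\sum_{j=1}^{\ell(p_1)} \d_X((e_j)_-,(e_j)_+) \le D\,N = D(\ell(p_1)+3).$$
Next, by the triangle inequality for $\d_X$ applied along the vertices of $p_1$,
$$\d_X((p_1)_-,(p_1)_+) \le \sum_{j=1}^{\ell(p_1)} \d_X((e_j)_-,(e_j)_+) \le D(\ell(p_1)+3).$$
Finally, to match the stated bound $(2\ell(p_1)+3)D$, simply note $D(\ell(p_1)+3) \le D(2\ell(p_1)+3)$ since $\ell(p_1)\ge 0$ (and in the case $\ell(p_1)=0$ the bound is trivial, as $(p_1)_-=(p_1)_+$). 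This completes the argument.

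I do not expect a serious obstacle here; the only point requiring a moment's care is the bookkeeping when passing from the $4$-gon to the refined $N$-gon — specifically, checking that an edge of $p_1$ that is isolated in $p_1p_2p_3p_4$ remains isolated after we regard $p_1$ as a concatenation of its edges. This is immediate because "isolated" is a statement about which $H_\omega$-cosets the endpoints of components lie in, and subdividing $p_1$ into edges neither merges nor splits any component of the other sides; the components of the refined path are exactly the edges $e_j$ together with the components of $p_2,p_3,p_4$, and none of the $e_j$ is connected to anything by hypothesis. One should also note the degenerate case $\ell(p_1) = 0$ separately so that the refined $N$-gon, if one insists on $N \ge 3$, is not an issue — but in that case $(p_1)_- = (p_1)_+$ and the inequality holds trivially.
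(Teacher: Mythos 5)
Your overall strategy (refine $p_1$ into its edges, apply Lemma \ref{lem:gon} to the resulting $(\ell(p_1)+3)$-gon, then use the triangle inequality for $\d_X$ along $p_1$) is exactly the paper's, but there is a genuine error in the step where you declare $I=\{e_1,\dots,e_{\ell(p_1)}\}$. The hypothesis is that all \emph{components} of $p_1$ are edges and are isolated; it does not say that all \emph{edges} of $p_1$ are components. An edge of $p_1$ labeled by an element of $X$ that does not lie in any $H_\omega$ is not a component at all, and Lemma \ref{lem:gon} only allows isolated components into the distinguished set $I$. In the intended applications (where $p_1=\wh{p}$ for a word with factorization $A_0h_1A_1\cdots h_nA_n$) most edges of $p_1$ are of this non-component type, so your claim ``by hypothesis every $e_j$ is an isolated component'' is false in general, and the inequality $\sum_j\d_X((e_j)_-,(e_j)_+)\le D(\ell(p_1)+3)$ over \emph{all} edges does not follow from Lemma \ref{lem:gon}.

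The repair is short and is what the paper does: take $I$ to be only the (isolated) components of $p_1$; the remaining edges of $p_1$, being labeled by elements of $X$, each satisfy $\d_X(e_-,e_+)\le 1$ and may be placed among the non-distinguished sides (a single edge is trivially a $(\lambda,c)$-quasi-geodesic). Lemma \ref{lem:gon} then bounds the total $X$-length of the components by $(\ell(p_1)+3)D$, and the triangle inequality gives
$\d_X((p_1)_-,(p_1)_+)\le \ell(p_1)+(\ell(p_1)+3)D\le (2\ell(p_1)+3)D$,
where the last step uses the standing assumption $D>1$. This also explains why the stated bound has the coefficient $2\ell(p_1)$ rather than the $\ell(p_1)$ your computation would have produced: the extra $\ell(p_1)D$ of slack is there precisely to absorb the non-component edges. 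Your observation that subdividing $p_1$ does not affect which components are isolated is correct and unproblematic.
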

\begin{proof}
Consider the polygon with sides $p_2,p_3,p_4$,  and the edges of $p_1$.
Let $I$ be the (isolated) components of $p_1$. By Lemma \ref{lem:gon}, the total sum of the  $X$-lengths of the components of $p_1$ is less than or equal to $(\ell(p_1)+3)D$.
Then $\d_X((p_1)_-,(p_1)_+)\leq \ell(p_1)+ (\ell(p_1)+3)D\leq (2\ell(p_1)+3)D$.
\end{proof}

We need some extra  terminology.  A minimal conjugacy $(\lambda,c)$-diagram $(p,q,r,s)$ in $\ga(G,X\cup \cH)$ is {\it without  vertex backtracking} if none of the cyclic permutations of  $\Lab(p)$ and $\Lab(q)$ vertex backtracks. In particular, all components of $p$ and $q$ are edges. Also, since $r$ and $s$ are geodesic, all their components are edges. We say that a component is {\it isolated in $(p,q,r,s)$} if it is isolated in the closed path $rqs^{-1}p^{-1}$.
 Finally, we say that $p$ (resp. $q$) is {\it parabolic} if $\Lab(p)$ (resp.  $\Lab(q)$) is a letter from $\cH$.

From now on $(p,q,r,s)$ will be a minimal conjugacy $(\lambda,c)$-diagram without vertex backtracking and $K>0$ the constant of Lemma \ref{lem:conjhyp} for $(G,X\cup \cH)$, that is,
\begin{equation}\label{eq:hyprect}
\min \{\max\{\ell(p),\ell(q)\},\ell(r)\}\leq K.
\end{equation}
The rest of this section is concerned with analyzing such minimal conjugacy diagrams, first arriving at Theorem \ref{thm:bmgcq}, where these diagrams are shown to be bounded (in terms of $X$-length) if $p$ and $q$ are not parabolic, and culminating with the proof of Theorem \ref{thm:rh_conjhyp}.

The first and easiest case is when all components are isolated in $(p,q,r,s)$.
\begin{cor}\label{cor:exem2}
If all components of  $p,q,r,s$ are isolated in $(p,q,r,s)$, then  $$\min\{\max \{\d_X(p_-,p_+),\d_X(q_-,q_+)\}, \d_X(r_-,r_+)\}\leq (2K+3) D.$$
\end{cor}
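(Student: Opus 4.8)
The plan is to obtain the bound from the hyperbolic rectangle inequality \eqref{eq:hyprect} together with two applications of Corollary \ref{cor:exem}. Recall that \eqref{eq:hyprect} reads $\min\{\max\{\ell(p),\ell(q)\},\ell(r)\}\leq K$, so I would split into the case $\ell(r)\leq K$ and the case $\max\{\ell(p),\ell(q)\}\leq K$.

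First I would record the hypotheses needed to invoke Corollary \ref{cor:exem} on the $4$-gon $rqs^{-1}p^{-1}$ (the one in which ``isolated in $(p,q,r,s)$'' was defined). All four sides are $(\lambda,c)$-quasi-geodesics in $\ga(G,X\cup\cH)$: the sides $p$ and $q$ by the definition of a minimal conjugacy $(\lambda,c)$-diagram, and $r,s$ because they are geodesics and $\lambda\geq 1$, $c\geq 0$; reversing a path preserves this, so $s^{-1}$ and $p^{-1}$ qualify as well. Since $(p,q,r,s)$ has no vertex backtracking, all components of $p$ and $q$ are edges; all components of the geodesics $r,s$ are trivially edges; and by the standing hypothesis of this corollary every one of these components is isolated in the closed path $rqs^{-1}p^{-1}$. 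Thus each of $p,q,r,s$ may legitimately play the role of the distinguished side $p_1$ in Corollary \ref{cor:exem}, after an irrelevant cyclic reindexing (and possibly reversal) of the $4$-gon.

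In the case $\ell(r)\leq K$, applying Corollary \ref{cor:exem} with distinguished side $r$ gives $\d_X(r_-,r_+)\leq (2\ell(r)+3)D\leq (2K+3)D$; since $\d_X(r_-,r_+)$ is one of the terms of the outer minimum, this finishes this case. In the case $\max\{\ell(p),\ell(q)\}\leq K$, applying Corollary \ref{cor:exem} with distinguished side $p$, and then with distinguished side $q$, gives $\d_X(p_-,p_+)\leq (2\ell(p)+3)D\leq (2K+3)D$ and $\d_X(q_-,q_+)\leq (2\ell(q)+3)D\leq (2K+3)D$, hence $\max\{\d_X(p_-,p_+),\d_X(q_-,q_+)\}\leq (2K+3)D$, which again bounds the outer minimum. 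There is no genuine obstacle here beyond this case split and some bookkeeping: all the geometric content is already packaged into Lemma \ref{lem:conjhyp} and Corollary \ref{cor:exem}, and the only point requiring attention is matching the isolated-edge-component hypothesis of Corollary \ref{cor:exem}, which is precisely the assumption of the present corollary.
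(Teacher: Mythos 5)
Your proof is correct and follows exactly the paper's (one-line) argument: the case split according to the inequality $\min\{\max\{\ell(p),\ell(q)\},\ell(r)\}\leq K$ from Lemma \ref{lem:conjhyp}, combined with an application of Corollary \ref{cor:exem} to the relevant side(s) of the $4$-gon $rqs^{-1}p^{-1}$. You have merely spelled out the hypothesis-checking and bookkeeping that the paper leaves implicit.
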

\begin{proof}
The result follows from \eqref{eq:hyprect} and Corollary \ref{cor:exem}.
\end{proof}

The second case, in which non-isolated components are allowed, is split into a number of lemmas that explain either how the components are positioned in the diagram, or how their length with respect to $X$ can be bounded in terms of other known parameters.
 Each of the lemmas exploits the minimality of $(p,q,r,s)$. 
 
 We say that a vertex $v$ is {\em adjacent} to an edge $e$ if $v=e_{-}$ or $v=e_{+}$. 

\begin{lem}\label{lem:rconnect}
Let $u$ be a vertex of $p$ and $v$ a vertex of $r$.
If $\d_{X\cup \cH}(u,v)\leq 1$, then $v$ is adjacent to the
first edge of $r$.
\end{lem}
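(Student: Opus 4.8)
\textbf{Proof plan for Lemma \ref{lem:rconnect}.}

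The plan is to argue by contradiction using the minimality of the conjugating side $r$. Suppose $u$ is a vertex of $p$ and $v$ a vertex of $r$ with $\d_{X\cup\cH}(u,v)\le 1$, but $v$ is \emph{not} adjacent to the first edge of $r$; that is, the subpath $r'$ of $r$ from $r_-$ to $v$ has $\ell(r')\ge 2$. Write $p = p_1 p_2$ where $p_1$ goes from $p_-$ to $u$, and similarly let $s = s_1 s_2$ with $s_1$ from $s_-$ to the vertex $v'$ on $s$ corresponding to $v$ (recall $r$ and $s$ carry the same label, so such a $v'$ exists, cutting $s$ at the same position $r$ is cut at $v$). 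Since $p s q^{-1} r^{-1}$ is a $4$-gon, rearranging the conjugacy relation yields a new conjugacy diagram in which the conjugate words are cyclic permutations of $\Lab(p)$ and $\Lab(q)$ (cyclically shifted at $u$ and at the corresponding point of $q$), and whose conjugating side is built from $r'$ together with the short connecting path of length $\le 1$ between $u$ and $v$.

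Concretely, the new diagram $(p',q',r'',s'')$ has $p'$ a cyclic shift of $\Lab(p)$ starting at $u$, and its conjugator side has length at most $\ell(r') + 1$. But by definition of a minimal conjugacy $(\lambda,c)$-diagram, $\ell(r)\le\ell(r'')$ for every diagram equivalent to $(p,q,r,s)$ under $\sim$, and in particular for this new one; since $r'$ is a \emph{proper} initial segment of $r$ with $\ell(r') = \ell(r) - (\text{length of } r \text{ from } v \text{ to } r_+)$, we need the complementary piece of $r$ (from $v$ to $r_+$) to be long. The key point is then to replace the tail of $r$ after $v$ by concatenating it (through the edge of length $\le 1$ joining $v$ to $u$) so as to re-express $\Lab(q')$ as a conjugate of $\Lab(p')$ by a path strictly shorter than $r$, contradicting minimality — unless $\ell(r') \le 1$, i.e. unless $v$ is adjacent to the first edge of $r$.

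Let me restate the mechanism more carefully, since that is where the care is needed. The conjugacy relation reads $\pi(r)\pi(q) = \pi(p)\pi(r)$ (reading $s$ as the copy of $r$ on the other side), so cyclically shifting: with $r = r' r''$ (where $r'$ ends at $v$), we get that $\pi(r'')\pi(q)\pi(r'')^{-1}$ is conjugate, via $\pi(r')$, to $\pi(p)$ after the appropriate shift. Now $\d_{X\cup\cH}(u,v)\le 1$ means there is an edge (or trivial path) $e$ from $v$ to $u$; then $r' e$ is a path from $r_- = p_-$ to $u$ whose label conjugates a cyclic shift of $\Lab(q)$ to the cyclic shift of $\Lab(p)$ starting at $u$. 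This gives a diagram $\sim (p,q,r,s)$ whose conjugating side has length $\ell(r') + 1$. Minimality forces $\ell(r) \le \ell(r') + 1$, i.e. $\ell(r'') \le 1$; but symmetrically, using the edge $e$ from the $p$-side one also produces a conjugator of length $\ell(r'') + 1$ from the \emph{other} end, forcing $\ell(r') \le 1$, which is exactly the claim that $v$ is adjacent to the first edge of $r$ (or $v = r_-$).

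\textbf{Main obstacle.} The delicate part is bookkeeping the cyclic shifts and checking that the rearranged $4$-gon genuinely is a conjugacy diagram equivalent to $(p,q,r,s)$ under $\sim$ — in particular that replacing a subpath of $r$ by a path through the connecting edge still leaves $\Lab(p)$ and $\Lab(q)$ merely cyclically permuted, with no change to the conjugacy class, and that the new conjugator side can be taken geodesic (or at worst replaced by a geodesic of no greater length, so the minimality comparison still applies). One must also handle the degenerate cases $v = r_-$ and $u = p_-$ (where the connecting edge may be trivial) and keep track of whether the short connecting path has length $0$ or $1$; none of these is hard, but the indexing must be done precisely to land on "$v$ adjacent to the first edge" rather than, say, "$v$ within distance $2$".
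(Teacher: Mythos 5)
Your second construction is the paper's proof: prepend the edge from $u$ to $v$ to the terminal segment $r''$ of $r$ (the part from $v$ to $r_+$), obtaining a path from $u$ to $q_-$ of length $\ell(r'')+1$ that serves as the conjugating side of a diagram equivalent to $(p,q,r,s)$ whose top side is the cyclic shift of $\Lab(p)$ based at $u$ and whose bottom side is $q$ itself; minimality then forces $\ell(r)\le \ell(r'')+1$, i.e.\ $\ell(r')\le 1$, which is the assertion that $v$ is adjacent to the first edge of $r$. (Your worry about the new conjugator side being geodesic is harmless: replacing it by a geodesic only shortens it, so the comparison with $\ell(r)$ still applies.) This half alone establishes the lemma.

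The first construction, however, is not valid, and it is not the ``symmetric partner'' of the second. The path $r'e$ runs from $r_-=p_-$ to $v$ and then to $u$; both of its endpoints lie on the top of the diagram, so it does not join the $p$-cycle to the $q$-cycle and cannot be the conjugating side of a diagram equivalent to $(p,q,r,s)$. Indeed, $\pi(\Lab(r'e))=\pi(U_1)$, where $U_1$ labels the initial segment of $p$ ending at $u$; this element conjugates $\Lab(p)$ to its own cyclic shift at $u$ and carries no information about $\Lab(q)$, so no contradiction with minimality arises. The conclusion you extract from it, $\ell(r'')\le 1$, is moreover false in general: take $u=p_-$ and $v=r_-$, so that $\d_{X\cup \cH}(u,v)=0$, while $r$ may be arbitrarily long. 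Since only the tail construction is needed, your proposal does prove the lemma once the invalid half is deleted --- but as written, the paragraph beginning ``Concretely'' elaborates precisely the half that fails.
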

\begin{proof}
The proof of this lemma is easiest to follow while consulting Figure \ref{fig:minquad}.
\begin{figure}[ht]
\begin{center}
\includegraphics[scale=0.4]{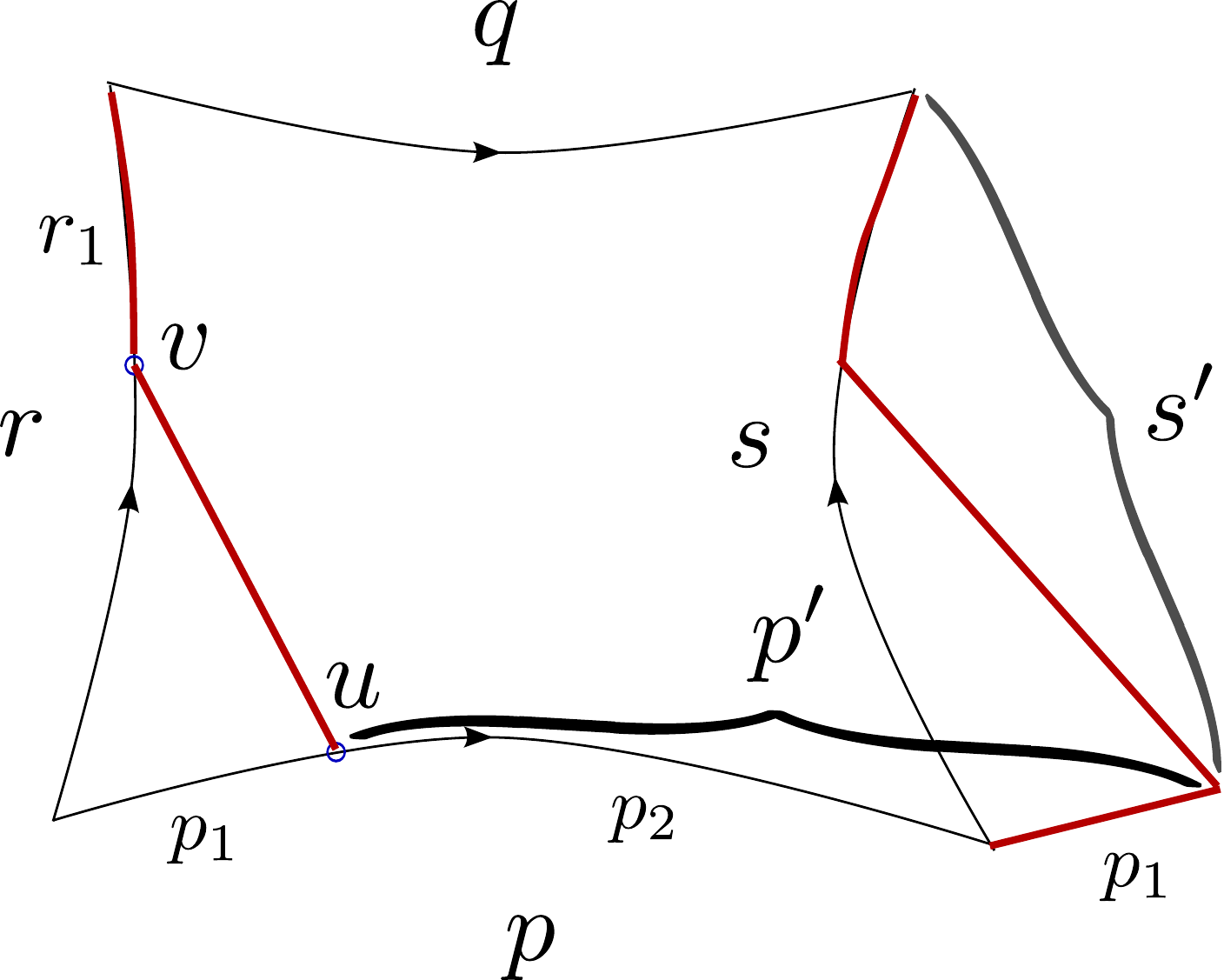} 
\end{center}

\caption{Proof of Lemma \ref{lem:rconnect}.}
\label{fig:minquad}
\end{figure}

Suppose $v$ is not adjacent to the first edge of $r$, and
let $r_1$ be the subpath of $r$ from $v$ to $r_+$. Note that
$\ell(r_1)\leq \ell(r)-2$.
Let $z\in X\cup \cH$ be the label of the edge form $u$ to $v$, and let $p_1$ be the subpath of $p$ from $p_-$ to $u$ and $p_2$ be the subpath of $p$ from $u$ to $p_+$.

Let $U_1\equiv \Lab(p_1)$, $U_2\equiv \Lab(p_2)$ and
$V_1\equiv \Lab(r_1)$.

If $p'$ is the path with label $U_2U_1$ starting at $u$, and
$r'$ and $s'$ are the paths with labels $zV_1$ starting at $p'_-$ and $p'_+$, respectively, it is easy to check that $q'=q$ satisfies $q'_-=r'_+$ and $q'_+=s'_+$.

Thus $(p',q',r',s')$ is a conjugacy diagram, $\ell(r')=\ell(r_1)+1<\ell(r)$
and $\Lab(p')$ and $\Lab(q')$ are cyclic permutations of $\Lab(p)$ and $\Lab(q)$, and hence $(p',q',r',s')\sim(p,q,r,s)$. This contradicts the minimality of $(p,q,r,s)$.
\end{proof}

The next lemma shows that if two consecutive sides of $(p,q,r,s)$ have connected components, these components occur at the same corner.

\begin{lem} \label{lem:corners}
%If two consecutive sides of $(p,q,r,s)$ have connected components, these components occur at the same corner.
%That is, i
If $p$ has a component connected to a component of $r$, these components must be the first edge of $p$ and the first edge of $r$, respectively. The same behavior holds for the pairs of sides $(q,r)$, $(p,s)$ and $(q,s)$.
\end{lem}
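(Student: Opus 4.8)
The plan is to mimic the argument of Lemma \ref{lem:rconnect}: assume a component of $p$ is connected to a component of $r$ but the configuration is not the one claimed, then use the connectedness to re-cut the $4$-gon into a new conjugacy diagram $(p',q',r',s')$ with $\ell(r')<\ell(r)$, contradicting minimality. Concretely, since $(p,q,r,s)$ has no vertex backtracking, all components of $p$ (and of $r$, being geodesic) are single edges; so suppose the $H_\omega$-edge $e$ of $p$ is connected to the $H_\omega$-edge $f$ of $r$, with $e$ not the first edge of $p$ or $f$ not the first edge of $r$. Write $p=p_1 e p_2$ and $r=r_1 f r_2$, and let $h\in H_\omega$ be the element with $h=_G (e_-)^{-1}(f_-)$, i.e. $e_-$ and $f_-$ lie in the same $H_\omega$-coset. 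The idea is that the single $H_\omega$-edge $g$ labelled by $h$ (together with the single $H_\omega$-edge labelled by $(e_+)^{-1}(f_+)$) lets us replace the part of $r$ between $f$ and $r_+$ by something strictly shorter, after cyclically permuting $\Lab(p)$ so that $e$ (or rather the coset vertex adjacent to $e$) becomes the new basepoint of $p'$.

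The key steps, in order, are: (1) record that no vertex backtracking forces all components of $p,q$ to be edges, so the statement is about edges; (2) set up the cyclic permutation of $\Lab(p)$ that moves the relevant corner of $e$ to the start, producing $p'$ with $\Lab(p')$ a cyclic shift of $\Lab(p)$; (3) build $r'$ as the edge labelled by the connecting element $h\in H_\omega$ followed by the tail $r_2$ of $r$ beyond $f$ — since connectedness means $e_-H_\omega=f_-H_\omega$, this $r'$ has the correct endpoints, and $\ell(r')=1+\ell(r_2)$; (4) observe that if $e$ is not the first edge of $p$ then $\ell(r_2)\le \ell(r)-1$ (the first edge of $r$ is consumed), so $\ell(r')\le \ell(r)$ with equality only in the excluded case, and if $f$ is not the first edge of $r$ then $\ell(r_2)\le \ell(r)-2$, giving $\ell(r')\le \ell(r)-1<\ell(r)$; (5) set $q'=q$ and define $s'$ by the same label $\Lab(r')$ starting at the other corner, check that $(p',q',r',s')$ is a genuine conjugacy diagram with $\Lab(q')$ a cyclic shift of $\Lab(q)$, hence $(p',q',r',s')\sim(p,q,r,s)$, and derive the contradiction with minimality of $\ell(r)$. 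Then repeat verbatim, with the obvious symmetric bookkeeping, for the pairs $(q,r)$, $(p,s)$ and $(q,s)$ — here one uses that cyclically permuting $\Lab(q)$ (resp. reading $s$ from the other end, which has the same label as $r$) keeps us inside the $\sim$-class.

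The main obstacle I expect is the careful endpoint bookkeeping in step (4): one must distinguish which of the two offending cases occurs (the connected edge of $p$ is not first, versus the connected edge of $r$ is not first) and verify in each that the recut path $r'$ is strictly shorter rather than merely no longer — the borderline case, where both are first edges, is exactly the conclusion we want, so the inequalities must be tight enough to rule out everything else. A secondary subtlety is making sure the new polygon $p's'(q')^{-1}(r')^{-1}$ is genuinely closed (the connecting $H_\omega$-edges at the two corners of $e$ must fit together consistently with the unchanged $q'=q$), which is a routine but slightly fiddly coset computation; it is essentially the same verification already done in the proof of Lemma \ref{lem:rconnect}, so I would cite that proof as a template and only indicate the changes.
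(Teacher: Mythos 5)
Your re-cutting argument correctly disposes of the first half of the claim (the connected component of $r$ must be its first edge): there, as you compute, $\ell(r')\le \ell(r)-1<\ell(r)$, contradicting minimality. (The paper gets this case more quickly by noting that connectedness puts the vertex $(r_1)_+$ of $r$, which is not adjacent to the first edge of $r$, at $\d_{X\cup \cH}$-distance at most $1$ from the vertex $(p_1)_+$ of $p$, contradicting Lemma \ref{lem:rconnect} directly; your version essentially re-proves that lemma, which is fine.) The genuine gap is in the second half. The length of your recut conjugator $r'$ is $1+\ell(r_2)$, and $\ell(r_2)$ depends only on where the component $f$ sits inside $r$ — the position of the component $e$ inside $p$ does not enter at all. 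So when $f$ \emph{is} the first edge of $r$ but $e$ is \emph{not} the first edge of $p$, you get $\ell(r')=1+(\ell(r)-1)=\ell(r)$, which is perfectly consistent with minimality. Your remark that equality holds ``only in the excluded case'' is therefore mistaken: equality holds precisely in the configuration you still need to rule out, and the minimality of $\ell(r)$ cannot see it.

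The missing ingredient is the no-vertex-backtracking hypothesis, which you invoke only to conclude that components are edges. Once $f$ is known to be the first edge of $r$, connectedness gives $p_-=r_-=f_-\in (e_+)H_\omega$; hence if $e$ were not the first edge of $p$, the initial subpath of $p$ from $p_-$ to $e_+$ would have length at least $2$ and label representing an element of $H_\omega$, so $p$ would vertex backtrack — a contradiction. This is exactly how the paper closes the second case, and your outline needs this step (or some substitute for it) to be a complete proof.
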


\begin{proof}
Suppose that $p_1$ is a component of $p$, $r_1$ a component of $r$, and $p_1$ and $r_1$ are connected $H_\omega$-components.
Also suppose that $r_1$ is not the first edge. Then $(r_1)_+$ is not adjacent to the first edge of $r$, and $\d_{X\cup \cH}((r_1)_+),(p_1)_+)\leq 1$, which contradicts Lemma \ref{lem:rconnect}. So $r_1$ must be the first edge of $r$.

Suppose now $p_1$ is not the first edge of $p$, that is, $(p_1)_-\neq p_-$. Since $r_1$ is the first edge of $r$, $p_-=r_-=(r_1)_-\in (p_1)_+ H_\omega$. Thus the subpath of $p$ from $p_-$ to $(p_1)_+$ has length at least 2 and its label represents some element of $H_\omega$, contradicting that $(p,q,r,s)$ is without vertex backtracking.
\end{proof}

Let $m$ be the constant provided by Lemma \ref{lem:nocancellation}. 
In particular,
\begin{equation}\label{eq:8m}
\text{ if $g\in H_\mu\cap H_\omega$, $\omega\neq \mu$, then $|g|_X\leq m$.}
\end{equation}

\begin{lem}\label{lem:reducing-conjugators}
If $p$ is not parabolic, then there exists a minimal conjugacy $(\lambda,c)$-diagram $(p',q,s',r')$ such that $(p',q,s',r')\sim(p,q,r,s)$ and
 if a component $p_1$ of $p'$ is connected to a component $s_1$ of $s'$, then $\d_X((s_1)_-,(s_1)_+)\leq m$. Also, $\Lab(s')$ and $\Lab(s)$ share a suffix of length $\ell(s)-1$.
\end{lem}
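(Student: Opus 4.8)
Looking at Lemma~\ref{lem:reducing-conjugators}, the goal is to show that if $p$ is not parabolic, then after cyclically rotating the conjugacy diagram we can arrange that any component of the conjugator side $s'$ connected to a component of $p'$ is short (of $X$-length at most $m$), while keeping almost all of $s$ intact. The intuition is that if a component $s_1$ of $s$ is connected to a component $p_1$ of $p$, then the endpoints $(s_1)_-, (s_1)_+, (p_1)_-, (p_1)_+$ all lie in the same $H_\omega$-coset, so we can ``swallow'' the long segment of $s$ into a cyclic rotation of $p$, replacing it by a shorter (in fact length-$1$, at the $X\cup\cH$ level, but short in $X$-length once we bound it) piece.

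\textbf{Plan.} First I would apply Lemma~\ref{lem:corners} (with the pair $(p,s)$): if $p$ has a component $p_1$ connected to a component $s_1$ of $s$, then necessarily $p_1$ is the first edge of $p$ and $s_1$ is the first edge of $s$. So there is at most one such component, it sits at the ``bottom-left'' corner of the diagram, and it is the first edge of both $p$ and $s$. Since $(p,q,r,s)$ is without vertex backtracking, all components of $p$ are edges, and $r,s$ are geodesic so their components are edges too; thus $p_1, s_1$ are genuinely single edges. Now if $\d_X((s_1)_-,(s_1)_+)\le m$ already, there is nothing to do: take $(p',q,s',r')=(p,q,r,s)$. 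Otherwise $\Lab(s_1)=h\in H_\omega$ with $|h|_X > m$.

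In the remaining case I would construct the rotated diagram explicitly. Write $p = p_1 p''$, so $\Lab(p)\equiv h_p \cdot \Lab(p'')$ where $h_p=\Lab(p_1)\in H_\omega$, and $s = s_1 s''$ where $\Lab(s_1)=h\in H_\omega$, $|h|_X > m$. Because $p_1$ and $s_1$ are connected $H_\omega$-components, $p_- = s_-$ and $(p_1)_+ H_\omega = p_- H_\omega$, i.e. $(p_1)_+ = p_- g$ for some $g \in H_\omega$; and similarly $(s_1)_+ = s_- h = p_- h$. The element $g^{-1}h$ lies in $H_\omega$ as well. The idea is to cyclically permute $\Lab(p)$ past its first edge: set $\Lab(p') \equiv \Lab(p'') \cdot h_p'$ for an appropriate choice $h_p'$ of $H_\omega$-element, and replace the first edge of $s$ by a single edge labelled by $g^{-1} h \in H_\omega$ (which has $X\cup\cH$-length $1$). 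Concretely, $p' := p''$ followed by the edge $e$ with $\Lab(e)=g^{-1}h_p \in H_\omega$ realizing the return from $(p'')_+=(p_1)_+$ back to the appropriate coset representative; and $s' :=$ the edge $f$ with $\Lab(f)\in H_\omega$ from $p'_- = (p_1)_+$ to $(s_1)_+$, followed by $s''$. Then $\Lab(p')$ is a cyclic permutation of $\Lab(p)$, $q$ is unchanged, and $r'$ is built analogously at the top corner (or one simply re-runs the construction of Lemma~\ref{lem:rconnect}/Lemma~\ref{lem:corners} on the new diagram to locate $r'$). One checks $(p',q,s',r')\sim(p,q,r,s)$, that $\Lab(s')$ shares with $\Lab(s)$ the suffix $\Lab(s'')$ of length $\ell(s)-1$, and that the diagram remains a conjugacy $(\lambda,c)$-diagram without vertex backtracking — here the point is that by minimality of $\ell(r)$ over the equivalence class $\sim$, the rotated $r'$ must satisfy $\ell(r')\le\ell(r)$ (indeed one expects $\ell(r')=\ell(r)$ since $r'$ differs from $r$ only in its first edge, which gets replaced by a single $\cH$-edge), so $(p',q,s',r')$ is again minimal.

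Finally, I would verify the quantitative conclusion on $s'$. Any component of $p'$ that is connected to a component of $s'$ is, by Lemma~\ref{lem:corners} applied to the new diagram $(p',q,s',r')$, the first edge of $p'$ connected to the first edge $f$ of $s'$. But $\Lab(f)\in H_\omega$ was chosen to be $g^{-1}h_p$, an element lying in a fixed $H_\omega$, and it represents $(p'_-)^{-1}(s_1)_+\in H_\omega$; if instead it represented an element of some $H_\mu\cap H_\omega$ with $\mu\ne\omega$, then \eqref{eq:8m} gives $|{\cdot}|_X\le m$ directly. The remaining possibility — that the first edge of $p'$ is connected to $f$ with $f$ a ``genuine'' $H_\omega$-component of $X$-length $> m$ — would force, via the no-vertex-backtracking hypothesis and Lemma~\ref{lem:nocancellation} applied to the adjacent edges, that a longer subpath of $p'$ or $s'$ has label in $H_\omega$, contradicting no vertex backtracking, exactly as in the proof of Lemma~\ref{lem:corners}. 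Hence $\d_X((s_1)_-,(s_1)_+)\le m$ as required.

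\textbf{Main obstacle.} The delicate point is bookkeeping the cyclic rotation so that \emph{simultaneously} (a) $\Lab(p')$ and $\Lab(q)$ remain cyclic permutations of $\Lab(p),\Lab(q)$, (b) the new conjugator sides $r',s'$ have labels that are honest geodesics with a common label and with $\ell(r')\le\ell(r)$ (so minimality is preserved and one doesn't accidentally lengthen the conjugator), (c) the new diagram still has no vertex backtracking, and (d) one really has reduced the offending component of $s$ rather than merely shuffled it. I expect that proving $\ell(r')\le\ell(r)$ — equivalently, that replacing the long first edge of $s$ by one $\cH$-edge and correspondingly modifying $r$ does not increase $\ell(r)$ — will require invoking the minimality of $(p,q,r,s)$ once more, together with the observation that the ``rotation'' operation is reversible, so the two diagrams have conjugator sides of equal length. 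Everything else is a routine, if slightly tedious, diagram chase of the same flavour as Lemmas~\ref{lem:rconnect} and~\ref{lem:corners}.
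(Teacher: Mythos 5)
Your overall strategy is the paper's: absorb the offending corner component into a cyclic rotation of $p$, producing a new conjugator that differs from $s$ only in its first edge, and then rule out a second long connected component using \eqref{eq:8m} and the no-vertex-backtracking hypothesis. However, there is a concrete error at the start on which the rest of your construction is built. In the $4$-gon $psq^{-1}r^{-1}$ one has $s_-=p_+$ (not $s_-=p_-$), so Lemma \ref{lem:corners} applied to the pair $(p,s)$ says that a component of $p$ connected to a component of $s$ must be the \emph{last} edge of $p$, meeting the first edge of $s$ at the corner $p_+=s_-$. Your claim that $p_1$ is the first edge of $p$ and that $p_-=s_-$ is false; indeed, if the first edges of $p$ and $s$ were connected $H_\omega$-components one would get $p_-H_\omega=s_-H_\omega=p_+H_\omega$, which by no vertex backtracking forces $p$ to be a single edge, i.e. parabolic, against the hypothesis. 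Consequently the explicit rotation you describe (permuting $\Lab(p)$ past its first edge, with the auxiliary elements $g$, $h_p'$, $g^{-1}h_p$) does not produce a valid diagram as written. The correct bookkeeping is short: write $\Lab(p)\equiv Uh_1$ and $\Lab(s)\equiv h_2V$ with $h_1,h_2\in H_\omega$ (and $U$ nonempty since $p$ is not parabolic), set $h_3=h_1h_2$, and check $(h_3V)\Lab(q)(h_3V)^{-1}=_Gh_1U$; the new sides are $\Lab(p')\equiv h_1U$ and $\Lab(r')\equiv\Lab(s')\equiv h_3V$, which visibly shares the suffix $V$ of length $\ell(s)-1$ with $\Lab(s)$.

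This also dissolves the ``main obstacle'' you flag about $\ell(r')\le\ell(r)$: the new conjugator $h_3V$ has length at most $1+\ell(V)=\ell(s)=\ell(r)$, and minimality of the original diagram forces equality (in particular $h_3\ne 1$), so the new diagram is again minimal; no reversibility argument is needed. Your closing argument for why one rotation suffices is essentially the right idea but should land the contradiction precisely: if the new first edge of $s'$ (labelled $h_3\in H_\omega$, of $X$-length $>m$) is connected to the last edge of $p'$, which is an $H_\mu$-component labelled $g_1$, then \eqref{eq:8m} forces $\mu=\omega$, so $\Lab(p)\equiv U'g_1h_1$ ends in two consecutive $H_\omega$-letters — vertex backtracking in the \emph{original} $p$, a contradiction.
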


\begin{proof}
Suppose that $(p,q,r,s)$ does not satisfy the claim of the lemma. That is, there exists a component $p_1$ of $p$ connected to a component $s_1$ of $s$ such that $\d_X((s_1)_-,(s_1)_+)>m$.
By Lemma \ref{lem:corners}, $p_1$ is the last edge of $p$ and $s_1$ is the first edge of $s$.

Thus there is $\omega\in \Omega$ such that $\Lab(p_1)\equiv h_1\in H_\omega$,
and $\Lab(p)\equiv Uh_1$. Since $p$ is not parabolic, $U$ is non-empty.
Suppose that $\Lab(s_1)\equiv h_2\in H_\omega$, $\Lab(s)\equiv h_2V$. Let $h_3=h_1h_2$.  Then $(h_3V) \Lab(q)(h_3V)^{-1}=_G h_1 U$, and we have a minimal conjugacy $(\lambda,c)$-diagram $(p',q,r',s')\sim (p,q,r,s)$, 
where $\Lab(p')\equiv h_1U$, $\Lab(r')\equiv \Lab(s')=h_3 V$ and $\Lab(s')$ and $\Lab(s)$ share a suffix of length $\ell(s)-1$. 

If $(p',q,r',s')$ does not satisfy the claim of the lemma,
then there is a component $p'_1$ of $p'$ connected to a 
component $s'_1$ of $s'$ with $\d_X((s_1')_-,(s_1')_+)>m$. By Lemma 
\ref{lem:corners}, $p'_1$ is the last edge of $p'$ and $s'_1$ 
the first edge of $s'$.
Thus there is $\mu\in \Omega$ such that 
$\Lab(p'_1)=g_1$ and $\Lab(s'_1)=g_2\in H_\mu$. 
 Recall that on one hand $\Lab(s_1')=h_3\in H_\omega$, 
 and on the other hand $\Lab(s_1')=g_2\in H_\mu$. Since $|\Lab(s_1')|_X>m$, it follows 
by \eqref{eq:8m} that $\mu=\omega$. 
Hence $\Lab(p)\equiv U'g_1h_1$ with $g_1,h_1\in H_\omega$, contradicting the no vertex backtracking hypothesis.
\end{proof}

Similarly we obtain the following.
\begin{lem}\label{lem:reducing-conjugators2}
If $q$ is not parabolic, there exists a minimal conjugacy $(\lambda,c)$-diagram $(p,q',s',r')$ such that  $(p,q',s',r')\sim(p,q,r,s)$ and
 if a component $q_1$ of $q'$ is connected to a component $s_1$ of $s'$, then $\d_X((s_1)_-,(s_1)_+)\leq m$. Also, $\Lab(s')$ and $\Lab(s)$ share a prefix of length $\ell(s)-1$.
\end{lem}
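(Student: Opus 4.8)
The plan is to transcribe the proof of Lemma~\ref{lem:reducing-conjugators}, interchanging the roles of $p$ and $q$. The one structural difference is that here a component of $q$ meets a component of $s$ at the corner $q_+=s_+$ of the $4$-gon $psq^{-1}r^{-1}$, whereas in Lemma~\ref{lem:reducing-conjugators} a component of $p$ meets $s$ at the corner $p_+=s_-$; this is exactly why the present statement records a shared \emph{prefix} of $\Lab(s')$ and $\Lab(s)$ (the cancellation gets absorbed on the right end of the conjugator) rather than a shared suffix.

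First I would dispose of the trivial case: if every component of $q$ connected to a component of $s$ already has $X$-length at most $m$, take $(p,q',r',s')=(p,q,r,s)$. Otherwise choose a component $q_1$ of $q$ connected to a component $s_1$ of $s$ with $\d_X((s_1)_-,(s_1)_+)>m$. Applying Lemma~\ref{lem:corners} to the pair of sides $(q,s)$, the edge $q_1$ must be the last edge of $q$ and $s_1$ the last edge of $s$; write $\Lab(q_1)\equiv h_1$ and $\Lab(s_1)\equiv h_2$ with $h_1,h_2\in H_\omega$ for a common $\omega\in\Omega$. Since $q$ is not parabolic, $\Lab(q)\equiv Uh_1$ with $U$ non-empty, and $\Lab(s)\equiv Vh_2$ with $V$ of length $\ell(s)-1$.

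Next comes the modification. Reading around the $4$-gon gives the conjugacy relation $\Lab(p)=_G w\,\Lab(q)\,w^{-1}$ with $w=\Lab(r)=\Lab(s)=Vh_2$. The cyclic shift of $q$ that moves $h_1$ to its front replaces the conjugator $w$ by $wh_1^{-1}=V(h_2h_1^{-1})$; setting $h_3=h_2h_1^{-1}\in H_\omega$ (a single letter, since $h_1,h_2\in H_\omega$), this is the word $Vh_3$, and one checks $\Lab(p)=_G(Vh_3)(h_1U)(Vh_3)^{-1}$. I would therefore take the diagram $(p,q',r',s')$ with $\Lab(q')\equiv h_1U$, $\Lab(p)$ unchanged, and $\Lab(r')\equiv\Lab(s')\equiv Vh_3$. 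Since the set of cyclic permutations of $\Lab(q')$ coincides with that of $\Lab(q)$, this is again a minimal conjugacy $(\lambda,c)$-diagram without vertex backtracking with $(p,q',r',s')\sim(p,q,r,s)$ — using that minimality of $(p,q,r,s)$ forces $h_3\neq1$ and forces $Vh_3$ to be geodesic of length $\ell(s)=\ell(r)$, as otherwise the conjugator could be strictly shortened within the $\sim$-class. Finally $\Lab(s')\equiv Vh_3$ and $\Lab(s)\equiv Vh_2$ share the prefix $V$ of length $\ell(s)-1$, as claimed.

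It remains to verify that $(p,q',r',s')$ satisfies the conclusion. If not, some component $q_1'$ of $q'$ is connected to a component $s_1'$ of $s'$ with $\d_X((s_1')_-,(s_1')_+)>m$, and Lemma~\ref{lem:corners} (applicable since $(p,q',r',s')$ is without vertex backtracking) forces $q_1'$ to be the last edge of $q'$ and $s_1'$ the last edge of $s'$. That last edge of $s'$ carries the label $h_3\in H_\omega$ and, being connected to $q_1'$, is also an $H_\mu$-component for some $\mu$; hence $h_3\in H_\omega\cap H_\mu$, and since $|h_3|_X=\d_X((s_1')_-,(s_1')_+)>m$, inequality \eqref{eq:8m} forces $\mu=\omega$, so $\Lab(q_1')\in H_\omega$. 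As $q$ is not parabolic, $\Lab(q')\equiv h_1U$ with $U$ non-empty, so $q_1'$ is the last letter $g_1$ of $U$ and $g_1\in H_\omega$; writing $U\equiv U'g_1$ yields $\Lab(q)\equiv U'g_1h_1$ with $g_1,h_1\in H_\omega$, whence the subpath of $q$ formed by its last two edges has label in $H_\omega$, contradicting that $(p,q,r,s)$ is without vertex backtracking. The only genuinely delicate point is this concluding contradiction, in which both the non-parabolicity of $q$ and the no-vertex-backtracking hypothesis are essential, and in which one must correctly identify $s_1'$ as the \emph{last} edge of $s'$; everything else is a faithful copy of the proof of Lemma~\ref{lem:reducing-conjugators}.
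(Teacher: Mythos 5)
Your proof is correct and is exactly the argument the paper intends: the paper states this lemma with no proof beyond ``Similarly we obtain the following,'' and your write-up is a faithful mirror of the proof of Lemma \ref{lem:reducing-conjugators} with the pair $(q,s)$ meeting at the corner $q_+=s_+$, the correctly adjusted conjugator $Vh_3$ with $h_3=h_2h_1^{-1}$, and the same concluding vertex-backtracking contradiction.
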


\begin{cor}
\label{cor:reducing-conjugators}
If $p$ and $q$ are not parabolic, then there exists a minimal conjugacy $(\lambda,c)$-diagram $(p',q',s'',r'')\sim(p,q,r,s)$ such that 
 if a component $s_0$ of $s'$ is connected to a component of $p'$ or $q'$, then $\d_X((s_0)_-,(s_0)_+)\leq m$.
\end{cor}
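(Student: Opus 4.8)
The plan is to produce the required diagram by applying Lemma~\ref{lem:reducing-conjugators} and then Lemma~\ref{lem:reducing-conjugators2} in succession, and then to check that the second application does not spoil the $X$-length bound obtained from the first. The point to be careful about is that the two reductions alter the conjugator at opposite ends, so for a conjugator of length at least two they are independent; the only real subtlety is the degenerate case of a length-one conjugator.

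Since $p$ is not parabolic, Lemma~\ref{lem:reducing-conjugators} applied to $(p,q,r,s)$ yields a minimal conjugacy $(\lambda,c)$-diagram $(p_1,q,r_1,s_1)\sim(p,q,r,s)$ such that every component of $s_1$ connected to a component of $p_1$ has $X$-length at most $m$, and with $\Lab(s_1)$ agreeing with $\Lab(s)$ on the suffix of length $\ell(s)-1$. The diagram $(p_1,q,r_1,s_1)$ is again without vertex backtracking, since $\Lab(p_1)$ is a cyclic permutation of $\Lab(p)$ and hence has the same set of cyclic permutations; moreover $p_1$ is still not parabolic (its label has length at least $2$, the only alternative being that $p$ has no components, in which case the $p$-side of the conclusion is vacuous) and $q$ is unchanged. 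Applying Lemma~\ref{lem:reducing-conjugators2} to $(p_1,q,r_1,s_1)$, which is legitimate because $q$ is not parabolic, gives a minimal conjugacy $(\lambda,c)$-diagram $(p_1,q',r_2,s_2)\sim(p,q,r,s)$ such that every component of $s_2$ connected to a component of $q'$ has $X$-length at most $m$, with $\Lab(s_2)$ agreeing with $\Lab(s_1)$ on the prefix of length $\ell(s)-1$ and with $p_1$ unchanged. I then set $(p',q',s'',r'')\coloneq(p_1,q',s_2,r_2)$; the condition on components of $s''$ connected to $q'$ is precisely the conclusion of Lemma~\ref{lem:reducing-conjugators2}.

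What remains, and the step I expect to be the only genuine obstacle, is to show that components of $s''$ connected to $p'=p_1$ also have $X$-length at most $m$. Let $s_0$ be a component of $s_2$ connected to a component of $p_1$. By Lemma~\ref{lem:corners}, applied to the $4$-gon of $(p_1,q',r_2,s_2)$, the component $s_0$ must be the first edge of $s_2$ and it is connected, say through $H_\omega$, to the last edge of $p_1$, so the label of that last edge lies in $H_\omega$. When $\ell(s)\ge 2$, the paths $s_1$ and $s_2$ issue from the common vertex $(p_1)_+$ and their labels share a prefix of positive length, so their first edges literally coincide; hence $s_0$ is the first edge of $s_1$ and is connected to the last edge of $p_1$ already in the diagram $(p_1,q,r_1,s_1)$, and Lemma~\ref{lem:reducing-conjugators} gives $\d_X((s_0)_-,(s_0)_+)\le m$. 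The degenerate case $\ell(s)\le 1$ is a routine check: for $\ell(s)=0$ there are no conjugator edges, and for $\ell(s)=1$ one uses that the relabeling performed by Lemma~\ref{lem:reducing-conjugators2} is triggered only by a conjugator edge of $X$-length exceeding $m$, so that the conclusion of Lemma~\ref{lem:reducing-conjugators} together with the intersection bound \eqref{eq:8m} forces $s_0$ not to be both connected to $p_1$ and of $X$-length $>m$. This establishes the $p_1$-side condition and completes the proof.
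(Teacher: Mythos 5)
Your argument is correct and follows the paper's proof in all essentials: apply Lemma \ref{lem:reducing-conjugators} and then Lemma \ref{lem:reducing-conjugators2}, invoke Lemma \ref{lem:corners} to locate any offending component of the conjugator at the first edge, and observe that for $\ell(s)\ge 2$ this first edge is untouched by the second reduction, so the bound from the first reduction survives. Your treatment of the degenerate case $\ell(s)\le 1$ (via the coset/\eqref{eq:8m} argument) is in fact slightly more detailed than the paper's, which simply asserts that this case follows from either lemma.
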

\begin{proof}
Suppose that $\ell(s)=1$. Then the result follows from either Lemma \ref{lem:reducing-conjugators} or Lemma \ref{lem:reducing-conjugators2}.
If $\ell(s)>1$ and a component of $s$ is connected to a component 
of $p$, then it has to be the first one by Lemma 
\ref{lem:corners}. By Lemma  \ref{lem:reducing-conjugators} we 
obtain a minimal conjugacy $(\lambda,c)$-diagram $(p',q,s',r')\sim(p,q,r,s)$ such that if the first component $s_1$ of $s'$ is 
connected to a component of $p$, $\d_X((s_1)_-,(s_1)_+)\leq m$ and 
$s'$ and $s$ share a suffix of length $\ell(s)-1$. Now, if $q$ has 
a component connected to $s$, by Lemma \ref{lem:corners}, it has 
to be  the last component of $s$ which is the same as the last 
component of $s'$. We use Lemma \ref{lem:reducing-conjugators2} to 
obtain a  minimal conjugacy $(\lambda,c)$-diagram 
$(p',q',s'',r'')\sim(p',q,r',s')$ such that if the last 
component $s_2$ of $s''$ is connected to a component of $q'$, then 
$\d_X((s_2)_-,(s_2)_+)\leq m$ and $s''$ and $s'$ share a 
suffix of length $\ell(s)-1$.  
The only component of $s''$ that can be connected to $p'$ is the first one 
and is exactly $s_1$.
\end{proof}

The next two lemmas deal with the case when $p$ or $q$ is long.

\begin{lem}\label{lem:longpq}
Suppose that $\ell(p)> \lambda (2K+1+c)$. Then $\ell(q)>1$, $\ell(r)=\ell(s)\leq K$ and no component of $r$ is connected to a component of $s$.
\end{lem}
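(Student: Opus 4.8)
The plan is to pass to the hyperbolic Cayley graph $\ga(G,X\cup \cH)$, normalise so that $p_-=r_-=1$, and exploit throughout the single inequality coming from the hypothesis: since $\Lab(p)$ is a $(\lambda,c)$-quasi-geodesic, $\ell(p)\leq\lambda\,\d_{X\cup \cH}(p_-,p_+)+c$. Reading the $4$-gon $psq^{-1}r^{-1}$, the reversal of $sq^{-1}r^{-1}$ is a path $rqs^{-1}$ from $p_-$ to $p_+$, so $\d_{X\cup \cH}(p_-,p_+)\leq \ell(r)+\ell(q)+\ell(s)=2\ell(r)+\ell(q)$ (using $\ell(s)=\ell(r)$, as $\Lab(r)\equiv\Lab(s)$). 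Each of the three conclusions will then follow either directly from \eqref{eq:hyprect}, or by producing a short enough path from $p_-$ to $p_+$ so that the displayed inequality contradicts $\ell(p)>\lambda(2K+1+c)$.

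First, since $\lambda\geq 1$ and $c\geq 0$ the hypothesis gives $\ell(p)>\lambda(2K+1+c)\geq 2K+1>K$, hence $\max\{\ell(p),\ell(q)\}>K$, and so \eqref{eq:hyprect} forces $\ell(r)\leq K$ and thus $\ell(s)=\ell(r)\leq K$. Second, for $\ell(q)>1$, I argue by contradiction: if $\ell(q)\leq 1$ then the path $rqs^{-1}$ has length $\leq 2\ell(r)+1\leq 2K+1$, so $\ell(p)\leq \lambda(2K+1)+c\leq\lambda(2K+1+c)$, a contradiction. Third, suppose some component of $r$ is connected to some component of $s$. Since $r$ and $s$ are geodesic in $\ga(G,X\cup \cH)$, these components are single edges, say the $a$-th edge of $r$ and the $b$-th edge of $s$ with $1\leq a,b\leq \ell(r)=\ell(s)$, which are $H_\omega$-components for a common $\omega$; by definition of ``connected'', their initial vertices lie in one coset $gH_\omega$. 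Following $r$ from $p_-$ up to that coset ($a-1$ edges), crossing the coset ($\leq 1$ edge, via an $\cH$-edge), and then following $s$ backwards down to $p_+$ ($b-1$ edges) yields a path from $p_-$ to $p_+$ of length $\leq (a-1)+1+(b-1)=a+b-1\leq 2\ell(r)-1\leq 2K-1$, whence $\ell(p)\leq\lambda(2K-1)+c<\lambda(2K+1+c)$, a contradiction; so no such connection exists.

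The only step that needs care is the last one: one must check that the initial vertices of the two connected edges really do lie in a common $H_\omega$-coset of $\ga(G,X\cup \cH)$ (so that crossing it costs at most one edge) and that the stated concatenation is genuinely a path from $p_-$ to $p_+$ --- here it helps that traversing $s$ backwards is a legitimate path of the same length, which avoids any issue with $X$ not being symmetric. Everything else is bookkeeping; and, in contrast with the neighbouring lemmas, the minimality of the conjugacy diagram enters this proof only through \eqref{eq:hyprect}, the actual engine being the largeness of $\ell(p)$.
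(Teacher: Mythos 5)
Your proof is correct and follows essentially the same route as the paper's: both arguments combine $\ell(r)=\ell(s)\leq K$ (from \eqref{eq:hyprect}, since $\ell(p)>K$) with the quasi-geodesic bound forcing $\d_{X\cup\cH}(p_-,p_+)>2K+1$, to conclude that no vertex of $r$ can be within $X\cup\cH$-distance $1$ of a vertex of $s$ — which yields both $\ell(q)>1$ and the non-connectedness of components of $r$ and $s$. The paper phrases this as one uniform triangle-inequality estimate rather than two explicit path constructions, but the content is identical.
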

\begin{proof}
Let $u$ be a vertex of $r$ and $v$ a vertex of $s$.
Suppose that $\d_{X\cup \cH}(u,v)\leq 1$.
Since $p$ is a $(\lambda,c)$-quasi-geodesic, $\d_{X\cup \cH}( p_-, p_+)\geq \ell(p)/\lambda -c > 2K+1$.
Since $\ell(p)>K$, by \eqref{eq:hyprect} one has $\ell(r)=\ell(s)\leq K$. Therefore 
$\d_{X\cup \cH}(u,v)\geq \d_{X\cup \cH} (p_-,p_+)-\d_{X\cup \cH}(u,p_-)-\d_{X\cup \cH}(v,p_+) > 2K+1-2K=1$, which is a contradiction.

In particular $\ell(q)>1$  and no component of $r$ is connected to a component of $s$.
\end{proof}

\begin{lem}\label{lem:rlessthanK}
If $\max\{\ell(p),\ell(q)\}\geq \lambda (2K+1+c)$ there is $(p',q',r',s')\sim(p,q,r,s)$ such that  $\d_X(r'_-,r'_+)=\d_X(s'_-,s'_+)\leq (2K+3) D +2 m$.
\end{lem}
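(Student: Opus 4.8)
The plan is to reduce, by means of Corollary~\ref{cor:reducing-conjugators}, to an equivalent diagram whose conjugator side has at most two ``long'' components, and then to bound the $X$-length of that side edge by edge using the polygon estimate of Lemma~\ref{lem:gon}. Without loss of generality I would assume $\ell(p)\ge \lambda(2K+1+c)$, the case $\ell(q)\ge \lambda(2K+1+c)$ following by interchanging $p$ with $q$ and $r$ with $s$. Since $c>0$ this forces $\ell(p)>1$, so $p$ is not parabolic, and Lemma~\ref{lem:longpq} then gives that $q$ is not parabolic, that $\ell(r)=\ell(s)\le K$, and that no component of $r$ is connected to a component of $s$. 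As $p$ and $q$ are not parabolic, Corollary~\ref{cor:reducing-conjugators} produces a minimal conjugacy $(\lambda,c)$-diagram $(p',q',r',s')\sim(p,q,r,s)$ in which every component of $s'$ connected to a component of $p'$ or of $q'$ has $X$-length at most $m$. This new diagram is still without vertex backtracking, because $\Lab(p')$ and $\Lab(q')$ are cyclic permutations of $\Lab(p)$ and $\Lab(q)$ and hence have the same sets of cyclic permutations; moreover $\ell(p')=\ell(p)$ and $\ell(q')=\ell(q)$, so Lemma~\ref{lem:longpq} applies once more and yields $\ell(r')=\ell(s')\le K$ with no component of $r'$ connected to a component of $s'$. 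Since $\Lab(r')\equiv\Lab(s')$, it is enough to bound $\d_X(s'_-,s'_+)$.

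Write $s'=e_1\cdots e_\ell$ as a concatenation of $\ell=\ell(s')\le K$ edges, so that $\d_X(s'_-,s'_+)\le\sum_{i=1}^{\ell}\d_X((e_i)_-,(e_i)_+)$, and split the edges $e_i$ into three groups. The edges whose label lies in $X$ contribute at most $1$ each, hence at most $\ell$ in all. For the components of $s'$ that are isolated in the closed path $p's'(q')^{-1}(r')^{-1}$, I would apply Lemma~\ref{lem:gon} to the $(\ell+3)$-gon whose sides, in cyclic order, are $p'$, the individual edges $e_1,\dots,e_\ell$, $(q')^{-1}$, and $(r')^{-1}$, with distinguished set the $e_i$ that are isolated components; all the remaining sides are $(\lambda,c)$-quasi-geodesics (single edges, or the quasi-geodesics $p',(q')^{-1},(r')^{-1}$), so the total $X$-length of these isolated components is at most $D(\ell+3)$. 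Finally, a component of $s'$ that is not isolated must be connected to a component of another side; it cannot be connected to a component of $s'$ (a geodesic path does not backtrack) nor, by the previous paragraph, to a component of $r'$, so it is connected to a component of $p'$ or of $q'$, and by Lemma~\ref{lem:corners} applied to the pairs $(p',s')$ and $(q',s')$ the only edges of $s'$ that can be so connected are its first and its last one; hence there are at most two such components, each of $X$-length at most $m$ by the property coming from Corollary~\ref{cor:reducing-conjugators}. Adding the three contributions,
$$\d_X(s'_-,s'_+)\le \ell+D(\ell+3)+2m\le D\ell+D(\ell+3)+2m=D(2\ell+3)+2m\le(2K+3)D+2m,$$
where I used $D>1$ and $\ell\le K$; and $\d_X(r'_-,r'_+)=\d_X(s'_-,s'_+)$ as $\Lab(r')\equiv\Lab(s')$.

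The only point that genuinely needs care is the trichotomy used above: every edge of $s'$ must be accounted for exactly once, which comes down to knowing that a non-isolated component of $s'$ is necessarily connected to $p'$ or $q'$ and only at the two extreme edges of $s'$. This is exactly what Lemma~\ref{lem:longpq} (no $r'$--$s'$ connections when one side is long) and Lemma~\ref{lem:corners} (connections of $s'$ to $p'$ or $q'$ occur only at the corners) supply, while Corollary~\ref{cor:reducing-conjugators} is what makes those at most two connections short. Everything else is bookkeeping, the final inequality being forced by the inequality $D>1$ absorbing the edges of $s'$ labelled by $X$.
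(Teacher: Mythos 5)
Your proof is correct and follows essentially the same route as the paper's: invoke Lemma \ref{lem:longpq} to rule out parabolic sides and $r'$--$s'$ connections, apply Corollary \ref{cor:reducing-conjugators} to make the at most two corner components of $s'$ (located by Lemma \ref{lem:corners}) have $X$-length at most $m$, and then bound the isolated components of $s'$ via Lemma \ref{lem:gon} applied to the polygon with sides $p'$, $q'$, $r'$ and the edges of $s'$. Your accounting of the three types of edges of $s'$ and the final arithmetic match the paper's, and the extra care you take (why the new diagram is still without vertex backtracking, why a non-isolated component of $s'$ must attach to $p'$ or $q'$) only makes explicit what the paper leaves implicit.
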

\begin{proof}
By Lemma \ref{lem:longpq} neither $p$ nor $q$ is parabolic.
By Corollary \ref{cor:reducing-conjugators} there is 
$(p',q',r',s')\sim(p,q,r,s)$ such that if a component $s'_1$ 
of $s'$ is connected to a component of $p'$ or of $q'$ 
then $\d_X((s_1')_-,(s_1')_+)\leq m$. By Lemma \ref{lem:longpq} 
no component of $s'$ is connected to a component of $r'$, 
so all the components of $s'$ are either isolated in $(p',q',r',s')$ or are 
connected to $p'$ or $q'$ and have $X$-length less than $m$. 
By Lemma \ref{lem:corners} there are at most 2 
components of $s'$ connected to components of $p'$ or $q'$.

Consider the polygon with sides $p'$, $q'$, $r'$ and the edges of $s'$. This polygon has at most $K+3$ sides and we set 
$I$ to be the set of isolated components of $s'$. Applying Lemma 
\ref{lem:gon} we obtain that the sum of the $X$-lengths of the isolated components of $s'$ is at most $(K+3)D$. By the previous discussion, the sum of the $X$-length of the non-isolated components of $s'$ is at most $2m$. Since $D>1$, $\d_X(s'_-,s'_+)\leq (K+3)D+2m+KD\leq (2K+3)D+2m$.
\end{proof}

The following is immediate from the fact that $(p,q,r,s)$ is minimal.

\begin{lem}\label{lem:longr}
If $\ell(r)=\ell(s)>1$ then no component of $p$ is connected to a component of $q$.
\end{lem}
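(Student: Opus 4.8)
The plan is to derive a contradiction from the minimality of $(p,q,r,s)$, in the same spirit as the preceding lemmas of this section. Suppose, for contradiction, that some component $p_1$ of $p$ is connected to some component $q_1$ of $q$. Since $(p,q,r,s)$ has no vertex backtracking, $p_1$ and $q_1$ are single edges, and they are $H_\omega$-components for one common $\omega\in\Omega$. By the definition of connectedness, $((p_1)_-)H_\omega=((q_1)_-)H_\omega$, so the vertices $u=(p_1)_-$ and $v=(q_1)_-$ lie in a common left coset of $H_\omega$; in particular $u^{-1}v\in H_\omega$, and hence $\d_{X\cup\cH}(u,v)\le 1$.

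The idea is to re-base the two conjugate sides $p$ and $q$ of the diagram at $u$ and $v$, producing an equivalent conjugacy diagram whose conjugator has length at most $1$. Concretely, write $\Lab(p)\equiv A_1\,\Lab(p_1)\,A_2$ and $\Lab(q)\equiv B_1\,\Lab(q_1)\,B_2$, and let $p'$ and $q'$ be the paths whose labels are the cyclic permutations $\Lab(p_1)A_2A_1$ of $\Lab(p)$ and $\Lab(q_1)B_2B_1$ of $\Lab(q)$. Using that $r$ and $s$ carry the same label and that $\Lab(p)=_G\Lab(r)\Lab(q)\Lab(r)^{-1}$ (the defining relations of the $4$-gon $psq^{-1}r^{-1}$), a routine computation with the group elements $p_-, q_-, u, v$ shows that the conjugator of the re-based diagram must represent precisely $u^{-1}v\in H_\omega$. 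Thus we may take $r'=s'$ to be a path of length at most $1$ --- a single edge labelled by this element, or the trivial path when $u=v$ --- and the resulting quadruple $(p',q',r',s')$ is a conjugacy diagram with $(p',q',r',s')\sim(p,q,r,s)$. Since every cyclic permutation of $\Lab(p)$ and $\Lab(q)$ is a $(\lambda,c)$-quasi-geodesic by hypothesis, $(p',q',r',s')$ is moreover a conjugacy $(\lambda,c)$-diagram. The minimality of $(p,q,r,s)$ now forces $\ell(r)\le\ell(r')\le 1$, contradicting $\ell(r)=\ell(s)>1$; this proves the lemma.

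The only point requiring genuine care --- and the closest thing here to an obstacle --- is verifying that the re-based quadruple $(p',q',r',s')$ really is a conjugacy diagram, i.e.\ that $p's'(q')^{-1}(r')^{-1}$ closes up into a $4$-gon in $\ga(G,X\cup\cH)$. This comes down to the identity $\Lab(p')=_G\Lab(r')\Lab(q')\Lab(r')^{-1}$ together with the (by construction) equality $\Lab(r')=\Lab(s')$; the first is exactly the conjugacy relation transported along the chosen cyclic permutations, and it is what the element computation of the previous paragraph delivers. Everything else --- that cyclic permutations of $\Lab(p'),\Lab(q')$ are again cyclic permutations of $\Lab(p),\Lab(q)$, so that both the quasi-geodesic hypothesis and the relation $\sim$ transfer --- is purely formal.
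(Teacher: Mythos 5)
Your argument is correct, and it is precisely the justification the paper leaves implicit when it declares the lemma ``immediate from the fact that $(p,q,r,s)$ is minimal'': re-basing $p$ and $q$ at the endpoints of the connected components produces an equivalent conjugacy diagram whose conjugator is the single $H_\omega$-letter $u^{-1}v$ (or trivial), contradicting $\ell(r)>1$. This is also the same mechanism the paper uses explicitly in the proof of Lemma \ref{lem:conjPara}, so your write-up matches the intended reasoning.
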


The next lemma gives an upper bound for $\d_X(p_-,p_+)$ in terms of the $(X \cup \cH)$-length of $p$. This will help us deal with the case when $p$ and $q$ are short.
\begin{lem}\label{lem:plessthanK}
Suppose that  $\ell(r)=\ell(s)>1$ and $p$ is not parabolic. Then 
$$\d_X(p_-,p_+)\leq 4(\ell(p)+4)D+4m.$$
\end{lem}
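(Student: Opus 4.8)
The plan is to bound the total $X$-length of the components of $p$. Since $(p,q,r,s)$ is without vertex backtracking, every component of $p$ is a single edge, so that, since $p$ has $\ell(p)$ edges and each edge of $p$ either has $X$-length at most one or is one of these components $s_0$, we have $\d_X(p_-,p_+)\le\ell(p)+\sum_{s_0}\d_X((s_0)_-,(s_0)_+)$. If $\ell(p)=1$ then $\d_X(p_-,p_+)\le 1$ and there is nothing to prove, so assume $\ell(p)\ge 2$.

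I would first locate the non-isolated components of $p$. As $\ell(r)=\ell(s)>1$, Lemma \ref{lem:longr} shows that no component of $p$ is connected to a component of $q$; by Lemma \ref{lem:corners}, a component of $p$ connected to a component of $r$ must be the first edge of $p$ connected to the first edge of $r$, and a component of $p$ connected to a component of $s$ must be the last edge of $p$ connected to the first edge of $s$. Moreover, using that no cyclic permutation of $\Lab(p)$ vertex backtracks, together with \eqref{eq:8m}, one sees that if both the first edge $e$ and the last edge $e'$ of $p$ are such non-isolated components, then they are $H_\omega$- and $H_\nu$-components with $\omega\ne\nu$. Hence at most two components of $p$ fail to be isolated, and they occur at the two ends of $p$.

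The isolated components are handled by Lemma \ref{lem:gon}: applying it to the $(\ell(p)+3)$-gon obtained from the closed path $psq^{-1}r^{-1}$ by subdividing $p$ into its $\ell(p)$ edges and keeping $s$, $q^{-1}$, $r^{-1}$ as single sides, with $I$ the set of isolated components of $p$, gives $\sum\d_X((s_0)_-,(s_0)_+)\le D(\ell(p)+3)$ over those components. For the (at most two) non-isolated components $e,e'$ I would use a merging trick: replace the connected pair consisting of the last edge of $p$ and the first edge of $s$ by the single edge labelled by the product of their labels (an element of $\cH\cup\{1\}$); this edge is isolated in the new closed path, so comparing with the corresponding $(\ell(p)+3)$-gon via Lemma \ref{lem:gon} bounds its $X$-length by $D(\ell(p)+3)$, and therefore bounds $\d_X(e'_-,e'_+)$ by $D(\ell(p)+3)$ plus the $X$-length of the first edge of $s$; the latter equals the $X$-length of the first edge of $r$ because $\Lab(r)\equiv\Lab(s)$, and it is the only quantity not directly controlled by an $n$-gon estimate. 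Here I would invoke minimality of $(p,q,r,s)$ as in Lemma \ref{lem:reducing-conjugators}: if that parabolic letter had $X$-length greater than $m$, a cyclic rotation absorbing it into the conjugator would give a $\sim$-equivalent conjugacy diagram contradicting minimality, unless the letter lies in an intersection of two parabolic subgroups, where \eqref{eq:8m} bounds it by $m$; thus $\d_X(e'_-,e'_+)\le D(\ell(p)+3)+2m$, and symmetrically $\d_X(e_-,e_+)\le D(\ell(p)+3)+2m$. Summing, $\d_X(p_-,p_+)\le\ell(p)+D(\ell(p)+3)+2\bigl(D(\ell(p)+3)+2m\bigr)\le 4(\ell(p)+4)D+4m$, using $D>1$.

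The main obstacle is this last step: the cyclic rotations driving the minimality argument move the endpoints of $p$, so I must run that argument so as to bound the offending parabolic letter of $r$ (or $s$) directly, rather than by passing to a new diagram, in order to keep the final estimate anchored to the original path $p$. A secondary technical point is checking that the merged edge really is isolated, which uses that the first edges of $r$ and $s$, and the first and last edges of $p$, each have at most one possible connection partner in the diagram.
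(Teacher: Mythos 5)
Your route is the paper's route: cut the corners of the $4$-gon, apply Lemma \ref{lem:gon} to the resulting polygon, and control the corner letters separately. The genuine gap sits exactly where you flag "the main obstacle", and the fix you propose does not work. Minimality of $(p,q,r,s)$ constrains only $\ell(r)$, the $(X\cup\cH)$-length of the conjugator; it says nothing about the $X$-length of an individual parabolic letter of $r$ or $s$ in the \emph{given} diagram. The rotation underlying Lemma \ref{lem:reducing-conjugators} produces a different minimal diagram in which the offending letter has been multiplied into the adjacent edge of $p$ — it does not produce a contradiction in the original diagram, so there is no "direct" bound to extract. The escape via \eqref{eq:8m} only applies when the corner letter is forced into an intersection $H_\omega\cap H_\nu$, $\omega\neq\nu$, which happens when \emph{both} ends of $p$ are non-isolated (since $\Lab(r_1)\equiv\Lab(s_1)$, your observation that the two ends lie in distinct parabolics forces the common label into the intersection). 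When only one end of $p$ is connected, say to the first edge $r_1$ of $r$, the letter $\Lab(r_1)$ need not lie in any intersection and can a priori have arbitrarily large $X$-length, so your estimate for that corner is unjustified.

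Two repairs, both taken from the paper's proof. First, your reason for refusing to pass to a $\sim$-equivalent diagram is unfounded: the quantity you are actually bounding is $\ell(p)+\sum_{t}\d_X(t_-,t_+)$, summed over the components $t$ of $p$, and this is invariant under cyclic permutation of the labels; hence Corollary \ref{cor:reducing-conjugators} may be applied without loss of generality even though it moves $p_\pm$, and afterwards any corner edge of $s$ connected to $p$ or $q$ has $X$-length at most $m$. Second, in the one-corner case the paper instead shows that the first edge $s_1$ of $s$ is \emph{isolated} in the diagram (it cannot connect to $p$, to $q$, to the rest of $s$, or to any edge of $r$ other than $r_1$ by Lemma \ref{lem:rconnect}, and a connection to $r_1$ would force $\Lab(p)\in H_\omega$, contradicting non-parabolicity and non-vertex-backtracking), whence Lemma \ref{lem:gon} gives $\d_X((r_1)_-,(r_1)_+)=\d_X((s_1)_-,(s_1)_+)\le(\ell(p)+4)D$ — a bound of a different shape from the $\le m$ you assumed, though it still fits inside the stated constant. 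A smaller inaccuracy: the merged corner edge need not be isolated in the cut polygon; the correct dichotomy (the paper's Claims 1--3) is that it is either isolated or has $X$-length at most $m$, the latter occurring precisely when its label falls into an intersection of parabolics.
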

\begin{proof}
Let $I$ be the set of components of $p$. Then $\d_X(p_-,p_+)\leq \ell(p)+\sum_{t\in I}\d_X(t_-,t_+)$. Our strategy is to bound $\sum_{t\in I}\d_X(t_-,t_+).$
Notice that for all $(p',q',r',s')\sim(p,q,r,s)$, $\ell(p)=\ell(p')$, $\ell(q)=\ell(q')$ and the lengths of the components of $p'$ is the same as the length of the components of $p$. So without loss of generality, we can change $(p,q,r,s)$ by a minimal conjugacy diagram $(p',q',r',s')\sim(p,q,r,s) $ and by Corollary \ref{cor:reducing-conjugators} we can assume that if a component $s_1$ of $s$ is connected to a component of $p$ or of $q$ then $\d_X((s_1)_-,(s_1)_+)\leq m$. 

By Lemma \ref{lem:longr}, no component of $p$ is connected to a component of $q$. If no component of $p$ is connected to a component of $r$ or $s$, then the lemma follows from Corollary \ref{cor:exem}.

{\bf Case 1:}
We consider first the case when $p$ has components connected to $r$ and $s$. In this case, by Lemma \ref{lem:corners} there are only two components of $p$ that are non-isolated in $(p,q,r,s)$: the first edge $p_1$ of $p$ connected to the first edge $r_1$ of $r$ and the last edge $p_2$ of $p$ connected to  first edge $s_1$ of $s$. Since $p$ is not parabolic, $p_1\neq p_2$.   Consider the 6-gon $abcdef$ obtained from $(p,q,r,s)$ by  ``cutting the bottom corners". See Figure \ref{fig:6gon1}.
\begin{figure}
\includegraphics[scale=0.4]{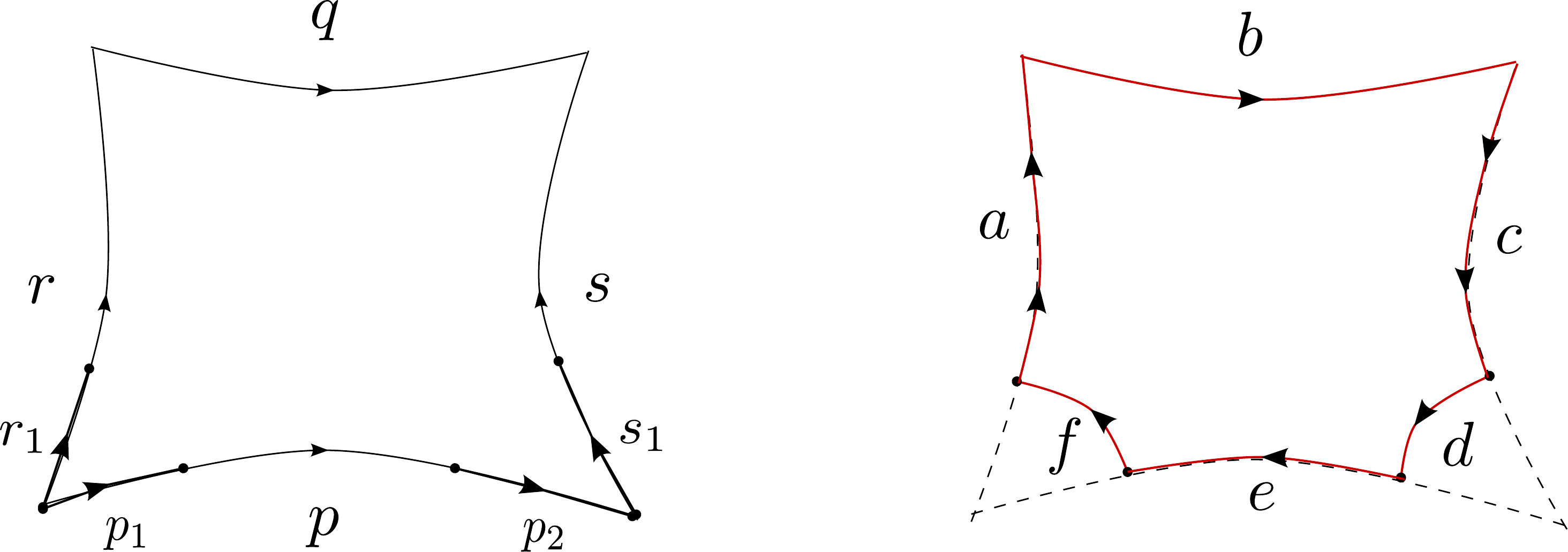}
\caption{Polygon $abcdef$ for Case 1.}
\label{fig:6gon1}
\end{figure} That is,
$a$ is the subpath of $r$ from $(r_1)_+$ to $r_+$, $b=q$,
$c$ is the subpath of $s^{-1}$ from $s_+$ to $(s_1)_+$,
$d$ is an edge from $(s_1)_+$ to $(p_2)_-$ labeled by $\Lab(s_1)^{-1}\Lab(p_2)^{-1}\in \cH$, $e$ is the (possibly empty) subpath of $p^{-1}$ from  
$(p_2)_-$ to $(p_1)_+$ and finally $f$ is the edge from $(p_1)_+$ to $(r_1)_+$ labeled by $\Lab(p_1)^{-1}\Lab(r_1)\in \cH$. Then all the sides of $abcdef$ are $(\lambda,c)$-quasi-geodesics and all the components of $e$ are isolated in $abcdef$ by construction. We note that $f$ and $d$ might be paths of length $0$ (i.e. vertices).

{\bf Claim 1:} $f$ is either an isolated component in $abcdef$ or $\d_X(f_-,f_+)\leq m$. 

To prove the claim, we assume that $f$ is not isolated and $d_X(f_-,f_+)>m$. Suppose that $r_1$ and $p_1$ are $H_\omega$-components. Then $f$ is an $H_\omega$-component, and since $\d_X(f_-,f_+)>m$, it is not an $H_\nu$-component, for any $\nu\neq \omega$. Thus, if $f$ is connected to other  component $t$ of $abcdef$ it must be an $H_\omega$-component.
Since $r$ is geodesic $t$ cannot be a component of $a$.
By Lemma \ref{lem:longr}, $p_1$ is not connected to a component of $q$ and thus $t$ can not be a component of $b$.
By Lemma \ref{lem:corners}, $p_1$ is not connected to a component of $s$ and thus $t$ can not be a component of $c$. Since $p$ has no vertex backtracking, $t$ cannot be a component of $e$. Thus $t=d$. Then $t$ is an $H_\omega$-component, and since $\Lab(s_1)\equiv \Lab(r_1)$, $s_1$ is also an $H_\omega$-component. This implies that $p_2$ is an $H_\omega$-component and is connected to $p_1$, contradicting the fact that $p$ has no vertex backtracking. The claim is proved.

Similarly we get:

{\bf Claim 2:}  $d$ is either an  isolated component in $abcdef$ or $\d_X(d_-,d_+)\leq m$.

We consider the polygon with sides $a,b,c,d,f$ and the edges of $e$. 
This is a polygon with at most $\ell(p)+3$ sides and all its sides are $(\lambda,c)$-quasi-geodesics. Recall that $I$ is the set of components of $p$. Let $I'$ be the set of isolated components of $e$ in $abcdef$ together with $d$ and $f$ in case they are isolated.
Then by Lemma \ref{lem:gon} the total sum of the $X$-lengths of the  components of $I'$ is $(\ell(p)+3)D$. 
By Claim 1 and Claim 2, $\max\{\d_X(d_-,d_+), \d_X(f_-,f_+)\}\leq \max\{m, (\ell(p)+3)D\}$. By hypothesis, $\d_X((r_1)_-,(r_1)_+)\leq m$,  and thus we obtain that $\d_X((p_1)_-,(p_1)_+)\leq \max\{m, (\ell(p)+3)D\}+m$. And similarly for $\d_X((p_2)_-,(p_2)_+)$. Hence
\begin{align*}
\sum_{t\in I}\d_X(t_-,t_+) &\leq \sum_{t\in I'}\d_X(t_-,t_+)+\d_X((p_1)_-,(p_1)_+)+\d_X((p_2)_-,(p_2)_+)\\
& \leq (\ell(p)+3)D +2 \max\{m, (\ell(p)+3)D\}+2m\\&\leq 3(\ell(p)+3)D+4m.
\end{align*}
The lemma now follows easily.

{\bf Case 2:} We consider now the case when $p$ has no component connected to $s$.  In this case, by Lemma \ref{lem:corners}, the only non-isolated component is the first edge $p_1$ of $p$ connected to the first edge $r_1$ of $r$. 

Let $s_1$ be the first edge of $s$. If $r_1$ is an $H_\omega$-component, so is $s_1$. By hypothesis, $s_1$ is not connected to a component of $p$. 
Since $\ell(s)>1$, if follows from Lemma \ref{lem:corners} that $s_1$ is not connected to a component of $q$. 
Since $s$ is geodesic, $s_1$ is not connected to a component of $s$.
Suppose $s_1$ is  connected to a component $r_2$ of $r$. 
Then there is a vertex $v=(r_2)_+$ of $r$ (the furthest away from $r_-$) such that $\d_{X\cup \cH}(u,v)\leq 1$, where $u=p_+=(s_1)_-$, and hence, by Lemma \ref{lem:rconnect}, $u$ is a vertex of $r_1$ and thus $r_2=r_1$. 
If $r_1$ and $s_1$ are connected, we obtain a contradiction to the non-parabolicity of $p$. Therefore $s_1$ is isolated.

We  consider the 6-gon $abcdef$ obtained from $(p,q,r,s)$ by ``cutting the bottom left corner". See Figure \ref{fig:6gon2}.
\begin{figure}
\includegraphics[scale=0.4]{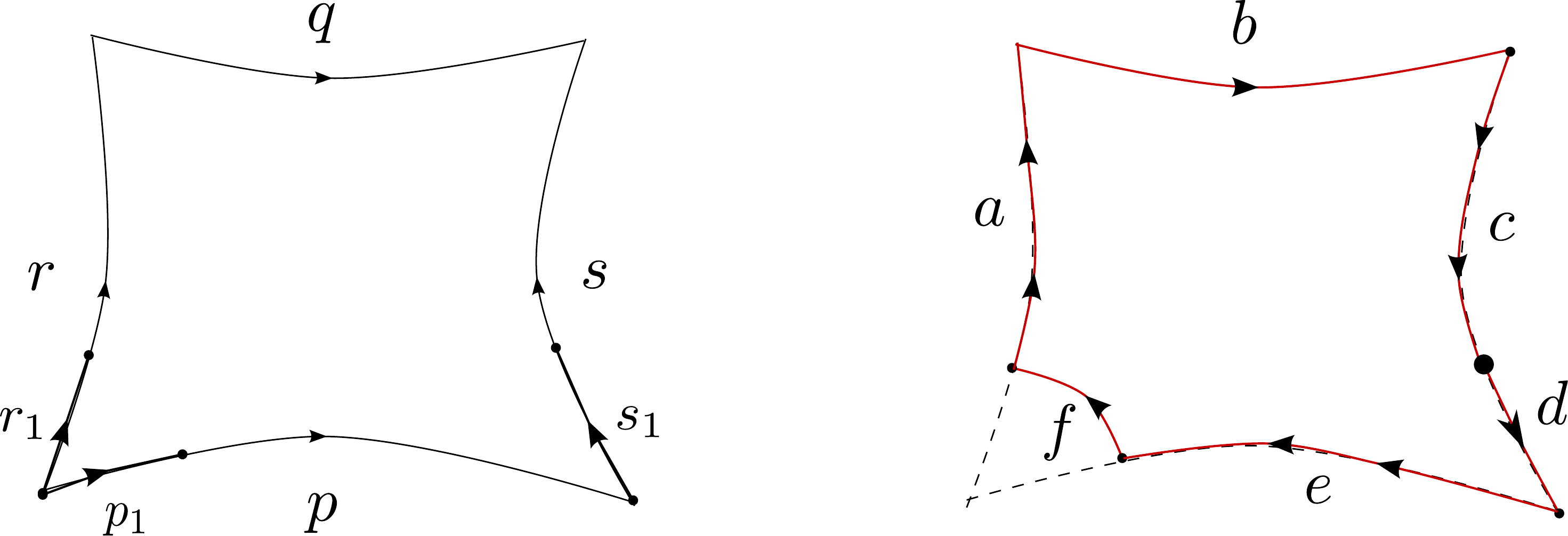}
\caption{Polygon $abcdef$ for Case 2.}
\label{fig:6gon2}
\end{figure}

 That is,
$a$ is the subpath of $r$ from $(r_1)_+$ to $r_+$, $b=q$,
$c$ is the subpath of $s^{-1}$ from $s_+$ to $(s_1)_+$,
$d=s_1^{-1}$, $e$ is the  subpath of $p^{-1}$ from  $p_+$ to $(p_1)_-$  and finally $f$ is the edge from $(p_1)_+$ to $(r_1)_+$ labelled by $\Lab(p_1)^{-1}\Lab(r_1)\in \cH$. Then all the sides of $abcdef$ are $(\lambda,c)$-quasi-geodesics and all the components of $e$ are isolated in $abcdef$ by construction, and $d$ is an isolated component in $abcdef$. 

Note that we allow the degenerate case of $f$ being a vertex. Arguing in a similar way as in Claim 1 we obtain:

{\bf Claim 3:}  $f$ is either an isolated component in $abcdef$ or $\d_X(f_-,f_+)\leq m$. 

We consider the polygon with sides $a,b,c,d,f$ and the edges of $e$. This is a polygon with at most $\ell(p)+4$ sides and all its sides are $(\lambda,c)$-quasi-geodesics. Let $I'$ be the set of isolated components of $e$ in $abcdef$ together with  $f$, if it is isolated in $abcdef$.
Then by Lemma \ref{lem:gon} the total sum of the $X$-length of the  components of $I'$ is $(\ell(p)+4)D$. Thus from Claim 3 we get that $\d_X(f_-,f_+)\leq \max\{m, (\ell(p)+4)D\}$.  Since $s_1=d$ is isolated, $\d_X((r_1)_-,(r_1)_+)=\d_X((s_1)_-,(s_1)_+)\leq (\ell(p)+4)D$. Combining this with $\d_X(f_-,f_+)\leq \max\{m, (\ell(p)+4)D\}$, we obtain that $\d_X((p_1)_-,(p_1)_+)\leq \max\{m, (\ell(p)+4)D\}+ (\ell(p)+4)D$. 

Hence
\begin{align*}
\sum_{t\in I}\d_X(t_-,t_+) &\leq \sum_{t\in I'}\d_X(t_-,t_+)+\d_X((p_1)_-,(p_1)_+)\\
& \leq (\ell(p)+4)D +\max\{m, (\ell(p)+4)D\}+ (\ell(p)+4)D\\&\leq 3(\ell(p)+4)D+m.
\end{align*}
This case now easily follows.

{\bf Case 3:} $p$ has no component connected to $r$.
In this case we can argue as in Case 2. 

Recall that the case when $p$ has no component connected to $r$ or $s$ has been already considered and hence we have proved the lemma in all possible cases.
\end{proof}

Collecting all the results up to now produces the following general statement about conjugacy diagrams, independent of the Cayley graphs of the parabolic subgroups.

\begin{thm}\label{thm:bmgcq}
Let $G$ be a finitely generated group, hyperbolic relative to a collection of subgroups $\{H_\omega\}_{\omega \in \Omega}$.
Let $X$ be a finite generating set of $G$, $\cH=\cup_{\omega \in \Omega} (H_\omega - \{1\})$, $\lambda \geq 1$ and $c\geq 0$. 

There exists a constant $k \geq 0$ such that for every minimal conjugacy $(\lambda,c)$-diagram without vertex backtracking $(p,q,r,s)$, where $p$ and $q$ are not parabolic, one can find $(p',q',r',s')\sim(p,q,r,s)$ such that

$$\min\{ \max\{\d_X(p'_-,p'_+),\d_X(q'_-,q'_+)\}, \d_X(r'_-,r'_+) \}\leq k.$$ 
\end{thm}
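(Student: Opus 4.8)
The plan is to combine the case analysis of Lemmas \ref{lem:longpq}--\ref{lem:plessthanK} and Corollaries \ref{cor:exem}--\ref{cor:exem2}, using the inequality \eqref{eq:hyprect} coming from $K$-bounded minimal conjugacy $(\lambda,c)$-diagrams in $\ga(G,X\cup\cH)$ (Lemma \ref{lem:conjhyp}) as the starting point. Set $N=\lambda(2K+1+c)$; this is the threshold separating ``$p$ or $q$ long'' from ``$p$ and $q$ short''. I will produce $(p',q',r',s')\sim(p,q,r,s)$ and bound the relevant quantity by a constant $k$ depending only on $K$, $D$, $m$, $\lambda$, $c$, which in turn depend only on $G$, $X$, $\lambda$, $c$.

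\textbf{Case A: $\max\{\ell(p),\ell(q)\}\geq N$.} Here Lemma \ref{lem:rlessthanK} applies directly and gives $(p',q',r',s')\sim(p,q,r,s)$ with $\d_X(r'_-,r'_+)=\d_X(s'_-,s'_+)\leq (2K+3)D+2m$. So the minimum in the statement is $\leq (2K+3)D+2m$.

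\textbf{Case B: $\max\{\ell(p),\ell(q)\}< N$.} Now $\ell(p),\ell(q)< N$, so both are bounded. If $\ell(r)=\ell(s)=1$, then $\d_X(r_-,r_+)\leq |X\cup\cH\text{-length }1|$ translated back to $X$-length; more simply, each letter of $\cH$ has some bounded $X$-length only if it lies in a fixed finite set, which it need not, so instead I observe: if $\ell(r)\le 1$ there is nothing to bound for the term $\d_X(r_-,r_+)$ only when $r$ is an edge with label in $X$; if $r$'s single edge has label in $\cH$ we must still bound $\d_X(p_-,p_+)$ or $\d_X(q_-,q_+)$. Thus the cleaner split inside Case B is on whether $\ell(r)=\ell(s)>1$ or not. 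If $\ell(r)=\ell(s)\leq 1$: either the edge is labelled in $X$ (so $\d_X(r_-,r_+)\le1$ and we are done), or it is labelled by some $h\in\cH$; then we bound $\d_X(p_-,p_+)$. For the latter, and for the subcase $\ell(r)=\ell(s)>1$, I invoke Lemma \ref{lem:plessthanK} (applicable when $\ell(r)=\ell(s)>1$ and $p$ is not parabolic, which holds by hypothesis): it gives $\d_X(p_-,p_+)\leq 4(\ell(p)+4)D+4m\leq 4(N+4)D+4m$. The residual subcase $\ell(r)=\ell(s)=1$ with the edge labelled by $h\in\cH$ is handled directly: consider the $4$-gon, which now has $p,q$ of bounded $(X\cup\cH)$-length and $r,s$ of length $1$; applying Lemma \ref{lem:gon} (or Corollary \ref{cor:exem}) to the polygon whose sides are $q$, $r$, $s$ together with the (isolated, by no vertex backtracking) components of $p$, one bounds $\sum\d_X(t_-,t_+)$ over components $t$ of $p$ by $(\ell(p)+3)D$ plus the contributions of the at most two components of $p$ connected to $r$ or $s$, which by Corollary \ref{cor:reducing-conjugators} (after replacing $(p,q,r,s)$ by an equivalent diagram) have $X$-length $\leq m$ each; hence $\d_X(p_-,p_+)\leq \ell(p)+(\ell(p)+3)D+2m\leq N+(N+3)D+2m$.

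\textbf{Conclusion.} Taking $k$ to be the maximum of the finitely many explicit bounds appearing in Cases A and B, namely $k=\max\{(2K+3)D+2m,\ 4(N+4)D+4m,\ N+(N+3)D+2m\}$ with $N=\lambda(2K+1+c)$, one obtains a constant depending only on $G$, $X$, $\lambda$, $c$, and in every case a diagram $(p',q',r',s')\sim(p,q,r,s)$ with $\min\{\max\{\d_X(p'_-,p'_+),\d_X(q'_-,q'_+)\},\d_X(r'_-,r'_+)\}\leq k$, as required. The main obstacle I expect is organising the case split so that each branch genuinely produces a bound on the \emph{minimum} of the two quantities (rather than on one particular one), and keeping track of which equivalent diagram $(p',q',r',s')$ is being used in each branch, since Lemmas \ref{lem:rlessthanK} and \ref{lem:plessthanK} and Corollary \ref{cor:reducing-conjugators} each may replace the diagram; one must check these replacements are compatible (they are, since $\sim$ is transitive and all the quantities involved are invariant under cyclic permutation of $\Lab(p),\Lab(q)$ and under the specific modifications used). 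A minor point is verifying that when $\max\{\ell(p),\ell(q)\}<N$ the non-parabolicity hypothesis on $p$ and $q$ lets us appeal to Lemma \ref{lem:plessthanK} (whose hypothesis includes ``$p$ not parabolic'') and to Corollary \ref{cor:reducing-conjugators}.
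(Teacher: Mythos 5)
Your Case A and the subcase of Case B with $\ell(r)=\ell(s)>1$ reproduce the paper's own proof: the paper splits into $\max\{\ell(p),\ell(q)\}\ge\lambda(2K+1+c)$ (Lemma \ref{lem:rlessthanK}) and the complementary case with $\ell(r)>1$ (Lemma \ref{lem:plessthanK}, applied to both $p$ and $q$ by the symmetry of the diagram --- note you must bound $\d_X(q_-,q_+)$ as well as $\d_X(p_-,p_+)$ to control the $\max$ in that branch). The genuine gap is in your residual subcase $\ell(r)=\ell(s)=1$ with $\Lab(r)\in\cH$. First, the components of $p$ are \emph{not} isolated in the $4$-gon merely because $p$ does not vertex backtrack: that only isolates them within $p$ itself, and since $\ell(r)=1$ Lemma \ref{lem:longr} is unavailable, so a component of $p$ may well be connected to a component of $q$ (minimality does not exclude this once the conjugator already has $(X\cup\cH)$-length $1$). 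Second, Corollary \ref{cor:reducing-conjugators} bounds the $X$-length of the components of the conjugator side $s'$ that are connected to $p'$ or $q'$; it says nothing about the $X$-length of the components of $p'$ connected to them, which can be large even when the matching component of $s'$ is short --- controlling those is precisely what the corner-cutting argument with the auxiliary edges $d$ and $f$ in the proof of Lemma \ref{lem:plessthanK} is for. Third, even granting a bound on $\d_X(p_-,p_+)$, this branch would still require a bound on $\d_X(q_-,q_+)$.

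The repair is to bound the other term of the minimum, which is what the paper does: when $\ell(r)=\ell(s)=1$, pass via Corollary \ref{cor:reducing-conjugators} to $(p',q',r',s')\sim(p,q,r,s)$; the single edge $s'$ is then either of $X$-length at most $m$ (if connected to a component of $p'$ or $q'$) or isolated in the $4$-gon --- it cannot be connected to $r'$, since that would force $\Lab(p')$ to represent an element of some $H_\omega$, contradicting that $p$ is not parabolic and does not vertex backtrack --- and in the isolated case Corollary \ref{cor:exem} gives $\d_X(r'_-,r'_+)=\d_X(s'_-,s'_+)\le 5D$. With that branch replaced, your constant $k$ (suitably adjusted) and the rest of your argument coincide with the paper's proof.
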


\begin{proof}
We set $k= \max\{(2K+3)D+2m, 4(\lambda (2K+1+c)+4)D+4m\}$.

If $\ell(r)=\ell(s)=1$, by Corollary \ref{cor:reducing-conjugators}, there exists a conjugacy diagram $(p',q',s',r')\sim(p,q,r,s)$ such that either $\d_X(r'_-,r'_+)\leq m$ or $r'$ is isolated in $(p',q',r',s')$. In the latter case, we use Corollary \ref{cor:exem} to conclude that $\d_X(r'_-,r'_+)\leq 4D.$ 

Suppose that $\max\{\ell(p),\ell(q)\}\geq \lambda(2K+1+c)$. Then
by Lemma \ref{lem:rlessthanK} there is $(p',q',r',s')\sim(p,q,r,s)$ such that $\d_X(r'_-,r'_+)\leq (2K+3)D+2m\leq k$.

Now suppose $\max\{\ell(p),\ell(q)\} < \lambda(2K+1+c)$ and $\ell(r)>1$. Since $p$ and $q$ are not parabolic,  Lemma \ref{lem:plessthanK} implies $\max\{\d_X(p_-,p_+),\d_X(q_-,q_+)\}\leq 4(\lambda (2K+1+c)+4)D+4m\leq k$.
\end{proof}

\begin{lem}\label{lem:conjPara}
Suppose that $p$ is parabolic (and hence $p$ is a component) and is connected to a component of $q$. Then $\Lab(p),\Lab(q),\Lab(r),\Lab(s)$ are letters of some $H_\omega$.
\end{lem}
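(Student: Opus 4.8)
The plan is first to fix notation. Let $\omega\in\Omega$ be an index for which $p$ is an $H_\omega$-component and $q_1$ is the component of $q$ connected to $p$, so that $q_1$ is also an $H_\omega$-edge and $p_-H_\omega=(q_1)_-H_\omega$; call this common coset $A$. Since $\Lab(p)\in H_\omega$ and $\Lab(q_1)\in H_\omega$, all four vertices $p_-,p_+,(q_1)_-,(q_1)_+$ lie in $A$. Recall that for the $4$-gon $psq^{-1}r^{-1}$ one has $r_-=p_-$, $s_-=p_+$, $r_+=q_-$, $s_+=q_+$, together with $\Lab(r)=\Lab(s)$; write $C\in G$ for the element both of these words represent. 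Write $q$ as a concatenation $q'q_1q''$, where $q'$ and $q''$ are (possibly trivial) subpaths with $(q')_-=q_-$, $(q')_+=(q_1)_-$, $(q'')_-=(q_1)_+$, $(q'')_+=q_+$.

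The first substantive step is to show that the word $\Lab(q'')\Lab(q')$ represents an element of $H_\omega$. The path $rq'$ runs from $p_-\in A$ to $(q_1)_-\in A$, so $\Lab(rq')$ represents $p_-^{-1}(q_1)_-\in H_\omega$, whence $\Lab(q')\in C^{-1}H_\omega$; likewise $s(q'')^{-1}$ runs from $p_+\in A$ to $(q_1)_+\in A$, so $\Lab(s)\Lab(q'')^{-1}$ represents $p_+^{-1}(q_1)_+\in H_\omega$, whence $\Lab(q'')\in H_\omega C$. Multiplying, $\Lab(q'')\Lab(q')\in (H_\omega C)(C^{-1}H_\omega)=H_\omega$. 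I expect this to be the crux of the argument: a priori it is not clear that the ``non-parabolic part'' of $q$ should wrap back into $H_\omega$, and this step genuinely uses that $r$ and $s$ carry the \emph{same} label (conjugacy of $\Lab(p)$ and $\Lab(q)$ alone would not give it).

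Next I would use the no-vertex-backtracking hypothesis to force $q'$ and $q''$ to be trivial. If $\ell(q')+\ell(q'')\ge 2$, then $\Lab(q_1)\Lab(q'')\Lab(q')$ is a cyclic permutation of $\Lab(q)$ containing the subword $\Lab(q'')\Lab(q')$, which has length $\ge 2$ and represents an element of $H_\omega$; hence this cyclic permutation vertex backtracks, contradicting that $(p,q,r,s)$ is without vertex backtracking. If $\ell(q')+\ell(q'')=1$, say $q''$ is trivial and $q'$ is a single edge, then by the previous step $\Lab(q')$ represents an element of $H_\omega$, so $q=q'q_1$ has length $2$ with $\Lab(q)=\Lab(q')\Lab(q_1)\in H_\omega$, i.e.\ $q$ itself vertex backtracks — again a contradiction (and symmetrically if $q'$ is trivial and $q''$ is the edge). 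Hence $q$ consists of the single $H_\omega$-edge $q_1$ and $\Lab(q)\in H_\omega$.

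Finally, since $q$ (now equal to $q_1$) is connected to $p$, we have $q_-=(q_1)_-\in A$, so $r$ runs between the two vertices $p_-$ and $q_-$ of the coset $A$; thus $\Lab(r)=\Lab(s)$ represents $p_-^{-1}q_-\in H_\omega$, and since $\d_{X\cup\cH}(p_-,q_-)\le 1$ and $r$ is geodesic in $\ga(G,X\cup\cH)$, we get $\ell(r)=\ell(s)\le 1$. Combining with the previous paragraphs, $\Lab(p),\Lab(q),\Lab(r),\Lab(s)$ all have length at most $1$ and represent elements of $H_\omega$, i.e.\ are letters of $H_\omega$ (the borderline case $\ell(r)=\ell(s)=0$ yields the empty word and is harmless).
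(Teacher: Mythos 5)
Your argument is correct and is essentially the paper's proof written out in full: both hinge on the fact that the component of $q$ connected to $p$ forces the relevant endpoints into a single $H_\omega$-coset, and on the hypothesis that no cyclic permutation of $\Lab(q)$ vertex backtracks, which kills everything in $q$ outside that component and then bounds $r$ and $s$. The only real difference is one of bookkeeping: the paper first cyclically permutes $q$ so that the connected component becomes its first edge (whence $\Lab(r)\in H_\omega$ at once), whereas you keep the diagram fixed and verify the key membership $\Lab(q'')\Lab(q')\in H_\omega$ directly from $\Lab(r)\equiv\Lab(s)$ --- a slightly more careful route that avoids having to re-justify minimality after the permutation.
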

\begin{proof}
If $p$ is an $H_\omega$-component connected to an $H_\omega$-component of $q$, after a cyclic permutation of $q$, we can assume that this component of $q$ is the first edge of $q$ and $\Lab(r)\equiv \Lab(s)\in H_\omega$.
Since no cyclic  permutation of $\Lab(q)$ is allowed to have vertex backtracking due to the fact that $(p,q,r,s)$ is minimal, we can conclude that $q$ is of length one and hence parabolic.
\end{proof}

We now include the property \bcd{} of the parabolic subgroups in the discussion.

\begin{lem}\label{lem:Apara}
Assume that for every $\omega\in \Omega$ and every $h\in H_\omega$, $|h|_X=|h|_{X\cap H_\omega}$ and $(H_\omega, X\cap H_\omega)$ has $A$-\bcd{} for some $A \geq 0$.
Suppose that there is a cyclic geodesic word over $X\cap H_\omega$ representing  $h\in H_\omega$. Then for any $h',g\in H_\omega$ such that $h=gh'g^{-1}$, we have $|h|_{X}\leq 2A+|h'|_{X}$.
\end{lem}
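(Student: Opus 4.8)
\textbf{Proof plan for Lemma \ref{lem:Apara}.} The statement says: if $h\in H_\omega$ admits a cyclic geodesic word over $X\cap H_\omega$, and $h = g h' g^{-1}$ with $g, h' \in H_\omega$, then $|h|_X \le 2A + |h'|_X$. The plan is to work entirely inside the parabolic subgroup $H_\omega$ with the generating set $X\cap H_\omega$, using the hypothesis $|\cdot|_X = |\cdot|_{X\cap H_\omega}$ on $H_\omega$ to translate the conclusion back to the $X$-metric, and to invoke the $A$-\bcd{} property of $(H_\omega, X\cap H_\omega)$.

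First I would set up a minimal conjugacy $(1,0)$-diagram for the pair $(h, h')$ inside $\ga(H_\omega, X\cap H_\omega)$. Let $U$ be a cyclic geodesic word over $X\cap H_\omega$ representing $h$ (which exists by hypothesis), and let $V$ be a cyclic geodesic word over $X\cap H_\omega$ representing $h'$; note $\ell(V) = |h'|_{X\cap H_\omega} = |h'|_X$ since a cyclic geodesic is in particular geodesic. Since $h$ and $h'$ are conjugate in $H_\omega$, among all conjugacy diagrams $(p,q,r,s)$ over $(H_\omega, X\cap H_\omega)$ with $\Lab(p), \Lab(q)$ ranging over cyclic permutations of $U, V$ and $\Lab(r) = \Lab(s)$ a geodesic representing a conjugator, we may choose one minimizing $\ell(r)$; this is a minimal conjugacy $(1,0)$-diagram in the sense of Section \ref{sec:conjdiag} (all cyclic permutations of $\Lab(p)$ and $\Lab(q)$ are geodesic because $U$ and $V$ are cyclic geodesics). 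Now apply the $A$-\bcd{} hypothesis, i.e. $(H_\omega, X\cap H_\omega)$ has $A$-bounded minimal conjugacy $(1,0)$-diagrams: for this diagram, $\min\{\max\{\ell(p), \ell(q)\}, \ell(r)\} \le A$.

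Next I would split into the two cases coming from the minimum. If $\ell(r) \le A$: then from the 4-gon $p s q^{-1} r^{-1}$ we read off that $h$ (represented by $\Lab(p)$, a cyclic permutation of $U$, hence of the same length $|h|_{X\cap H_\omega}$) is conjugate to $h'$ (represented by $\Lab(q)$) by an element of $H_\omega$ of $X\cap H_\omega$-length at most $A$; therefore $|h|_{X\cap H_\omega} = \ell(p) \le \ell(r) + \ell(q) + \ell(s) \le A + \ell(V) + A = 2A + |h'|_{X\cap H_\omega}$, and translating via the hypothesis gives $|h|_X \le 2A + |h'|_X$. If instead $\max\{\ell(p), \ell(q)\} \le A$: then in particular $\ell(p) \le A$, but $\ell(p)$ is the length of a cyclic permutation of the cyclic geodesic $U$, so $|h|_{X\cap H_\omega} = \ell(U) = \ell(p) \le A \le 2A + |h'|_X$, and again the hypothesis $|h|_X = |h|_{X\cap H_\omega}$ finishes it.

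The main obstacle — really the only subtlety — is making sure the diagram I construct genuinely qualifies as a \emph{minimal conjugacy $(1,0)$-diagram}: I must verify that all cyclic permutations of $\Lab(p)$ and of $\Lab(q)$ are $(1,0)$-quasi-geodesic, which is exactly why the hypothesis demands a \emph{cyclic} geodesic word for $h$ (and why I pick $V$ to be a cyclic geodesic for $h'$), and that the conjugator side is genuinely length-minimized over the $\sim$-class so that the $A$-\bcd{} bound applies. Once that bookkeeping is in place, the inequality $|h|_X \le 2A + |h'|_X$ falls out of the triangle-type estimate $\ell(p) \le \ell(r) + \ell(q) + \ell(s)$ in the first case and is immediate in the second. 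No further computation is needed.
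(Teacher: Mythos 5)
Your overall strategy is exactly the paper's: build a minimal conjugacy $(1,0)$-diagram inside $\ga(H_\omega, X\cap H_\omega)$ between cyclic permutations of a cyclic geodesic for $h$ and a short cyclic geodesic on the other side, apply $A$-\bcd{}, split on which term realizes the minimum, and finish with the triangle-type estimate $\ell(p)\le \ell(r)+\ell(q)+\ell(s)$. The case analysis and the final inequality are correct.

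There is, however, one genuine gap at the very first step: you ``let $V$ be a cyclic geodesic word over $X\cap H_\omega$ representing $h'$'' and use $\ell(V)=|h'|_{X\cap H_\omega}$. The hypothesis of the lemma only guarantees a cyclic geodesic representative for $h$, and in general a given element need not admit any cyclic geodesic representative: in the free group $F(a,b)$ with the standard basis, the element $aba^{-1}$ has the unique geodesic representative $aba^{-1}$, whose cyclic shift $ba^{-1}a$ is not geodesic, so no cyclic geodesic word represents $aba^{-1}$. Since the parabolic subgroups here are arbitrary groups with \bcd{}, you cannot assume $h'$ is better behaved than $h$ in this respect. The fix is the one the paper uses: replace $V$ by a cyclic geodesic word representing some \emph{conjugate} of $h'$ (equivalently of $h$) with $\ell(V)\le |h'|_{X\cap H_\omega}$ --- such a word always exists, e.g.\ any geodesic representative of a shortest element in the conjugacy class, or the result of iterated cyclic reduction starting from a geodesic word for $h'$. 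Your final estimate only needs $\ell(q)\le |h'|_{X}$, not equality, so the rest of your argument goes through verbatim after this substitution.
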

\begin{proof}
If $|h|_{X\cap H_\omega}=|h|_X\leq A$ there is nothing to prove. So assume $|h|_{X\cap H_\omega}\geq A$.
Since $h$ is conjugate to $h'$, there exists a cyclic geodesic word $U$ over $X\cap H_\omega$ such that $\ell(U)\leq |h'|_{X\cap H_\omega}$ and $U$ represents some conjugate of $h$. 
Let $V$ be a cyclic geodesic over $X\cap H_\omega$ representing $h$. 
Then there is an $A$-bounded minimal $(1,0)$-diagram $(p',q',r',s')$ in $(H_\omega, H_\omega\cap X)$ with $p'$ and $q'$ labeling cyclic permutations of $V$ and $U$, respectively. 
Since $|h|_{X\cap H_\omega} \geq A$, it follows that $\ell(s')=\ell(r')\leq A$. 
Then $$\ell(p')=\ell(V)= |h|_{X\cap H_\omega}\leq \ell(s')+\ell(r')+\ell(q')\leq 2A+|h'|_{X\cup H_\omega }=2A+|h'|_{X}.$$
\end{proof}

\begin{lem}\label{lem:pnonconnected}
Assume that for every $\omega\in \Omega$ and every $h\in H_\omega$, $|h|_X=|h|_{X\cap H_\omega}$.
Suppose that there is a cyclic geodesic word $U$ over $X\cap H_\omega$ representing  $\Lab(p)$ and $p$ is not connected to a component of $q$.
\begin{itemize}
\item[{\rm (a)}] If $(H_\omega, X\cap H_\omega)$ has $A$-\bcd{}, then $\d_X(p_-,p_+)\leq  2A+4D+m$.
\item[{\rm (b)}] If $(H_\omega, X\cap H_\omega)$ has $A$-\nsc{}, then either $\d_X(p_-,p_+)\leq 4D+m$ or there is a cyclic permutation $U'$ of $U$ and words $V$ and $C$ over $X\cap H_\omega$, $\ell(V)<\ell(U)$, $\ell(C)\leq A$ such that $CU'C^{-1}=_G V$.
\end{itemize}
\end{lem}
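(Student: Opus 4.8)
The plan is to analyze the closed polygon obtained from the conjugacy diagram $(p,q,r,s)$ after cutting off the corners at $p$, using the fact that $p$ is a single parabolic edge which, by hypothesis, is not connected to any component of $q$. First I would apply Lemma~\ref{lem:corners}: if $p$ were connected to a component of $r$ or $s$, these would have to occur at the first edge of $r$ (resp.\ $s$); by Lemma~\ref{lem:rconnect} and Lemma~\ref{lem:rlessthanK}-style reductions (more precisely, by passing to $(p',q',r',s')\sim(p,q,r,s)$ via Corollary~\ref{cor:reducing-conjugators}), I may assume that any component of $r$ or $s$ connected to $p$ has $X$-length at most $m$. Since $p$ is a single edge, it can be connected to at most one component of $r$ and one of $s$ (at most one of these actually, since $p$ is a single vertex-pair), and the rest of the argument splits according to whether $p$ is isolated in the cut-down polygon.

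For part (a), I would distinguish two cases. If $p$ is isolated in $(p,q,r,s)$, then Corollary~\ref{cor:exem} applied to the $4$-gon $p\,s\,q^{-1}r^{-1}$ (viewing the single edge $p$ as the isolated-component side) gives $\d_X(p_-,p_+)\leq (2\cdot 1+3)D=5D$, which is comfortably below $2A+4D+m$. Otherwise $p$ is connected to a component of $r$ or $s$; since it is not connected to $q$ (by hypothesis) and not connected to itself, the only possibility is that $p$ is connected to the first edge of $r$ \emph{and/or} the first edge of $s$. Cutting these corners off produces a polygon with $p$ now isolated among at most a bounded number of sides, to which Lemma~\ref{lem:gon} applies. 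The key new input is that when $p$ is connected to \emph{both} $r$ and $s$, the three conjugate edges $\Lab(p)$, $\Lab(r_1)$, $\Lab(s_1)$ all lie in the same $H_\omega$, and the relation $\Lab(r_1)^{-1}\Lab(p)\Lab(s_1)=_G 1$ exhibits $\Lab(p)$ as conjugate (in $H_\omega$) to an element represented by a short word; since $U$ is a cyclic geodesic over $X\cap H_\omega$ representing $\Lab(p)$, Lemma~\ref{lem:Apara} yields $|\Lab(p)|_X\leq 2A+|\text{short element}|_X$. Combining with the $\d_X$-bound $m$ on the connecting edges and the $4D$ coming from the isolated-component estimate for $r_1$ (or $s_1$) in the cut polygon, I get $\d_X(p_-,p_+)\leq 2A+4D+m$.

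For part (b), the structure is parallel but I replace the use of \bcd{} by \nsc{}. If $p$ is isolated, the same $5D\leq 4D+m$ bound (adjusting constants) holds, so assume $p$ is connected to a component of $r$ or $s$. As before this forces the conjugate edges into a common $H_\omega$, and we obtain a word $C$ over $X\cap H_\omega$ of length $\leq A$ (coming from $r_1$, $s_1$, bounded by $m$, and absorbed into $A$ if necessary) and a cyclic permutation $U'$ of $U$ with $CU'C^{-1}=_G V$ for some $V$ over $X\cap H_\omega$ that is shorter than $U$; here the hypothesis that $U$ is a cyclic geodesic not a conjugacy geodesic (implicit in the fact that it is being shortened) lets us invoke $A$-\nsc{} to produce exactly such a $U'$, $C$, $V$. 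If instead $p$ is isolated in the cut-down polygon after removing the connecting edge, then $\d_X(p_-,p_+)\leq 4D+m$ as in case (a). I expect the main obstacle to be the careful bookkeeping of the cyclic-permutation equivalences — ensuring that after passing to $(p',q',r',s')$ via Corollary~\ref{cor:reducing-conjugators} one still has the hypothesis ``$p$ not connected to $q$'' and that $p$ remains a single parabolic edge — and verifying that the element conjugating $\Lab(p)$ to the short word actually lies in $H_\omega$ rather than merely in $G$; this is where Lemma~\ref{lem:nocancellation} and inequality~\eqref{eq:8m} are needed, to rule out the connecting edges secretly belonging to a different parabolic and thereby having large $X$-length.
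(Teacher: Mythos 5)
Your overall shape is right (cut the corners at $p$, land a conjugate of $\Lab(p)$ inside $H_\omega$, then apply Lemma~\ref{lem:Apara} resp.\ $A$-\nsc{}), but there are two genuine gaps. First, you cannot invoke Corollary~\ref{cor:reducing-conjugators} (nor Lemmas~\ref{lem:reducing-conjugators}, \ref{lem:reducing-conjugators2}) here: those results explicitly assume $p$ and $q$ are \emph{not} parabolic, whereas in this lemma $p$ is a single parabolic edge, so the step ``I may assume any component of $r$ or $s$ connected to $p$ has $X$-length at most $m$'' has no justification. Second, and more seriously, the heart of the lemma is never proved: you need an \emph{a priori} bound on the $X$-length of the conjugate element $\Lab(p')=\Lab(r_1)^{-1}\Lab(p)\Lab(s_1)$ (note this is not $=_G 1$ as you wrote; it is the label of the new edge $p'$ from $(r_1)_+$ to $(s_1)_+$, conjugate to $\Lab(p)$ in $H_\omega$ because $\Lab(r_1)\equiv\Lab(s_1)$). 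Your proposed route --- bound $r_1$ and $s_1$ by $m$ and cite an ``isolated-component estimate for $r_1$'' --- does not bound $\Lab(p')$: even with $\d_X((r_1)_-,(r_1)_+)\le m$ you only get $\d_X(p'_-,p'_+)\le \d_X(p_-,p_+)+2m$, which is circular. The paper instead proves the claim $\d_X(p'_-,p'_+)\le 4D+m$ directly, by showing that in the cut-down $4$-gon $p's'q^{-1}r'^{-1}$ the edge $p'$ is either isolated (whence Lemma~\ref{lem:gon}/Corollary~\ref{cor:exem} applies) or has $X$-length at most $m$: if $\d_X(p'_-,p'_+)>m$ then by \eqref{eq:8m} any component connected to $p'$ would be an $H_\omega$-component in the coset of $p$, so a connection to $q$ would contradict the hypothesis that $p$ is not connected to a component of $q$, and connections to $r'$ or $s'$ are ruled out similarly. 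That case analysis is exactly where the hypothesis enters, and it is absent from your argument.

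Two smaller points. In part (a), once $\d_X(p'_-,p'_+)\le 4D+m$ is in hand, Lemma~\ref{lem:Apara} immediately gives $\d_X(p_-,p_+)\le 2A+4D+m$; no further polygon estimates are needed. In part (b), the word $C$ of length at most $A$ must come from the $A$-\nsc{} property of $(H_\omega,X\cap H_\omega)$ applied to the cyclic geodesic $U$ (which is conjugate in $H_\omega$ to the shorter element $\Lab(p')$), not from $r_1$ or $s_1$ ``absorbed into $A$'' --- $A$ is a fixed constant of the hypothesis and cannot be enlarged.
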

\begin{proof}
If $p$ is not connected to a component of $r$ or $s$, then $p$ is isolated and by Corollary \ref{cor:exem}, 
$\d_{X}(p_-,p_+)\leq 4D$.

 If $p$ is connected to a component of $r$, by Lemma \ref{lem:corners}, it has to be the first edge of $r$,
and hence $p$ is also connected to the first edge of $s$. Suppose that $r_1$ is the first edge of $r$, 
$s_1$ is the first edge of $s$ and let  $p'$ be a path of length one from  $(r_1)_{+}$ to $(s_1)_+$.
Let  $r'$ be the subpath of $r$ from $(r_1)_+$ to $r_+$ and $s'$ the subpath of $s$ from $(s_1)_+$ to $s_+$. 

We claim that  $\d_{X}(p'_-,p'_+)\leq 4D+m$. Suppose the converse, i.e. $\d_{X}(p'_-,p'_+)>4D+m$. Then, by Corollary \ref{cor:exem}, $p'$ can't be isolated in the 4-gon $p's'q^{-1}r'^{-1}$.
Also, $p'$ can't be connected to a component $q_1$ of $q$, since in this case, $\d_X(p'_-,p'_+)>m$ implies that $q_1$ is also an $H_\omega$-component and has to be connected to $p$. A similar argument shows that $p'$ can't be connected to a component of $r'$ or $s'$. This leads to a contradiction, and hence the claim is proved.

(a) Since $\d_{X}(p'_-,p'_+)\leq 4D+m$, by Lemma \ref{lem:Apara} we get $\d_X(p_-,p_+)\leq   2A+4D+m$.

(b) If  $\d_{X}(p_-,p_+)>4D+m$ then, since $\d_{X}(p'_-,p'_+)\leq 4D+m$, the $A$-\nsc{} property gives the existence of $U'$, $V$ and $C$ with the desired properties.
\end{proof}

The following statement implies Theorem \ref{thm:rh_conjhyp} in the Introduction. 
\begin{thm}\label{thm:rh_conjhyp_ext}
Let $G$ be a finitely generated group, hyperbolic relative to a family of subgroups $\{H_\omega\}_{\omega\in \Omega}$. Let $Y$ be a finite generating set and $\cH=\cup_{\omega\in \Omega} (H_\omega -\{1\})$.
There exists a finite subset $\cH'$ of $\cH$ such that the following hold.
For every finite set $X$ satisfying
$$Y\cup \cH'\subseteq X \subseteq Y \cup \cH$$
there is a finite subset $\Phi$ of non-geodesic words over $X$ such that if $W$ is a word over $X$ with no parabolic shortenings and without subwords in $\Phi$, then $W$ represents the trivial element if and only if $W$ is empty. Moreover,
\begin{enumerate}
\item[{\rm (a)}] If $(H_\omega, X\cap H_\omega)$ has \bcd{} for all $\omega\in \Omega$, then there is a constant $B$ 
such that for every pair of words $U$ and $V$ over $X$ representing conjugate elements in $G$ and such that none of their cyclic shifts have parabolic shortenings or contain subwords in $\Phi$, there is an element $g\in G$ and cyclic shifts $U'$ of $U$ and $V'$ of $V$ such that $gU'g^{-1}=_GV'$ and
$$\min\{\max \{ \ell(U),\ell(V)\}, |g|_X\}\leq B.$$

\item[{\rm (b)}] If  $(H_\omega, X\cap H_\omega)$ have \nsc{} for all $\omega\in \Omega$, then $(G,X)$ has \nsc.

\end{enumerate}
 
\end{thm}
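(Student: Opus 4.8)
The plan is to apply the Generating Set Lemma (Lemma~\ref{lem:goodgenset}) to produce the finite set $\cH'$ and, for each admissible $X$, the finite set $\Phi$ of non-geodesic words; the last clause of that lemma already gives $|h|_X=|h|_{X\cap H_\omega}$ for $h\in H_\omega$, and the fact that a $W$ with no parabolic shortenings avoiding $\Phi$ represents $1$ only if it is empty follows from the corresponding lemma's assertion that $\wh W$ is a $(\lambda,c)$-quasi-geodesic without vertex backtracking (a nonempty quasi-geodesic cannot represent $1$). So the content is in parts (a) and (b). For both, I would start with words $U,V$ over $X$ whose cyclic shifts have no parabolic shortenings and avoid $\Phi$; replacing $U,V$ by cyclically-shifted cyclic geodesics changes nothing essential since being $\Phi$-avoiding with no parabolic shortenings is a cyclic-subword condition. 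Then $\wh p$ and $\wh q$ (the derived paths) are $(\lambda,c)$-quasi-geodesics without vertex backtracking in $\ga(G,X\cup\cH)$, and any conjugating element $g\in G$ between cyclic shifts of $U$ and $V$ can be taken, after minimizing, to give a \emph{minimal conjugacy $(\lambda,c)$-diagram without vertex backtracking} $(p,q,r,s)$ to which the machinery of Section~\ref{sec:conjdiag} applies.

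\textbf{Case analysis on whether $p,q$ are parabolic.} The main dichotomy is whether $p$ (equivalently $U$) and $q$ (equivalently $V$) are parabolic. If neither is parabolic, Theorem~\ref{thm:bmgcq} directly supplies $(p',q',r',s')\sim(p,q,r,s)$ with $\min\{\max\{\d_X(p'_-,p'_+),\d_X(q'_-,q'_+)\},\d_X(r'_-,r'_+)\}\le k$; for part (a) this is already the conclusion with $B=k$ (the conjugator $g$ read off from $r'$ or $s'$ has $|g|_X\le k$, and $\ell(U),\ell(V)$ equal the relevant $\d_X$-distances since $U,V$ are geodesic), and for part (b), if $\ell(U)=\ell(V)\le k$ there is nothing to check, while otherwise $|g|_X\le k$ and $gU'g^{-1}=_G V'$ is shorter, giving the $\nsc$ conclusion with $B$ large enough. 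The remaining case is when $p$ is parabolic, i.e. $U$ represents an element $h\in H_\omega$ (and by $|h|_X=|h|_{X\cap H_\omega}$, $U$ is actually a word over $X\cap H_\omega$, being geodesic, so in fact $\ell(U)=1$ after reduction, or more precisely $U$ lies inside a single parabolic); here one uses Lemma~\ref{lem:conjPara} to see that if $p$ is connected to a component of $q$ then everything is parabolic and the problem reduces to the conjugacy diagram question \emph{inside} $(H_\omega,X\cap H_\omega)$, where the hypothesis (\bcd{} resp.\ \nsc{}) for $H_\omega$ applies; if $p$ is not connected to a component of $q$, Lemma~\ref{lem:pnonconnected} bounds $\d_X(p_-,p_+)$ by $2A+4D+m$ in case (a) (then $q$ is conjugate to a short element, so $\max\{\ell(U),\ell(V)\}$ is bounded after recording this), and in case (b) either $\d_X(p_-,p_+)\le 4D+m$ or the $A$-\nsc{} property of $H_\omega$ yields the required cyclic permutation $U'$ and conjugator $C$ of bounded length, which lifts to the $\nsc$ conclusion for $G$ via $|C|_X=|C|_{X\cap H_\omega}$.

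\textbf{Assembling the constant and the subtle point.} Part (b) needs slightly more care than part (a): one must check that the $\nsc$ conjugator produced lives over $X$ with $X$-length $\le B$ for a \emph{uniform} $B$ independent of $U$; this $B$ is the maximum of $k$ (from Theorem~\ref{thm:bmgcq}), $4D+m$, the parabolic constants $A$ (there are finitely many $\omega$ by Lemma~\ref{lem:factsRH}(ii), so one can take the max), and constants such as $2A+4D+m$; one also needs, when one only controls $\min\{\max\{\ell(U),\ell(V)\},|g|_X\}$, to convert a bound on $\max\{\ell(U),\ell(V)\}$ into the bound $B$ appearing in the definition of \nsc{} (which asks that cyclic geodesics of length $\ge B$ that are non-conjugacy-geodesic be shortenable by a bounded conjugator) --- if $\max\{\ell(U),\ell(V)\}\le B$ then in particular $U$ has length $\le B$ and the $\nsc$ requirement is vacuous for it. \textbf{The main obstacle} I anticipate is the bookkeeping in the parabolic case: correctly tracking which side the conjugating element is read from after the cyclic-permutation reductions of Section~\ref{sec:conjdiag}, and making sure that ``conjugate inside $G$'' is actually witnessed by ``conjugate inside $H_\omega$'' when all four sides are parabolic --- this is exactly Lemma~\ref{lem:conjPara}, but verifying that its hypothesis (a connected pair of components $p,q$) does hold whenever $U$ and $V$ represent $G$-conjugate parabolic elements requires combining the no-vertex-backtracking normal form with the Bounded Coset Penetration property (Theorem~\ref{thm:BCP}) applied to $\wh p$, $\wh q$, which I would carry out as a short preliminary lemma before invoking the conjugacy-diagram results.
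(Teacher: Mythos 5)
Your proposal follows essentially the same route as the paper: the Generating Set Lemma supplies $\cH'$ and $\Phi$, the non-parabolic case is Theorem \ref{thm:bmgcq}, and the parabolic case splits according to whether $p$ is connected to a component of $q$ (Lemma \ref{lem:conjPara}, reducing to the parabolic subgroup) or not (Lemma \ref{lem:pnonconnected}). Two remarks. First, the ``main obstacle'' you anticipate at the end is a phantom: one never needs to show that $G$-conjugate parabolic elements give connected components $p,q$; when they are not connected, Lemma \ref{lem:pnonconnected} bounds the lengths directly, which is exactly the dichotomy you already describe in your middle paragraph. Second, your case analysis omits the asymmetric case ``$q$ parabolic, $p$ not parabolic.'' In part (a) this is harmless by symmetry of the conclusion, but in part (b) it is the situation where the long cyclic geodesic $U$ is non-parabolic and its shorter conjugate $V$ lies in a parabolic, and there the argument is genuinely different: one cannot shorten inside a parabolic, so one must instead bound the conjugator $\d_X(r_-,r_+)$, which the paper does by combining Lemma \ref{lem:reducing-conjugators} and Corollary \ref{cor:exem} when $\ell(r)=1$, and Lemmas \ref{lem:longr} and \ref{lem:longpq} when $\ell(r)>1$. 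This case needs to be added for part (b) to be complete.
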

 
\begin{proof}
 Let $\lambda, c, \cH'$ be  the constants and set provided by the Generating Set Lemma (Lemma \ref{lem:goodgenset}).
Let $X$ be any  generating set of $G$ satisfying $Y\cup \cH'\subseteq X \subseteq Y \cup \cH$, and let $\Phi=\Phi(X)$ be the set of non-geodesic words produced by the Generating Set Lemma. 

For any  word $W$ over $X$ such that none of its cyclic shifts have parabolic shortenings or contain a subword in $\Phi$, the Generating Set Lemma implies that $\wh{W}$ is a cyclic $(\lambda,c)$-quasi-geodesic without vertex backtracking. %In particular, if $\wh{W}$ represents the trivial element, it has to be empty, and thus $W$ is empty. 
Also, we have that $|h|_X=|h|_{X\cap H_\omega}$ for all $h\in H_\omega$  and $\omega\in \Omega$.

(a) Assume that  $(H_\omega, H_\omega\cap X)$ have $A$-\bcd{} for all $\omega\in \Omega$, where $A \geq 0$.

Let $U,V$ be two words over $X$ representing conjugate elements such that no cyclic shifts have parabolic shortenings or contain subwords in $\Phi$.
Let $(p,q,r,s)$ be a minimal conjugacy $(\lambda,c)$-diagram in $\ga(G,X\cup \cH)$, where $\Lab(p)$ and $\Lab(q)$ are some cyclic permutations of $\wh{U}$ and $\wh{V}$, respectively.  If $U$ and $V$ do not represent an element of a parabolic subgroup, we obtain by Theorem \ref{thm:bmgcq}
that there exist a cyclic permutation $U'$ of $U$ and a cyclic permutation $V'$ of $V$ and $g\in G$ such that $g U' g^{-1}= V'$ and $\min\{\max \{\ell(U'),\ell(V')\},|g|_{X}\}\leq k$, where $k$ is the constant provided by Theorem \ref{thm:bmgcq}.

So we only need to consider the case when $U$ or $V$ labels elements of the parabolic subgroups.

%Let $A$ be a constant satisfying that $(H_\omega, H_\omega\cap X)$ is $A$-\bcd{}. 
Suppose that $p$ is parabolic.
If $p$ is connected to a component of $q$, then by Lemma \ref{lem:conjPara}, $U$ and $V$ are words in  $X\cap H_\omega$ for some $\omega\in \Omega$ and conjugate in $H_\omega$. 
Then there exists a cyclic permutation $U'$ of $U$ and a cyclic permutation of $V'$ of $V$ and $h\in H_\omega$ such that $h U' h^{-1}= V'$ and $\min\{\max \{\ell(U'),\ell(V')\},|h|_{X}\}\leq A$.

If $p$ is not connected to a component of $q$, then by Lemma \ref{lem:pnonconnected}(a), $\d_X(p_-,p_+)\leq 2A+4D+m$.
In this case, $q$ is not connected to a component of $p$, and there are two cases. The first case is when $q$ is also parabolic. Here again, using Lemma \ref{lem:pnonconnected} we obtain that $\d_X(q_-,q_+)\leq 2A+4D+m$.
The second case is when $q$ is not parabolic. In this case,
by Lemma \ref{lem:longpq}, $\ell(q)\leq \lambda(2K+1+c)$.
Then by Lemma \ref{lem:plessthanK}, $\d_{X}(q_-,q_+)\leq 4 (\lambda(2K+1+c)+4) D+4m$.

Therefore $\max\{\ell(U),\ell(V)\}\leq B\coloneq \max\{ 4 (\lambda(2K+1+c)+4) D+4m, 2A+4D+m\}$ and then $(G,X)$ has $B$-\bcd{}.

(b) Assume that $(H_\omega,H_\omega\cap X)$ has $A$-\nsc{} for all $\omega\in \Omega$, where $A \geq 0$.

We are going to show that $(G,X)$ has $B$-\nsc{} with $$B=A+5D+4m+k+4\lambda(2K+1+c).$$

Let $U,V$ be two cyclic geodesic words over $X$ representing conjugate elements. Suppose that $\ell(V)<\ell(U)$ and $\ell(U)\geq B$.

Let $(p,q,r,s)$ be a minimal conjugacy $(\lambda,c)$-diagram in $\ga(G,X\cup \cH)$, where $\Lab(p)$ and $\Lab(q)$ are some cyclic permutations of $\wh{U}$ and $\wh{V}$, respectively.  If $U$ and $V$ do not represent an element of a parabolic subgroup, we obtain by Theorem \ref{thm:bmgcq}
that there exist a cyclic permutation $U'$ of $U$ and a cyclic permutation $V'$ of $V$, and $g\in G$, such that $g U' g^{-1}= V'$ and $\min\{\max \{\ell(U'),\ell(V')\},|g|_{X}\}\leq k$, where $k$ is the constant provided by Theorem \ref{thm:bmgcq}.

So we only need to consider the cases when $p$ or $q$ is parabolic.
 
Assume first that $p$ is parabolic.

If $p$ is connected to a component of $q$, then by Lemma \ref{lem:conjPara}, $q$ is parabolic and $U$ and $V$ are conjugate in $H_\omega$. The result in this case follows from the $A$-\nsc{} of $(H_\omega, X\cap H_\omega)$ and the fact that $|h|_X=|h|_{X\cap H_\omega}$ for $h\in H_\omega$.
If $p$ is parabolic and is not connected to a component of $q$, then the result follows from Lemma \ref{lem:pnonconnected}(b).

Assume now that $q$ is parabolic.

If $q$ is connected to a component of $p$, the result follows by arguing as in the case $p$ parabolic connected to a component of $q$.
If $q$ is not connected to a component of $p$, we consider two cases separately. In both we will conclude that  $\d_{X}(r_-,r_+)\leq B$.

Suppose first that $\ell(r)=1$.  If $\Lab(r)\in X$  then  $\d_{X}(r_-,r_+)=1\leq B$. If $r$ is just a component, it can't be connected to $q$, since this will imply that $p$ vertex backtracks or $p$ is a component, and none of the two situations holds by hypothesis. Then, by Lemma \ref{lem:reducing-conjugators}, we can assume that either $\d_{X}(r_-,r_+)\leq m\leq B$ or $r$ is isolated. If $r$ is isolated, then by Corollary \ref{cor:exem}, $\d_X(r_-,r_+)\leq 5D\leq B$. 

Suppose now that $\ell(r)>1$. Now by Lemma \ref{lem:longr}, $B\leq (4\ell(p)+4)D+4m$. Since $B> (4\lambda(2K+1+c)+4)D+4m$, $\ell(p)>\lambda(2K+1+c)$ and then by Lemma \ref{lem:longpq}, $\d_X(r_-,r_+)\leq (2K+3)D+2m\leq B$.
\end{proof}

\begin{rem}\label{rem:complexity} %[The complexity of the conjugacy problem for groups hyperbolic relative to abelian subgroups.]
We observe here that Theorem \ref{thm:rh_conjhyp_ext} provides a cubic-time algorithm for solving the conjugacy problem in groups hyperbolic relative to abelian subgroups. This algorithm is in the same spirit as  \cite[Algorithm 7.A]{BridsonHowie}. 

The conjugacy problem in groups hyperbolic relative to parabolic subgroups with solvable conjugacy problem has been shown to be solvable by Bumagin in \cite{Bumagin}.  Recently and independently, bounds of the complexity have been obtained by Bumagin \cite{Bumagin14}. Rebbechi (\cite{Rebbechi}) or our Corollary \ref{cor:toral_languages} proves that groups hyperbolic with respect to abelian subgroups are biautomatic, so this also implies the solvability of the conjugacy problem in the main class of groups considered in this paper.

Suppose that $G$ is a finitely generated group, hyperbolic relative to a family $\{H_\omega\}_{\omega\in\Omega}$ of abelian subgroups. Using Theorem \ref{thm:rh_conjhyp_ext} we can find a finite generating set $X$ and a finite set $\Phi$ of non-geodesic words over $X$. Let $B$ be the constant provided by Theorem \ref{thm:rh_conjhyp_ext}(a). We assume that the sets $X$ and $\Phi$ have been provided to us, and do not include the complexity of obtaining these sets in our discussion below. This approach is often employed when considering algorithmic problems in hyperbolic groups, where a Dehn presentation is assumed, and the complexity of obtaining such a presentation from an arbitrary one is incorporated into a constant (produced by a computable function that took into account various parameters of the presentation).  

Given a word $U$ over $X\cap H_\omega$, since $H_\omega$ is abelian, one can find in $O(\ell(U))$ steps a geodesic word $U'$ over $X\cap H_\omega$ such that $U'=_G U$. Therefore, given a word $W$ over $X$, one can produce in a linear number of steps a word $W'$, $\ell(W')\leq \ell(W)$, that cyclically has no parabolic shortenings and represents a conjugate of $W$.
Using the same argument as that in \cite[Algorithm DA]{BridsonHowie}, from a word $W$ that cyclically has no parabolic shortenings, when replacing subwords in $\Phi$ by geodesic words representing the same elements, we can produce a word $W'$ that cyclically has no subwords in $\Phi$ and such that $W'$ represents a conjugate of $W$ with $\ell(W')\leq \ell(W)$. Moreover, if $W$ represents the trivial element, $W'$ is empty.
We call this algorithm ``the reducing algorithm."

Now given two words $U$ and $V$, we use the reducing algorithm to produce in a linear number of steps words $U'$ and $V'$ that cyclically have no parabolic shortenings and do not contain subwords in $\Phi$. By Theorem \ref{thm:rh_conjhyp_ext}(a) either $\max\{\ell(U'),\ell(V')\}\leq B$, and we decide if $U'$ and $V'$ are conjugate in a constant number of steps (depending on $B$), or $\max\{\ell(U'),\ell(V')\}> B$, and we know that if $U$ and $V$ are conjugate, then there are cyclic shifts $U''$ of $U'$ and $V''$ of $V'$ and a conjugator of length at most $B$. Then we can decide if $U$ and $V$ are conjugate by applying the reducing algorithm to all words of the form $CU''C^{-1}(V'')^{-1}$, where $C$ is a word of length less than $B$ and $U''$ and $V''$ are cyclic shifts of $U'$ and $V'$, respectively. The number of such words is $O(\ell(U')\cdot \ell(V'))$. Thus the complexity of the algorithm is $O((\ell(U)+\ell(V))\ell(U)\ell(V))$.

We remark that \cite{BridsonHowie} argues that in the hyperbolic case it is only necessary to check whether cyclic shifts of $U'$ are conjugate to $V'$ by a word of a bounded length. This is the reason why the algorithm there is quadratic. A similar situation might occur in our context. An algorithm with even lower complexity  for the conjugacy problem for hyperbolic groups was obtained Epstein and Holt in \cite{EpsteinHolt}. 

Also notice that the algorithm provides a conjugator for $U'$ and $V'$ in case $U$ and $V$ are conjugate. By keeping track of the cyclic reductions necessary to obtain $U'$ from $U$ and $V'$ from $V$ one can produce a conjugator for $U$ and $V$. We remark that an upper bound for the length of the conjugator was obtained by O'Connor in \cite{Zoe}.
\end{rem}

%%%%%%%%%%%%%%%
%Section 
%Generating sets of virtually abelian
%%%%%%%%%%%%%%%%

\section{Proof of Theorem \ref{thm:summary} and application to virtually abelian parabolics}\label{sec:lang}

In this section we prove all the remaining main results stated in the Introduction. We start with the proof of Theorem \ref{thm:summary}, which uses a combination of Theorems \ref{thm:fftpgenset}, \ref{thm:rh_conjhyp_ext},   \ref{thm:biaut} and \ref{thm:shortlex}. 

\begin{proof}[Proof of Theorem \ref{thm:summary}]
Let (P) be a property of (ordered) generating sets as in Theorem \ref{thm:summary}. Assume that $G$ is hyperbolic with respect to a family $\{H_\omega\}_{\omega\in \Omega}$ of groups that are (P)-completable.
Let $Y$ be any finite (ordered) generating set of $G$.

Let $\cH'_1$ be the subset of $\cH$ provided by Theorem \ref{thm:fftpgenset}, $\cH'_2$ the subset of $\cH$ provided by Theorem \ref{thm:biaut} and
$\cH'_3$ the subset set of $\cH$ provided by Theorem \ref{thm:rh_conjhyp_ext}. Set $\cH'=\cH'_1\cup \cH'_2\cup \cH'_3$.

Since  $\{H_\omega\}_{\omega\in \Omega}$ are (P)-completable, there is  a 
finite (ordered) generating set $X$ of $G$ satisfying 
$Y\cup \cH' \subseteq X \subseteq Y\cup \cH$ and such that for all $\omega \in \Omega$,  
$(H_\omega,H_\omega \cap X)$ has (P). 
Now the result follows from Theorems \ref{thm:fftpgenset}, \ref{thm:biaut}, \ref{thm:shortlex} and \ref{thm:rh_conjhyp_ext}.
\end{proof}

We now turn to proving Corollaries \ref{cor:toral_languages} and \ref{cor:vapar_languages}.

It will be convenient to set the following notation for the set of cyclic geodesics $$\cycgeo(G,X)\coloneq \{W\in X^* \mid \text{for  all cyclic shifts $W'$ of } W, \, W'\in \geol(G,X)\}. $$
We remark that  $\CG(G,X) \subseteq \cycgeo(G,X) \subseteq \geol(G,X)$.

\begin{prop}\label{prop:VA}
Let $G$ be a finitely generated virtually abelian group. Any finite 
%ordered 
generating set of $G$ 
is contained in a 
%ordered
finite generating set $Z$ such that %$\mathsf{ShotLex}(G,Z)$ is a biautomatic structure and 
$(G,Z)$ has  \nsc{} and \fftp{} and $(G,Z)$ admits a geodesic prefix-closed biautomatic structure.
\end{prop}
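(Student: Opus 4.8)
The strategy is to exhibit a single finite generating set $Z$ of $G$ that simultaneously witnesses \fftp, \nsc, and a geodesic prefix-closed biautomatic structure, by exploiting the structure of virtually abelian groups together with the already established tools. First I would fix a finite-index free abelian normal subgroup $A\cong\Z^n$ of $G$; such an $A$ exists and can be chosen normal by passing to the core of a finite-index abelian subgroup. I would then start with an arbitrary finite generating set $Y_0$ of $G$, enlarge it to a finite generating set $Y$ which contains a basis-adapted generating set of $A$ (i.e. a set whose intersection with $A$ generates $A$ and behaves well with respect to the coordinates of $\Z^n$) together with a full set of coset representatives of $A$ in $G$. The point of this normalization is that every element of $G$ factors as $a\cdot t$ with $a\in A$ and $t$ ranging over a fixed finite transversal, and geodesics in $(G,Y)$ are controlled, up to a bounded error, by $\ell^1$-geodesics in $\Z^n$.

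The heart of the argument is then to quote the existing literature for the three properties. For \fftp{} I would invoke the result of Neumann--Shapiro that virtually abelian groups admit \emph{some} finite generating set with \fftp; combined with Proposition~\ref{lem:extendingfftpgenset} of this paper, any further finite enlargement (in particular passing to a ball $\{g : |g|_Y\le m\}$) preserves \fftp. For the geodesic prefix-closed biautomatic structure, I would use that virtually abelian groups admit a geodesic biautomatic structure over a suitable generating set (again by Neumann--Shapiro, \cite{NeumannShapiro}), and then apply Lemma~\ref{lem:prefixclosed}, which says that the prefix closure of a geodesic biautomatic structure is again a geodesic biautomatic structure. For \nsc{}, the cleanest route is to observe that a finitely generated virtually abelian group has conjugacy classes of bounded diameter relative to a finite-by-abelian quotient behavior: more precisely, $G$ is finite-by-(virtually free abelian) is too weak, but one can use that $[G:A]<\infty$ with $A$ abelian implies every conjugacy class of an element $g$ is contained in a single $A$-coset shift and has cardinality dividing $[G:A]$ only when $g$ is suitably central; in general I would argue directly that the displacement $|g U' g^{-1}|_X$ is controlled by a bounded conjugator, giving $B$-\nsc{} for a constant $B$ depending only on $[G:A]$ and the generating set. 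Alternatively, and more robustly, \nsc{} follows from \bcd{}, and for virtually abelian groups \bcd{} with bound depending on $[G:A]$ can be established by the same coset-transversal bookkeeping: a minimal conjugacy diagram whose sides are long must have both conjugate sides lying in $A$ up to the transversal, where conjugation acts by a finite-order linear map, forcing either the sides or the conjugator to be short.

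The technical crux is arranging that a \emph{single} generating set works for all three properties at once. Neumann--Shapiro's \fftp{} generating set and their geodesic biautomatic generating set need not coincide a priori, and \nsc{}/\bcd{} impose yet another constraint. The resolution I would pursue is: take $Z$ to be a large ball $\{g\in G : |g|_Y\le m\}$ for $Y$ as normalized above and $m$ large. Proposition~\ref{lem:extendingfftpgenset} handles \fftp{} for all such $m$ once it holds for one generating set; a parallel ``enlargement preserves it'' argument (which I would need to supply, in the spirit of Proposition~\ref{lem:extendingfftpgenset}, using Claim~1--Claim~3 there) handles the geodesic biautomatic structure and its prefix-closure under the same ball enlargement; and the \bcd{}/\nsc{} bound is stable under enlarging the generating set because enlarging can only shorten conjugators and the relevant quantities were already bounded in terms of $[G:A]$. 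I expect the main obstacle to be exactly this simultaneity: verifying carefully that passing to the ball $\{g:|g|_Y\le m\}$ preserves the geodesic biautomatic structure (the fellow-traveler constants must be tracked through the subdivision argument, as in Claim~3 of Proposition~\ref{lem:extendingfftpgenset}), and that the \nsc{} constant does not blow up. Once those three stability statements are in hand, setting $Z$ equal to a sufficiently large $Y$-ball and taking prefix closures of the biautomatic language completes the proof.
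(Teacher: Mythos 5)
Your overall strategy -- normalize the generating set relative to a finite-index abelian normal subgroup and then quote/extend known results for each of the three properties -- is in the right spirit, but two of your three legs rest on statements you have not proved and which are in fact problematic. First, for the geodesic prefix-closed biautomatic structure you propose to prove a ``ball enlargement preserves geodesic biautomaticity'' analogue of Proposition~\ref{lem:extendingfftpgenset}. This is exactly the step you cannot defer: admitting a geodesic biautomatic structure is generating-set dependent (Neumann--Shapiro \cite{NS3}), and no such stability lemma is available. The paper avoids this entirely by \emph{constructing} the generating set so that the structure exists by fiat: it takes $Y=(Z_1-N)^{\pm1}$, closes a suitable finite subset of $N$ under inversion and conjugation, passes to the lattice points of a dilated convex hull in $\mathbb{R}\otimes N$ to get $X\subseteq N$ carrying a $G/N$-invariant geodesic prefix-closed biautomatic structure $\cL$ (\cite[Theorem 4.2.1]{ECHLPT}), and then $Z=X\cup Y$ carries $\cL\cup\cL Y$ by \cite[Corollary 4.2.4]{ECHLPT}. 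The same $Z$ is one of the Neumann--Shapiro \fftp{} generating sets, so no separate stability argument for \fftp{} is needed either.

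Second, your route to \nsc{} is a genuine gap. Your ``more robust'' alternative -- establish \bcd{} with a bound depending on $[G:A]$ and deduce \nsc{} -- is contradicted by the paper's own remark following the proposition: Derek Holt produced a virtually abelian group that does \emph{not} have \bcd{} with the very generating set used here, and it is stated as unknown whether a generating set with both \fftp{} and \bcd{} exists. Your direct argument (``the displacement $|gU'g^{-1}|_X$ is controlled by a bounded conjugator'') asserts the conclusion rather than proving it: the content of \nsc{} is that every cyclic geodesic that is not a conjugacy geodesic admits a cyclic shift that is strictly shortened by a conjugator of uniformly bounded length, and conjugation by short elements need not shorten anything a priori. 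The paper's proof of this is the real work of the proposition: it partitions cyclic geodesics according to how many letters of $Y$ they contain (at most two, by the coset structure), and uses the machinery of \cite[Proposition 3.3]{CHHR} -- shuffles, $Y$-shuffles, and the finite sets $\wt{S}'_{1,r}$ of minimal non-conjugacy-geodesic piecewise subwords, which are closed under cyclic permutation -- to extract a bounded conjugator $g_W$ that strictly shortens, using crucially that $X$ is closed under conjugation so the abelian part cannot lengthen. Without this combinatorial input (or a substitute for it), the \nsc{} claim is unsupported.
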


\begin{proof}
We will refer a number of times to the proof of \cite[Proposition 3.3]{CHHR}, and in particular, the generating set we work with here is the one used in that proof, as well as in \cite[Prop.~6.3]{HHR}. We also need to introduce some terminology. For a set $A$ and $x_1,\dots,x_n\in A$ we say that $V\equiv x_1\cdots x_n$ is a {\it piecewise subword} of a word $W$ over $A$ if $W$ belongs to the set  $x_1A^*x_2A^*\cdots Ax_nA^*$.
 
Let $Z_0$ be a generating set for $G$. Since $G$ is residually finite, there is  $N \lhd G$ torsion-free abelian of 
finite index in $G$  such that $Z_0\cap N=\emptyset$. Enlarge $Z_0$ to $Z_1$, if necessary, to assume that $\cup_{z\in Z_1} z N=G$. 

We build the generating set $Z$ for $G$ as follows.
Let $Y := (Z_1-N)^{\pm 1}$.
Note that $Z_0\subseteq Y$.
Let $X'$ be the set of all $x \in N$ such that
$x =_G W \neq_G 1$ for some 
$W \in Y^*$ with $\ell(W) \le 4$.
Let $X''$ be the closure of the set $(Z_0\cap N)\cup X'$ in $G$ under
inversion and conjugation.
We now identify $N$ with a lattice in $\mathbb{R}\otimes N$ as in \cite[Theorem 4.2.1]{ECHLPT},
and we take the convex hull $\mathcal{K}$ in $\mathbb{R}\otimes N$ of the union of the images of $X''$. % under the action of $G/N$ and inversions.
It follows form \cite[Theorem 4.2.1]{ECHLPT} that there is some $n\in \mathbb{N}$, $n>0$,
such that $X=N\cap n\mathcal{K}$ is a generating set of $N$ that admits
a geodesic prefix-closed biautomatic structure $\cL$ invariant under the $G/N$-action.

Let 
$Z := X \cup Y$.  
By \cite[Corollary 4.2.4]{ECHLPT}, $\cL \cup \cL Y$ is a geodesic prefix-closed biautomatic structure
for $(G,Z)$.

Moreover
\vspace{-.1cm}
\begin{enumerate}
\item[(i)] $X \subset N$, $Y \subset G - N$,
\item[(ii)] both $X$ and $Y$ are closed under inversion,
\item[(iii)] $X$ is closed under conjugation by elements of $G$, 
\item[(iv)] $Y$ contains at least one representative of each nontrivial coset of $N$
in $G$, and 
\item[(v)] if $W =_G xy$ with $W \in Y^*$, $\ell(W) \le 3$, 
$x \in N$ and $y \in Y \cup \{\epsilon\}$, then $x \in X$.
\end{enumerate}
\vspace{-.1cm}
Write the finite set $X$ as $X=\{x_1,...,x_m\}$.
For each $x \in X$ and $y \in Y$, let $x^y$ denote
the generator in $X$ that represents the group element $y^{-1}xy$.
Similarly, if $v=x_{i_1} \cdots x_{i_k} \in X^*$, 
the symbol $v^y$ denotes the word $x_{i_1}^y \cdots x_{i_k}^y$.
An immediate consequence of  properties (i)-(v) above is that
the language $L:=\geol(G,Z)$ of geodesics of $G$ over 
$Z$ satisfies the property that
$L \subseteq X^{\star} \cup X^{\star} Y  X^{\star} \cup X^{\star} Y
X^{\star}Y  X^{\star}.$

In order to see that $(G,Z)$ has FFTP, observe that $Z$ is in fact one of the generating sets produced by Neumann and Shapiro in \cite[Proposition 4.4]{NeumannShapiro} 
to show that $(G,Z)$ has FFTP.

We need to show that $(G,Z)$ also has \nsc{}.

Let $\wdl:=\CG(G,Z)$ be the set of conjugacy geodesics
of $G$ over $Z$. Then $\wdl:=\geocl(G,Z) \subseteq L$,
and so $\wdl$ can be partitioned as the union of the subsets
$\wdl_0:=\wdl \cap X^*$, $\wdl_1 := \wdl \cap X^*YX^*$, and
$\wdl_2:=\wdl \cap X^*YX^*YX^*$. The set $C:=\cycgeo(G,Z)$ can also be partitioned as the union of the subsets $C_0=C \cap X^*$, $C_1 := C\cap X^*YX^*$, and
$C_2:=C \cap X^*YX^*YX^*$.

It was shown in the proof of \cite[Proposition 3.3]{CHHR} that $X^*\cap L=\wdl_0$, so $C_0=\wdl_0$ since $L_0 \subseteq C_0 \subseteq X^*\cap L=\wdl_0$. Thus if $U\in X^*$ is a cyclic geodesic, there is no word strictly shorter than $U$ that is conjugate to it, and the \nsc{} property  is vacuous in this case.

We now turn to $C_1 - \wdl_1$. One can further partition the set $\wdl_1 = \cup_{r \in Y} \wdl_{1,r}$, where $\wdl_{1,r}:=\{v_1rv_2 \in \wdl \mid v_1,v_2 \in X^*\}$ for each $r$ in $Y$. It was shown in the proof of \cite[Proposition 3.3]{CHHR} that the set 
$\wdl_{1,r}$ is exactly the set of all words that do not contain a piecewise subword lying in a particular finite set, denoted there by $\wt{S}_{1,r}' \subset X^*rX^*$. The set $\wt{S}_{1,r}'$ is by construction closed under $Y-$shuffles and shuffles, so in particular under cyclic permutations. (An operation on words over $Z$ given by replacement $ayxb \rightarrow ax^{y^{-1}}yb$ with $a,b \in Z^*$, $x \in X$, and $y \in Y$
is called a {\em $Y$-shuffle}. An operation on words over $X$ given by a replacement $ux_ix_jv \rightarrow ux_jx_iv$ is a {\em shuffle}.) 
The set $\wt{S}_{1,r}'$ can thus be seen as the set of minimal 
non-conjugacy geodesic words in $X^*rX^*$, and so for each word 
$W$ in $\wt{S}_{1,r}'$ there is a $g_W \in G$ such that $|g_W^{-1}Wg_W|_Z<|W|_Z$. Take $k_r:=\max\{|g_w|_Z \ | \ w \in \wt{S}_{1,r}'\}$ and $k_1:=\max\{k_r \ | \ r \in Y\}$.

Let $V \in C_1 - \wdl_1$. Then it contains a piecewise subword in $S:=\cup_{r\in Y} \wt{S}_{1,r}'$. So $V\equiv V_0a_1V_1a_2V_2\cdots a_nV_n$, where $a_1a_2\cdots a_n\in S$, $V_0,\dots, V_n\in X^*$. After $Y$-shuffles and shuffles we can obtain a word $V'$ such that $V'=_GV$ and
$V'\equiv U W$,  where $U\in X^*$, $W\in S$, and $\ell(U)+\ell(W)=\ell(V')=\ell(V)$. 
Take the element $g_W$ and notice that $$g_W^{-1}Vg_W=_Gg_W^{-1}V'g_W=_G (g_W^{-1}Ug_W)(g_W^{-1} Wg_W),$$
by construction $|g_W^{-1} Wg_W|_Z<|W|_Z\leq \ell(W)$, and by
(iii) $|(g_W^{-1}Ug_W)|_Z\leq |U|_Z\leq \ell(U)$.
Thus $|g_W^{-1}Vg_W|_Z< \ell(U)+\ell(W)=\ell(V)=|V|_Z$.

A similar argument works for $C_2 - \wdl_2$. There is a constant, call it $k_2$, such that for every $V \in C_2 - \wdl_2$ there is a $g \in G$, $|g|<k_2$, for which   $V$ satisfies $|g^{-1}V g|_Z < |V|_Z$.

Thus $(G,Z)$ has $k$-\nsc{}, where $k=\max \{k_1, k_2\}$.
\end{proof}

We do not know if virtually abelian groups have  generating sets that satisfy both \fftp{} and \bcd{}. We remark here that Derek Holt has produced an example of a virtually abelian group that does not have \bcd{} with the generating set used in the proof of Proposition \ref{prop:VA}.

\begin{proof}[Proof of Corollary \ref{cor:toral_languages}.]
The result follows from Theorem \ref{thm:summary} and the fact that for any finite generating set $Z$ of an abelian group $A$, $(A,Z)$ has \fftp{} and \bcd{}, and for any order of $Z$, $\mathsf{ShortLex}(A,Z)$ is a biautomatic structure  (by \cite[Theorem 4.3.1]{ECHLPT} and \cite[Definition 2.5.4]{ECHLPT}). The regularity of the languages is a consequence of Proposition \ref{prop:CHHR38}, Proposition \ref{prop:NS} and the closure of regular languages under intersection.
\end{proof}

\begin{proof}[Proof of Corollary \ref{cor:vapar_languages}.]
The result follows from Theorem \ref{thm:summary} and Proposition \ref{prop:VA}.
The regularity of the languages is a consequence of Proposition \ref{prop:CHHR38}, Proposition \ref{prop:NS} and the closure of regular languages under intersection.
\end{proof}

%%%%%%%%%%%%%%%%%%%%%%%%%%%
%Acknowledgments
%%%%%%%%%%%%%%%%%%%%%%%%%%%

\noindent{\textbf{{Acknowledgments}}}.
The authors thank Murray Elder, Jim Howie and Ashot Minasyan for helpful discussions. They also thank Denis Osin 
for suggesting a simplification of the proof of Theorem \ref{thm:Phi}, Derek Holt for comments regarding the conjugacy properties of virtually abelian groups, and  Olga Kharlampovich for her question that motivated the 
results on biautomaticity. They finally thank the anonymous referee for all her/his comments, which helped improve the exposition of the paper.
% and in particular for producing a virtually abelian group that does not have \bcd{} with the generating set used in the proof of Proposition \ref{prop:VA}.

The first author was supported by the Swiss National Science 
Foundation grant FN 200020-137696/1 and by the MCI 
(Spain) through project MTM2011-25955. 

The second author was supported by the Swiss National Science 
Foundation grants Ambizione PZ00P-136897/1 and Professorship FN PP00P2-144681/1.  

%%%%%%%%%%%%%%%%%%%%%%%%%%%%%%%%%%%%
%Bibliography
%%%%%%%%%%%%%%%%%%%%%%%%%%%


\begin{thebibliography}{1}


\bibitem{BH}
M.\ Bridson and A.\ Haefliger,
\newblock{\em Metric spaces of non-positive curvature.}
\newblock  Grundlehren der
Mathematischen Wissenschaften [Fundamental Principles of Mathematical Sciences], 319.
Springer-Verlag, Berlin, 1999. xxii+643 pp.

\bibitem{BridsonHowie}
M.\ Bridson and J.\ Howie, 
\newblock{\em Conjugacy of finite subsets in hyperbolic groups}.
\newblock Internat. J. Algebra Comput. {\bf 15} (2005), 725--756.

\bibitem{Bowditch}
B.\ H.\ Bowditch,
\newblock{\em Relatively hyperbolic groups}.
\newblock  Internat. J. Algebra Comput. 22 (2012), no. 3, 1250016, 66 pp. 

\bibitem{Bumagin}
I.\ Bumagin, 
\newblock{\em The conjugacy problem for relatively hyperbolic groups}.
\newblock Algebraic \& Geometric Topology. Vol. 4, (2004) 1013--1040.

\bibitem{Bumagin14}
I.\ Bumagin, 
\newblock{\em Time complexity of the conjugacy problem in relatively hyperbolic groups}.
\newblock Preprint arXiv: 1407.4528.


\bibitem{CH}
L.\ Ciobanu and S.\ Hermiller,
\newblock {\em Conjugacy growth series and languages in groups}.
\newblock Trans. Amer. Math. Soc., 366 (2014), 2803--2825. 

\bibitem{CHHR}
L.\ Ciobanu, S.\ Hermiller, D.\ F.\ Holt and S.\ Rees,
\newblock {\em Conjugacy Languages in Groups},
\newblock Preprint arXiv:1401.7203, Israel Journal of Mathematics, to appear.

\bibitem{CDP} M.\ Coornaert, T.\ Delzant, and A.\ Papadopoulos,
\newblock {\em Geometrie et theorie des groupes},
\newblock Lecture Notes in Math., vol.1441, Springer Verlag (1990).

\bibitem{Dahmani}
F.\ Dahmani,
\newblock{\em Combination of  convergence  groups.}
\newblock Geom. Topol. {\bf 7} (2003) 933--963. 

\bibitem{Dahmani2008}
F.\ Dahmani,
\newblock{\em Finding relative hyperbolic structures}.
\newblock Bull. London Math. Soc., (2008) 40 (3), 395--404. 

\bibitem{Dahmani09}
F.\ Dahmani,
\newblock{\em Existential questions in (relatively) hyperbolic groups}.
\newblock Israel J. Math. {\bf 173} (2009), 91--124.

\bibitem{DrutuSapir}
C. \ Drutu, M. \ Sapir
\newblock {\it Relatively Hyperbolic Groups with Rapid Decay Property}
\newblock Int. Math. Res. Notices {\bf 19} (2005), 1181--1194.

\bibitem{Elder02}
M.\ Elder,
\newblock {\it Finiteness and the falsification by fellow traveler property.}
\newblock Geometriae Dedicata  {\bf 95} (2002) 103--113.

\bibitem{ECHLPT}
D.\ B.\ A.\ Epstein, J.\ Cannon, D.\ Holt, S.\ Levy, M.\ Paterson, M. and W. Thurston, W.,
\newblock {\it  Word Processing in Groups.}
\newblock Jones and Bartlett, Boston, 1992.

\bibitem{EpsteinHolt}
D.\ B.\ A.\ Epstein and D.\ Holt,
\newblock{\it The linearity of the conjugacy problem in word hyperbolic groups.}
\newblock  Internat. J. Algebra Comput. 16 (2006), 287--305.

\bibitem{Farb}
B.\ Farb,
\newblock {\it Relatively hyperbolic groups.}
\newblock  Geom. Funct. Anal. {\bf 8} (1998), no. 5, 810--840. 

\bibitem{GN}
R.\ Grigorchuk and T.\ Nagnibeda,
\newblock {\it Complete growth functions of hyperbolic groups.}
\newblock Invent. Math. {\bf 130} (1997), no. 1, 159--188.

\bibitem{Gromov}
M. \ Gromov, 
\newblock {\it Hyperbolic Groups}
\newblock Essays in group theory, 75Ð-263, Math. Sci. Res. Inst. Publ., 8, (1987).

\bibitem{Guirardel}
V.\ Guirardel, 
\newblock {\it Limit groups and groups acting freely on $\R^n$-trees.}
\newblock Geom. Top. {\bf 8} (2004), 1427--1470.


\bibitem{HHR}
S.\ Hermiller, D.\ F.\ Holt and S.\ Rees,
\newblock{\em Star-free geodesic languages for groups}.
\newblock Internat.~J.~Algebra Comput.~{\bf 17} (2007), 329--345.


\bibitem{Holt}
D.\ F.\ Holt, 
\newblock {\em Garside groups have the falsification by fellow-traveler property.}
\newblock Groups Geom. Dyn. {\bf 4} (2010), 777--784.


\bibitem{HoltRees}
D.\ F.\ Holt and S.\ Rees,
\newblock {\em Artin groups of large type are shortlex automatic with regular geodesics.} 
\newblock Proc. Lond. Math. Soc. (3) {\bf 104} (2012), no. 3, 486--512. 

\bibitem{KMS}
O. Kharlampovich, A. Myasnikov, and D. Serbin, 
\newblock {\em Actions, length functions, and non-archimedean words.}
\newblock  arXiv:1211.3207v2.

\bibitem{KMW}
O. Kharlampovich, A. Myasnikov, and P. Weil, 
\newblock {\em Completion process and quasi-convex subgroups.}
\newblock  arXiv: 1408.1917.

\bibitem{Lang}
U.\ Lang,
\newblock {\em Injective hulls of certain discrete metric spaces and groups.}
\newblock J. Topol. Anal.,  5 (2013), 297--331.

\bibitem{BFC}
B.\ H.\ Neumann, 
\newblock {\em Groups with finite classes of conjugate subgroups.}
\newblock Math. Z. 63 (1955), 76--96. 

%
%\bibitem{LyndonSchupp}
%\newblock \textit{Combinatorial group theory.}
%\newblock Ergebnisse der Mathematik und ihrer Grenzgebiete, Band 89.
%Springer-Verlag, Berlin-New York, 1977.

\bibitem{NeumannShapiro}
W.\ D.\ Neumann and M.\ Shapiro,
\newblock \textit{ Automatic structures, rational growth, and geometrically finite hyperbolic groups.}
\newblock Invent. Math. {\bf 120} (1995), no. 2, 259--287.

\bibitem{NS2}
W.\ D.\ Neumann and M.\ Shapiro,
\newblock \textit{ A Short Course in Geometric Group Theory.}
\newblock Notes for the ANU Workshop January/February 1996.

\bibitem{NS3}
W.\ D.\ Neumann and M.\ Shapiro,
\newblock \textit{ Regular geodesic normal forms in virtually abelian groups.} 
\newblock Bull. Austral. Math. Soc. 55 (1997), no. 3, 517--519. 

\bibitem{NoskovCC}
G. A. Noskov, 
\newblock {\em Growth of certain non-positively curved cube groups}, 
\newblock  Europ.J. Combinatorics {\bf 21} (2000), 659--666.

\bibitem{NoskovCoxeter}
G. A. Noskov, 
\newblock {\em Bounded shortening in Coxeter complexes and buildings.}
\newblock pp. 10–14 in: A. K. Guts (Ed.), Mathematical structures and modeling, No. 8, Omsk. Gos. Univ., Omsk 2001.  Available electronically at http://cmm.univer.omsk.su/sbornik/sborn8.html.

%\bibitem{Ols92}
%A.Yu. Olshanskii, 
%\newblock {\it Periodic quotients of hyperbolic groups},
%\newblock  Mat. Sbornik {\bf 182} (1991), 4, 543--567 (in Russian),
%\newblock English translation in Math. USSR
%Sbornik {\bf 72} (1992), 2, 519--541.

\bibitem{Zoe}
Z.\ O'Connor,
\newblock {\it Conjugacy Search Problem for Relatively Hyperbolic Groups}.
\newblock Preprint arXiv:1211.5561.

\bibitem{Osin05}
D.\ Osin,
\newblock {\it Asymptotic dimension of relatively hyperbolic groups}.
\newblock Internat. Math. Res. Notices  {\bf 35} (2005), 2143--2162.

\bibitem{Osin06}
D.\ Osin,
\newblock {\it Relatively hyperbolic groups: intrinsic geometry, algebraic properties, and 
algorithmic problems.}
\newblock Mem. Amer. Math. Soc. {\bf 179} (2006), no. 843, vi+100 pp. 

\bibitem{OsinPF}
D.\ Osin,
\newblock {\em Peripheral fillings of relatively hyperbolic groups.}
\newblock Invent. Math. \textbf{167 }(2007), no. 2,
295--326.

\bibitem{Rebbechi}
D.\ Y.\ Rebbechi,
\newblock{\em Algorithmic Properties of Relatively Hyperbolic Groups.}
\newblock{PhD Dissertation (2003), Rutgers Newark.}

\bibitem{Short}
H.\ Short,
\newblock {\em Groups and combings.}
\newblock  Preprint, ENS Lyon, 1990.

\bibitem{SS}
A.\ Salomaa and M.\ Soittola, 
\newblock {\it Automata-theoretic aspects of formal power series},
\newblock New York: Springer-Verlag, 1978.

\bibitem{Warshall}
A.\ D. \ Warshall.
\newblock{\em Deep pockets in lattices and other groups.}
\newblock{Trans. Amer. Math. Soc.} {\bf 362} (2010), 577--601.
\end{thebibliography}
\end{document}